\numberwithin{equation}{section}
\newtheorem{theorem}{Theorem}[section]
\newtheorem{proposition}[theorem]{Proposition}
\newtheorem{prop}[theorem]{Proposition}
\newtheorem{corollary}[theorem]{Corollary}
\newtheorem{cor}[equation]{Corollary}
\newtheorem{lemma}[theorem]{Lemma}
\newtheorem{conjecture}[theorem]{Conjecture}
\theoremstyle{definition}
\newtheorem{definition}[theorem]{Definition}
\theoremstyle{remark}
\newtheorem{remark}[theorem]{Remark}
\newcommand{\bd}{\partial}
\newcommand{\supp}{\mathrm{supp}}
\newcommand{\eps}{\varepsilon}
\DeclareMathOperator{\area}{\mathrm{Area}}
\newcommand{\Id}{\mathrm{Id}}
\newcommand{\Sph}{\mathbb{S}}
\newcommand{\N}{\mathbb{N}}
\newcommand{\Z}{\mathbb{Z}}
\newcommand{\DD}{\mathbb{D}}
\newcommand{\D}{\mathcal{D}}
\newcommand{\Dcal}{\mathcal{D}}
\newcommand{\dist}{\mathrm{dist}}
\newcommand{\R}{\mathbb{R}}
\newcommand{\B}{\mathbb{B}}
\newcommand{\genus}{\mathrm{genus}}
\newcommand{\Isom}{\mathrm{Isom}}
\newcommand{\Conf}{\mathrm{Conf}}
\newcommand{\Area}{\mathrm{Area}}
\newcommand{\Diff}{\mathrm{Diff}}
\newcommand{\Met}{\mathrm{Met}}
\newcommand{\Fix}{\mathrm{Fix}}
\newcommand{\sigmabar}{\overline{\sigma}}
\newcommand{\ulow}{u_{\mathrm{low}}}
\newcommand{\uhigh}{u_{\mathrm{high}}}
\definecolor{light-gray}{gray}{.95}
\definecolor{dark-gray}{gray}{.7}
\begin{document}
\title[Asymptotics for minimal surfaces]{Large topology asymptotics for spectrally extremal minimal surfaces in $\mathbb{B}^3$ and $\mathbb{S}^3$}

\author[M.~Karpukhin]{Mikhail~Karpukhin}
\author[P.~McGrath]{Peter~McGrath}
\author[D.~Stern]{Daniel~Stern}
\date{}
\address{Department of Mathematics, University College London, 25 Gordon Street, London, WC1H 0AY, UK} \email{m.karpukhin@ucl.ac.uk}
\address{Department of Mathematics, North Carolina State University, Raleigh NC 27695} 
\email{pjmcgrat@ncsu.edu}
\address{Department of Mathematics, Cornell University, Ithaca NY, 14853} \email{daniel.stern@cornell.edu}

\begin{abstract}
In recent work with Kusner, we developed a method, based on the equivariant optimization of Laplace and Steklov eigenvalues, for producing minimal surfaces of prescribed topology in low-dimensional balls and spheres. We used the method to construct many new minimal embeddings in $\mathbb{S}^3$ with area below $8\pi$, and many new free boundary minimal embeddings in $\mathbb{B}^3$ with area below $2\pi$.
In this paper, we study the geometry of these surfaces in more detail, with an emphasis on studying sharp area estimates and varifold limits in the large Euler characteristic regime.  This allows us to confirm some well-known conjectures regarding the space of low-area minimal surfaces in $\Sph^3$ in this class of examples and the special role played by Lawson's $\xi_{\gamma,1}$ surfaces. We also confirm analogous statements in $\B^3$ and identify a family of free boundary minimal surfaces in $\B^3$ most closely resembling $\xi_{\gamma,1}$.
\end{abstract}

\maketitle


\section{Introduction}
\label{Sintro}
In the last thirty years, an emerging dictionary between isoperimetric problems in spectral geometry and minimal surfaces in distinguished ambient spaces has led to striking developments in both spectral theory and the study of minimal surfaces, starting with Nadirashvili's discovery 
 \cite{Nadirashvili} that extremal metrics for the area-normalized Laplacian eigenvalues  
 \[
 {\bar{\lambda}_k(M,g):=\lambda_k(M,g)\area(M,g)}
 \]
  on a closed surface $M$ are induced by minimal immersions into spheres. Initially, this connection was exploited to obtain sharp spectral inequalities and characterize extremal metrics, as in the identification of $\bar{\lambda}_1$-maximizing metrics on the torus and Klein bottle \cite{Nadirashvili, Jakobson},  $\bar{\lambda}_k$-maximizing metrics on $\mathbb{S}^2$ and $\mathbb{RP}^2$ \cite{KNPP, KarRP2}, and metrics maximizing the first length-normalized Steklov eigenvalue on the annulus and M\"obius band \cite{FraserSchoen}. 

More recently, there has been growing interest in using this connection in service of minimal surface theory, employing variational methods for eigenvalues as a tool for producing new minimal surfaces \cite{FraserSchoen, KSDuke, KKMS}. Though general constructions of this form yield surfaces of high codimension, possibly with branch points and self-intersections \cite{NaySh}, in \cite{KKMS} the authors and R. Kusner identified a large family of variational problems for Laplace and Steklov eigenvalues for which extremal metrics give rise to \emph{embedded} minimal surfaces of \emph{codimension one}, and by developing the corresponding existence theory, gave many new examples of minimal surfaces in $\mathbb{S}^3$ and free boundary minimal surfaces in $\mathbb{B}^3$ with prescribed topology and symmetries. 

The goal of the present paper is to gain an improved understanding of the \emph{geometry} of the minimal surfaces in $\mathbb{S}^3$ and $\mathbb{B}^3$ constructed in \cite{KKMS}, with an emphasis on \emph{sharp area estimates} and \emph{varifold limits} in the large Euler characteristic regime, similar in spirit to those available for doubling and desingularization constructions \cite{LDG}. As advertised in \cite[Section 1.6]{KKMS}, among other applications, these results allow us to confirm some well-known conjectures regarding the space of low-area minimal surfaces in $\mathbb{S}^3$ for the large class of new examples constructed in \cite{KKMS}, providing further evidence that the Lawson surfaces $\xi_{\gamma,1}$ have least area among minimal surfaces with genus $\gamma$ (cf. \cite[Conjecture 8.4]{KusnerWillmore}) and are the unique family of minimal surfaces desingularizing a pair of (non-identical) equatorial spheres (cf.  \cite[Question 4.4]{Kap:survey}). In $\mathbb{B}^3$, we also obtain analogs of these results for the many new free boundary minimal surfaces with area $<2\pi$ constructed in \cite{KKMS}, and we identify a family of free boundary minimal surfaces which we believe can be regarded as free boundary analog of the Lawson surfaces $\xi_{\gamma,1}$.

\subsection{$\bar{\lambda}_1$-extremal surfaces in $\mathbb{S}^3$}\label{intro.s3}

Let $M$ be a closed, oriented surface and let $\Gamma\leq \Diff(M)$ be a finite group of diffeomorphisms. Recall from \cite[Definition 5.4]{KKMS} that the pair $(M,\Gamma)$ is called a \emph{basic reflection surface} if $\Gamma$ contains an involution $\tau\in \Gamma$ whose fixed point set $M^\tau$ contains a curve, and whose quotient $M/\langle \tau\rangle$ has genus zero. As in \cite{KKMS}, we are particularly interested in basic reflection surfaces $(M,\Gamma)$ where $\Gamma$ has the form
\begin{equation}\label{gamma.prod}
\Gamma:=\langle \tau\rangle\times G,
\end{equation}
where $\tau$ is an involution as above, and $G$ is a finite reflection group--i.e., a finite group $G$ generated by elements whose fixed-point sets separate $M$. For pairs $(M,\Gamma)$ of this type, it was shown in \cite{KKMS} that $\bar{\lambda}_1(M,g)$ can always be maximized over $\Gamma$-invariant metrics $g\in \Met_{\Gamma}(M)$, giving rise to an extremal metric induced by a minimal embedding in $\mathbb{S}^3$. 

\begin{theorem}\label{kkms.thm1}{{\cite[Theorem 8.7]{KKMS}}}
For every orientable basic reflection surface $(M,\Gamma)$ with $\Gamma$ of the form $\Gamma=\langle \tau\rangle\times G$ for a finite reflection group $G$, there exists a metric $g\in \Met_{\Gamma}(M)$ realizing the supremum
$$\Lambda_1(M,\Gamma):=\sup\{\bar{\lambda}_1(M,h)\mid h\in \Met_{\Gamma}(M)\}$$
of $\bar{\lambda}_1$ over all $\Gamma$-invariant metrics $h\in \Met_{\Gamma}(M)$. Moreover, there is an orthogonal representation $\rho: \Gamma\to O(4)$ and a $\rho$-equivariant minimal embedding $\Phi: M\to \mathbb{S}^3$ satisfying $\Delta_{g}\Phi=\lambda_1(M, g)\Phi$, with 
$$\Area(\Phi)=\frac{1}{2}\Lambda_1(M,\Gamma)<8\pi.$$
\end{theorem}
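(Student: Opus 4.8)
The plan is to prove this via the equivariant version of the Nadirashvili--Petrides two-step scheme for maximizing $\bar\lambda_1$, combined with a Hersch/El Soufi--Ilias-type comparison supplying the area bound. Since $\Gamma$ is finite, fix a $\Gamma$-invariant complex structure on $M$, so that a $\Gamma$-invariant metric amounts to a $\Gamma$-invariant conformal class $c$ together with a $\Gamma$-invariant conformal factor, while $\tau$ acts as an anti-holomorphic involution whose quotient $\Omega:=M/\langle\tau\rangle$ is a compact bordered surface of genus zero with $\partial\Omega=M^\tau/\langle\tau\rangle$; thus $M$ is the double of $\Omega$ along $M^\tau$ and the quotient map $\pi\colon M\to\Omega$ is $2$-to-$1$ and conformal.

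Before addressing existence I would establish the uniform bound $\bar\lambda_1(M,h)<16\pi$ for every $h\in\Met_\Gamma(M)$. Because $\Omega$ is a genus-zero bordered Riemann surface it embeds conformally as a \emph{proper} subdomain $\iota(\Omega)\subsetneq\Sph^2$, of area $<4\pi$. The composition $F:=\iota\circ\pi\colon M\to\Sph^2\subset\RR^3$ is then a conformal map covering $\iota(\Omega)$ twice, so $\int_M|\nabla F|^2\,dV_h=4\,\Area_{\Sph^2}(\iota(\Omega))<16\pi$; composing $F$ with a M\"obius transformation to balance its center of mass against $dV_h$, as in Hersch, makes the coordinates of $F$ admissible test functions and yields $\bar\lambda_1(M,h)=\lambda_1(M,h)V(h)\le\int_M|\nabla F|^2\,dV_h<16\pi$ (the M\"obius image of $\iota(\Omega)$ is still a proper subdomain, so strictness persists). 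Applied to the extremal metric this gives $\Area(\Phi)=\tfrac12\bar\lambda_1(M,g)=\tfrac12\Lambda_1(M,\Gamma)<8\pi$ once existence is known; it also prevents maximizing sequences from bubbling off $\Gamma$-orbits of round spheres of total eigenvalue mass $\ge16\pi$.

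For existence I would run Petrides's concentration--compactness argument equivariantly, in two stages. \emph{Fixed conformal class:} given a $\Gamma$-invariant $c$, a $\bar\lambda_1$-maximizing sequence of $\Gamma$-invariant metrics in $c$, together with its first eigenfunctions assembled into $\Gamma$-equivariant maps $\Phi_j\colon M\to\Sph^{N_j}$, either subconverges (after renormalization) to a $\Gamma$-invariant maximizer---smooth, and induced by a $\Gamma$-equivariant harmonic, hence branched minimal, map to a round sphere, via the usual Takahashi/regularity arguments---or it bubbles; by $\Gamma$-invariance bubbles occur along $\Gamma$-orbits, and the cheapest configuration compatible with a basic reflection surface is a single round $\Sph^2$ forming at a point of $M^\tau$ where $\tau$ acts as a mirror, carrying mass exactly $\bar\lambda_1(\Sph^2)=8\pi$ (a bubble with smaller $\Gamma$-stabilizer costs $\ge16\pi$ and is excluded by the previous paragraph; for $\lambda_1$, bubbling is all-or-nothing). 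Since one can build a $\Gamma$-invariant test metric---supported near $M^\tau$ and exploiting the extra topology---with $\bar\lambda_1>8\pi$ whenever $M\neq\Sph^2$, the bubble alternative is ruled out and $\Lambda_1(M,\Gamma,c)$ is attained (for $M=\Sph^2$ the round metric attains $\Lambda_1=8\pi$ directly). \emph{Varying $c$:} as $c$ approaches the boundary of the $\Gamma$-equivariant moduli space, $M$ degenerates by pinching a $\Gamma$-orbit of simple closed curves, and a Nadirashvili--Petrides-type analysis bounds $\limsup\Lambda_1(M,\Gamma,c)$ by the maximum of $\Lambda_1$ over the lower-complexity equivariant pieces; since $\Gamma$-invariant test metrics on $M$ beat each such degenerate value, the supremum is attained at an interior $c^\ast$, where $\Lambda_1(M,\Gamma,c^\ast)=\Lambda_1(M,\Gamma)$ is realized by a $\Gamma$-equivariant branched minimal immersion $\Phi\colon M\to\Sph^N$ with first-eigenfunction coordinates.

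It remains to see that $N=3$, that $\Phi$ is an honest embedding, and that $\rho$ takes values in $O(4)$; this is where the hypothesis $\Gamma=\langle\tau\rangle\times G$ with $G$ a reflection group enters, and it follows from the structural theory of basic reflection surfaces developed in the earlier sections of \cite{KKMS}: the separating fixed hypersurfaces of the generators of $G$ control the nodal domains of first eigenfunctions, forcing the relevant $\lambda_1$-representation to be the standard four-dimensional one and $\Phi$ to be embedded into $\Sph^3$. Finally, a minimal surface in $\Sph^3$ satisfies $\Delta_g\Phi=2\Phi$, so $\lambda_1(M,g)=2$ and $\Area(\Phi)=\tfrac12\bar\lambda_1(M,g)=\tfrac12\Lambda_1(M,\Gamma)<8\pi$ by the second paragraph. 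The main obstacle is the equivariant concentration--compactness together with the control of boundary degenerations in equivariant moduli---correctly bookkeeping bubble masses under the $\Gamma$-action, excluding the $8\pi$-bubble in every $\Gamma$-invariant conformal class, and verifying the strict inequalities against the degenerate limits---and, secondarily, the reduction from a branched minimal immersion into some $\Sph^N$ to an embedded surface in $\Sph^3$, which is exactly what the basic-reflection-surface machinery of \cite{KKMS} is designed to deliver.
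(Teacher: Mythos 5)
Theorem~\ref{kkms.thm1} is not proved in this paper: it is quoted verbatim from \cite[Theorem 8.7]{KKMS} and used throughout as a black box, so there is no in-text argument for your proposal to be measured against. The closest thing the present paper contains to your outline is Lemma~\ref{double.lem}, which records your first step — identify $\Omega=M/\langle\tau\rangle$ with a circular domain in $\mathbb{S}^2$ so that $\Lambda_1(M,\Gamma)=2\mathcal{M}_1(\Omega,G)$, with $\mathcal{M}_1(\Omega,G)\leq 8\pi$ because $|\Omega|<4\pi$; your Hersch-type computation via the doubled inclusion $F=\iota\circ\pi$ is essentially that idea.

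Taken as a blind reconstruction of the proof in \cite{KKMS}, your sketch names the right tools (equivariant Petrides-style concentration--compactness at fixed conformal class, degeneration analysis as the conformal class escapes to the boundary of equivariant moduli, and the structural theory of basic reflection surfaces to force a codimension-one embedding), but the hard steps are only gestured at, and two in particular need more than an assertion. First, to exclude the bubble you invoke a $\Gamma$-invariant metric \emph{in the given conformal class} with $\bar\lambda_1>8\pi$, ``supported near $M^\tau$ and exploiting the extra topology,'' without constructing it; producing such gap test metrics uniformly across the noncompact equivariant moduli space, and matching them against each degenerate boundary stratum, is the bulk of the existence theory, not a remark. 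The bubble-cost accounting under the $\Gamma$-action (a bubble at $p$ contributes $8\pi\cdot|\Gamma|/|\mathrm{Stab}_\Gamma(p)|$, with refinements for branch points and for the boundary fold along $M^\tau$) likewise needs to be carried out rather than summarized as ``all-or-nothing for $\lambda_1$.'' Second, passing from a $\Gamma$-equivariant branched minimal immersion into $\mathbb{S}^N$ to an \emph{embedding} in $\mathbb{S}^3$ is precisely the novel content of the basic-reflection-surface machinery in \cite{KKMS}; you are right to cite it, but your sentence about the separating fixed sets of $G$ controlling nodal domains and forcing $N=3$ states a theorem rather than reducing to one. In short: plausible skeleton, correct pointers, not a proof — and since this paper imports the result rather than proving it, there is no internal argument against which to judge it more finely.
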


As shown in \cite[Lemma 5.35]{KKMS}, these surfaces are \emph{doublings} of the equator $\Sph^2 \subset \Sph^3$ in the sense of \cite[Definition 1.1]{LDG}: the nearest-point projection $\pi$ to $\Sph^2$ is well-defined on $M$ and $M = M_1 \cup M_2$, where $M_1$ is a $1$-manifold, $M_2 \subset M$ is open, $\pi|_{M_1}$ is a diffeomorphism, and $\pi|_{M_2}$ is a $2$-sheeted covering map. 

We emphasize that these minimal doublings are determined not only by their topology and the isomorphism class of $G$, but by the specific action of $G$ on $M$. For instance, as discussed in [KKMS, Proposition 8.8], one obtains at least $\lfloor \frac{\gamma-1}{4}\rfloor+1$ distinct $\mathbb{Z}_2\times \mathbb{Z}_2$-symmetric minimal $\Sph^2$-doublings of genus $\gamma$ by applying Theorem \ref{kkms.thm1} with $G$ given by a single reflection with fundamental domain of prescribed topology.

Prior to \cite{KKMS}, other families of minimal surfaces in $\mathbb{S}^3$ with prescribed genus and area $<8\pi$ have been constructed by variational and perturbative methods \cite{Lawson, KPS, Kapouleas, KapMcG, LDG}; among these, the earliest and best studied are the Lawson surfaces $\xi_{\gamma,1}$. Several well-known conjectures assert that this family is distinguished among the others in various senses. First, Kusner conjectured \cite[Conjecture 8.4]{KusnerWillmore} that the surfaces $\xi_{\gamma,1}$ have least area among all genus $\gamma$ minimal surfaces in $\mathbb{S}^3$, and in fact the lowest Willmore energy among all genus $\gamma$ surfaces in $\mathbb{S}^3$. Note that recent work of \cite{HHT,CHHT} gives a sharp asymptotic expansion for this area $\Area(\xi_{\gamma,1})$, of the form
$$\Area(\xi_{\gamma,1})=8\pi-\frac{4\pi \log 2}{\gamma}+O(1/\gamma^2)$$
as $\gamma\to\infty$.

As an application of the tools introduced below, we confirm that for $\gamma$ sufficiently large, all of the $\sim \gamma/4$ minimal surfaces of genus $\gamma$ constructed in \cite{KKMS} indeed have area larger than $\Area(\xi_{\gamma,1})$, with $8\pi-\Area(M_{\gamma})$ decaying exponentially fast in $\gamma$ as $\gamma\to\infty$ for most of those families. More precisely, the basic reflection pairs $(M,\Gamma)$ can be split into two types, which exhibit different behaviors.

\begin{definition}\label{scherk.def}
We say a basic reflection pair $(M,\Gamma)$ with $M$ orientable is of \emph{Scherk type} if every $\Gamma$-invariant metric on $M$ is conformally equivalent to the double of $\mathbb{S}^2\setminus \mathcal{D}$, where $\mathcal{D}$ is a union of geodesic disks with centers on the equator. A basic reflection pair $(M,\Gamma)$ that is not of Scherk type is said to be  \emph{generic}. 
\end{definition}

\begin{remark}
It is a straightforward exercise to check that $(M,\Gamma)$ is of Scherk type whenever there exist two distinct reflections $\tau_1,\tau_2\in \Gamma$ for which $M/\langle \tau_i\rangle$ has genus zero. Indeed, for the reflection groups we consider, this is almost equivalent to the definition given above, with essentially one exceptional case when $\Gamma\cong \mathbb{Z}_2\times D_k$. See Lemma \ref{scherk.lem} below for an enumeration of all the Scherk-type basic reflection surfaces of the form $\Gamma=\langle \tau\rangle\times G$.
\end{remark}

To justify the use of `generic', note for instance that when $\Gamma\cong \mathbb{Z}_2\times \mathbb{Z}_2$,  for every genus $\gamma$, there is exactly one pair $(M,\Gamma)$ of Scherk type, up to equivariant homeomorphism, and $\lfloor \frac{\gamma+1}{2}\rfloor$ generic pairs. With this language in place, we have the following.

\begin{theorem}\label{s3.lbd}
There exist $\gamma_0\in \mathbb{N}$ such that for every pair $(M,\Gamma)$ as in Theorem \ref{kkms.thm1} with $genus(M)=\gamma\geq \gamma_0$,
$$\frac{1}{2}\Lambda_1(M,\Gamma)\geq \Area(\xi_{\gamma,1}).$$
Moreover, there exists $c>0$ such that $\frac{1}{2}\Lambda_1(M,\Gamma)\geq 8\pi-e^{-c\sqrt{\gamma}}$ for all generic basic reflection pairs $(M,\Gamma)$ of genus $\gamma$.
\end{theorem}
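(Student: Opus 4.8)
The plan is as follows. Since $\Lambda_1(M,\Gamma)$ is a \emph{supremum} of $\bar\lambda_1$ over $\Gamma$-invariant metrics, it suffices to exhibit, for each pair $(M,\Gamma)$ of genus $\gamma$, a single $\Gamma$-invariant conformal class together with a metric in it whose area-normalized first eigenvalue is as close to $16\pi$ as the two claimed estimates demand. The guiding heuristic is that a metric presenting $M$ as two unit round copies of $\Sph^2$ joined along many small catenoidal bridges has $\bar\lambda_1$ near $16\pi$: if each bridge is attached at the same point of the two spheres, then the ``matched'' coordinate functions $(x_i,x_i)$ carry no flux through any bridge and survive as first eigenfunctions with eigenvalue near $2$, while $\Area$ is near $2\cdot4\pi=8\pi$. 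The difference between the two families of pairs lies in how small the bridges may be made subject to being compatible with the action of $\Gamma$, and this is what produces the $\log 2$ gap between generic and Scherk-type behaviour.

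For generic pairs, the enumeration in Lemma \ref{scherk.lem} shows precisely that --- in contrast to the Scherk case --- the $\gamma+1$ handles of $M$ may be placed $\Gamma$-equivariantly along a genuinely two-dimensional array of points of $\Sph^2$ with mutual separation $\gtrsim\gamma^{-1/2}$. I would take the test metric to be two unit spheres with geodesic disks removed at these points and their $\tau$-mirrors, glued by catenoidal bridges of waist $r=e^{-c\sqrt\gamma}$ with a mesoscopic transition annulus of scale $\sqrt r\ll\gamma^{-1/2}$. This metric is $\Gamma$-invariant, has $\Area=8\pi-O(\gamma r)=8\pi-e^{-c'\sqrt\gamma}$, and, since the even coordinate modes extend through every bridge with zero flux and the metric is round outside an exponentially small set, has $\lambda_1\geq 2-e^{-c'\sqrt\gamma}$ by standard capacity and Neumann-eigenvalue perturbation estimates at small holes; the competing $\tau$-odd modes carry eigenvalue $\sim\gamma/|\log r|\gtrsim\sqrt\gamma\to\infty$ and so do not depress $\lambda_1$. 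Multiplying gives $\bar\lambda_1\geq 16\pi-e^{-c''\sqrt\gamma}$, hence $\tfrac12\Lambda_1(M,\Gamma)\geq 8\pi-e^{-c''\sqrt\gamma}$, which is the second assertion; since $e^{-c''\sqrt\gamma}=o(\gamma^{-1})$ this also yields $\tfrac12\Lambda_1(M,\Gamma)\geq\Area(\xi_{\gamma,1})$ for generic pairs once $\gamma$ is large, using $\Area(\xi_{\gamma,1})=8\pi-\frac{4\pi\log 2}{\gamma}+O(\gamma^{-2})$.

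For Scherk-type pairs the handles are forced into a single row on the equatorial circle, so a disk-and-catenoid construction can only reach $|\log r|\gtrsim\log\gamma$ and loses a factor $\log\gamma$ relative to what is needed; here I would compare directly against the Lawson surface $\xi_{\gamma,1}$. The point is intrinsic, not about the embedding: combining the classification of Scherk-type basic reflection pairs with the known symmetry group of $\xi_{\gamma,1}$, one checks that each such $\Gamma$ is subconjugate to the symmetry group of $\xi_{\gamma,1}$ acting on the genus-$\gamma$ surface, so that after a $\Gamma$-equivariant identification the metric $g_{\xi_{\gamma,1}}$ induced from $\Sph^3$ is an admissible competitor in $\Met_\Gamma(M)$. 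Since $\xi_{\gamma,1}$ is minimal in $\Sph^3$ its embedding $\Phi$ satisfies $\Delta\Phi=2\Phi$, and provided $\lambda_1(\xi_{\gamma,1})=2$ one obtains $\Lambda_1(M,\Gamma)\geq\bar\lambda_1(M,g_{\xi_{\gamma,1}})=2\,\Area(\xi_{\gamma,1})$, i.e.\ $\tfrac12\Lambda_1(M,\Gamma)\geq\Area(\xi_{\gamma,1})$, as required.

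I expect the main obstacles to be: (i) verifying $\lambda_1(\xi_{\gamma,1})=2$ for all large $\gamma$ --- a case of Yau's conjecture --- which I would attack by perturbing from the varifold limit of $\xi_{\gamma,1}$, namely a pair of equatorial $2$-spheres meeting along a great circle, whose Laplace spectrum one computes to have first nonzero eigenvalue exactly $2$ (with multiplicity five), combined with the elementary bound $\lambda_1\leq 2$ for minimal surfaces in $\Sph^3$ and a multiplicity/continuity argument to exclude $\lambda_1<2$; failing a clean such statement one can instead replace $g_{\xi_{\gamma,1}}$ by a hand-built $\Gamma$-invariant test metric whose bridges are suitably rescaled singly-periodic Scherk surfaces, at the cost of recovering the sharp constant $\frac{4\pi\log 2}{\gamma}$ directly. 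And (ii) in the generic case, carrying out the many-thin-necks gluing and the attendant eigenvalue perturbation with all error terms uniformly exponentially small in $\gamma$ --- conceptually standard, but requiring care because the number of necks grows. The identification of the $\Gamma$-invariant conformal locus with a slice of the symmetry group of $\xi_{\gamma,1}$ in the Scherk case should then be a mechanical consequence of Lemma \ref{scherk.lem}.
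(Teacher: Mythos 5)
Your Scherk-type argument is correct in outline and matches the paper's (Lemma~\ref{scherk.lem}), but the obstacle you flag as ``(i)'' is not an obstacle: the fact that $\lambda_1(\xi_{\gamma,1})=2$ is a theorem of Choe--Soret \cite{ChoeSoret}, so the comparison $\Lambda_1(M,\Gamma)\geq\bar\lambda_1(\xi_{\gamma,1})=2\Area(\xi_{\gamma,1})$ is immediate once one checks that $\Gamma$ is conjugate into $\Isom(\xi_{\gamma,1})$; there is no need to attack Yau's conjecture.

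The generic case is where the real gap lies. You assert that for every generic pair the handles ``may be placed $\Gamma$-equivariantly along a genuinely two-dimensional array of points of $\Sph^2$ with mutual separation $\gtrsim\gamma^{-1/2}$,'' and your entire eigenvalue estimate (in particular the claim that the odd modes have eigenvalue $\sim\gamma/|\log r|$) depends on this. But several generic symmetry types \emph{forbid} a two-dimensional spread. For example, a $\mathbb{Z}_2$-symmetric domain of type $f+e_1\rho_1$ with $f=1$ and $e_1=\gamma-1$ is generic, yet $\gamma-1$ of the excised disks are forced onto the fixed circle of $\rho_1$ and only two may lie off it; their mutual separation is $\sim 1/\gamma$, not $\gamma^{-1/2}$. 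The same one-dimensional rigidity occurs for $D_k$-symmetric domains of type $f+e_1\rho_1+e_2\rho_2+v_{12}\rho_1\rho_2$ with $f+e_1+e_2$ small and for various $\mathbb{Z}_2\times D_k$ types. This is precisely why the theorem asserts only the rate $e^{-c\sqrt{\gamma}}$ rather than $e^{-c\gamma}$, and the paper handles these cases with hemisphere/annular Dirichlet estimates (Lemmas~\ref{Leqpol}, \ref{Lpollat}, \ref{Lseg}, Propositions~\ref{eq.poles}, \ref{offeq1pole}, \ref{Pseg}) in which the small radius $r$ can only be taken $e^{-c\sqrt{\gamma}}$ rather than $e^{-c\gamma}$ because the $\log r$-gain must beat a $1/\sqrt{\gamma}$-scale neck. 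Your proposal, as written, silently excludes exactly these configurations; it would need the supplementary one-dimensional estimates to cover the full statement. Finally, note that the paper first reduces to the quotient domain $\Omega\subset\Sph^2$ via Lemma~\ref{double.lem} and bounds $\min\{\lambda_1^D,\lambda_1^N\}$ there rather than working on the closed double $M$; your even/odd mode description is the doubled picture of the same dichotomy, so that part is only a presentational difference.
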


Since the Lawson surfaces have first Laplace eigenvalue equal to $2$ \cite{ChoeSoret}, for those pairs $(M,\Gamma)$ where $M$ has genus $\gamma$ and $\Gamma$ is conjugate to a subgroup of $\Isom(\xi_{\gamma,1})$, the lower bound 
$$\Lambda_1(M,\Gamma)\geq \bar{\lambda}_1(\xi_{\gamma,1})=2\Area(\xi_{\gamma,1})$$
follows immediately from the definition of $\Lambda_1(M,\Gamma)$; in particular, this provides the desired lower bound for all pairs $(M,\Gamma)$ of Scherk type. For generic pairs $(M,\Gamma)$, we prove by direct construction of test metrics that $\Lambda_1(M,\Gamma)\geq 16\pi-e^{-c\sqrt{\gamma}}$, and in fact most of these families have $\Lambda_1(M,\Gamma)$ converging to $16\pi$ at the slightly faster rate $16\pi - \Lambda_1(M,\Gamma)\leq e^{-c\gamma}$.

On the other hand, using stability methods similar in spirit to those of \cite{KNPS} and \cite{KSDuke}, we establish matching upper bounds, showing that these lower bounds on $\Lambda_1(M,\Gamma)$ are qualitatively sharp.

\begin{theorem}\label{s3.ubd}
There exists $C_1>0$ such that for every basic reflection surface $(M,\Gamma)$,
$$\frac{1}{2}\Lambda_1(M,\Gamma)\leq 8\pi-e^{-C_1|\chi(M)|}.$$
Moreover, there is $C_2>0$ such that if $(M,\Gamma)$ is of Scherk type, then
$$\frac{1}{2}\Lambda_1(M,\Gamma)\leq 8\pi-\frac{C_2}{|\chi(M)|}.$$
\end{theorem}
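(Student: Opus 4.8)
\emph{Proof proposal.} Since $\Lambda_1(M,\Gamma)=\sup_{h\in\Met_\Gamma(M)}\bar\lambda_1(M,h)$, it suffices to produce, for every $\Gamma$-invariant metric $h$, admissible test functions controlling $\bar\lambda_1(M,h)$. Write $\Omega:=M/\langle\tau\rangle$; this is a genus-zero surface with $N:=\mathrm{genus}(M)+1$ boundary circles (equivalently $N\asymp|\chi(M)|$, up to adjusting additive constants, with the cases of small genus handled trivially by Hersch). Since $\tau$ is anti-conformal, $(\Omega,[h])$ is conformally a circle domain $\Sph^2\setminus\bigsqcup_{j=1}^N D_j$ and $M$ is its double. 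Reflecting a first nonzero Neumann eigenfunction of $(\Omega,h)$ $\tau$-evenly, and a first Dirichlet eigenfunction $\tau$-oddly, produces test functions on $M$ showing
\[
\bar\lambda_1(M,h)\ \le\ 2\min\{\bar\lambda_1^N(\Omega,h),\ \bar\lambda_1^D(\Omega,h)\},
\]
with $\bar\lambda^{N/D}_1$ the area-normalized first Neumann/Dirichlet eigenvalues. The plan is to bound the two quantities on the right by two complementary families of test functions and then optimize the resulting trade-off.

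For the Neumann side we run Hersch's center-of-mass balancing on the conformal inclusion $\iota\colon\Omega\hookrightarrow\Sph^2$: after composing with a M\"obius transformation $\phi$ normalizing the push-forward measure $\iota_*(dA_h)$, the three coordinate functions $(\phi\circ\iota)_a$ are mean-zero Neumann test functions, so by conformal invariance of the Dirichlet energy
\[
\bar\lambda_1^N(\Omega,h)\ \le\ \int_\Omega|d(\phi\circ\iota)|_h^2\,dA_h\ =\ 2\,\Area_{\Sph^2}\!\big(\phi(\Omega)\big)\ =\ 8\pi-2S,\qquad S:=\sum_{j=1}^N\Area_{\Sph^2}\!\big(\phi(D_j)\big)>0.
\]
(Equivalently, one applies this to the balanced $\tau$-even folding map $\phi\circ\pi\colon M\to\Sph^2$ directly, obtaining $\bar\lambda_1(M,h)\le 16\pi-4S$.) Thus if $S$ is bounded below we are done; the difficulty is that after balancing the $N$ round disks $\phi(D_j)$ may all be small.

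In that regime $M$ is a thin doubling of $\Sph^2$ with $N$ thin necks, and we estimate $\bar\lambda_1^D(\Omega,h)$ by a capacity-type test function: take $u=0$ on $\bigsqcup_j D_j$ and $u\approx 1$ away from their union, modeled on the capacity potential of $\bigsqcup_j D_j$ in $\Sph^2$, but with the transition collars chosen — by a co-area/decomposition argument in the spirit of Cheeger-type and capacity decompositions — so as to have small $h$-area, so that $\int_\Omega u^2\,dA_h\gtrsim\area(\Omega,h)$ while $\int_\Omega|\nabla u|_h^2$ stays comparable to the conformal capacity. Writing $r_j$ for the conformal radii of the $\phi(D_j)$ one gets $\bar\lambda_1^D(\Omega,h)\lesssim\sum_j\log(C/r_j)^{-1}$, while $S\asymp\sum_j r_j^2$. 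Combining with the Neumann bound,
\[
8\pi-\tfrac12\bar\lambda_1(M,h)\ \ge\ \max\Big\{\,2S,\ \ 8\pi-C\!\sum_{j}\tfrac{1}{\log(C/r_j)}\,\Big\},
\]
and since, for fixed number of terms $N$ and fixed $\sum_j r_j^2=S$, the quantity $\sum_j\log(C/r_j)^{-1}$ is maximized when the $r_j$ are equal, a direct optimization of the right-hand side over all configurations gives it $\ge e^{-C_1N}\ge e^{-C_1|\chi(M)|}$ (after adjusting $C_1$). This proves the first inequality.

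The Scherk-type improvement requires upgrading this exponential trade-off to a polynomial one, which is the delicate point and where stability-type analysis in the spirit of \cite{KNPS,KSDuke} enters. One exploits the rigidity of the Scherk conformal type: the disks $D_j$ may be taken with centers on a fixed great circle in a $G$-symmetric configuration, and (equivalently, by the Remark after Definition \ref{scherk.def}) there is a second reflection $\tau'$ with $M/\langle\tau'\rangle$ of genus zero, supplying a second pair of bounds. Because the $D_j$ are confined to a curve the relevant capacity — and hence $\bar\lambda_1^D$ — is controlled far more strongly (a union of disks of total area $S$ on a great circle has capacity with bounded numerator rather than one growing in $N$), while the $G$-action confines the balancing M\"obius transformation to a small family; together these pin the conformal moduli of the necks up to polynomial factors, promoting the estimate above to $8\pi-\tfrac12\bar\lambda_1(M,h)\ge C_2/N\asymp C_2/|\chi(M)|$. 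The main obstacle, then, is precisely obtaining this sharp polynomial rate: the exponential bound follows from soft test-function arguments, but the Scherk case demands quantitative control of the sizes, spacing, and neck-moduli of near-maximizing configurations; a secondary technical point is the metric-dependence of the $L^2$-denominator in the Dirichlet bound, handled by the decomposition argument above.
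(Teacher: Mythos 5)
Your high-level architecture for the first inequality — pass to the genus-zero quotient $\Omega$, bound the Neumann eigenvalue by Hersch balancing, bound the Dirichlet eigenvalue by a log-cutoff/capacity test function, then optimize the trade-off between the two — is indeed the same skeleton the paper uses (the Neumann/area estimate is Lemma \ref{stab.ineqs}, the log-cutoff appears in the proof of Corollary \ref{cor:Lap_lbd}, and the final estimate also exploits that $\lambda_1^D=\lambda_1^N$ at the maximizer). However, there is a genuine gap at what you call a ``secondary technical point'': controlling the $L^2(dA_h)$-mass of the Dirichlet test function. This is in fact the crux. Your proposed co-area/decomposition argument does not work: the Dirichlet energy of the cutoff is conformally invariant, so it is computed in $g_0$, but the $h$-area of the region where $u<1$ is \emph{not} controlled by the conformal modulus of the collar. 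Nothing prevents the maximizing $h$ from concentrating almost all of its area in the thin conformal annuli just outside the excised disks, where any admissible $u$ must vanish or be small; a pigeonhole choice of sub-annulus only controls the collar itself, not the inner region $\{u=0\}\cap\Omega$, and even for the collar it loses a factor that degrades the exponent from $e^{-cN}$ to $e^{-cN^2}$. The paper resolves this with the quantitative stability estimate \eqref{meas.close}: taking $\psi=\phi^2$ there shows $\lambda_1^N\int_\Omega\phi^2\,dv_g\ge 8\pi - C k\sqrt{\delta}$, i.e.\ when $\bar\mu$ is close to $8\pi$ the measure $dv_g$ (weighted by $\lambda_1^N$) is weak-$W^{1,2}$ close to $2\,dv_{g_0}$ and therefore \emph{cannot} concentrate near the disk boundaries. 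This is the essential new input, and it is exactly the ``stability methods similar in spirit to \cite{KNPS,KSDuke}'' advertised in the introduction, not a routine decomposition.

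For the Scherk-type polynomial bound your route diverges more significantly from the paper's, and as stated it would not deliver the rate $C_2/|\chi(M)|$. The paper does not attempt a sharper capacity estimate for disks on a great circle; instead it proves Theorem \ref{var.close.thm} (using the same stability machinery), showing that a genus-zero free boundary minimal surface in $\Sph^3_+$ embedded by first eigenfunctions satisfies $\int_N\dist_{\partial\Sph^3_+}\le C\sqrt{1+|\chi(N)|}\sqrt{4\pi-|N|}$. For a Scherk-type pair one first upgrades, via Montiel--Ros, the second anti-conformal involution to an isometry of the extremal immersion, so the minimal surface is a basic reflection surface with respect to \emph{two orthogonal} great spheres $S_1,S_2$, and Theorem \ref{var.close.thm} applies to both. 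The polynomial bound then comes from the soft compactness fact that $\int_M(\dist_{S_1}+\dist_{S_2})\ge c_0>0$ for any stationary integral $2$-varifold of bounded mass, since otherwise one would obtain a stationary varifold supported on the circle $S_1\cap S_2$. Your proposal (sharper capacity because the disks lie on a circle) has the wrong direction — a smaller capacity only helps your Dirichlet \emph{upper} bound, which would \emph{weaken} the gap estimate rather than sharpen it — and the claim that the $G$-action ``confines the balancing M\"obius transformation to a small family'' is not developed into anything quantitative. To obtain a rate of $1/|\chi|$ rather than $e^{-c\sqrt{|\chi|}}$ (which, per Theorem \ref{s3.lbd}, generic basic reflection surfaces actually achieve) one really needs a geometric rigidity input of the form of Theorem \ref{var.close.thm}, not a refinement of the test-function trade-off.
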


\begin{remark}
The universal upper bound $\frac{1}{2}\Lambda_1(M,\Gamma)\leq 8\pi-e^{-C_1|\chi(M)|}$ among all basic reflection surfaces is equivalent to the following statements, where $C_1$ and $C_2$ denote absolute constants.
\begin{itemize}
\item For any metric $g$ on the surface $\Omega$ with genus zero, the minimum $\mu_1:=\min\{\lambda_1^N(\Omega,g),\lambda_1^D(\Omega,g)\}$ of the Dirichlet and Neumann eigenvalues satisfies
$$\mu_1(\Omega,g)\cdot \Area(\Omega,g)\leq 8\pi-e^{-C_1 |\chi(\Omega)|}.$$
\item Every genus zero free boundary minimal surface $\Sigma\subset \mathbb{S}^3_+$ embedded by first eigenfunctions has area 
$$|\Sigma|\leq 4\pi-e^{-C_2|\chi(\Sigma)|}.$$
\end{itemize}
These estimates may be compared with those of \cite[Theorem 1.8]{KSDuke} for the Steklov optimization problem on genus zero surfaces. Per a well-known conjecture of Yau \cite[Problem 100]{Yau:Problems} the ``embedded by first eigenfunctions" condition in the latter statement can likely be weakened to embeddedness; it could be interesting to find an alternate proof of the upper bound in this case that bypasses the spectral estimates used in this paper.
\end{remark}

The preceding upper and lower area estimates may be compared with area expansions \cite[Theorem A(v)]{LDG} for $\Sph^2$-doublings constructed by PDE gluing methods \cite{Kapouleas, KapMcG, LDG}.  For example, using the formula \cite[Equation 6.23]{Kapouleas} with this area expansion shows the ``equator-poles" examples $M^{\mathrm{eq-pol}}_\gamma$ with genus $\gamma$ from \cite[Theorem 7.3]{Kapouleas} have area satisfying
\begin{align*}
8\pi - C \sqrt{\gamma} e^{-\sqrt{2\gamma}} < |M^{\mathrm{eq-pol}}_\gamma| < 8\pi - c \sqrt{\gamma}e^{ -\sqrt{2\gamma}}
\end{align*}
for appropriate constants $0< c< C$.  It follows from the formulas in \cite{Kapouleas, KapMcG, LDG} that all of the other families of $\Sph^2$-doublings constructed in those articles have area tending to $8\pi$ in the genus $\gamma$ at rates bounded between $8\pi - e^{-c \gamma}$ and $8\pi - e^{-C \gamma}$ for appropriate constants $c$.

For pairs $(M,\Gamma)$ of Scherk type, combining Theorems \ref{s3.lbd} and \ref{s3.ubd} gives
$$8\pi-\frac{8\pi \log 2}{|\chi(M)|}+O(1/|\chi(M)|^2)\leq \frac{1}{2}\Lambda_1(M,\Gamma)\leq 8\pi-\frac{C_2}{|\chi(M)|},$$
and we strongly suspect that $\Lambda_1(M,\Gamma)=2\Area(\xi_{\gamma,1})$ for all pairs $(M,\Gamma)$ of this type with genus $\gamma$. For $\Gamma$ conjugate to the \emph{full} isometry group of $\xi_{\gamma,1}$, this follows from a uniqueness result of Kapouleas--Wiygul \cite{KWsym}, and the desired equality would follow if the symmetry hypotheses in their result could be weakened to invariance under a certain $\mathbb{Z}_2\times \mathbb{Z}_2$ subgroup of $\xi_{\gamma,1}$. In spectral terms, this is equivalent to the following conjecture.

\begin{conjecture}
Let $\Omega_k$ be a $2k$-gon, and write $\partial\Omega_k:=\Gamma_1\cup \Gamma_2$, where each $\Gamma_i$ consists of $k$ disjoint segments. Given a metric $g$ on $\Omega_k$, let $\lambda_1^N(\Omega_k,g)$ denote the first nonzero Neumann eigenvalue of $\Delta_g$, and set 
$$\lambda_1^{\mathrm{mix}}(\Omega_k,g):=\inf\left\{\left.\frac{\int_{\Omega_k}|d\phi|_g^2dv_g}{\int_{\Omega_k}\phi^2dv_g}\right| \phi|_{\Gamma_i}\equiv 0\text{ for either }i=1\text{ or }2\right\}.$$
 Then for any metric $g$ on $\Omega_k$, 
$$\Area(\Omega_k,g)\cdot \min\{\lambda_1^N(\Omega_k,g),\lambda_1^{\mathrm{mix}}(\Omega_k,g)\}\leq \frac{1}{2}\Area(\xi_{k,1}),$$
with equality when $(\Omega_k,g)$ is homothetic to a quarter of $\xi_{k,1}$.
\end{conjecture}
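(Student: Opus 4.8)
\medskip
\noindent\emph{Sketch of a possible approach.}
The natural plan is to treat the left‑hand side as an eigenvalue optimization problem on $\Omega_k$ in its own right and to show that its supremum over all metrics $g$ equals $\tfrac12\Area(\xi_{k,1})$, attained at a metric homothetic to the one induced by a quarter of $\xi_{k,1}$; the quotient picture makes this essentially equivalent to identifying a $\mathbb{Z}_2\times\mathbb{Z}_2$‑equivariant extremal minimal surface in $\mathbb{S}^3$ of genus $k$ as $\xi_{k,1}$. Identify $\Omega_k$ with the quotient $M/\Gamma$ of the unique Scherk‑type basic reflection surface $(M,\Gamma)$ of genus $k$ with $\Gamma=\langle\tau_1\rangle\times\langle\tau_2\rangle\cong\mathbb{Z}_2\times\mathbb{Z}_2$, so that $\Gamma_i$ is the image of the mirror $M^{\tau_i}$ and a metric $g$ on $\Omega_k$ is the same as a $\Gamma$‑invariant metric on $M$; fix a representation $\rho\colon\Gamma\to O(4)$ with $\rho(\tau_1),\rho(\tau_2)$ the reflections in the orthogonal hyperplanes $\{x_4=0\}$, $\{x_3=0\}$. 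The $\Gamma$‑isotypic decomposition of $L^2(M)$ splits $\mathrm{spec}(M,g)$ into four pieces whose first positive eigenvalues are $\lambda_1^N(\Omega_k,g)$ (trivial isotype; realized among ambient coordinates by $x_1,x_2$), $\lambda_1^{\mathrm{mix},1}(\Omega_k,g)$ (Dirichlet on $\Gamma_1$; realized by $x_4$), $\lambda_1^{\mathrm{mix},2}(\Omega_k,g)$ (Dirichlet on $\Gamma_2$; realized by $x_3$), and $\lambda_1^{DD}(\Omega_k,g)$ (Dirichlet on $\Gamma_1\cup\Gamma_2$; realized by no ambient coordinate), with $\lambda_1^{\mathrm{mix}}=\min\{\lambda_1^{\mathrm{mix},1},\lambda_1^{\mathrm{mix},2}\}$. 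The equality assertion then follows at once: when $g$ comes from such a $\mathbb{Z}_2\times\mathbb{Z}_2\leq\Isom(\xi_{k,1})$, since $\lambda_1(\xi_{k,1})=2$ by Choe--Soret \cite{ChoeSoret} and every nonconstant eigenfunction of $\xi_{k,1}$ has eigenvalue $\geq2$, each of $\lambda_1^N,\lambda_1^{\mathrm{mix},1},\lambda_1^{\mathrm{mix},2}$ is $\leq2$ (witnessed by a coordinate function of the right isotype) and $\geq2$, hence equals $2$; as $\Area(\Omega_k,g)=\tfrac14\Area(\xi_{k,1})$, the product equals $\tfrac12\Area(\xi_{k,1})$.

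For the inequality, the plan is to run the existence--regularity scheme of \cite{KKMS} for the $\langle\tau\rangle\times G$ case, here with $\tau=\tau_1$ and $G=\langle\tau_2\rangle$. First I would show that the supremum of $\Area(\Omega_k,g)\cdot\min\{\lambda_1^N,\lambda_1^{\mathrm{mix}}\}$ is attained by a metric $g_*$, smooth away from the corners of $\Omega_k$ (where conical singularities are to be expected), and that the reflection‑group structure forces, at the maximizer, both the equality $\lambda_1^N(g_*)=\lambda_1^{\mathrm{mix},1}(g_*)=\lambda_1^{\mathrm{mix},2}(g_*)=:\lambda_*$ and multiplicities $2,1,1$ for these; one should also check that the suppressed eigenvalue $\lambda_1^{DD}$ never strictly governs the supremum -- a thin part of $\Omega_k$ on which a $\lambda_1^{DD}$‑mode would concentrate also carries a small $\lambda_1^{\mathrm{mix}}$‑mode -- so that dropping it leaves the extremal value unchanged. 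Then the standard extremality argument produces first eigenfunctions $\phi_1,\phi_2$ (Neumann), $\phi_3$ (Dirichlet on $\Gamma_1$), $\phi_4$ (Dirichlet on $\Gamma_2$), all at eigenvalue $\lambda_*$ with $\sum_i\phi_i^2\equiv\mathrm{const}$, and moreover $g_*$ homothetic to $\Phi^*g_{\mathbb{S}^3}$, where $\Phi=(\phi_1,\dots,\phi_4)\colon(\Omega_k,g_*)\to\mathbb{S}^3$ is a branched free boundary minimal immersion meeting the orthogonal great spheres $\{x_4=0\}$ and $\{x_3=0\}$ orthogonally along $\Gamma_1$ and $\Gamma_2$. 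Normalizing so that $g_*=\Phi^*g_{\mathbb{S}^3}$ (equivalently $\lambda_*=2$, since coordinate functions of a minimal surface in $\mathbb{S}^3$ are eigenfunctions of eigenvalue $2$), one has $\Area(\Omega_k,g_*)\cdot\lambda_*=2\Area(\Phi(\Omega_k))$. Schwarz reflection across the two mirrors then assembles $\Phi$ into a closed embedded minimal surface $\Sigma\subset\mathbb{S}^3$, invariant under $\mathbb{Z}_2\times\mathbb{Z}_2$ with $\Sigma/(\mathbb{Z}_2\times\mathbb{Z}_2)\cong\Omega_k$, so that $\Area(\Sigma)=4\Area(\Phi(\Omega_k))$, $\genus(\Sigma)=k$ by Riemann--Hurwitz, $\Sigma$ is embedded by first eigenfunctions, and $\Sigma$ is a Scherk‑type $\mathbb{S}^2$‑doubling in the sense of Definition \ref{scherk.def}; here one must also exclude branch points of $\Phi$ (its components being first eigenfunctions bounds the index and nullity) and check that the reflected surface is genuinely embedded (orthogonality at the mirrors keeps it so). At this point the supremum in question equals $2\Area(\Phi(\Omega_k))=\tfrac12\Area(\Sigma)$, and the conjecture reduces to the identification $\Sigma=\xi_{k,1}$.

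That identification is the main obstacle. For $\mathbb{Z}_2\times\mathbb{Z}_2$ replaced by the full isometry group of $\xi_{k,1}$ it is precisely the Kapouleas--Wiygul uniqueness theorem \cite{KWsym}, but their argument leans on the dihedral symmetry cyclically permuting the catenoidal bridges of $\xi_{k,1}$ and does not obviously survive the weakening. Two routes seem plausible. The first is to \emph{promote the symmetry}: show that a $\mathbb{Z}_2\times\mathbb{Z}_2$‑invariant extremal metric on the fixed‑topology domain $\Omega_k$ is automatically invariant under a cyclic group acting by permuting the bridges -- plausibly via a \L{}ojasiewicz--Simon‑type uniqueness and continuation argument in the parameter $k$, exploiting that the bridges are small and weakly interacting for $k$ large -- after which \cite{KWsym} applies. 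The second is to establish the required rigidity directly for $\mathbb{Z}_2\times\mathbb{Z}_2$‑symmetric minimal $\mathbb{S}^2$‑doublings of genus $k$ embedded by first eigenfunctions, for instance by matching $\Sigma$ to Lawson's geodesic‑polygon (Plateau) construction of $\xi_{k,1}$ \cite{Lawson} and the symmetries it manifestly carries. In either case, once $\Sigma=\xi_{k,1}$ is established, the supremum equals $\tfrac12\Area(\Sigma)=\tfrac12\Area(\xi_{k,1})$, so $\Area(\Omega_k,g)\cdot\min\{\lambda_1^N,\lambda_1^{\mathrm{mix}}\}\leq\tfrac12\Area(\xi_{k,1})$ for every $g$, which together with the equality computation above is the conjecture.
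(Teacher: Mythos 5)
This statement is labeled a \emph{Conjecture} in the paper and is not proven there: the surrounding text makes clear that the authors regard it as equivalent to an open weakening of the Kapouleas--Wiygul uniqueness theorem \cite{KWsym} from the full isometry group of $\xi_{k,1}$ down to a $\mathbb{Z}_2\times\mathbb{Z}_2$ subgroup. Your sketch reconstructs this reformulation correctly and in essentially the same spirit as the paper's discussion: you identify $\Omega_k$ with $M/\Gamma$ for the unique Scherk-type $\mathbb{Z}_2\times\mathbb{Z}_2$ basic reflection pair of genus $k$, translate the isotypic decomposition of $\mathrm{spec}(M,g)$ into the four boundary-value problems on $\Omega_k$, obtain the equality case from Choe--Soret, and run the \cite{KKMS} existence scheme to produce a $\mathbb{Z}_2\times\mathbb{Z}_2$-equivariant minimal $\Sph^2$-doubling $\Sigma$ embedded by first eigenfunctions. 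Crucially, you also correctly locate the real obstruction in the step $\Sigma=\xi_{k,1}$. But the two routes you propose to close that gap -- promoting $\mathbb{Z}_2\times\mathbb{Z}_2$ invariance to dihedral symmetry via a \L{}ojasiewicz--Simon continuation in $k$, or matching $\Sigma$ to Lawson's Plateau construction -- are themselves announced as plausible but are not carried out, and each would require a substantial new uniqueness argument that neither this paper nor \cite{KWsym} provides. So what you have written is a correct translation of the conjecture into the minimal-surface uniqueness question that it encodes, together with candidate strategies, but not a proof; given the paper offers no proof either, that is the correct outcome to report.

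One small simplification worth noting: your hedge about the fully-Dirichlet isotype $\lambda_1^{DD}$ is unnecessary. For every metric, domain monotonicity of the Dirichlet problem gives $\lambda_1^{DD}\geq\lambda_1^{\mathrm{mix},i}$ for each $i$, since the admissible test functions for the $DD$ problem (vanishing on both $\Gamma_1$ and $\Gamma_2$) form a subspace of those for the $\mathrm{mix},i$ problem. Hence $\min\{\lambda_1^N,\lambda_1^{\mathrm{mix}}\}$ already coincides with the four-way minimum, with no need for a thin-part concentration argument; this is also what makes the paper's stated conjecture literally equivalent to the assertion $\mathcal{M}_1(\Omega,\mathbb{Z}_2)=\Area(\xi_{k,1})$ via Lemma \ref{double.lem}.
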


Another interesting open problem about the space of minimal surfaces in $\mathbb{S}^3$ with area $<8\pi$ is that of identifying the \emph{boundary} of this space, with respect to the varifold topology. Given a sequence $M_k\subset \mathbb{S}^3$ of minimal surfaces with $\Area(M_k) \nearrow 8\pi$ and genus $\to\infty$, it is well-known that subsequences must converge as varifolds to a sum $S_1+S_2$ of two (multiplicity-one) great two-spheres $S_1,S_2$. A priori, the angle between $S_1$ and $S_2$ could be arbitrary, but in practice only two cases are known to occur: $S_1=S_2$, corresponding to doublings of an equator, and $S_1\perp S_2$, which arises as the large genus limit of the Lawson surfaces $\xi_{\gamma,1}$. In \cite[Section 4]{Kap:survey}, Kapouleas presents evidence that these are indeed the \emph{only} possible varifold limits,  suggesting the following conjecture. 

\begin{conjecture}[cf. {\cite[Question 4.3]{Kap:survey}}]
\label{lim.conj} The boundary of the space of minimal surfaces in $\mathbb{S}^3$ with area $<8\pi$ consists only of multiplicity-two great spheres and unions of orthogonal great spheres. Moreover, the Lawson surfaces $\xi_{\gamma,1}$ are the only family desingularizing a pair of orthogonal great spheres.
\end{conjecture}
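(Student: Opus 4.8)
By the structure theory recalled above, a boundary point of the space of minimal surfaces in $\Sph^3$ with area $<8\pi$ is, up to ambient isometry, either a multiplicity-two equatorial sphere $2S$ or a union $S_1+S_2$ of two distinct equatorial spheres meeting along a great circle $C=S_1\cap S_2$ at a dihedral angle $\theta\in(0,\pi/2]$; the cases $2S$ and $\theta=\pi/2$ do occur, being realized respectively by equatorial doublings and by the Lawson surfaces $\xi_{\gamma,1}$. Thus the first assertion is equivalent to ruling out sequences $M_k$ of minimal surfaces with $\Area(M_k)<8\pi$ converging as varifolds to $S_1+S_2$ with $\theta\in(0,\pi/2)$. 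Any such sequence must have $\genus(M_k)\to\infty$, and since $M_k\setminus N_\delta(C)$ converges smoothly with multiplicity one on each sheet, all the topology of $M_k$ concentrates in the tube $N_\delta(C)$ as $n_k\to\infty$ handles strung along $C$. The first substantive step would be a neck analysis: rescaling $M_k$ about a point of $C$ at the scale inverse to the local handle density, one expects convergence to a complete embedded singly-periodic minimal surface in $\RR^3$ asymptotic to two planes meeting at angle $\theta$ along the period axis---a Scherk-type saddle tower $\mathcal{S}_\theta$, unique for each $\theta$ up to normalization by the classification of P\'erez and Traizet. Carrying this out cleanly (uniform curvature bounds on $N_\delta(C)$, a no-clustering estimate forcing comparably spaced handles, nondegeneracy of $\mathcal{S}_\theta$) is already a significant undertaking, in the spirit of existing desingularization analyses.

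The crux is then a global balancing argument. The tower $\mathcal{S}_\theta$ is flat at leading order, while $S_1$ and $S_2$ bend away from their common tangent planes at second order in the distance to $C$, so fitting $\mathcal{S}_\theta$ into $\Sph^3$ to model $M_k$ near $C$ produces an error term which, because $M_k$ is minimal, must be absorbed by a Jacobi field of the limit configuration $S_1\cup S_2$. A totally geodesic $\Sph^2\subset\Sph^3$ is a degenerate critical point of area---its Jacobi operator is $\Delta+2$, with three-dimensional kernel spanned by restrictions of linear functions of $\RR^4$---and among the Jacobi fields of $S_1\cup S_2$ is the one infinitesimally rotating $S_1$ and $S_2$ toward each other, i.e. varying $\theta$. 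The plan is to compute the pairing of the Scherk-type error with this Jacobi field---an integral over $C$ of a flux quantity attached to $\mathcal{S}_\theta$---and to show that it is nonzero unless $\theta=\pi/2$, yielding the desired contradiction. This is the main obstacle: at present this balancing is only heuristic, essentially the one indicated by Kapouleas in \cite[Section~4]{Kap:survey}, and making it rigorous---indeed, deciding whether the obstruction really does vanish only at $\theta=\pi/2$, so that the conjecture holds---appears to require a genuinely new ingredient, for instance a classification of admissible degenerate data in the loop-group (DPW) framework underlying the sharp $\xi_{\gamma,1}$ asymptotics of \cite{HHT,CHHT}.

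Granting $\theta=\pi/2$, the local model becomes the $\pi/2$ saddle tower---precisely the fundamental piece of Lawson's construction of $\xi_{\gamma,1}$---and to conclude that $M_k=\xi_{\genus(M_k),1}$ for $k$ large one would combine a handle count (expected to give $n_k=\genus(M_k)+1$) with a rigidity theorem for minimal surfaces in $\Sph^3$ lying sufficiently close to the Scherk desingularization of two orthogonal equatorial spheres. The prototype is the Kapouleas--Wiygul uniqueness theorem \cite{KWsym} for $\xi_{\gamma,1}$ among surfaces invariant under its full isometry group; what is needed here is a version that dispenses with the a priori symmetry hypothesis, deriving the symmetry instead from proximity to the highly symmetric model. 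This too is presently open, and is perhaps most naturally approached through the same integrable-systems description.

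In summary, the reduction and neck analysis appear to be within reach of current technology, whereas the global balancing obstruction forcing $\theta=\pi/2$ and the symmetry-free rigidity identifying $\xi_{\gamma,1}$ are the essential difficulties; absent a new idea for the former, Conjecture~\ref{lim.conj} should be regarded as open.
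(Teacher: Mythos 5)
The statement you are addressing is Conjecture \ref{lim.conj}: the paper does not prove it, and neither do you. Your text is a research-program sketch, and you say so yourself, correctly flagging the two essential missing ingredients (a rigorous balancing/obstruction argument forcing the dihedral angle $\theta=\pi/2$, and a symmetry-free rigidity theorem identifying $\xi_{\gamma,1}$ near the Scherk desingularization of orthogonal equators). So there is nothing to compare at the level of proofs; the honest verdict is that your proposal cannot be accepted as a proof of the statement, but it is not wrong either — it is essentially the heuristic program of Kapouleas \cite[Section 4]{Kap:survey} that the conjecture itself is modeled on, and your assessment of where the genuine difficulties lie is accurate.

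It is worth noting how the paper actually engages with this conjecture, since it is quite different in spirit from your gluing-analysis outline. The authors do not attempt a neck analysis or a balancing computation; instead they prove a quantitative spectral stability estimate (Theorem \ref{var.close.thm}, via the arguments of Section 6) showing that any genus zero free boundary minimal surface in $\Sph^3_+$ embedded by first eigenfunctions with $4\pi-|N|=o\bigl(1/(1+|\chi(N)|)\bigr)$ must converge to the boundary equator, and combine it with the lower bounds of Theorem \ref{s3.lbd}. This confirms the expected dichotomy only within the class of $\bar\lambda_1$-extremal basic reflection surfaces: the generic families converge to multiplicity-two equators, while the Scherk-type families are the only ones that can approach a pair of orthogonal spheres. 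That route buys unconditional, quantitative results for the surfaces constructed in \cite{KKMS}, but it says nothing about arbitrary low-area minimal surfaces, whereas your program — if the obstruction and rigidity steps could be carried out — would address the full conjecture. As it stands, Conjecture \ref{lim.conj} remains open, and your write-up should be presented as a discussion of strategy rather than as a proof environment.
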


In other words, apart from the Lawson surfaces $\xi_{\gamma,1}$, it is expected that all families of minimal surfaces in $\mathbb{S}^3$ with area $<8\pi$ resemble doublings of the equator in the large topology limit. In the class of $\bar{\lambda}_1$-extremal basic reflection surfaces, we show that Conjecture \ref{lim.conj} is closely related to Kusner's conjecture that the Lawson surfaces $\xi_{\gamma,1}$ have least area for their genus: namely, any sequence $M_{\gamma}$ of genus $\gamma$ minimal surfaces of this type with $8\pi-|M_{\gamma}|\ll8\pi-|\xi_{\gamma,1}|$ must converge to a multiplicity-two sphere as $\gamma\to\infty$. More precisely, we show the following.

\begin{theorem}\label{var.close.thm}
Let $N$ be a genus zero free boundary minimal surface, embedded in the hemisphere $\Sph^3_+$ by first eigenfunctions. Then
$$\int_N \dist_{\partial \Sph^3_+} \leq C\sqrt{1+|\chi(N)|}\sqrt{4\pi-|N|}$$
for some universal constant $C$. In particular, any sequence $N_k$ with $4\pi-|N_k|=o(\frac{1}{1+|\chi(N_k)|})$ must converge to the boundary $\partial \mathbb{S}^3_+=\mathbb{S}^2\times\{0\}$ in the varifold sense as $k\to\infty$.
\end{theorem}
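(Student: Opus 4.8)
The plan is to compare the surface $N\subset\Sph^3_+$ with a piece of a great two-sphere using the fact that $N$ is embedded by first eigenfunctions, and to control the ``defect integral'' $\int_N\dist_{\partial\Sph^3_+}$ by the area deficit $4\pi-|N|$ via an integration-by-parts argument. Writing the inclusion $\Phi:N\to\Sph^3_+\subset\RR^4$ with coordinate functions $\Phi=(\phi_1,\dots,\phi_4)$, where we arrange $\partial\Sph^3_+=\{\phi_4=0\}$ and $\Sph^3_+=\{\phi_4\geq 0\}$, the free boundary minimality means each $\phi_i$ is a Steklov-type/first Laplace eigenfunction and (after normalization) $\Delta_N\phi_i=\sigma_1\phi_i$ on $N$ with the appropriate Neumann/Dirichlet splitting coming from the hemisphere geometry; in particular $\dist_{\partial\Sph^3_+}$ is comparable to $\phi_4\geq 0$, so it suffices to bound $\int_N\phi_4\,dv_N$. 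The key identity will be that $\int_N|d\phi_4|^2 = \sigma_1\int_N\phi_4^2$ together with the ``balancing'' relation $\sum_i|d\phi_i|^2 = 2$ (the energy density of the conformal minimal immersion) and $\sum_i\phi_i^2=1$ on $N$, which forces $\sum_i\int_N|d\phi_i|^2 = 2|N|$ and $\sum_i\sigma_1\int_N\phi_i^2=\sigma_1|N|$, hence $\sigma_1 = 2$ exactly when $|N|$ is as large as possible and more generally gives quantitative control on how far $\phi_4$ is from being ``flat.''

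Concretely, I would first use the eigenvalue equation and the constraint $\sum\phi_i^2\equiv 1$ to derive the area bound $|N|\leq 4\pi$ together with an exact remainder: something of the shape $4\pi - |N| \geq c\int_N|d\phi_4|^2$ or, better, $4\pi-|N|\geq c\,\sigma_1\int_N\phi_4^2$, using that the component $\phi_4$ carries the ``non-equatorial'' part of the immersion. The elementary inequality $\phi_4 = \phi_4$ combined with $\phi_4^2\leq \phi_4$ near the boundary is not quite enough on its own, so I would instead control $\int_N\phi_4$ by $\int_N\phi_4^2$ at the cost of a factor depending on how concentrated $\phi_4$ is; this is where the topology enters. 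Writing $\Omega=N/(\text{reflection across }\partial\Sph^3_+)$ — the genus zero surface whose double is the relevant picture — one has Hersch/Yang–Yau-type bounds relating $\int\phi_4$ and $\int\phi_4^2$ with a loss controlled by $\sqrt{1+|\chi|}$, because on a genus zero surface one can balance a conformal map to $\Sph^2$ and the first eigenvalue is controlled by $\chi$ in the Steklov-doubled setting (as in the comparison with \cite[Theorem 1.8]{KSDuke}). Putting these together yields $\int_N\phi_4\leq C\sqrt{1+|\chi(N)|}\left(\int_N\phi_4^2\right)^{1/2}|N|^{1/2}\cdot(\text{const})$, and then substituting $\int_N\phi_4^2\leq C(4\pi-|N|)$ and $|N|\leq 4\pi$ gives exactly the claimed bound $\int_N\dist_{\partial\Sph^3_+}\leq C\sqrt{1+|\chi(N)|}\sqrt{4\pi-|N|}$. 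The varifold convergence statement then follows immediately: if $4\pi-|N_k|=o(1/(1+|\chi(N_k)|))$ the right-hand side tends to zero, so the support of the varifold limit must lie in $\{\dist_{\partial\Sph^3_+}=0\}=\Sph^2\times\{0\}$, and since $|N_k|\to 4\pi$ the limit is the sphere with multiplicity one as a varifold in $\Sph^3_+$ (equivalently multiplicity two after doubling), with no loss of mass because the area converges to that of the equatorial hemisphere boundary configuration.

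The main obstacle I anticipate is establishing the Cauchy–Schwarz-type step with the \emph{correct} power of $|\chi(N)|$, i.e. showing that $\int_N\phi_4$ is controlled by $\int_N\phi_4^2$ with loss only $\sqrt{1+|\chi(N)|}$ rather than something worse. This requires exploiting the genus zero structure of the quotient $\Omega$ and a good choice of test functions: one should use the conformal balancing of $\Phi$ restricted to the ``equatorial'' coordinates $(\phi_1,\phi_2,\phi_3)$ — which already map $N$ essentially to $\Sph^2$ — and feed $\phi_4$-weighted versions of these into the variational characterization of $\sigma_1$, while keeping track of the $\chi$-dependence coming from the conformal volume / Li–Yau-type bound on genus zero surfaces. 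A secondary technical point is making the identification $\dist_{\partial\Sph^3_+}\asymp\phi_4$ precise and uniform (it is, since $\dist_{\partial\Sph^3_+}(x)=\arcsin(x_4)$ for $x\in\Sph^3_+$, so $\frac{2}{\pi}x_4\leq\dist_{\partial\Sph^3_+}(x)\leq\frac{\pi}{2}x_4$), which is routine. Once the $\chi$-dependent Cauchy–Schwarz step is in hand, the rest is a clean assembly of the eigenvalue identity, the area deficit estimate, and the definition of varifold convergence.
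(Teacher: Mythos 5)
Your proposed intermediate estimate $\int_N\phi_4^2\leq C(4\pi-|N|)$ is false, and this is a fatal gap rather than a technical one. Consider the half of the Lawson surface $\xi_{\gamma,1}$ lying in $\Sph^3_+$ across one of its reflection planes: as $\gamma\to\infty$ it converges as a varifold to the union of the equatorial hemisphere $\{x_4=0\}$ and a half great sphere \emph{orthogonal} to it, so $\int_N x_4^2$ converges to the positive constant $\int_{S_2\cap\{x_4\geq 0\}}x_4^2>0$, while $4\pi-|N|\sim 1/\gamma\to 0$. Moreover, if the $L^2$ bound were true, plain Cauchy--Schwarz (with no topology-dependent loss at all) would already give $\int_N x_4\leq |N|^{1/2}(\int_N x_4^2)^{1/2}\leq C\sqrt{4\pi-|N|}$, which is strictly stronger than the theorem and again contradicted by the Lawson family, where $\int_N x_4$ stays bounded below while $\sqrt{4\pi-|N|}\to 0$. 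So the route ``bound $\int x_4^2$ by the area deficit, then upgrade $L^2$ to $L^1$'' cannot work: the $\sqrt{1+|\chi|}$ factor does not come from an $L^2$-to-$L^1$ concentration loss.

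The paper's actual mechanism is different and worth internalizing. It works in the conformal parameter domain $\Omega=\Sph^2\setminus\mathcal D$, where $\mathcal D$ is a union of $k\sim 1+|\chi(N)|$ geodesic disks with total area controlled by the stability estimate $\mathrm{Area}(\mathcal D,g_0)\leq C\delta$ (Lemma~\ref{stab.ineqs}). Because $\phi:=x_4\circ F$ satisfies the \emph{Dirichlet} condition $\phi|_{\partial\Omega}=0$, one can pick a cutoff $\psi$ with $\psi\equiv 1$ on $\mathcal D$, supported in the doubled disks, and with $\|d\psi\|_{L^2(\Sph^2)}^2\leq Ck$; integrating $\mathrm{div}([1-\psi]\nabla\phi)$ then gives the identity $\int_\Omega\phi\,dv_g=\int_\Omega\psi\phi\,dv_g+\tfrac12\int_\Omega\langle d\psi,d\phi\rangle$. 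The $\sqrt{k}$ appears precisely as $\|d\psi\|_{L^2}$, and the remaining ingredient is not an $L^2$ bound on $\phi$ but a conformal-factor estimate $\|dF\|_{L^2(\mathrm{supp}\,\psi)}^2\leq C\delta$, obtained from the eigenvalue gap $\lambda_4^N>2+c$ (Lemma~\ref{l4.gap}) and the near-isometry $\||d\Psi|_{g_0}-\alpha\|_{L^2}^2\leq C\delta$. In other words, the topology enters through the Dirichlet energy of a cutoff adapted to the $k$ boundary circles, not through a Hersch/Yang--Yau-type comparison of $\int\phi$ and $\int\phi^2$.
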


As a consequence, if $(M,\Gamma)$ is a generic basic reflection surface, this result, together with the bound $8\pi-\frac{1}{2}\Lambda_1(M,\Gamma)\leq e^{-c\sqrt{|\chi(M)|}}$ of Theorem \ref{s3.lbd} gives the following corollary.

\begin{corollary}
Let $M\subset \mathbb{S}^3$ be the minimal surface realizing $\Lambda_1(M,\Gamma)$ for a generic basic reflection pair $(M,\Gamma)$. Then there exists a great sphere $\Sph^2 \subset \mathbb{S}^3$ and universal constants $C<\infty$, $c_1>0$ such that
$$\int_M\dist_{\Sph^2} \leq Ce^{-c_1\sqrt{|\chi(M)|}}.$$
In particular, if $\{M_k\}$ is a sequence of such surfaces with $|\chi(M_k)|\to\infty$, then $M_k$ converges rapidly to a great sphere with multiplicity two.
\end{corollary}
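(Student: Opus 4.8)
The plan is to deduce the corollary from Theorems \ref{var.close.thm} and \ref{s3.lbd} via the reflection--doubling picture, so the main work is already done; I mostly need to set up the correct half-surface and then chain the estimates. Write $\gamma=\mathrm{genus}(M)$, so $|\chi(M)|=2\gamma-2$. The involution $\tau\in\Gamma$ acts on $\Sph^3$ through $\rho$ as an orthogonal reflection $\rho(\tau)\in O(4)$ across a great sphere $\Sph^2$, and since $\Phi(M)$ is $\rho(\tau)$-invariant and meets $\Sph^2$ orthogonally along the curve $M_1=\Phi(M)\cap\Sph^2$, the intersection $N:=\Phi(M)\cap\Sph^3_+$ with one of the two closed hemispheres $\Sph^3_+$ bounded by $\Sph^2$ is a free boundary minimal surface in $\Sph^3_+$ with $\partial N=M_1$. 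Being a fundamental domain for $\langle\tau\rangle$, $N$ is homeomorphic to $\Phi(M)/\langle\tau\rangle$, which has genus zero by the basic reflection hypothesis; concretely, nearest-point projection to $\Sph^2$ identifies $N$ with the closed planar domain $\overline{\Omega}\subset\Sph^2$ bounded by $\pi(M_1)$, whose double is $\Phi(M)$, so $\partial N$ has $\gamma+1$ components and $|\chi(N)|=\gamma-1$. Finally, $N$ is embedded by first eigenfunctions: from $\Delta_g\Phi=\lambda_1(M,g)\Phi$ and minimality in $\Sph^3$ we have $\lambda_1(M,g)=2$, and the four coordinates of $\Phi$ split under $\tau$ into three $\tau$-even ones (tangent to $\Sph^2$), which restrict to Neumann eigenfunctions of $(N,g|_N)$ with eigenvalue $2$, and one $\tau$-odd one (normal to $\Sph^2$), which restricts to a Dirichlet eigenfunction with eigenvalue $2$; thus $\mu_1(N,g|_N):=\min\{\lambda_1^N(N),\lambda_1^D(N)\}\le 2$, while extending eigenfunctions on $N$ evenly (Neumann) or oddly (Dirichlet) across $M_1$ to mean-zero functions on $M$ of the same Rayleigh quotient gives $\mu_1(N,g|_N)\ge\lambda_1(M,g)=2$. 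Hence $\mu_1(N,g|_N)=2$, realized by the coordinate functions, and Theorem \ref{var.close.thm} applies to $N$. (Most of these facts are already recorded in \cite{KKMS}.)

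With $N$ in hand, the estimates chain as follows. By $\tau$-symmetry, $|N|=\tfrac12|\Phi(M)|=\tfrac14\Lambda_1(M,\Gamma)$, so
$$4\pi-|N|=\tfrac12\Bigl(8\pi-\tfrac12\Lambda_1(M,\Gamma)\Bigr)\le\tfrac12\,e^{-c\sqrt{\gamma}}$$
by the generic lower bound of Theorem \ref{s3.lbd}. Feeding this and $|\chi(N)|=\gamma-1$ into Theorem \ref{var.close.thm},
$$\int_N\dist_{\Sph^2}=\int_N\dist_{\partial\Sph^3_+}\le C\sqrt{\gamma}\,\Bigl(\tfrac12\,e^{-c\sqrt{\gamma}}\Bigr)^{1/2}\le C'\sqrt{\gamma}\,e^{-\frac{c}{2}\sqrt{\gamma}}.$$
Since $\tau$ is an isometry of $\Sph^3$ fixing $\Sph^2$ and $M_1$ has measure zero, $\int_{\Phi(M)}\dist_{\Sph^2}=2\int_N\dist_{\Sph^2}$; absorbing the factor $\sqrt\gamma$ into the exponential and using $\gamma\ge\tfrac12|\chi(M)|$ then gives $\int_M\dist_{\Sph^2}\le C e^{-c_1\sqrt{|\chi(M)|}}$ for universal $C<\infty$, $c_1>0$. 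For a sequence $M_k$ with $|\chi(M_k)|\to\infty$ and associated great spheres $\Sph^2_k$, Chebyshev's inequality shows the mass of $M_k$ outside any fixed tubular neighborhood of $\Sph^2_k$ tends to zero; passing to a subsequence along which $\Sph^2_k\to\Sph^2_\infty$, any varifold limit is supported on $\Sph^2_\infty$, and since $|M_k|=\tfrac12\Lambda_1(M_k,\Gamma_k)\to 8\pi$, the constancy theorem identifies this limit with $\Sph^2_\infty$ taken with multiplicity two, the rate being the exponential one above.

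The substance of the corollary sits in Theorems \ref{s3.lbd} and \ref{var.close.thm}; granting those, what remains is bookkeeping. The one point I expect to require genuine attention is the identification of the half-surface $N$ as a genus zero free boundary minimal surface embedded by first eigenfunctions — the spectral equality $\mu_1(N,g|_N)=2$ and the Euler-characteristic count $|\chi(N)|=\gamma-1$ both rely on the precise doubling structure of $M$. The remaining conversion, from the half-surface bound and $|\chi(N)|=\gamma-1$ to the exponential bound on $\int_M\dist_{\Sph^2}$ in terms of $|\chi(M)|$, is routine, as is the varifold compactness argument for the concluding statement.
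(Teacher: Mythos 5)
Your argument is correct and is exactly the chain of reasoning the paper intends: the paper states this corollary as an immediate consequence of Theorems~\ref{s3.lbd} and~\ref{var.close.thm} without spelling out the details, and your write-up supplies precisely those details — the identification of the half-surface $N=\Phi(M)\cap\Sph^3_+$ as a genus-zero free boundary minimal surface embedded by first eigenfunctions, the Euler characteristic count $|\chi(N)|=\gamma-1$, the conversion $4\pi-|N|=\tfrac12\bigl(8\pi-\tfrac12\Lambda_1(M,\Gamma)\bigr)$, and the absorption of the polynomial factor $\sqrt{\gamma}$ into the exponential. The only thing worth flagging is cosmetic: Theorem~\ref{s3.lbd} gives the bound $8\pi-\tfrac12\Lambda_1(M,\Gamma)\le e^{-c\sqrt{\gamma}}$ only for $\gamma\ge\gamma_0$, so the universal constant $C$ in the corollary must be enlarged to cover the finitely many small-genus cases; this is standard and does not affect the argument.
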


For the pairs $(M,\Gamma)$ of Scherk type, Theorem \ref{var.close.thm} also gives some nontrivial information, leading in particular to the proof of the upper bound $\frac{1}{2}\Lambda_1(M,\Gamma)\leq 8\pi-\frac{C_2}{|\chi(M)|}$ in Theorem \ref{s3.ubd}. Roughly speaking, if we had such a pair $(M,\Gamma)$ with $8\pi-\frac{1}{2}\Lambda_1(M,\Gamma)=o(1/|\chi(M)|)$, Theorem \ref{var.close.thm} would imply that the associated minimal surface simultaneously lies very close to \emph{two orthogonal} great spheres corresponding to the two reflections $\tau_1,\tau_2$, which is evidently impossible. 

\subsection{$\bar{\sigma}_1$-extremal surfaces in $\mathbb{B}^3$}\label{intro.b3}

\subsubsection{$\bar{\sigma}_1$-extremal basic reflection surfaces}

Analogous to the case of closed surfaces, if $N$ is a compact oriented surface with boundary and $\Gamma\leq \Diff(N)$ is a finite group of diffeomorphisms, we say the pair $(N,\Gamma)$ is a \emph{basic reflection surface} if there is a reflection $\tau\in \Gamma$ for which the quotient $N/\langle \tau \rangle$ has genus zero and all but one boundary component contained in the fixed-point set $N^\tau$ (see \cite[Def. 5.3]{KKMS}). For any such pair, we consider the supremum
$$\Sigma_1(N,\Gamma):=\sup\{\bar{\sigma}_1(N,g)\mid g\in \Met_{\Gamma}(N)\}$$
of the length-normalized first Steklov eigenvalue $\bar{\sigma}_1(N,g)=L(\partial N,g)\cdot \sigma_1(N,g)$ on the space $\Met_{\Gamma}(N)$ of $\Gamma$-invariant metrics.

In \cite[Theorem 9.15]{KKMS}, existence of metrics realizing $\Sigma_1(N,\Gamma)$ is proved for a large class of basic reflection pairs $(N,\Gamma)$, and by \cite[Lemma 5.37]{KKMS}, these metrics are induced by $\Gamma$-equivariant free boundary minimal embeddings $(N,\partial N)\to (\mathbb{B}^3,\mathbb{S}^2)$ of area $\frac{1}{2}\Sigma_1(N,\Gamma)<2\pi$.  Just as in the closed case, these embeddings are doublings, this time of an equatorial disk $\mathbb{D}^2$.  Though for technical reasons \cite[Theorem 9.15]{KKMS} does not establish existence of a maximizing metric for every basic reflection pair $(N,\Gamma)$, for every $\gamma\geq 0$ and $k\geq 2$, \cite[Theorem 9.15]{KKMS} supplies at least $\lfloor \frac{\gamma-2}{4}\rfloor$ distinct free boundary minimal surfaces with genus $\gamma$, $k$ boundary components, and area $<2\pi$. 

\begin{remark}
In a recent preprint \cite{PetridesNew}, Petrides has announced a significant refinement of the main analytic tool used in the existence theory of \cite{KKMS}, which together with the results of \cite{KKMS} should show that $\Sigma_1(N,\Gamma)$ is indeed realized by a free boundary minimal embedding $(N,\partial N)\subset (\mathbb{B}^3,\mathbb{S}^2)$ for \emph{any} basic reflection pair $(N,\Gamma)$.
\end{remark}

As with the $\bar{\lambda}_1$-extremal closed basic reflection surfaces in $\mathbb{S}^3$, we are able to obtain qualitatively sharp area estimates for these $\bar{\sigma}_1$-extremal free boundary minimal surfaces in the large topology limit, and relate this to their varifold limits. One interesting distinction with the analogous results in $\mathbb{S}^3$, where topology is encoded by genus alone, is that the genus and the number of boundary components  play very different roles in determining the possible range of areas for these free boundary minimal surfaces in the large topology limit.

First, we record the following universal upper and lower bounds.

\begin{theorem}\label{main.stek.bds}
For every basic reflection surface $(N,\Gamma)$ with genus $\gamma$ and $k$ boundary components, we have an estimate of the form
$$4\pi-e^{-C_1(\gamma+k)}\geq \Sigma_1(N,\Gamma)\geq \max\{4\pi-e^{-C_2k},4\pi-C_3/\gamma\}$$
for universal constants $C_1,C_2,C_3>0$. In particular, the associated free boundary minimal surface $N\subset \mathbb{B}^3$ has area
$$2\pi-e^{-C_1'(\gamma+k)}\geq |N| \geq \max\{2\pi-e^{-C_2'k},2\pi-C_2'/\gamma\}.$$
\end{theorem}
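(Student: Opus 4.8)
The plan is to establish the four inequalities separately, since they come from genuinely different mechanisms: the two upper bounds on $\Sigma_1(N,\Gamma)$ (the universal exponential one and, implicitly, the sharper story for special types) follow from spectral stability arguments, while the two lower bounds follow from explicit constructions of test metrics in $\Met_\Gamma(N)$. Throughout, recall that by \cite[Lemma 5.37]{KKMS} a metric realizing $\Sigma_1(N,\Gamma)$ is induced by a $\Gamma$-equivariant free boundary minimal embedding into $(\mathbb{B}^3,\mathbb{S}^2)$, and that dividing by the reflection $\tau$ identifies $N/\langle\tau\rangle$ with a genus-zero surface $\Omega$ carrying a mixed Steklov/Dirichlet (and Neumann, along the non-fixed boundary) eigenvalue problem; the area bound $|N|=\tfrac12\Sigma_1(N,\Gamma)$ then lets us pass freely between the eigenvalue statement and the area statement, which is why the second displayed inequality of the theorem is just a restatement with $C_i'=C_i/2$ (roughly).

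For the \emph{universal upper bound} $\Sigma_1(N,\Gamma)\leq 4\pi-e^{-C_1(\gamma+k)}$, I would mirror the method behind Theorem \ref{s3.ubd}. Quotienting by $\tau$, the problem becomes bounding $\mu_1(\Omega,g)\cdot L(\partial^{\mathrm{Steklov}}\Omega)$ from $2\pi$ away by $e^{-C|\chi(\Omega)|}$ for the relevant mixed eigenvalue on a genus-zero $\Omega$, where $|\chi(\Omega)|$ is comparable to $\gamma+k$. The key point is that $2\pi$ (resp.\ $4\pi$) is the value attained in the degenerate limit where $\Omega$ looks like a half-sphere (resp.\ the surface degenerates to a multiplicity-two disk); if $\Sigma_1(N,\Gamma)$ were within $e^{-o(|\chi|)}$ of $4\pi$, then the associated minimal surface would, by a Courant–Hersch/Hersch-type balancing estimate together with the area gap, have to be varifold-close to the equatorial disk with multiplicity two — and one quantifies this via Theorem \ref{var.close.thm}. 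But a surface trapped in an $e^{-o(|\chi|)}$-neighborhood of a multiplicity-two disk cannot carry a first eigenfunction of the required index without violating a stability/monotonicity inequality in the spirit of \cite{KNPS, KSDuke}: packing $|\chi(\Omega)|$ "bubbles" forces a quantitative loss, of exponential size $e^{-C|\chi|}$, in the eigenvalue. This packing-versus-eigenvalue dichotomy is the technical heart; I would isolate it as a lemma about genus-zero surfaces with the mixed boundary condition, proved by cutting $\Omega$ into $\sim|\chi(\Omega)|$ pieces along short curves and running a test-function argument on each.

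For the \emph{lower bounds}, the two branches $4\pi-e^{-C_2k}$ and $4\pi-C_3/\gamma$ are built from different test families. The bound $\Sigma_1(N,\Gamma)\geq 4\pi-C_3/\gamma$ should come from the Scherk-type mechanism as in Theorem \ref{s3.lbd}: when $\gamma$ is large one can build, from (a free boundary version of) the Lawson surface $\xi_{\gamma,1}$ restricted to the ball — equivalently, from the explicit Scherk-type model of $\mathbb{D}^2$ minus a line of small disks — a $\Gamma$-invariant metric whose normalized Steklov eigenvalue is $4\pi-C/\gamma$, using the known asymptotics $\Area(\xi_{\gamma,1})=8\pi - 4\pi\log 2/\gamma + O(\gamma^{-2})$ cited above; this only requires $\gamma$ large, not $k$. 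The bound $\Sigma_1(N,\Gamma)\geq 4\pi-e^{-C_2k}$ is the genuinely new ingredient: here one exploits many boundary components, constructing a doubling of $\mathbb{D}^2$ with $k$ small catenoidal/Scherk-type necks whose conformal balance can be arranged with exponentially small defect in $k$ — I would take a reference domain made of a large disk with $\sim k$ tiny holes spaced so that the interaction decays geometrically, glue in the standard neck model, and estimate the Rayleigh quotient of the equilibrium-distributing boundary map by a comparison/capacity argument, getting $4\pi - e^{-C_2 k}$. The main obstacle in both lower bounds is making the test metric genuinely $\Gamma$-\emph{invariant} while keeping sharp control of the Rayleigh quotient: one must place the holes/necks symmetrically with respect to the reflection group $\Gamma$ (in particular on or symmetric about the fixed-point curves of $\tau$), and verify that the resulting fundamental-domain problem still has the near-optimal eigenvalue — this symmetric placement is exactly what ties the construction to the combinatorics of the basic reflection structure and is where the estimates are most delicate. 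The passage from the eigenvalue inequalities to the area inequalities is then immediate from $|N|=\tfrac12\Sigma_1(N,\Gamma)$.
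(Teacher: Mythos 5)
Your high-level split (upper bound from stability, lower bounds from explicit test metrics, areas following from $|N|=\tfrac12\Sigma_1$) matches the paper, but the execution differs substantially in each piece, and the upper-bound argument as you describe it has a gap.

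\textbf{Upper bound.} The paper does \emph{not} prove $\Sigma_1(N,\Gamma)\le 4\pi-e^{-C_1(\gamma+k)}$ through Theorem~\ref{Tfbconv} (the Steklov analogue of Theorem~\ref{var.close.thm}) or any varifold-closeness step. Instead, Proposition~\ref{prop:St_lbd} is a direct Rayleigh-quotient argument: quotienting by $\tau$ gives a domain $\Omega=\DD\setminus\mathcal{D}$ with $n\sim\gamma+k$ removed disks, the stability inequality (nonnegativity of the Hersch quadratic form for balanced maps, \eqref{hole.est}--\eqref{stek.stab}) forces $\sum r_j^2\le C\delta$, and then a \emph{single} logarithmic cutoff $\phi$ vanishing on $\mathcal D$ and equal to $1$ away from $\sqrt{r_j}$-neighborhoods is fed into the Steklov--Dirichlet Rayleigh quotient to give $\sigma_1^D(\Omega)\le Cn/|\log\delta|$; since the extremal metric has $\sigma_1^D=\sigma_1^N=1$, this yields $\delta\ge e^{-Cn}$. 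In your proposal the step from ``varifold-close to a multiplicity-two disk'' to ``eigenvalue gap of size $e^{-Cn}$'' is asserted but not actually supplied (the phrase ``cannot carry a first eigenfunction of the required index without violating a stability/monotonicity inequality'' is where the argument would need to live, and that is exactly what the log-cutoff computation replaces). Also note the paper reserves the varifold-closeness machinery for the sharper \emph{polynomial} upper bound of Theorem~\ref{lawsy.bds}, where it is deployed against \emph{two orthogonal} fixed-point planes; for the universal exponential bound that detour is unnecessary.

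\textbf{Lower bound $4\pi-e^{-C_2k}$.} The paper does not glue catenoidal necks. Proposition~\ref{Pstekk} constructs a flat domain $\Omega=\DD\setminus D_r(X)$ with $X$ the $k$-th roots of unity on $\partial\DD$ and $r=\tfrac1k e^{-ck}$, and then verifies $\sigma_1^D(\Omega)\ge 1$ via the annulus monotonicity estimate of Lemma~\ref{Lmono} and $\sigma_1^N(\Omega)\ge 1-C|\mathcal D|$ via Proposition~\ref{Pstekn}. Because one is optimizing over conformal classes, no PDE gluing is needed; the desired topology and $\Gamma$-invariance are arranged by removing further arbitrarily small interior disks. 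Your neck-gluing route resembles the Folha--Pacard--Zolotareva construction, which produces genuine minimal surfaces rather than test metrics; it would give \emph{some} lower bound but would require much more work to make it $\Gamma$-invariant for an arbitrary basic reflection group, and the paper deliberately avoids this.

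\textbf{Lower bound $4\pi-C_3/\gamma$.} Here you are closest to the paper. Proposition~\ref{pstekmaxis} indeed uses $\DD$ minus disks of radius $\sim1/m$ along a diameter, proving $\sigma_1^D\ge 1-C/m$ by a quasimode argument around the functions $(\pm y - 1/m)^+$ together with a spectral-gap bound $\sigma_3^D\ge 1+\delta_0$. The paper does not invoke the Lawson-surface asymptotics $\Area(\xi_{\gamma,1})=8\pi-4\pi\log 2/\gamma+\cdots$ at this point (that comparison belongs to the $\Sph^3$ discussion); the $C/\gamma$ rate comes out of the explicit radius scaling, not from an appeal to a known minimal surface. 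Since other placements of the holes (along $\partial\DD$, along a circle near $\partial\DD$, or distributed in a $D_n$-orbit via Propositions~\ref{pstekmbdry} and \ref{pstekwedges}) are needed to cover all basic reflection pairs $(N,\Gamma)$, the combined proof at the end of Section~\ref{sec:lowbounds_St} runs a case analysis on $G$; your proposal does not address how to get the bound simultaneously for every $\Gamma$, which is where the remaining delicacy lies.
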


In particular, note that the gap $4\pi-\Sigma_1(N,\Gamma)$ decays exponentially fast as the number of boundary components of $N$ increases, but, for a fixed number of boundary components, could vanish as slowly as $1/\gamma$ as the genus $\gamma$ grows but the number of boundary components remains fixed, and we present examples below for which the latter behavior does occur.

Note that the maximum of $\Sigma_1(N,\Gamma)$ over all basic reflection pairs $(N,\Gamma)$ on a surface $N$ of fixed topological type is simply $\Sigma_1(N,\langle \tau\rangle)$, the supremum of $\bar{\sigma}_1(N,g)$ over all metrics invariant under a fixed basic reflection $\tau:N\to N$. Existence of maximizing metrics in this case is established in \cite[Theorem 9.15]{KKMS}, corresponding to the free boundary basic reflection surfaces in $\mathbb{B}^3$ of largest area for a given topology. For these surfaces, we show that the upper bound in Theorem \ref{main.stek.bds} is sharp.

\begin{theorem}\label{max.stek.bds}
There exist $C_1,C_2$ such that for the surface $N_{\gamma,k}$ with genus $\gamma$ and $k$ boundary components.
$$4\pi-e^{-C_1(\gamma+k)}\geq \Sigma_1(N_{\gamma,k},\langle \tau\rangle)\geq 4\pi-e^{-C_2(\gamma+k)}.$$
\end{theorem}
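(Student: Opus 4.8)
The plan is to establish Theorem~\ref{max.stek.bds} by combining the general upper bound from Theorem~\ref{main.stek.bds} with a matching exponential lower bound obtained via an explicit construction of $\Gamma$-invariant test metrics on $N_{\gamma,k}$. The upper bound $\Sigma_1(N_{\gamma,k},\langle\tau\rangle)\leq 4\pi-e^{-C_1(\gamma+k)}$ is nothing but the universal upper bound already asserted in Theorem~\ref{main.stek.bds}, applied to the particular pair $(N_{\gamma,k},\langle\tau\rangle)$, so no new work is needed there. The content of the theorem is therefore the lower bound: we must exhibit, for each pair $(\gamma,k)$, a metric $g$ on $N_{\gamma,k}$ invariant under the basic reflection $\tau$ with $\bar\sigma_1(N_{\gamma,k},g)\geq 4\pi-e^{-C_2(\gamma+k)}$. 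Since $\Sigma_1(N_{\gamma,k},\langle\tau\rangle)$ is by definition the \emph{largest} value of $\Sigma_1(N,\Gamma)$ over basic reflection pairs on a surface of this topological type, it suffices to produce \emph{any} good basic reflection pair; the natural choice is the one with the most symmetry, for which the quotient orbifold is smallest and the test metric easiest to control.

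The key steps are as follows. First I would set up the doubling picture: by \cite[Lemma 5.37]{KKMS} the extremal metric is induced by a free boundary minimal doubling of the equatorial disk $\DD^2\subset\B^3$, so the right model is a disk $\DD^2$ with a large number $n$ of small geodesic half-disks (centered on $\partial\DD^2$ or in the interior, respectively corresponding to extra boundary components and extra handles) removed, capped off and doubled across its boundary. Concretely, one builds the test surface as two parallel copies of $\DD^2$ joined by $n$ small catenoidal necks, where roughly $n\sim\gamma$ necks are placed in the interior (each contributing a handle) and the removal of half-disks along the boundary accounts for the $k$ boundary components; this is exactly the Scherk/catenoidal-doubling ansatz used for the lower bounds in Theorem~\ref{main.stek.bds}, pushed to maximal symmetry. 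Second, one estimates $\bar\sigma_1$ of this test metric from below: away from the necks the surface is $C^0$-close to two copies of the flat hemisphere-boundary geometry, on which $\bar\sigma_1$ of a disk equals $2\pi$ per sheet (total $4\pi$), and each neck is a region of area $O(\delta^2)$ and boundary length $O(\delta)$ for a neck scale $\delta$, contributing only a lower-order correction. The eigenvalue estimate is carried out by the standard argument: one takes the two "coordinate" test functions $x_1,x_2$ that realize $\sigma_1=2\pi$ on each flat disk, glues them across the necks with a logarithmic cutoff supported in an annulus of modulus $\sim\log(1/\delta)$, and checks that the resulting Rayleigh quotient deficit is controlled by the capacity of that annulus, i.e.\ by $1/\log(1/\delta)$, plus the area/length contributions of the necks; choosing $\delta$ comparable to the separation between necks, which must be $\sim e^{-c n}\sim e^{-c(\gamma+k)}$ once $\sim(\gamma+k)$ necks are packed into a region of bounded size, yields the deficit bound $e^{-C_2(\gamma+k)}$. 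Third, one verifies that this test metric is genuinely invariant under a basic reflection $\tau$ of $N_{\gamma,k}$ — namely the reflection swapping the two sheets, whose fixed-point set is the union of the neck waists, with quotient a disk with $n$ holes (genus zero, all but one boundary component in the fixed set) — so that the test metric is admissible in the variational problem defining $\Sigma_1(N_{\gamma,k},\langle\tau\rangle)$. Finally, translating $\Sigma_1\geq 4\pi-e^{-C_2(\gamma+k)}$ into the area statement is immediate from $|N|=\tfrac12\Sigma_1$.

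The main obstacle I expect is the eigenvalue lower bound for the test metric, and specifically controlling the interaction between \emph{many} necks simultaneously. For a single neck the capacity estimate is classical, but with $\sim(\gamma+k)$ necks the cutoff functions in neighboring annuli must be chosen so their supports are disjoint (or at worst boundedly overlapping), which forces each annulus to have modulus only $\sim\frac{1}{n}\log(1/\delta)$ rather than $\log(1/\delta)$; to still get an exponentially small deficit one must take $\delta$ exponentially small in $n$, and then separately check that shrinking the necks this much does not create a \emph{small} eigenvalue from some spurious low-frequency mode localized near the neck cluster. Ruling this out requires a uniform lower bound on $\sigma_1$ of the doubled punctured disk that does not degenerate as $\delta\to0$ — essentially a quantitative connectedness/capacity statement saying that two sheets joined by even a very thin neck still have a spectral gap bounded below, with the dependence on $n$ only polynomial. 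This is the sort of estimate that in the closed case underlies the lower bound in Theorem~\ref{s3.lbd}, and I would adapt that argument: decompose the surface into the "bulk" pieces and the neck regions, use that the test eigenfunctions are nearly locally constant on each bulk piece, and bound the cross terms by a discrete Dirichlet energy on the "neck graph," whose eigenvalues are controlled by its combinatorial structure. Getting all constants uniform in both $\gamma$ and $k$ — and in particular matching the $(\gamma+k)$ dependence of the upper bound rather than, say, $\gamma k$ — is the delicate bookkeeping that constitutes the real work.
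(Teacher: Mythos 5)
Your proposal takes a genuinely different route from the paper, but the key quantitative claims do not hold up, and the real mechanism behind the exponential rate is missing.

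The paper does not work in the doubled-surface/neck picture at all. Via Lemma~\ref{stek.double}, it reduces the problem to constructing a domain $\Omega=\mathbb{D}\setminus\mathcal{D}$ (a fundamental domain for $\tau$) with $L(\partial\mathbb{D}\setminus\mathcal{D})\cdot\min\{\sigma_1^N(\Omega),\sigma_1^D(\Omega)\}$ close to $2\pi$. The crucial structural point — which your argument does not exploit — is the \emph{asymmetry} between the $\tau$-even and $\tau$-odd modes: the Neumann--Steklov eigenvalue $\sigma_1^N$ is stable under removing small holes (Proposition~\ref{Pstekn} gives $\sigma_1^N\geq 1-C(|\mathcal{D}|+|\partial\mathbb{D}\cap\mathcal{D}|)$), so that side of the $\min$ is close to $1$ automatically once the holes are small; the Dirichlet--Steklov eigenvalue $\sigma_1^D$ only needs the \emph{rough} bound $\sigma_1^D\geq 1$, and this is what the capacity/monotonicity estimates are used for. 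The exponential rate then comes purely from the size of the removed set: one can take radii of order $\tfrac{1}{n}e^{-cn}$ and still keep $\sigma_1^D\geq1$ (Propositions~\ref{Pstekk} and \ref{pstekmbdry}), so $|\mathcal{D}|+|\partial\mathbb{D}\cap\mathcal{D}|\lesssim e^{-cn}$ and the deficit in $\sigma_1^N$ is exponentially small. The bound in terms of $\gamma+k$ is then the $\max$ of the $k$-estimate and the $\gamma$-estimate, using $\max\{\gamma,k\}\geq\tfrac12(\gamma+k)$.

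Two of your quantitative claims are wrong and mask this mechanism. First, the separation between holes/necks is \emph{polynomially} small ($\sim 1/n$ along a circle), not $e^{-cn}$; it is the waist radius that is exponentially small. Second, the deficit is not controlled by the capacity $1/\log(1/\delta)$ of the gluing annuli: with $n$ holes of polynomially-separated centers and exponentially small radius, the total capacity is $\sum_x \tfrac{C}{\log((1/n)/r)}\sim n\cdot\tfrac{1}{n}=O(1)$, not exponentially small, so a capacity-of-cutoffs argument cannot produce the rate $e^{-c(\gamma+k)}$. What is exponentially small is the removed area/arclength, and the argument must route through that. Your proposal also omits the arrangement of the interior holes on a circle of radius $\tfrac{m-1}{m}$ close to $\partial\mathbb{D}$, which is essential for the Dirichlet trace estimate: if the holes sit away from the boundary, $\sigma_1^D$ degenerates to $0$ as the holes shrink (already for one tiny interior hole, $\sigma_1^D\sim 1/\log(1/r)\to 0$), and the construction fails. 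Finally, the proposed ``graph Dirichlet energy'' control of spurious low modes is vague and not what underlies the paper's estimates; the paper's Proposition~\ref{Pstekn} handles the even modes by a stability/projection argument against the exact Steklov eigenfunctions of $\mathbb{D}$, and the odd modes by the monotonicity estimates of Lemma~\ref{Lmono}.

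To salvage your surface-level approach you would effectively have to rediscover the even/odd decomposition (so that the ``slosh'' mode is a Dirichlet mode that only needs $\sigma\geq1$) and the near-boundary arrangement, at which point you are doing the paper's proof in a less convenient coordinate system.
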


\begin{remark}
For the surfaces $N_{0,k}$ with genus zero and $k$ boundary components, it is possible \emph{a priori} that $\bar{\sigma}_1$-maximization over any subgroup $\Gamma\leq \Diff(N_{0,k})$ containing a basic reflection yields the same $D_k$-symmetric free boundary minimal surface, coinciding with those produced in \cite[Theorem 5.1]{FrS} by min-max methods or in \cite{Zolotareva} via gluing techniques. Indeed, since the results of \cite{KusnerMcGrath} confirm that the latter two families are embedded by first Steklov eigenfunctions, one can obtain more precise lower bounds for $\Sigma_1(N_{0,k},\langle \tau\rangle)$ by a careful examination of the gluing construction in \cite{Zolotareva} in combination with the area expansion in \cite[Theorem A(v)]{LDG}, but we do not pursue this here.
\end{remark}

Next, we note that the lower bound in Theorem \ref{main.stek.bds} with respect to the genus is also sharp, at least when the number of boundary components is one or two.

\begin{theorem}\label{lawsy.bds}
If $N$ has $k=1$ or $2$ boundary components and any genus $\gamma$, there exists a subgroup $\Gamma_{12}=\langle \tau_1,\tau_2\rangle\leq \Diff(N)$ generated by two distinct reflections with respect to which $N$ is a basic reflection surface. In this case, there are universal constants $C_1,C_2>0$ such that 
$$4\pi-\frac{C_1}{\gamma}\geq \Sigma_1(N_{\gamma,k},\Gamma_{12})\geq 4\pi-\frac{C_2}{\gamma}.$$
\end{theorem}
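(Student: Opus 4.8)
The plan is to prove Theorem~\ref{lawsy.bds} in two halves: the upper bound $\Sigma_1(N_{\gamma,k},\Gamma_{12})\le 4\pi-C_1/\gamma$ should follow from the general stability/eigenvalue machinery that underlies Theorem~\ref{main.stek.bds} (indeed, it is a special case of the universal upper bound $\Sigma_1\le 4\pi-e^{-C_1(\gamma+k)}$ only in the crude form; the sharper $4\pi - C_1/\gamma$ bound for $k=1,2$ should come from the same two-reflection obstruction used to prove the Scherk-type upper bound $\frac12\Lambda_1\le 8\pi - C_2/|\chi|$ in Theorem~\ref{s3.ubd}). The key point is that the existence of \emph{two} distinct reflections $\tau_1,\tau_2$ forces the extremal surface, if it had area too close to $2\pi$, to lie simultaneously close to two distinct equatorial disks, which the varifold-proximity estimate of Theorem~\ref{var.close.thm} rules out quantitatively; tracking constants gives the $1/\gamma$ rate rather than something exponential.

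For the lower bound $\Sigma_1(N_{\gamma,k},\Gamma_{12})\ge 4\pi - C_2/\gamma$, I would argue by explicit construction of $\Gamma_{12}$-invariant test metrics on $N_{\gamma,k}$, exactly parallel to the Scherk-type lower bound in Theorem~\ref{s3.lbd}. The natural model is a $\Gamma_{12}$-invariant doubling of a disk-with-holes: take $\mathbb{D}^2$, remove $\gamma$ (or $\gamma+1$) small geodesic disks arranged symmetrically with respect to the two reflection axes, and double across the boundary circles. One then needs to (i) identify the topological type of this double as $N_{\gamma,k}$ with $k=1$ or $2$, (ii) exhibit a metric on it — essentially the double of the flat or spherical disk-with-holes, conformally rescaled — for which $\bar\sigma_1$ is within $C_2/\gamma$ of $4\pi$. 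The eigenvalue lower bound should be obtained by the same test-function bookkeeping as for the Lawson surfaces: the first Steklov eigenfunction on the doubled disk behaves like a coordinate function on $\mathbb{D}^2$, and the perturbation caused by the $\gamma$ small necks contributes an error of order (number of necks)$\times$(neck size)$^2$, which one balances against the area/length renormalization to get $1/\gamma$.

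The construction should be set up so that it degenerates, as $\gamma\to\infty$, to a configuration resembling a quarter (or half) of a Lawson-type surface $\xi_{\gamma,1}$ folded over — this is the sense in which these are the free-boundary analogs of $\xi_{\gamma,1}$ advertised in the introduction — and the $1/\gamma$ rate is then the free-boundary reflection of the $\frac{4\pi\log 2}{\gamma}$ expansion of $\Area(\xi_{\gamma,1})$. Concretely, for $k=1$ one places the holes along a single axis so the second reflection acts freely away from that axis; for $k=2$ one uses both axes and the hole centers straddle them. The identification of which arrangement yields $k=1$ versus $k=2$, and the verification that such $\Gamma_{12}$ actually acts as claimed (two distinct reflections, quotient genus zero, all-but-one boundary component in the fixed set of the basic reflection), is a small topological bookkeeping exercise that I would do by drawing the quotient orbifold.

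The main obstacle I expect is the lower bound's eigenvalue estimate: producing a test metric whose $\bar\sigma_1$ is provably $\ge 4\pi - C_2/\gamma$ requires controlling $\sigma_1$ from below on a highly degenerate (many-necked) surface, and the naive doubled-disk metric may have $\sigma_1$ dropping faster than $1/\gamma$ unless the necks are placed and sized carefully and the conformal factor is chosen to concentrate length on the outer boundary appropriately. This is exactly the kind of delicate capacity/test-function balancing that appears in the Lawson area asymptotics of \cite{HHT,CHHT} and in the Scherk-type constructions behind Theorem~\ref{s3.lbd}; I would lean on those computations, transferring them through the disk-doubling correspondence of \cite[Lemma 5.37]{KKMS}, rather than redoing the hard analysis from scratch. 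Matching the two constants $C_1,C_2$ only up to a multiplicative factor (not asymptotically) is all that is claimed, which makes the balancing tractable.
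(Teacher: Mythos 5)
Your upper-bound argument is the paper's: by \cite[Theorem 9.15]{KKMS} the supremum $\Sigma_1(N_{\gamma,k},\Gamma_{12})$ is realized by a free boundary minimal surface in $\B^3$ that is a basic reflection surface with respect to two orthogonal planes $P_1,P_2$, and applying Theorem~\ref{Tfbconv} twice forces
\[
c_0\leq \int_{\partial N}\bigl(\dist_{P_1}+\dist_{P_2}\bigr)\leq C\sqrt{2\pi-|N|}\,\sqrt{1+\gamma},
\]
the uniform positive lower bound $c_0$ coming from the same varifold-compactness argument used in the Scherk-type case of Theorem~\ref{s3.ubd}. (You cite Theorem~\ref{var.close.thm}, which is the $\Sph^3$ version; the one you actually need here is its free boundary analog Theorem~\ref{Tfbconv}, but the mechanism is as you describe.)

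Where you diverge is the lower bound, and there your plan has a genuine gap. You propose to build $\Gamma_{12}$-invariant test domains $\Omega\subset\DD$ with holes along a diameter and then control $\sigma_1$ on the doubled surface by ``leaning on'' the Lawson area asymptotics of \cite{HHT,CHHT} transferred through the disk-doubling correspondence. Those works compute $\Area(\xi_{\gamma,1})$ for \emph{closed} Lawson surfaces in $\Sph^3$; they do not produce eigenvalue estimates, and there is no established transfer of their conclusions to the mixed Steklov--Dirichlet/Steklov--Neumann problem on a disk with holes, so this step would have to be carried out essentially from scratch. In fact the lower bound $\Sigma_1(N_{\gamma,k},\Gamma_{12})\geq 4\pi-C_2/\gamma$ is already a special case of Theorem~\ref{main.stek.bds}, whose relevant ingredient is Proposition~\ref{pstekmaxis}: a self-contained variational estimate showing $\sigma_1^D(\Omega_m)\geq 1-C/m$ for the domain with $m$ evenly spaced holes of radius $1/(2m)$ along a diameter. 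That proof hinges on a quasimode built from the truncated height functions $f_1,f_2$ together with a spectral-gap bound $\sigma_3^D(\Omega_m)\geq 1+\delta_0$, the latter obtained via an injective comparison map into $W^{1,2}$ functions on $\DD$ vanishing on the diameter. Nothing in that argument uses or resembles the DPW-theoretic Lawson asymptotics, and the $1/\gamma$ rate comes out of the capacity/trace estimates directly, not from matching a $\frac{4\pi\log 2}{\gamma}$ expansion. So the missing piece in your proposal is precisely the quantitative eigenvalue lower bound on the many-necked domain; the reference you reach for will not supply it, and a direct argument in the spirit of Proposition~\ref{pstekmaxis} is needed.

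One more small correction to your topological bookkeeping: in the paper's construction the $k=1$ versus $k=2$ distinction is governed by whether one or two of the removed disks are centered at the \emph{boundary} points $(\pm 1,0)\in\partial\DD$, while all $\gamma$ interior holes lie on the single axis $\{y=0\}$ in both cases; the second reflection is always reflection across $\{y=0\}$, not ``both axes.''
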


Together with \cite[Theorem 9.15]{KKMS}, this gives the following. 

\begin{corollary}
For every integer $m\geq -1$, there exists a embedded free boundary minimal surface $N_m\subset \mathbb{B}^3$ with Euler characteristic $\chi(N_m)=m$ and area
$$2\pi-\frac{C_1}{m}\geq |N_m|\geq 2\pi-\frac{C_2}{m},$$
invariant with respect to reflection about two orthogonal planes. 
\end{corollary}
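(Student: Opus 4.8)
The plan is to deduce the Corollary directly from Theorem~\ref{lawsy.bds} together with the existence theory of \cite[Theorem 9.15]{KKMS}, treating the genus-zero case and the positive-genus case slightly differently. First I would recall that for an integer $m\geq -1$, any compact orientable surface $N$ with boundary and $\chi(N)=m$ has $\chi(N)=2-2\gamma-k$ where $\gamma\geq 0$ is the genus and $k\geq 1$ is the number of boundary components. To invoke Theorem~\ref{lawsy.bds} we must produce, for each such $m$, a surface with $k\in\{1,2\}$ boundary components and the appropriate genus: concretely, take $k=1$ and $\gamma=\tfrac{1-m}{2}$ when $m$ is odd, and $k=2$ and $\gamma=-\tfrac{m}{2}$ when $m$ is even. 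In either case $\gamma$ is a nonnegative integer (using $m\geq -1$), and since $m\to\infty$ corresponds to $\gamma\to\infty$ with $k$ fixed, the genus-dependent estimate $4\pi-\tfrac{C_2}{\gamma}\leq\Sigma_1(N_{\gamma,k},\Gamma_{12})\leq 4\pi-\tfrac{C_1}{\gamma}$ of Theorem~\ref{lawsy.bds} translates, after replacing $\tfrac{1}{\gamma}$ by a comparable multiple of $\tfrac{1}{m}$ (absorbing the factor $2$ into the constants and using $|m|\geq 1$ to handle small cases), into the claimed two-sided bound with $\tfrac{1}{\gamma}$ replaced by $\tfrac{1}{m}$.

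Next I would invoke \cite[Theorem 9.15]{KKMS} applied to the basic reflection pair $(N_{\gamma,k},\Gamma_{12})$: since $\Gamma_{12}=\langle\tau_1,\tau_2\rangle$ contains the basic reflection $\tau_1$, this yields a metric $g\in\Met_{\Gamma_{12}}(N_{\gamma,k})$ realizing $\Sigma_1(N_{\gamma,k},\Gamma_{12})$, which by \cite[Lemma 5.37]{KKMS} is induced by a $\Gamma_{12}$-equivariant free boundary minimal embedding $N_m:=(N_{\gamma,k},\partial N_{\gamma,k})\hookrightarrow(\mathbb{B}^3,\mathbb{S}^2)$ of area $\tfrac{1}{2}\Sigma_1(N_{\gamma,k},\Gamma_{12})$. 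Here one must check that the hypotheses of \cite[Theorem 9.15]{KKMS} are met for these particular pairs; given the remark in the excerpt that the theorem applies to ``a large class'' of basic reflection pairs and the explicit use of $\Gamma_{12}$-symmetric examples elsewhere in the paper, I expect the relevant pairs $(N_{\gamma,k},\Gamma_{12})$ with $k\in\{1,2\}$ to lie in the scope of that existence result — if not, one can appeal to Petrides' refinement \cite{PetridesNew} as noted in the remark. The equivariance of the embedding under $\Gamma_{12}=\langle\tau_1,\tau_2\rangle$, with $\tau_1,\tau_2$ acting as reflections whose fixed-point sets separate $N$, is realized in $\mathbb{B}^3$ by reflections in two planes through the origin; that these two planes can be taken orthogonal follows from the representation-theoretic normalization in \cite{KKMS} (the group $\mathbb{Z}_2\times\mathbb{Z}_2$ acts on $\mathbb{R}^3$, up to conjugation, by sign changes on two coordinates), which gives the asserted invariance under reflection about two orthogonal planes.

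Finally I would combine the two halves: the surface $N_m$ produced above has $\chi(N_m)=\chi(N_{\gamma,k})=m$ by construction, it is an embedded free boundary minimal surface in $\mathbb{B}^3$ invariant under reflection in two orthogonal planes, and its area satisfies $|N_m|=\tfrac{1}{2}\Sigma_1(N_{\gamma,k},\Gamma_{12})$, which lies between $2\pi-\tfrac{C_2}{m}$ and $2\pi-\tfrac{C_1}{m}$ after the constant adjustment described above (for the finitely many small values of $m$ one simply enlarges the constants so the inequality holds trivially, using that $|N_m|<2\pi$ always and $|N_m|>0$). The one genuinely substantive input is Theorem~\ref{lawsy.bds} itself — in particular the existence of the two-reflection subgroup $\Gamma_{12}$ and the matching two-sided $O(1/\gamma)$ bounds on $\Sigma_1$ — so the present Corollary is essentially a bookkeeping exercise once that is in hand; accordingly, the main (minor) obstacle is just the parity/topology bookkeeping needed to hit every integer $m\geq -1$ with an allowed pair $(\gamma,k)$, $k\in\{1,2\}$, and the verification that the chosen pairs satisfy the hypotheses of the existence theorem \cite[Theorem 9.15]{KKMS}.
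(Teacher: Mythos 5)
You take the same approach as the paper: the Corollary is deduced from Theorem~\ref{lawsy.bds} together with the existence theory of \cite[Theorem 9.15]{KKMS} (via \cite[Lemma 5.37]{KKMS} for the embedding and the $\mathbb{Z}_2\times\mathbb{Z}_2$ representation for the orthogonality of the two reflecting planes), and indeed all of this machinery, including the orthogonal planes, is already assembled inside the proof of Theorem~\ref{lawsy.bds}, so the Corollary is essentially a re-indexing by Euler characteristic.

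The one problem is that your parity bookkeeping is internally inconsistent. You set $\gamma=(1-m)/2$ for $m$ odd and $\gamma=-m/2$ for $m$ even (taking $\chi(N_m)=m$ literally), but then assert that ``$\gamma$ is a nonnegative integer (using $m\geq -1$)'' and that ``$m\to\infty$ corresponds to $\gamma\to\infty$ with $k$ fixed.'' With your formulas both assertions are false: already $m=3$ gives $\gamma=-1$, and $m\to\infty$ gives $\gamma\to-\infty$. This traces to what is evidently a sign typo in the paper's own statement --- a compact surface with nonempty boundary has $\chi=2-2\gamma-k\leq 1$, so ``$\chi(N_m)=m$ for all $m\geq -1$'' is impossible, and the displayed area bounds $2\pi-C/m$ require $m$ to grow positively with the topology --- and the intended reading is $\chi(N_m)=-m$, i.e.\ $\gamma=(1+m)/2$ for $m$ odd and $\gamma=m/2$ for $m$ even, both nonnegative for $m\geq -1$ and tending to $+\infty$ with $m$. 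Under that correction your argument is fine, but you should flag the discrepancy rather than reproduce it: as written, your first and second sentences contradict each other.
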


The behavior of these surfaces seems analogous in many ways to that of the Lawson surfaces $\xi_{\gamma,1}$ in $\mathbb{S}^3$. Indeed, in the large topology limit, we expect that they give a desingularization of a union of orthogonal disks, perhaps coinciding with the desingularization announced by Kapouleas-Li in \cite{KapLi}. Moreover, the area estimates obtained here suggest that this family has the slowest possible area growth as $|\chi(N)|$ becomes large; in light of these estimates, it is tempting to conjecture that these surfaces have least area among all free boundary minimal surfaces in $\mathbb{B}^3$ with prescribed Euler characteristic. Some care is needed for the case of two boundary components, since in this case it is possible to add an additional (non-basic) reflection $\tau_3\in \Diff(N)$ to obtain a larger group $\Gamma_{12}'=\Gamma_{12}\times\langle \tau_3\rangle \cong \mathbb{Z}_2\times \mathbb{Z}_2\times \mathbb{Z}_2$ for which $\Sigma_1(N,\Gamma_{12}')\leq \Sigma_1(N, \Gamma_{12})$, and numerical results presented in \cite{KFS} suggest that this inequality is in general strict. We then pose the following.

\begin{conjecture}
For any free boundary minimal surface $S\subset \mathbb{B}^3$, 
$$|S|\geq \frac{1}{2}\Sigma_1(N_{\gamma,1},\Gamma_{12})\text{ when }\chi(S)\text{ is odd and }\gamma=\frac{1-\chi(S)}{2}$$
and
$$|S|\geq \frac{1}{2}\Sigma_1(N_{\gamma,2},\Gamma_{12}')\text{ when }\chi(S)\text{ is even and }\gamma=\frac{-\chi(S)}{2}.$$
\end{conjecture}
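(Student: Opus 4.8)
The first thing I would do is rewrite both sides geometrically. For any free boundary minimal surface $\Sigma\subset\mathbb{B}^3$ the position vector $x$ is harmonic on $\Sigma$ with $\partial_\nu x=x$ along $\partial\Sigma$, so $\mathrm{div}_\Sigma x\equiv 2$ and the first variation formula gives $2|\Sigma|=L(\partial\Sigma)$; moreover $x^1,x^2,x^3$ are Steklov eigenfunctions with eigenvalue $1$, so $\sigma_1(\Sigma)\le 1$. Since the $\Gamma_{12}$- (resp. $\Gamma_{12}'$-) maximizer $N_{\gamma,k}$ of Theorems~\ref{main.stek.bds}--\ref{lawsy.bds} is embedded by \emph{first} Steklov eigenfunctions, it has $\sigma_1=1$, hence $\frac{1}{2}\Sigma_1(N_{\gamma,k},\Gamma_{12})=\frac{1}{2}L(\partial N_{\gamma,k})=|N_{\gamma,k}|$. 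Thus the conjecture is exactly the assertion that $N_{\gamma,k}$ has least area among all free boundary minimal surfaces in $\mathbb{B}^3$ of the same Euler characteristic --- the $\mathbb{B}^3$-analog of Kusner's conjecture. In this form the case $\chi=1$ is Nitsche's uniqueness of the equatorial disk, and the case $\chi=0$ would follow from uniqueness of the critical catenoid among free boundary minimal annuli (still open, but with substantial recent progress).

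\textbf{Strategy.} For the remaining cases I would argue by contradiction in the large-topology regime and settle the finitely many small-$|\chi|$ cases by hand. Suppose $S_j$ is a sequence of free boundary minimal surfaces with $|\chi(S_j)|\to\infty$ and $|S_j|<|N_{\gamma_j,k_j}|<2\pi$. \emph{Step 1:} run the compactness argument of the paper (monotonicity together with the area bound $|S_j|<2\pi$) to extract a stationary integral varifold limit, necessarily a sum of equatorial disks through the origin; since a multiplicity-one single-disk limit would force $S_j$ to be an embedded graph over a disk (so $\chi(S_j)=1$), and three or more disks would violate the area bound, the limit is either a multiplicity-two disk or a union of two transverse equatorial disks, with mass $2\pi$. \emph{Step 2 (the crux):} a quantitative neck/\L ojasiewicz analysis near this limit configuration --- the free boundary analog of the desingularization-uniqueness statements of Kapouleas-type gluing theory --- should show that for $j$ large $S_j$ is a doubling of one or two disks in the sense of \cite{LDG}, and in fact lies in one of the explicitly parametrized families, whose area obeys a sharp expansion of the form $2\pi-\Theta(1/\gamma_{S_j})$ or $2\pi-\Theta(e^{-c k_{S_j}})$; comparing with the sharp expansion $\frac12\Sigma_1(N_{\gamma_j,k_j},\cdot)=2\pi-\Theta(1/\gamma_j)$ from Theorems~\ref{main.stek.bds}--\ref{lawsy.bds}, and checking that $N_{\gamma_j,k_j}$ is the member of least area among all these families with the given Euler characteristic, contradicts $|S_j|<|N_{\gamma_j,k_j}|$. \emph{Step 3:} treat the finitely many remaining $|\chi|$ by the base cases and explicit uniqueness results; in the even case one must keep track of the distinction, reflected in the statement, between $\Gamma_{12}$ and the larger group $\Gamma_{12}'=\Gamma_{12}\times\langle\tau_3\rangle$, since the comparison surface there is the more symmetric (smaller) one.

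\textbf{Main obstacle.} The serious difficulty is Step 2. We have \emph{no} topology-dependent lower bound on the area of a general free boundary minimal surface beyond the monotonicity bound $|\Sigma|\ge\pi$, which falls far short of the required $2\pi-O(1/\gamma)$; closing this gap amounts to a sharp quantitative rigidity theorem for free boundary minimal surfaces lying varifold-close to a union of at most two orthogonal equatorial disks --- equivalently, a uniqueness-of-desingularization statement of the type announced by Kapouleas--Li, itself a major open problem. Unlike in the $\bar\sigma_1$-extremal setting we cannot assume $S$ is a doubling, is symmetric, or is orientable, so this analysis must be carried out for arbitrary $S$. A realistic intermediate target is therefore the conditional statement --- modulo uniqueness of the critical catenoid and the sharp desingularization rigidity above --- together with the unconditional weakening $\liminf_{|\chi(S_j)|\to\infty}|S_j|\ge 2\pi$, which already follows from Step 1 alone.
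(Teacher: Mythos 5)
The statement you are addressing is posed in the paper as an open \emph{conjecture}: the authors offer no proof of it, only motivation coming from the area asymptotics of Theorems \ref{main.stek.bds}--\ref{lawsy.bds} and the analogy with Kusner's conjecture for the Lawson surfaces $\xi_{\gamma,1}$. So there is no argument in the paper to compare yours against, and your proposal, by its own admission, is not a proof either. Your reformulation is correct and worth stating: since $2|\Sigma|=L(\partial\Sigma)$ for any free boundary minimal surface and each nonconstant coordinate function is a Steklov eigenfunction of eigenvalue $1$ (hence has zero boundary mean and forces $\sigma_1\leq 1$), while the maximizers $N_{\gamma,k}$ are embedded by \emph{first} eigenfunctions so that $\sigma_1=1$, one indeed has $\frac12\Sigma_1(N_{\gamma,k},\cdot)=|N_{\gamma,k}|$, and the conjecture is precisely a least-area statement for prescribed Euler characteristic; the $\chi=1$ case is Nitsche's theorem and the $\chi=0$ case hinges on uniqueness of the critical catenoid. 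All of that is a faithful reading of what the authors intend.

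The genuine gap is that Steps 1 and 2 of your program do not hold with the tools available, and you should be careful not to overstate even the ``unconditional weakening.'' In Step 1, for a sequence $S_j$ with $|S_j|<2\pi$ and $|\chi(S_j)|\to\infty$ there is no compactness theorem (absent index or curvature bounds, which you cannot assume) forcing the varifold limit to be a union of equatorial disks: the limit is merely a free boundary stationary integral $2$-varifold of mass at most $2\pi$, which a priori could involve, say, the critical catenoid, pieces with boundary junctions, or concentration phenomena; so the claim $\liminf|S_j|\geq 2\pi$ is not a consequence of Step 1 alone. The paper's own varifold statements (Theorem \ref{Tfbconv} and Proposition \ref{stek.plane.prop}) apply only to $\bar\sigma_1$-extremal basic reflection surfaces, where one has the conformal parametrization by a perforated disk, the stability-type quadratic form, and the eigenvalue normalization $\sigma_1=1$; none of this is available for an arbitrary competitor $S$. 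Step 2 --- that any FBMS varifold-close to one or two orthogonal equatorial disks lies in an explicitly parametrized family with a sharp area expansion, and that $N_{\gamma,k}$ is the least-area member --- is exactly the uniqueness-of-desingularization/doubling problem you correctly flag as open; it is the entire content of the conjecture, not a step toward it. As a program your outline is sensible and matches the heuristics the authors describe, but as a proof attempt it establishes only the base cases already known in the literature.
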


The upper bounds in Theorem \ref{lawsy.bds} are obtained as a consequence of the following theorem, analogous to Theorem \ref{var.close.thm} in the closed setting, showing that $\bar{\sigma}_1$-extremal basic reflection surfaces with $2\pi-|N|\ll 1/(1+|\chi(N)|)$ lie close to the fixed-point plane of every basic reflection.

\begin{theorem}
\label{Tfbconv}
If $N\subset \mathbb{B}^3$ is a free boundary minimal embedding by first Steklov eigenfunctions such that $N$ is a basic reflection surface with respect to $\tau(x,y,z)=(x,y,-z)$, then
$$\int_{\partial N}|z|\leq C\sqrt{2\pi-|N|}\cdot \sqrt{1+|\chi(N)|}$$
for a universal $C<\infty$.
\end{theorem}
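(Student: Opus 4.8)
The plan is to exploit the fact that the coordinate function $z$ on $N$ is, up to the natural normalization, a first Steklov eigenfunction, hence is $L^2(\partial N)$-orthogonal to the constants and satisfies $\Delta_N z = 0$ with $\partial_\nu z = \sigma_1 z$ on $\partial N$; moreover $\sigma_1 = 1$ after suitable scaling (this is the standard fact that $N \subset \mathbb{B}^3$ free boundary minimal has $\sigma_1 = 1$ and the coordinate functions as eigenfunctions, as used throughout \cite{KKMS}). The point is that $\bar\sigma_1(N) = L(\partial N)\cdot \sigma_1 = L(\partial N)$, and $L(\partial N) = \frac12\bar\sigma_1(N) \cdot 2 $ — more precisely $|N| = \frac12 L(\partial N)$ by the free boundary condition together with the fact that $N$ is a cone over its boundary in the conformal sense (i.e. $2|N| = \int_{\partial N} x\cdot\nu = L(\partial N)$). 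Thus $2\pi - |N|$ controls $2\pi - \frac12 L(\partial N)$, i.e. the deficit of the boundary length from its ``doubled disk'' value $4\pi$.

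First I would set up the Rayleigh quotient comparison. Since $z$ is a first Steklov eigenfunction with eigenvalue $1$, and since $N$ is a basic reflection surface for $\tau(x,y,z)=(x,y,-z)$ whose quotient $N/\langle\tau\rangle$ has genus zero with all but one boundary component fixed, I would consider the restriction $z|_{\partial N}$. The fixed-point set $N^\tau$ contains all boundary components of $\partial N$ except one, so on $k-1$ of the boundary circles $z \equiv 0$ and the nontrivial part of $z|_{\partial N}$ is supported on a single circle $\Gamma_0 = \partial N \setminus N^\tau$. The key identity is $\int_{\partial N} z^2 = \int_N |\nabla z|^2$ (from integration by parts and $\sigma_1 = 1$), combined with $\int_N |\nabla z|^2 + |\nabla z|^2_{\text{of the other two coords}} \leq$ something; actually since $\Phi = (x,y,z): N \to \mathbb{B}^3$ is an isometric minimal immersion, $|\nabla x|^2 + |\nabla y|^2 + |\nabla z|^2 = 2$ pointwise, so $\int_N |\nabla z|^2 \leq 2|N| \leq 4\pi$, and more usefully $\int_{\partial N}(x^2+y^2+z^2) = \int_{\partial N} 1 = L(\partial N)$ while $\int_{\partial N} z^2 = \int_N|\nabla z|^2$. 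I would then estimate $\int_{\partial N}|z|$ by relating it, via the single circle $\Gamma_0$ on which $z$ is nonconstant, to $\int_{\Gamma_0} z^2$ and the length of $\Gamma_0$: by Cauchy–Schwarz, $\int_{\partial N}|z| = \int_{\Gamma_0}|z| \leq \sqrt{L(\Gamma_0)}\,\big(\int_{\Gamma_0} z^2\big)^{1/2} \leq \sqrt{2\pi}\,\big(\int_N |\nabla z|^2\big)^{1/2}$, which unfortunately does not yet see the area deficit.

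So the real content is to bound $\int_N |\nabla z|^2$ — equivalently $\int_{\partial N} z^2$ — by $C(1+|\chi(N)|)(2\pi - |N|)$. Here I would use the orthogonality/balancing structure: because $N$ is a $\tau$-doubling of the equatorial disk $\mathbb{D}^2$, the projection $\pi$ to $\{z=0\}$ is a $2$-sheeted branched/covering structure away from $N^\tau$, and the competitor metric obtained by pulling back the flat disk metric (or rather the round hemisphere-type comparison) shows $\bar\sigma_1$ is close to $4\pi$. The quantitative step is a stability-type argument in the spirit of \cite{KNPS, KSDuke}: one uses that $z$ and the constant function are the only functions below the eigenvalue threshold, together with a clever choice of test functions adapted to a conformal map $N/\langle\tau\rangle \to \mathbb{D}^2$ of bounded (in terms of $\chi$) degree, to show that if $\int_N|\nabla z|^2$ were large compared to $(1+|\chi|)(2\pi-|N|)$ then one could construct a metric on $N$ with $\bar\sigma_1$ strictly exceeding its maximal value, or violate the upper bound in Theorem \ref{main.stek.bds}. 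The main obstacle — and the step I expect to be hardest — is precisely this: producing the right family of test functions on the genus-zero quotient surface whose count is controlled by $|\chi(N)|$ (reflecting that a genus-zero surface with $k$ boundary components admits a proper conformal branched cover of $\mathbb{D}^2$ of degree $O(k)$), and showing that the associated Rayleigh quotient defect is genuinely quadratic in the geometric quantity $\int_{\partial N}|z|$ rather than merely linear. I would organize this by first reducing to the quotient $\Omega = N/\langle\tau\rangle$ with mixed Dirichlet/Steklov conditions, invoking the spectral reformulation in the Remark after Theorem \ref{s3.ubd}, then applying a Hersch-type normalization using a degree-controlled conformal map $\Omega \to \mathbb{D}^2$ and a second-variation (stability) computation around the balanced configuration to extract the factor $2\pi - |N|$.
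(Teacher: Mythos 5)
Your proposal gets the broad strategy right (a quantitative stability argument in the spirit of \cite{KNPS, KSDuke}, working on the quotient $\Omega = N/\langle\tau\rangle \cong \mathbb{D}\setminus\mathcal{D}$), but there are two concrete gaps. First, the geometric picture is off: the fixed-point set $N^\tau$ is not a union of boundary circles of $N$. Since $\partial N\subset \mathbb{S}^2$, a boundary circle lying in $\{z=0\}$ would have to be the equator of $\mathbb{S}^2$; generically $N^\tau$ is interior to $N$ (it becomes $\Gamma_0 = \partial\mathcal{D}\cap\mathbb{D}$ in the quotient, meeting $\partial N$ at most in finitely many points), and $z$ is nonzero on essentially all of $\partial N$. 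So the reduction to a single circle $\Gamma_0\subset\partial N$ via Cauchy--Schwarz does not apply.

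Second, and more seriously, the intermediate goal you isolate --- $\int_{\partial N} z^2 \lesssim (1+|\chi|)(2\pi-|N|)$ --- is not what the natural stability argument provides. Writing $\delta := 2\pi-|N|$ and $\phi := z|_{N_+}$, the stability estimate $\||d\Psi|_{g_0}-\alpha\|_{L^2(\Omega)}^2 \leq C\delta$ on $\Psi=(x,y)$ together with conformality ($|d\phi|^2 + |d\Psi|^2 = |dF|^2$) gives only $\int_\Omega |d\phi|^2 = \int_{\partial N_+} z^2 = O(\sqrt{\delta})$, which for small $\delta$ is far weaker than the $O(n\delta)$ you would need before applying Cauchy--Schwarz. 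The paper's proof of this theorem (Proposition \ref{stek.plane.prop}) avoids estimating $\int z^2$ altogether: it takes a cutoff $\psi$ with $\psi\equiv 1$ on $\mathcal{D}$, $\supp\psi\subset 2\mathcal{D}$, $\|d\psi\|_{L^2}^2\leq Cn$, writes $\int_{\Gamma_1}\phi = \int_{\Gamma_1}\psi\phi - \int_\Omega\langle d\psi,d\phi\rangle$ using harmonicity of $\phi$ and $\phi|_{\Gamma_0}=0$, and then the crucial observation is that the Dirichlet energy of $\phi$ is small \emph{on $\supp\psi$}: $\|d\phi\|^2_{L^2(\supp\psi)}\leq C\delta$, since $|d\phi|^2\leq |d\Psi|^2$ by conformality, $\||d\Psi|-\alpha\|_{L^2}^2\leq C\delta$, and $\operatorname{Area}_{g_0}(\supp\psi)\leq C\delta$. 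This localization is where the $\sqrt{n\delta}$ (hence $\sqrt{(1+|\chi|)(2\pi-|N|)}$) scaling comes from. You would also need the spectral-gap input $\sigma_3^N(N_+)\geq 2+c$ (the Steklov analog of Lemma \ref{l4.gap}) before \eqref{eq:St_Psi_proj} yields the estimate on $\Psi$; that lemma is itself nontrivial, resting on a mean-value $L^\infty$ bound for eigenfunctions and a compactness/no-concentration argument.
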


Since the families of Theorem \ref{lawsy.bds} are basic reflection surfaces with respect to two orthogonal planes of reflection, it follows that $2\pi-|N|$ cannot decay faster than $O(\frac{1}{1+|\chi(N)|})$. For the other families of basic reflection surfaces, most  have $2\pi-|N|=O(e^{-c|\chi(N)|})$, and Theorem \ref{Tfbconv} shows that such surfaces converge to a double disk in the large topology limit.

\subsection{Organization} In Section 2, we introduce a convenient equivalent formulation of the $\bar{\lambda}_1$- and $\bar{\sigma}_1$-optimization optimization problems over basic reflection surfaces in terms of optimization problems over subsets of $\mathbb{S}^2$ and $\mathbb{D}^2$. In Sections 3 and 4, we complete the proof of Theorem \ref{s3.lbd}, constructing test metrics for the relevant optimization problems via carefully chosen domains $\Omega\subset\mathbb{S}^2$, obtained by removing a collection disk whose total area $4\pi-|\Omega|$ is kept as small as possible while maintaining a lower bound $\lambda_1^D(\Omega)\geq 2$ on the first Dirichlet eigenvalue. In Section 5, we provide similar arguments in the Steklov setting, proving the lower bounds in Theorems \ref{main.stek.bds} and \ref{max.stek.bds}. In Section 6, we prove Theorem \ref{s3.ubd} and Theorem \ref{var.close.thm}, using as a starting point stability estimates modelled on those of \cite{KNPS, KSDuke}. In Section 7, we then implement analogous arguments in the Steklov setting, completing the proofs of the upper bounds in Theorems \ref{main.stek.bds} and \ref{max.stek.bds}, proving Theorem \ref{Tfbconv}, and deducing Theorem \ref{lawsy.bds} as a corollary.

\subsection*{Acknowledgements} The authors thank Rob Kusner, Mario Schulz, and Antoine Song for interesting discussions related to this work. During the completion of this work, D.S. was supported in part by the NSF grant DMS 2404992 and the Simons Foundation award MPS-TSM-00007693.

\section{Equivalent formulations of the optimization problem}

To obtain effective estimates for the suprema $\Lambda_1(M,\Gamma)$ and $\Sigma_1(N,\Gamma)$ among basic reflection pairs, it is convenient to work with an equivalent formulation via shape optimization problems over domains in $\mathbb{S}^2$ and $\mathbb{D}^2$, respectively.

Given a compact oriented surface $(\Omega,g)$, denote by $\lambda_1^D$ and $\lambda_1^N$ the first nonzero eigenvalues of the Laplacian $\Delta_g$ with homogeneous Dirichlet and Neumann boundary conditions, respectively, on $\partial\Omega$. We then set
$$\mu_1(\Omega,g):=\min\{\lambda_1^D(\Omega,g),\text{}\lambda_1^N(\Omega,g)\},$$
and
$$\bar{\mu}_1(\Omega,g)=\Area(M,g)\mu_1(\Omega,g).$$

Now, it is easy to see that there is a one-to-one correspondence between oriented basic reflection pairs $(M,\Gamma)$ of the form $\Gamma=\langle \tau\rangle\times G$ as in Theorem \ref{kkms.thm1} and pairs $(\Omega,G)$ consisting of compact surfaces $\Omega$ with genus zero and $(\genus(M)+1)$ boundary components together with an action of $G\leq \Diff(\Omega)$. Given a pair $(\Omega,G)$ of the latter type, define
$$\mathcal{M}_1(\Omega,G):=\sup\{\bar{\mu}_1(\Omega,g)\mid g\in \Met_G(\Omega)\}.$$
We then have the following.
\begin{lemma}\label{double.lem}
Given a closed oriented basic reflection pair $(M,\Gamma=\langle \tau\rangle \times G)$ as above with associated genus zero pair $(\Omega,G)$ corresponding to a fundamental domain of $\tau$ in $M$, we have
$$\Lambda_1(M,\Gamma)=2\mathcal{M}_1(\Omega,G).$$
\end{lemma}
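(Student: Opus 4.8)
The plan is to set up an explicit correspondence between $\Gamma$-invariant metrics on $M$ and $G$-invariant metrics on the fundamental domain $\Omega$ of $\tau$, and then match the two variational quantities $\bar\lambda_1$ and $\bar\mu_1$ under this correspondence. I would first observe that, since $(M,\Gamma)$ is a basic reflection surface, the fixed-point set $M^\tau$ contains a curve $C$, and cutting $M$ along $C$ (equivalently, taking the closure of one component of $M\setminus M^\tau$) produces a compact surface $\Omega$ with boundary $\partial\Omega$ mapping onto $C$; since $M/\langle\tau\rangle$ has genus zero, $\Omega$ has genus zero, and $\partial\Omega$ has $\genus(M)+1$ components (a standard Euler characteristic count: $M$ is the double of $\Omega$ across all of $\partial\Omega$, so $\chi(M)=2\chi(\Omega)$, giving $2-2\genus(M)=2(2-2\cdot 0-(\genus(M)+1))$). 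Because $G$ commutes with $\tau$, the group $G$ preserves $M^\tau$ and hence acts on $\Omega$, giving the pair $(\Omega,G)$.

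Next, I would establish the metric correspondence. A $\Gamma$-invariant metric $g$ on $M$ is in particular $\tau$-invariant, so $\tau$ acts as an isometric involution fixing $C$ pointwise; thus $g$ restricts to a $G$-invariant metric $\bar g$ on $\Omega$, and conversely any $G$-invariant metric on $\Omega$ extends by reflection to a $\Gamma$-invariant metric on $M$ which is the metric double of $(\Omega,\bar g)$. (A minor technical point: the doubled metric is only Lipschitz across $C$ a priori, but by density/smoothing one may restrict attention to metrics that are smooth, or simply work with the conformal class, since $\bar\lambda_1$ and $\bar\mu_1$ are conformal invariants in $2$D and only the conformal class matters for the suprema; I would remark that the variational suprema are unchanged.) This gives a bijection $\Met_\Gamma(M)\leftrightarrow\Met_G(\Omega)$ compatible with the area relation $\area(M,g)=2\area(\Omega,\bar g)$.

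The heart of the argument is the eigenvalue identity $\lambda_1(M,g)=\mu_1(\Omega,\bar g)$ for $g$ the double of $\bar g$. This is the classical fact that the low spectrum of the Laplacian on a metric double splits into the part symmetric under $\tau$ and the part antisymmetric under $\tau$: a symmetric eigenfunction on $M$ restricts to a Neumann eigenfunction on $\Omega$ (normal derivative vanishes on $C$ by symmetry), and an antisymmetric one restricts to a Dirichlet eigenfunction (it vanishes on $C$), and conversely every Neumann (resp. Dirichlet) eigenfunction on $\Omega$ extends evenly (resp. oddly) to an eigenfunction on $M$ with the same eigenvalue. Hence the spectrum of $\Delta_g$ on $M$ is the union with multiplicity of the Neumann and Dirichlet spectra of $\Delta_{\bar g}$ on $\Omega$; since the bottom of the spectrum is $0$ (constants, the symmetric/Neumann $0$-mode), the first nonzero eigenvalue $\lambda_1(M,g)$ is exactly $\min\{\lambda_1^N(\Omega,\bar g),\lambda_1^D(\Omega,\bar g)\}=\mu_1(\Omega,\bar g)$. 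Combining with $\area(M,g)=2\area(\Omega,\bar g)$ yields $\bar\lambda_1(M,g)=2\bar\mu_1(\Omega,\bar g)$ for every corresponding pair, and taking suprema over the bijective families of invariant metrics gives $\Lambda_1(M,\Gamma)=2\mathcal M_1(\Omega,G)$.

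The main obstacle I anticipate is purely technical rather than conceptual: handling the regularity of the doubled metric along $C$ (it is continuous and piecewise smooth but generally not $C^1$), so that the clean spectral-doubling statement applies. I would deal with this either by (i) noting that a $\tau$-invariant smooth metric on $M$ automatically reflects to a smooth metric on $\Omega$ and conversely one can always find a $G$-invariant representative of any conformal class on $\Omega$ whose double \emph{is} smooth (e.g. take $\bar g$ to be a product metric near $C$), so the suprema are computed over such metrics; or (ii) invoking conformal invariance of $\bar\lambda_1,\bar\mu_1$ in dimension two to reduce to that case. A secondary point to check carefully is that the correspondence $(M,\Gamma)\leftrightarrow(\Omega,G)$ is genuinely one-to-one up to the relevant equivalence — i.e., that $\Omega$ really has $\genus(M)+1$ boundary components and genus zero, and that distinct fundamental domains give equivariantly equivalent pairs — but this is the straightforward topological bookkeeping already implicit in the statement and in \cite[Lemma 5.35]{KKMS}.
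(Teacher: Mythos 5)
Your overall strategy --- set up the metric correspondence between $\Met_\Gamma(M)$ and $\Met_G(\Omega)$, prove the spectral doubling identity $\lambda_1(M,\bar g)=\min\{\lambda_1^N,\lambda_1^D\}(\Omega,g)$ by splitting into $\tau$-even and $\tau$-odd eigenfunctions, then take suprema --- is exactly the paper's approach, and the spectral identity part is handled correctly. The easy inequality $\Lambda_1(M,\Gamma)\leq 2\mathcal{M}_1(\Omega,G)$ is also fine, because, as you note, a smooth $\Gamma$-invariant metric on $M$ restricts to a $G$-invariant metric on $\Omega$ which automatically has totally geodesic boundary.

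The gap is in the reverse inequality, and neither of your two proposed fixes actually closes it. Fix (ii) rests on the claim that ``$\bar\lambda_1$ and $\bar\mu_1$ are conformal invariants in $2$D,'' which is false: the Dirichlet energy $\int|d\phi|^2$ is conformally invariant in two dimensions, but $\int\phi^2\,dv_g$ and $\Area(g)$ are not, so neither $\lambda_1$ nor $\bar\lambda_1$ is. Fix (i) observes (correctly, this is essentially \cite[Theorem 1(b)]{Sarnak}) that every conformal class on $\Omega$ has a $G$-invariant representative with totally geodesic boundary, but you then jump to ``so the suprema are computed over such metrics.'' That step is unjustified: for a general $g\in\Met_G(\Omega)$ without totally geodesic boundary, $\bar\mu_1(\Omega,g)$ can differ from $\bar\mu_1(\Omega,h)$ where $h$ is the TG-boundary representative of the same conformal class, so restricting the supremum to TG-boundary metrics could a priori lower it. To show $2\bar\mu_1(\Omega,g)\leq\Lambda_1(M,\Gamma)$ for every $g$, the paper doubles $g$ directly to get a \emph{Lipschitz} metric $\bar g\in\Met_\Gamma(M)$ (conformal to the smooth double $\bar h$ of $h$), observes that the doubling identity $\lambda_1(M,\bar g)=\mu_1(\Omega,g)$ still holds for this Lipschitz metric, and then approximates $\bar g$ by smooth metrics in $\Met_\Gamma(M)$, using continuity of $\bar\lambda_1$ in the Lipschitz topology to conclude $\bar\lambda_1(M,\bar g)\leq\Lambda_1(M,\Gamma)$. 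That approximation-and-continuity step is a genuine ingredient you would need to add.
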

\begin{proof}
First, note that if $g\in \Met_G(\Omega)$ is a metric on $\Omega$ with totally geodesic boundary, then doubling $\Omega$ across its boundary gives rise to a $\Gamma$-invariant metric $\bar{g}\in \Met_{\Gamma}(M)$, and any $\Gamma$-invariant metric on $M$ in turn restricts to a $G$-invariant metric on $\Omega$ with totally geodesic boundary.  In this setup, it is easy to check that the $\Delta_g$-eigenfunctions on $\Omega$ with homogeneous Neumann condition are precisely the restrictions of $\tau$-even eigenfunctions for $\Delta_{\bar{g}}$ on $M$, while the Dirichlet eigenfunctions on $\Omega$ are the restrictions of the $\tau$-odd eigenfunctions on $M$, and it follows immediately that
$$\lambda_1(M,\bar{g})=\mu_1(\Omega,g),$$
and therefore $\bar{\lambda}_1(M,\bar{g})=2\bar{\mu}_1(\Omega,g)$. As an immediate consequence, taking the supremum over all $\bar{g}\in \Met_{\Gamma}(M)$ gives
$$\Lambda_1(M,\Gamma)\leq 2\mathcal{M}_1(\Omega,G).$$

More generally, even if $g\in \Met_G(\Omega)$ does not have totally geodesic boundary, we can write $g$ in the form $g=e^{2f}h$ where $h$ has totally geodesic boundary \cite[Theorem 1(b)]{Sarnak}, and denoting by $\bar{h}\in \Met_{\Gamma}(M)$ the metric obtained by doubling $(\Omega,h)$ across its totally geodesic boundary, we can regard the double $\bar{g}$ of $g$ to $M$ as a Lipschitz metric in $\Met_{\Gamma}(M)$ conformal to $\bar{h}$, again satisfying
$$\lambda_1(M,\bar{g})=\mu_1(\Omega,g).$$
Approximating $\bar{g}$ by smooth metrics in $\Met_{\Gamma}(M)$, well-known continuity properties of Laplace eigenvalues in the space of Lipschitz metrics imply that $\bar{\lambda}_1(M,\bar{g})\leq \Lambda_1(M,\Gamma),$ whence $2\bar{\mu}_1(\Omega,g)\leq \Lambda_1(M,\Gamma)$ for every $g\in \Met_G(\Omega)$; in other words, $2\mathcal{M}_1(\Omega,G)\leq \Lambda_1(M,\Gamma),$ as desired.
\end{proof}

Similarly, if $(N,\Gamma)$ is a basic reflection pair where $\partial N\neq \varnothing$ and $\Gamma=\langle \tau\rangle\times G$, a fundamental domain $\Omega$ for $\tau$ can be identified conformally with the complement $\Omega=\mathbb{D}\setminus \mathcal{D}$ of a finite, disjoint union of disks $\mathcal{D}$ in the disk $\mathbb{D}$, where $\partial \mathcal{D}$ corresponds to the fixed point set of $\tau$. Given a metric $g$ on $\Omega$, we then define the mixed Steklov-Neumann and Steklov-Dirichlet eigenvalues for $(\Omega,g)$ by
$$
\sigma_1^N(\Omega,g):=\inf\left\{\left.\frac{\int_{\Omega}|d\phi|^2}{\int_{\partial\Omega\setminus \mathcal{D}}\phi^2}\right| \int_{\partial\Omega\setminus\mathcal{D}}\phi=0,\text{ }\phi\neq 0\right\}
$$
and
$$
\sigma_1^D(\Omega,g):=\inf\left\{\left.\frac{\int_{\Omega}|d\phi|^2}{\int_{\partial\Omega\setminus \mathcal{D}}\phi^2}\right| \phi|_{\partial \mathcal{D}}=0,\text{ }\phi\neq 0\right\},
$$
respectively. Similar to the case of Laplace eigenvalues above, it is straightforward to check that for $g\in \Met_G(\Omega)$, doubling $\Omega$ across $\partial \mathcal{D}$ gives a (Lipschitz) metric $\Met_{\Gamma}(N)$ for which
$$\bar{\sigma}_1(N,\bar{g})=2L_g(\partial\Omega\setminus \partial\mathcal{D})\cdot \min\{\sigma_1^N(\Omega,g), \sigma_1^D(\Omega,g)\}.$$
In particular, arguments entirely analogous to the proof of Lemma \ref{double.lem} give the following.

\begin{lemma}\label{stek.double}
Given an oriented basic reflection pair $(N,\Gamma=\langle \tau\rangle \times G)$ with nonempty boundary, and $(\Omega,G)$ the pair corresponding to a fundamental domain for $\tau$, we have
$$\Sigma_1(N,\Gamma)=2\sup\{L_g(\partial\Omega\setminus \partial\mathcal{D})\cdot \min\{\sigma_1^N(\Omega,g), \sigma_1^D(\Omega,g)\}\mid g\in \Met_G(\Omega)\}.$$
\end{lemma}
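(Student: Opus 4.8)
The plan is to follow exactly the same strategy used in the proof of Lemma \ref{double.lem}, transposing each step from the Laplace/Neumann-Dirichlet setting on closed surfaces to the Steklov/mixed setting on surfaces with boundary. The key conceptual point is the same: a $\Gamma$-invariant metric on $N$ (with $\Gamma = \langle\tau\rangle\times G$) is the same thing, after passing to the fundamental domain $\Omega$ of $\tau$, as a $G$-invariant metric on $\Omega$, and under doubling across $\partial\mathcal D$ (the fixed-point set of $\tau$) the Steklov spectrum of $N$ decomposes into the $\tau$-even part, which gives the mixed Steklov-Neumann problem on $\Omega$, and the $\tau$-odd part, which gives the mixed Steklov-Dirichlet problem. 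The displayed identity $\bar\sigma_1(N,\bar g) = 2L_g(\partial\Omega\setminus\partial\mathcal D)\cdot\min\{\sigma_1^N(\Omega,g),\sigma_1^D(\Omega,g)\}$ stated just before the lemma is the analogue of ``$\bar\lambda_1(M,\bar g) = 2\bar\mu_1(\Omega,g)$'' and is what makes the correspondence work; the lemma is then just the statement that taking suprema over the two (identified) spaces of metrics yields the same value.

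Concretely, I would organize the proof in two inequalities. For ``$\leq$'': start with $g\in\Met_G(\Omega)$ having totally geodesic boundary along $\partial\mathcal D$ (note that here one only needs the boundary portion $\partial\mathcal D$ to be totally geodesic, since that is the locus across which we double; the outer boundary $\partial\Omega\setminus\partial\mathcal D$ becomes $\partial N$ and needs no such condition). Doubling produces $\bar g\in\Met_\Gamma(N)$, and the Steklov eigenfunctions of $(N,\bar g)$ split by parity under $\tau$ into $\tau$-even functions (restricting to mixed Steklov-Neumann eigenfunctions on $\Omega$, since the normal derivative of an even function vanishes on the reflection locus) and $\tau$-odd functions (restricting to mixed Steklov-Dirichlet eigenfunctions, since an odd function vanishes on $\partial\mathcal D$), while $L_{\bar g}(\partial N) = 2L_g(\partial\Omega\setminus\partial\mathcal D)$; this gives $\bar\sigma_1(N,\bar g) = 2L_g(\partial\Omega\setminus\partial\mathcal D)\cdot\min\{\sigma_1^N(\Omega,g),\sigma_1^D(\Omega,g)\}$, hence $\sup_{g}\{\cdots\}\le \tfrac12\Sigma_1(N,\Gamma)$ after taking the supremum over such $g$, i.e. $2\sup\{\cdots\}\le\Sigma_1(N,\Gamma)$. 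For the reverse inequality, one must handle an arbitrary $g\in\Met_G(\Omega)$ without a totally geodesic boundary hypothesis: invoke \cite[Theorem 1(b)]{Sarnak} (as in the Laplace case) to write $g = e^{2f}h$ with $h\in\Met_G(\Omega)$ having totally geodesic boundary, double $h$ to $\bar h\in\Met_\Gamma(N)$, and regard $\bar g = e^{2\bar f}\bar h$ as a Lipschitz metric in $\Met_\Gamma(N)$ conformal to $\bar h$. Since the mixed eigenvalues $\sigma_1^N,\sigma_1^D$ on $\Omega$ and the Steklov eigenvalue $\sigma_1$ on $N$ are all conformally covariant in the appropriate bordered sense, the identity $\bar\sigma_1(N,\bar g) = 2L_g(\partial\Omega\setminus\partial\mathcal D)\cdot\min\{\sigma_1^N(\Omega,g),\sigma_1^D(\Omega,g)\}$ persists for this Lipschitz $\bar g$; approximating $\bar g$ by smooth $\Gamma$-invariant metrics and using continuity of Steklov eigenvalues (and of boundary length) in the space of Lipschitz metrics yields $2L_g(\partial\Omega\setminus\partial\mathcal D)\cdot\min\{\sigma_1^N,\sigma_1^D\}\le\Sigma_1(N,\Gamma)$ for every $g\in\Met_G(\Omega)$, and taking the supremum completes the proof.

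The main obstacle — or rather the only point requiring genuine care beyond transcription — is the conformal/continuity step for the Steklov functional. Unlike the closed case, the Steklov quotient involves a boundary integral in the denominator, so one must be careful that the Sarnak-type conformal reduction is compatible with the decomposition of $\partial\Omega$ into the ``reflection part'' $\partial\mathcal D$ and the ``free part'' $\partial\Omega\setminus\partial\mathcal D$, and that the doubling genuinely produces a metric whose Steklov spectrum splits as claimed even in the Lipschitz category. Concretely one should check that a conformal factor $e^{2f}$ with $f$ smooth and $G$-invariant on $\Omega$ doubles to a Lipschitz (indeed continuous, piecewise smooth) function on $N$ — which it does precisely because $h$ has totally geodesic boundary along $\partial\mathcal D$, so the doubled metric $\bar h$ is $C^{1,1}$ there and the doubled conformal factor is Lipschitz — and that the relevant continuity results for Steklov eigenvalues under Lipschitz perturbations of the metric (together with continuity of boundary length) are available; these are the direct analogues of the ``well-known continuity properties of Laplace eigenvalues'' cited in Lemma \ref{double.lem}. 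Since the paper already asserts the key doubling identity in the sentence preceding the lemma and explicitly says the argument is ``entirely analogous'' to that of Lemma \ref{double.lem}, the proof is essentially a two-inequality verification with this one analytic subtlety flagged.
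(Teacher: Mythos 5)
Your proposal is correct and follows precisely the route the paper intends: the paper itself gives no separate proof of Lemma~\ref{stek.double}, merely asserting the doubling identity $\bar\sigma_1(N,\bar g)=2L_g(\partial\Omega\setminus\partial\mathcal D)\cdot\min\{\sigma_1^N,\sigma_1^D\}$ in the preceding sentence and stating that ``arguments entirely analogous to the proof of Lemma~\ref{double.lem} give the following.'' You have carried out exactly that transposition — parity decomposition of the Steklov spectrum under $\tau$, the two-inequality structure, the Sarnak-type conformal reduction to a metric with totally geodesic boundary along $\partial\mathcal D$, and Lipschitz approximation with continuity of Steklov eigenvalues and boundary length — and you correctly flag the one genuine subtlety (compatibility of the conformal reduction with the partition of $\partial\Omega$ into $\partial\mathcal D$ and the outer Steklov boundary), which is also the only place the Steklov case differs materially from the Laplace case.
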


In particular, to obtain the lower bounds in Theorems \ref{s3.lbd} and \ref{main.stek.bds}, it suffices to construct pairs $(\Omega,G)$ as above for which $\bar{\lambda}_1^N$ and $\bar{\lambda}_1^D$ (or $\bar{\sigma}_1^N$ and $\bar{\sigma}_1^D$) are simultaneously large. As we will see in the next sections, the conditions needed to ensure that $(\Omega,G)$ has large Neumann eigenvalue $\bar{\lambda}_1^N$ are quite flexible, while obtaining lower bounds for $\bar{\lambda}_1^D$ is somewhat more delicate.

\section{Neumann estimates}

The goal of this section is to prove the following result.

\begin{prop}
\label{Pneu}
There exist constants $C>0$ and $\epsilon_0> 0$ such that if $X \subset \Sph^2$ is a finite set, $ \{ D_{2r_x}(x) \}_{x \in X}$ is a collection of pairwise disjoint disks, and $\Dcal := \cup_{x \in X} D_{r_x} (x)$ has area at most $\epsilon_0$, then the domain $\Omega : = \Sph^2 \setminus \Dcal$ satisfies
\[
\lambda_1^N(\Omega) |\Omega| \geq 8\pi - C |\Dcal|.
\]
\end{prop}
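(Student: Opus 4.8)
The plan is to bound $\lambda_1^N(\Omega)$ from below by comparing Neumann eigenfunctions on $\Omega$ with functions on the full sphere $\Sph^2$, where $\lambda_1(\Sph^2) = 2$ with eigenspace spanned by the coordinate functions $x_1, x_2, x_3$, and $\lambda_2(\Sph^2) = 6$. The key point is that removing a small-area set of disks perturbs the Neumann spectrum only slightly. First I would set up an extension operator: given $u \in H^1(\Omega)$ with $\int_\Omega u = 0$, extend $u$ to a function $Eu \in H^1(\Sph^2)$ with $\int_{\Sph^2} |dEu|^2 \leq (1 + C|\Dcal|)\int_\Omega |du|^2$ and $\int_{\Sph^2} (Eu)^2 \leq \int_\Omega u^2 + C|\Dcal| \cdot (\text{something controllable})$. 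Because the disks come with a definite "collar" — the disjoint doubled disks $D_{2r_x}(x)$ are available — one can extend $u$ across each $D_{r_x}(x)$ using the values on the annulus $D_{2r_x}(x) \setminus D_{r_x}(x)$, via a standard reflection/harmonic-extension argument on each annulus, with Dirichlet energy on the disk controlled by the energy on the surrounding annulus (uniformly, after rescaling, since $\Sph^2$ is locally Euclidean to bounded distortion at small scales). This is the mechanism that forces the factor $(1 + C|\Dcal|)$ rather than something uncontrolled.

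Next I would use this extension to transfer the variational problem. Let $u$ be a first Neumann eigenfunction on $\Omega$, normalized with $\int_\Omega u = 0$. Its extension $Eu$ need not have mean zero on $\Sph^2$, but $|\int_{\Sph^2} Eu| = |\int_{\Dcal} Eu| \leq |\Dcal|^{1/2} \|Eu\|_{L^2(\Dcal)}$, which is small; subtracting the mean changes the $L^2$ norm only by a correspondingly small amount. Testing the Rayleigh quotient for $\lambda_1(\Sph^2) = 2$ with $Eu - \fint Eu$ then gives
\[
2 \leq \frac{\int_{\Sph^2}|dEu|^2}{\int_{\Sph^2}(Eu - \fint Eu)^2} \leq \frac{(1 + C|\Dcal|)\int_\Omega |du|^2}{(1 - C|\Dcal|)\int_\Omega u^2} = (1 + C'|\Dcal|)\,\lambda_1^N(\Omega),
\]
provided $|\Dcal|$ is below a threshold $\epsilon_0$ ensuring the denominator estimate is valid; this yields $\lambda_1^N(\Omega) \geq 2 - C''|\Dcal|$. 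Combined with $|\Omega| = 4\pi - |\Dcal|$, we get $\lambda_1^N(\Omega)|\Omega| \geq (2 - C''|\Dcal|)(4\pi - |\Dcal|) \geq 8\pi - C|\Dcal|$ after absorbing the quadratic term (again using $|\Dcal| \leq \epsilon_0$ bounded).

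The main obstacle is the extension estimate with the \emph{linear-in-$|\Dcal|$} loss in Dirichlet energy: a naive cutoff extension costs energy proportional to the number of disks times a fixed amount, which is far too lossy, while controlling the energy of the harmonic extension on each annulus $D_{2r_x}(x)\setminus D_{r_x}(x)$ by the energy already present there (and then summing) requires care because the annuli could be thin relative to their radii only if $r_x$ is comparable to the gap — but here the hypothesis that $\{D_{2r_x}(x)\}$ are \emph{pairwise disjoint} gives exactly the fixed conformal modulus (ratio $2$) needed for a uniform annular extension bound. One must also handle the $L^2$ side: the extended function's mass on $\Dcal$ should be controlled by $\|u\|_{L^2}$ on the collar, which is where a trace/Poincaré inequality on the annulus enters, and one should check that the resulting error is genuinely $O(|\Dcal|)\|u\|_{L^2(\Omega)}^2$ rather than $O(|\Dcal|)\|u\|_{L^2(\text{collar})}^2$ with a bad constant — this is fine because the collars are disjoint subsets of $\Omega$. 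A secondary technical point is the uniformity of all annular constants as $r_x \to 0$, which follows from scale-invariance of the Dirichlet energy in two dimensions together with the bounded geometry of $\Sph^2$ at small scales.
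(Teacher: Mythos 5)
The key step in your proposal is the claim that the extension $Eu$ of a first Neumann eigenfunction $u$ on $\Omega$ satisfies
\[
\int_{\Sph^2}|dEu|^2 \leq (1 + C|\Dcal|)\int_\Omega |du|^2,
\]
and this is not established by the argument you sketch, nor is it clear how to establish it at this level of generality. The annular harmonic extension indeed gives $\int_{D_{r_x}}|d(Hu)|^2 \leq C\int_{A_x}|du|^2$ where $A_x = D_{2r_x}(x)\setminus D_{r_x}(x)$, and summing (using disjointness of the annuli) gives $\int_{\Sph^2}|dEu|^2 \leq (1+C)\int_\Omega|du|^2$ for a \emph{fixed} constant $C$, independent of $|\Dcal|$. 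To upgrade this to $(1 + C|\Dcal|)$ one would need to know that the Dirichlet energy of $u$ in the union of collars $\cup_x A_x$ is at most $C|\Dcal|$ times the total energy — but there is no a priori reason for that: the energy of a first Neumann eigenfunction could, in principle, concentrate near the excised disks. A crude extension bound with fixed constant $C$ only yields $\lambda_1^N(\Omega) \geq 2/(1+C)$, not the sharp $2 - O(|\Dcal|)$. Your own discussion identifies this as ``the main obstacle'' but conflates a uniform annular constant (which the modulus-2 hypothesis does provide) with a constant of size $1 + O(|\Dcal|)$; these are different things.

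The paper circumvents this by arguing in the opposite direction: instead of extending the eigenfunction of $\Omega$ to $\Sph^2$, it restricts the coordinate functions from $\Sph^2$ to $\Omega$. Because a coordinate function $x_i$ has $\|x_i\|_{C^1(\Sph^2)} \leq 2$, its Dirichlet energy and $L^2$ mass on $\Dcal$ are automatically $O(|\Dcal|)$ — precisely the linear-in-$|\Dcal|$ control you need, but which is unavailable for an unknown eigenfunction of $\Omega$. It then decomposes $x_i = c + u_{\mathrm{low}} + u_{\mathrm{high}}$ in the Neumann eigenbasis of $\Omega$, uses a separate spectral-gap lemma (that $\lambda_4^N(\Omega) \geq 4$, proved via the harmonic extension operator and the eigenvalue structure of $\Sph^2$) to show $u_{\mathrm{high}}$ has $W^{1,2}$ norm $O(|\Dcal|^{1/2})$, chooses the coordinate function so $u_{\mathrm{low}}$ is a $\lambda_1^N$-eigenfunction, and then closes the estimate. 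If you want to salvage the ``extend the eigenfunction'' direction, you would need some mechanism producing a uniform pointwise or $C^1$ bound on $u$ (so its collar energy is $O(|\Dcal|)$), and that is not available for general domains $\Omega$ of this class.

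Note also a secondary point: your denominator estimate $\int_{\Sph^2}(Eu - \fint Eu)^2 \geq (1 - C|\Dcal|)\int_\Omega u^2$ does go through, since $\bigl(\int_\Dcal Eu\bigr)^2 \leq |\Dcal|\,\|Eu\|_{L^2(\Dcal)}^2 \leq C|\Dcal|\,\|u\|_{W^{1,2}(\Omega)}^2$ and $\|du\|_{L^2}^2$ is comparable to $\|u\|_{L^2}^2$ once a crude lower bound on $\lambda_1^N(\Omega)$ is known, so that side of the computation is not the issue — the gap is concentrated entirely in the numerator estimate.
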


Before giving the proof, we will need the following simple lemma. 

\begin{lemma}
\label{Lneu}
There exists a constant $\epsilon_0>0$ such that if $X, \Dcal$, and $\Omega$ are as in Proposition \ref{Pneu} with $|\Dcal| \leq \epsilon_0$, then $\lambda^N_4(\Omega)\geq 4.$
\end{lemma}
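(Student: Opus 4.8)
The goal is to show that removing a small-area collection of disks from $\Sph^2$ cannot push $\lambda_4^N$ below $4$. The plan is to prove this by contradiction via a compactness/continuity argument. Recall that on the round sphere $\Sph^2$, the spectrum of $\Delta$ begins $0, 2, 2, 2, 6, 6, 6, 6, 6, \dots$, so $\lambda_4(\Sph^2) = 6 > 4$ (here $\lambda_0 = 0$, $\lambda_1 = \lambda_2 = \lambda_3 = 2$). The Neumann eigenvalues $\lambda_k^N(\Omega)$ of $\Omega = \Sph^2 \setminus \Dcal$ should converge to $\lambda_k(\Sph^2)$ as $|\Dcal| \to 0$, and since $6 > 4$, for $|\Dcal|$ small enough we get $\lambda_4^N(\Omega) \geq 4$ (in fact close to $6$). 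The only subtlety is that the number and configuration of the disks is unconstrained, so we need a quantitative estimate that is uniform in $X$ and in the radii, depending only on $|\Dcal|$.

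\textbf{Key steps.} First, I would recall the variational (min-max) characterization
\[
\lambda_4^N(\Omega) = \min_{\substack{V \subset W^{1,2}(\Omega) \\ \dim V = 5}} \; \max_{0 \neq u \in V} \frac{\int_\Omega |\nabla u|^2}{\int_\Omega u^2},
\]
so that a \emph{lower} bound on $\lambda_4^N(\Omega)$ amounts to showing that \emph{every} $5$-dimensional subspace of $W^{1,2}(\Omega)$ contains a function with Rayleigh quotient $\geq 4$. Equivalently, I want to rule out the existence of a $5$-dimensional subspace $V \subset W^{1,2}(\Omega)$ on which the Rayleigh quotient is everywhere $< 4$. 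Second, I would set up the contradiction: suppose there is a sequence of configurations $\Dcal_j$ with $|\Dcal_j| \to 0$ and $\lambda_4^N(\Omega_j) < 4$, with corresponding $L^2(\Omega_j)$-orthonormal Neumann eigenfunctions $u_0^j \equiv \mathrm{const}, u_1^j, \dots, u_4^j$ for eigenvalues $\lambda_0^N \leq \cdots \leq \lambda_4^N < 4$. Extend each $u_i^j$ to a function $\tilde u_i^j \in W^{1,2}(\Sph^2)$ with controlled norm — this is the standard extension across small removed disks, and the key point is that an extension operator $W^{1,2}(\Omega_j) \to W^{1,2}(\Sph^2)$ exists with operator norm bounded independently of the configuration when $|\Dcal_j|$ is small (using that the disks $D_{2r_x}(x)$ are disjoint, so each removed disk $D_{r_x}(x)$ sits in the ``collar'' annulus $D_{2r_x}(x) \setminus D_{r_x}(x)$ of comparable size, allowing a harmonic-type or reflection extension with uniformly bounded Dirichlet energy loss proportional to the energy on the collar). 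Third, with $\int_{\Sph^2} |\nabla \tilde u_i^j|^2 \leq C$ and $\int_{\Sph^2} |\tilde u_i^j|^2$ bounded above and (after subtracting the contribution of $\Dcal_j$, which has small area and on which $|\tilde u_i^j|$ is controlled by the mean value on the collar) bounded below, pass to a subsequence so that $\tilde u_i^j \rightharpoonup v_i$ weakly in $W^{1,2}(\Sph^2)$ and strongly in $L^2(\Sph^2)$ (Rellich). The limits $v_0, \dots, v_4$ are $L^2$-orthonormal on $\Sph^2$ (the $L^2$ inner products over $\Omega_j$ and over $\Sph^2$ differ by $O(|\Dcal_j|^{1/2})$-type terms) and satisfy, by weak lower semicontinuity of the Dirichlet energy,
\[
\int_{\Sph^2} |\nabla v_i|^2 \leq \liminf_j \int_{\Sph^2} |\nabla \tilde u_i^j|^2 = \liminf_j \int_{\Omega_j} |\nabla u_i^j|^2 \leq \liminf_j \lambda_4^N(\Omega_j) \leq 4.
\]
Thus $\mathrm{span}\{v_0, \dots, v_4\}$ is a $5$-dimensional subspace of $W^{1,2}(\Sph^2)$ on which the Rayleigh quotient is $\leq 4 < 6 = \lambda_4(\Sph^2)$, contradicting the min-max characterization of $\lambda_4(\Sph^2)$. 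This contradiction establishes the existence of the desired $\epsilon_0$.

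\textbf{Main obstacle.} The crux is the construction of the extension operator with uniformly bounded norm, \emph{independent of the number of disks and their radii}, using only that $|\Dcal| \leq \epsilon_0$ and the disjointness of the doubled disks $D_{2r_x}(x)$. On a single annulus $D_{2r} \setminus D_r$ one extends a $W^{1,2}$ function to $D_r$ (e.g. by the explicit formula $u(s,\theta) \mapsto u(2r - s, \theta)$ in polar-type coordinates, or by a harmonic replacement matching the trace on $\partial D_r$) with a bound $\int_{D_r} |\nabla (\text{ext})|^2 \leq C \int_{D_{2r} \setminus D_r} |\nabla u|^2 + C r^{-2}\int_{D_{2r}\setminus D_r}|u - \bar u|^2 \le C\int_{D_{2r}\setminus D_r}|\nabla u|^2$ by a scaled Poincaré inequality on the annulus, with $C$ an absolute constant; since the collars $D_{2r_x}(x) \setminus D_{r_x}(x)$ are pairwise disjoint and contained in $\Omega$, summing these local estimates gives a global bound $\int_{\Sph^2}|\nabla\tilde u|^2 \le C\int_\Omega |\nabla u|^2$ with $C$ absolute — this is where disjointness of the \emph{doubled} disks is essential. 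One must also take a little care that, for $|\Dcal|$ small, the curvature of $\Sph^2$ causes only a negligible distortion of these planar annulus estimates, which is routine since all the disks have radius $\lesssim \sqrt{\epsilon_0}$. I expect everything else — the Rellich compactness, the lower semicontinuity, the bookkeeping of $L^2$ norms over $\Omega_j$ versus $\Sph^2$, and the final min-max contradiction — to be standard.
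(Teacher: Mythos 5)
Your approach is genuinely different from the paper's: you argue by compactness and contradiction, whereas the paper gives a direct quantitative estimate. The paper fixes an arbitrary $5$-dimensional $V \subset W^{1,2}(\Omega)$, picks $\phi \in V$ whose harmonic extension $\hat\phi$ is $L^2(\Sph^2)$-orthogonal to constants and coordinates, splits $\hat\phi$ into frequencies above and below a fixed $\Lambda>6$, controls the high part by the extension energy bound and the low part by a $C^1$ estimate together with the smallness of $|\Dcal|$, and lands on $\|\phi\|^2_{L^2(\Omega)} \leq (\tfrac16 + o(1))\|d\phi\|^2_{L^2(\Omega)}$. Both proofs hinge on the same uniformly bounded extension operator, so the key input is shared; the paper's is sharper (an explicit rate in $|\Dcal|$), yours is conceptually lighter.

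However, there are two gaps in the compactness step as written. First, the equality $\liminf_j \int_{\Sph^2}|\nabla\tilde u_i^j|^2 = \liminf_j \int_{\Omega_j}|\nabla u_i^j|^2$ is false: the extension contributes a strictly positive Dirichlet energy inside $\Dcal_j$, bounded only by $C$ times the collar energy, and this extra term does \emph{not} tend to zero with $|\Dcal_j|$. What \emph{is} true, and suffices, is $\int_{\Sph^2}|\nabla v_i|^2 \leq \liminf_j \int_{\Omega_j}|\nabla u_i^j|^2$; to get it one should not apply lower semicontinuity on all of $\Sph^2$ but rather observe that $\chi_{\Omega_j}\nabla\tilde u_i^j \rightharpoonup \nabla v_i$ weakly in $L^2(\Sph^2)$ (the missing piece $\chi_{\Dcal_j}\nabla\tilde u_i^j$ tends weakly to $0$ since $|\Dcal_j|\to0$ and the gradients are $L^2$-bounded), then use lower semicontinuity of the $L^2$-norm. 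Second, bounding the Rayleigh quotient of each basis vector $v_i$ by $4$ does \emph{not} bound it on the whole span: the off-diagonal terms $\int_{\Sph^2}\langle\nabla v_i,\nabla v_k\rangle$ could a priori make the quotient large. This is repaired by using the $\Omega_j$-orthogonality of the eigenfunctions in the Dirichlet form: for any unit coefficient vector $(c_i)$, set $w_j=\sum_i c_iu_i^j$, note $\int_{\Omega_j}|\nabla w_j|^2=\sum_i c_i^2\lambda_i^N(\Omega_j)\leq 4$, and apply the corrected lower semicontinuity to $w_j$; this shows the Rayleigh quotient is $\leq 4$ on the entire limit span, and only then does the contradiction with $\lambda_4(\Sph^2)=6$ go through. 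With these two repairs your argument is sound.
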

\begin{proof}
First, consider the harmonic extension operator 
\[
W^{1,2}(D_2\setminus D_1)\to W^{1,2}(D_1),
\]
where here $D_r$ is the disk of radius $r$ in $\R^2$.  After rescaling and applying the result to the annuli $D_{2r_x}(x)\setminus D_{r_x}(x) \subset \Sph^2, x \in X$, we observe that the harmonic extension operator
$$H: W^{1,2}(\Omega)\to W^{1,2}(\Sph^2)$$
satisfies an estimate of the form
\begin{equation}\label{h.est}
\|d(H\phi)\|_{L^2(\Sph^2)}\leq C\|d\phi\|_{L^2(\Omega)}
\end{equation}
for a universal constant $C.$

Now, let $V\subset W^{1,2}(\Omega)$ be any $5$-dimensional subspace.  Because the coordinate and constant functions span a $4$-dimensional subspace of $L^2(\Sph^2)$, there exists a nonzero $\phi\in V$ such that $\hat{\phi} :=H\phi$ is $L^2(\Sph^2)$-orthogonal to the coordinate and constant functions. In particular, we can decompose
$$\hat{\phi}=\sum_{j=4}^{\infty}a_j u_j,$$
where $a_j\in \mathbb{R}$ and $u_j$ a $\lambda_j$-eigenfunction for $\Delta_{\Sph^2}$ with $\langle u_j,u_i\rangle_{L^2(\Sph^2)}=\delta_{ij}$. Fix an arbitrary $\Lambda>6$, and write
$$\hat{\phi}_{\Lambda}:=\sum_{\lambda_j<\Lambda}a_ju_j.$$
For the high frequency projection $\hat{\phi}-\hat{\phi}_{\Lambda}$, it then follows by \eqref{h.est} that
$$\|\hat{\phi}-\hat{\phi}_{\Lambda}\|_{L^2(\Sph^2)}^2\leq \frac{1}{\Lambda}\|d\hat{\phi}\|_{L^2(\Sph^2)}^2\leq \frac{C}{\Lambda}\|d\phi\|_{L^2(\Omega)}^2.$$
 In particular, we see that
$$\|\phi\|_{L^2(\Omega)}^2\leq \|\hat{\phi}\|_{L^2(\Sph^2)}^2\leq \|\hat{\phi}_{\Lambda}\|_{L^2(\Sph^2)}^2+\frac{C}{\Lambda}\|d\phi\|_{L^2(\Omega)}^2,$$
and since $\hat{\phi}_{\Lambda}$ is a sum of eigenfunctions with eigenvalue $\geq \lambda_4(\Sph^2)=6$, it follows that
\begin{align}
\label{Ephiup}
\|\phi\|_{L^2(\Omega)}^2\leq \frac{1}{6}\|d\hat{\phi}_{\Lambda}\|_{L^2(\Sph^2)}^2+\frac{C}{\Lambda}\|d\phi\|_{L^2(\Omega)}^2.
\end{align}

On the other hand, for the low frequency component $\hat{\phi}_{\Lambda}$, we have a $C^1$ estimate of the form
$$\|\hat{\phi}_{\Lambda}\|_{C^1(\Sph^2)}\leq C(\Lambda)\|d\hat{\phi}_{\Lambda}\|_{L^2(\Sph^2)}\leq C'(\Lambda)\|d\phi\|_{L^2(\Omega)},$$ 
where the second inequality uses \eqref{h.est}, so that
\begin{eqnarray*}
\int_{\Sph^2}|d\hat{\phi}_{\Lambda}|^2&=&\int_{\Sph^2}\langle d\hat{\phi},d\hat{\phi}_{\Lambda}\rangle\\
&\leq &\int_{\Omega}\langle d\phi,d\hat{\phi}_{\Lambda}\rangle+C'(\Lambda)\|d\hat{\phi}\|_{L^2(\Sph^2)}|\mathcal{D}|^{1/2}\|d\phi\|_{L^2(\Omega)},
\end{eqnarray*}
and therefore
$$\|d\hat{\phi}_{\Lambda}\|_{L^2(\Sph^2)}^2\leq (1+C''(\Lambda)|\mathcal{D}|^{1/2})\|d\phi\|_{L^2(\Omega)}^2.$$
Combining this with the upper bound \eqref{Ephiup} for $\|\phi\|_{L^2(\Omega)}^2$, we see that
$$\|\phi\|_{L^2(\Omega)}^2\leq \left(\frac{1}{6}+C(\Lambda)|\mathcal{D}|^{1/2}+\frac{C}{\Lambda}\right)\|d\phi\|_{L^2(\Omega)}^2.$$
Now, fix $\Lambda$ large enough such that $\frac{C}{\Lambda}<\frac{1}{100}$, and choose $\epsilon_0> 0$ small enough that $|\Dcal| < \epsilon_0$ implies $C(\Lambda)|\mathcal{D}|^{1/2}<\frac{1}{100}$. Then
$$\|\phi\|_{L^2(\Omega)}^2\leq \left(\frac{1}{6}+\frac{1}{50}\right)\|d\phi\|_{L^2(\Omega)}^2\leq \frac{1}{4}\|d\phi\|_{L^2(\Omega)}^2,$$
and since $V\subset W^{1,2}(\Omega)$ was an arbitrary $5$-dimensional space of functions, it follows that $\lambda^N_4(\Omega)\geq 4.$
\end{proof}

We are now ready to prove Proposition \ref{Pneu}.
\begin{proof}[Proof of Proposition \ref{Pneu}]
Let $u$ be a coordinate function on $\Sph^2$.  Because $u$ is a $\Delta_{\Sph^2}$-eigenfunction with eigenvalue $2$, we have
$\int_{\Sph^2} \langle d u_i , d w \rangle - 2u_i w = 0$ for any test function $w \in W^{1, 2}(\Sph^2)$.  Fixing $\phi \in W^{1,2}(\Omega)$ and using the harmonic extension $\hat{\phi} = H\phi$ as a test function shows that
\begin{equation}
\label{Ecoordt}
\begin{aligned}
\left|\int_{\Omega} \langle d u, d \phi \rangle - 2u \phi \right| &= \left| \int_{\Sph^2 \setminus \Omega} \langle d u, d \hat{\phi} \rangle - 2 u \hat{\phi}\right|\\
&\leq C\| u \|_{C^1(\Sph^2)} |\Dcal |^{1/2} \| \phi \|_{W^{1,2}(\Omega)}
\\
&\leq C |\Dcal|^{1/2} \| \phi\|_{W^{1,2}(\Omega)}. 
\end{aligned}
\end{equation}
Since $\int_{\mathbb{S}^2}u=0$ and $|u|\leq 1$, we must have $|\int_{\Omega}u|=|\int_{\mathcal{D}}u|\leq |\mathcal{D}|,$ and it follows that we can write
\begin{align*}
u =c+ \ulow + \uhigh,
\end{align*}
where $c$ is a constant with $|c|\leq C|\mathcal{D}|$, $\ulow$ is in the span of the first three $L^2$-normalized nonconstant Neumann eigenfunctions of $\Omega$ and $\uhigh$ is a sum of eigenfunctions with eigenvalue $\geq \lambda^N_4(\Omega) \geq 4$, where the second inequality follows from Lemma \ref{Lneu}.   Testing \eqref{Ecoordt} with $\phi = \uhigh$ gives an estimate of the form
\begin{align*}
\int_{\Omega} | d\uhigh |^2 - 2\uhigh^2 \leq C | \Dcal|^{1/2} \| \uhigh \|_{W^{1,2}(\Omega)},
\end{align*}
which together with the lower bound 
\begin{align*}
\int_\Omega |d \uhigh |^2 \geq 4 \int_\Omega \uhigh^2
\end{align*}
implies that 
\begin{align}
\label{EvL}
\| \uhigh \|_{W^{1,2}(\Omega)} \leq C | \Dcal|^{1/2}.
\end{align}

Next, note that the restrictions of the coordinate functions to $\Omega$ are linearly independent, so span a $3$-dimensional subspace of $W^{1,2}(\Omega)$; we may therefore choose $u$ so that $\ulow$ is a $\lambda^N_1(\Omega)$-eigenfunction.
Moreover, using that $\|u\|^2_{L^2(\Omega)} \geq \frac{4\pi}{3} - | \Dcal|$, $c^2\leq C|\mathcal{D}|^2$, and $\| \uhigh \|^2_{L^2(\Omega)} \leq C |\Dcal |$ by \eqref{EvL}, it follows that 
\begin{align}
\label{Ealow}
\| \ulow \|^2_{L^2(\Omega)} \geq \frac{4\pi}{3}- C | \Dcal|.
\end{align}
Finally, note that
\begin{align*}
 |\lambda_1^N(\Omega) - 2 | \| \ulow\|^2_{L^2(\Omega)}&= \left| \int_{\Omega} \langle du , d\ulow\rangle - 2 u \ulow\right|\\
&=  \left| 2c^2|\Omega|+\int_\Omega  | du|^2 - 2u^2 - \int_\Omega \langle du , d\uhigh \rangle - 2u \uhigh \right|\\
&\leq  \left| \int_\Dcal |du|^2 - 2u^2\right| +C|\Dcal|^2+ C | \Dcal|^{1/2} \| \uhigh\|_{W^{1,2}(\Omega)},
\end{align*}
where in the last line we've used that $u$ is an eigenfunction with eigenvalue $2$ and used the estimate \eqref{Ecoordt} with $\phi = \uhigh$. Finally, using \eqref{EvL} and the bound $\| u \|_{C^1(\Sph^2)} \leq 2$, it follows that
\begin{align*}
\| \ulow\|^2_{L^2(\Omega)} | \lambda_1^N(\Omega) - 2 | \leq C | \Dcal|,
\end{align*}
which together with the lower bound \eqref{Ealow} completes the proof. 
\end{proof}

\section{Dirichlet lower bounds}

Next, we collect some computational tools useful for estimating the first Dirichlet eigenvalue $\lambda_1^D$ of domains in $\mathbb{S}^2$.

\begin{lemma}
\label{Lmono}
Let $A_{r, R}: = D_R(0) \setminus D_r(0)$ be an annulus in $\R^2$.  There is a universal constant $C$ such that if $u \in W^{1,2}(A_{r, R})$, then
\begin{enumerate}[label=\emph{(\roman*)}]
\item $\left| \frac{d}{ds}\left(\frac{1}{s} \int_{\partial D_s} u^2\right)^{1/2} \right| \leq \left(\frac{1}{s} \int_{\partial D_s} |\frac{\partial u}{\partial \nu} |^2 \right)^{1/2}$ for a.e. $s \in (r, R)$. 
\end{enumerate}
If moreover $u|_{\partial D_r} = 0$, then 
\begin{enumerate}
\item[\emph{(ii)}] $\| u\|^2_{L^2(\partial D_s)} \leq C R\log(R/r) \| du \|^2_{L^2(A_{r, R})}$ for a.e. $s \in (r,R)$.
\item[\emph{(iii)}] $\| u\|^2_{L^2(A_{r, R})} \leq C R^2\log (R/r) \|du\|^2_{L^2(A_{r, R})}$.
\end{enumerate}
The same estimates in (ii) and (iii) hold with a modified constant $C$ for any family of domains uniformly bi-Lipschitz to $A_{r,R}$.  

Finally, if $u \in W^{1,2}(A^+_{r, R})$ and $u|_{\partial D^+_r} = 0$, where $A^+_{r, R} : = A_{r, R} \cap \R^2_+$ and  $D^+_s := D_s \cap \R^2_+$, then (ii)-(iii) hold with $A_{r, R}$ and $\partial D_s$ replaced by $A^+_{r, R}$ and $\partial D^+_s$, respectively. 
\end{lemma}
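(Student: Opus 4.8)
The statement is a collection of elementary one-dimensional Hardy/Poincaré-type inequalities adapted to annuli, so the strategy is to reduce everything to estimates on circles $\partial D_s$ parametrized by the radius $s$ and then integrate in $s$. First I would prove (i): writing $f(s) := \left(\frac1s\int_{\partial D_s} u^2\right)^{1/2}$, note that $\frac1s\int_{\partial D_s} u^2 = \int_0^{2\pi} u(s,\theta)^2\,d\theta$ in polar coordinates, so $f(s)^2 = \|u(s,\cdot)\|_{L^2(S^1)}^2$. Differentiating under the integral sign (justified for a.e. $s$ by the $W^{1,2}$ regularity and Fubini, after mollification), one gets $2 f(s) f'(s) = 2\int_0^{2\pi} u\, \partial_s u\, d\theta$, and Cauchy--Schwarz gives $|f'(s)| \le \|\partial_s u(s,\cdot)\|_{L^2(S^1)} \le \|\partial_\nu u(s,\cdot)\|_{L^2(S^1)} = \left(\frac1s\int_{\partial D_s}|\partial_\nu u|^2\right)^{1/2}$, since the radial derivative $\partial_s u$ is one component of the full gradient and $|\partial_s u| \le |\partial_\nu u| = |du|$ pointwise. (Here I am using $\partial_\nu$ loosely for the radial derivative; if the paper means the outward normal on $\partial D_s$ this is exactly $\partial_s u$.) Some care is needed to make ``for a.e. $s$'' rigorous, but this is a standard absolutely-continuous-on-lines argument.

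For (ii), assume $u|_{\partial D_r} = 0$, so $f(r) = 0$. By (i) and Cauchy--Schwarz in $s$,
\[
f(s) = \int_r^s f'(t)\,dt \le \int_r^R \left(\frac1t\int_{\partial D_t}|\partial_\nu u|^2\right)^{1/2} dt \le \left(\int_r^R \frac{dt}{t}\right)^{1/2}\left(\int_r^R \int_{\partial D_t}|\partial_\nu u|^2\,dt\right)^{1/2},
\]
and $\int_r^R\int_{\partial D_t}|\partial_\nu u|^2\,dt \le \int_{A_{r,R}} |du|^2 = \|du\|_{L^2(A_{r,R})}^2$ (integrating the radial-derivative contribution over the annulus in polar coordinates $da = t\,dt\,d\theta$, which exactly cancels the $\frac1t$ weight). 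Hence $f(s)^2 \le \log(R/r)\,\|du\|_{L^2(A_{r,R})}^2$, i.e. $\frac1s\|u\|_{L^2(\partial D_s)}^2 \le \log(R/r)\|du\|^2$, and since $s \le R$ we conclude $\|u\|_{L^2(\partial D_s)}^2 \le R\log(R/r)\|du\|_{L^2(A_{r,R})}^2$, which is (ii). Statement (iii) then follows by integrating (ii) over $s \in (r,R)$ in the measure $s\,ds$: $\|u\|_{L^2(A_{r,R})}^2 = \int_r^R \|u\|_{L^2(\partial D_s)}^2\,ds \le R\log(R/r)\|du\|^2\cdot(R-r) \le R^2\log(R/r)\|du\|_{L^2(A_{r,R})}^2$.

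The bi-Lipschitz remark follows because both sides of (ii)--(iii) are controlled up to multiplicative constants under a bi-Lipschitz change of coordinates: the Dirichlet energy, the $L^2$ norms on $\partial D_s$ (after foliating the image domain by the images of the circles, whose lengths are comparable to those of $\partial D_s$), and the $L^2$ norm on the domain all transform with bounded distortion, so the inequalities persist with $C$ replaced by a constant depending only on the bi-Lipschitz constant. Finally, the half-annulus version $A_{r,R}^+$ is handled by the identical argument: the only change is that the angular integral runs over a half-circle rather than a full circle, and the vanishing hypothesis $u|_{\partial D_r^+} = 0$ gives $f(r) = 0$ exactly as before; no boundary term appears from the two straight edges $\{\theta = 0\}$, $\{\theta = \pi\}$ because we never integrate by parts in $\theta$. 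I expect the main obstacle to be purely technical: justifying the differentiation in (i) and the ``for a.e. $s$'' statements carefully, i.e. showing $s \mapsto f(s)$ is absolutely continuous with the stated derivative bound, which requires a mollification/approximation argument since $u$ is only $W^{1,2}$; everything after that is a bookkeeping exercise with Cauchy--Schwarz and Fubini.
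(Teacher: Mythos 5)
Your proof is correct and takes essentially the same approach as the paper: both set $Q(s)=\tfrac1s\int_{\partial D_s}u^2$ (you work with $f=Q^{1/2}$ directly), differentiate in $s$, apply Cauchy--Schwarz on the circle to get (i), integrate in $s$ with a second Cauchy--Schwarz to get (ii), and integrate once more to get (iii). The only small divergence is at the half-annulus: the paper reduces to the full-annulus case by even reflection across the diameter, whereas you rerun the same argument directly over half-circles, noting that no boundary terms arise on the radial edges; both are equally valid.
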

\begin{proof}
Set $Q(s)= \frac{1}{s} \int_{\partial D_s} u^2$.  Calculating and estimating shows that
\begin{align}
\label{ineq:lvlsetder}
Q'(s) = \frac{2}{s} \int_{\partial D_s} u \frac{\partial u}{\partial \nu} 
\leq 2 Q(s)^{1/2} \left(\frac{1}{s}\int_{\partial D_s}\left|\frac{\partial u}{\partial \nu}\right|^2\right)^{1/2},
\end{align}
where the inequality follows from the Cauchy-Schwarz inequality.  Item (i) follows.  Item (ii) follows from integrating the inequality in (i) over $[r, R]$, applying the Cauchy-Schwarz inequality to the right-hand side, and squaring, and item (iii) follows by integrating the inequality in (ii) over $[r, R]$.  

Finally, the estimates on the half-annulus $A^+_{r, R}$ follow from (i)-(iii) by a simple reflection argument. 
\end{proof}

\begin{lemma}
\label{Lsphlower}
If $X \subset \Sph^2$ is a finite set and $R>0$ is such that
\begin{enumerate}[label=\emph{(\alph*)}]
\item the disks $\{ D_R(x)\}_{x \in X}$ are pairwise disjoint and 
\item the disks $\{ D_{6R}(x)\}_{x \in X}$ cover $\Sph^2$,
\end{enumerate}
then for any $r<R/2$,  $\lambda^D_1(\Omega)^{-1} \leq C R^2 \log (R/ r)$, where $\Omega = \Sph^2 \setminus \cup_{x \in X} D_r(x)$. 
\end{lemma}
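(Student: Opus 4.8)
The plan is to bound $\lambda_1^D(\Omega)^{-1}$ from above by exhibiting, for an arbitrary $u \in W^{1,2}_0(\Omega)$ (i.e.\ $u$ vanishing on $\bigcup_{x\in X} \partial D_r(x)$), an inequality $\|u\|_{L^2(\Omega)}^2 \leq CR^2\log(R/r)\|du\|_{L^2(\Omega)}^2$, which is exactly a Poincar\'e inequality on $\Omega$ with the quantitative constant claimed. The natural strategy is to cover $\Sph^2$ by the disks $\{D_{6R}(x)\}_{x\in X}$ provided by hypothesis (b), and to estimate the contribution of $u$ on each piece of this cover using the annular estimates of Lemma \ref{Lmono}. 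The key point is that on each $D_{6R}(x)$, after removing the small hole $D_r(x)$, one has (a region bi-Lipschitz to) an annulus $A_{r,6R}$ on which $u$ vanishes on the inner boundary, so Lemma \ref{Lmono}(iii) gives $\|u\|_{L^2(A_{r,6R})}^2 \leq C R^2\log(6R/r)\|du\|_{L^2(A_{r,6R})}^2 \leq C R^2 \log(R/r)\|du\|_{L^2}^2$ (absorbing the constant, using $r < R/2$ so $\log(6R/r)$ and $\log(R/r)$ are comparable).

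First I would make precise the bi-Lipschitz comparison: for $R$ smaller than some absolute constant, the geodesic disk $D_{6R}(x)\subset\Sph^2$ with the concentric hole $D_r(x)$ removed is uniformly bi-Lipschitz to the Euclidean annulus $A_{r,6R}\subset\R^2$, with bi-Lipschitz constant bounded independently of $r$, $R$, $x$; this is where the hypothesis that $R$ be bounded (implicit, since the disks $D_{6R}(x)$ are honest disks covering $\Sph^2$ forces $R$ to be at most a universal constant) gets used, and it lets us invoke the last sentence of Lemma \ref{Lmono} about families of domains bi-Lipschitz to $A_{r,R}$. Then for each $x \in X$ I get $\int_{D_{6R}(x)\setminus D_r(x)} u^2 \leq CR^2\log(R/r)\int_{D_{6R}(x)} |du|^2$.

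Next I would sum over $x \in X$. Since the $D_{6R}(x)$ cover $\Sph^2 \supset \Omega$, the left-hand sides sum to at least $\|u\|_{L^2(\Omega)}^2$ (each point of $\Omega$ lies in $D_{6R}(x)$ for some $x$, and lies outside the removed hole $D_r(x)$ for that same $x$ — note $\Omega$ omits $D_r(x)$, so $u$ restricted to $D_{6R}(x)\cap\Omega$ is supported in $D_{6R}(x)\setminus D_r(x)$). On the right, the sum $\sum_x \int_{D_{6R}(x)}|du|^2 \leq N \|du\|_{L^2(\Sph^2)}^2$ where $N$ is the maximal number of the $D_{6R}(x)$ overlapping at a point; the crucial observation is that $N$ is bounded by a universal constant, which follows from hypothesis (a) that the $D_R(x)$ are pairwise disjoint — a standard packing/volume argument: if $m$ disks $D_{6R}(x_i)$ all contain a common point $p$, then all the disjoint disks $D_R(x_i)$ lie in $D_{7R}(p)$, and comparing areas on $\Sph^2$ (again using $R$ bounded so areas are comparable to Euclidean) gives $m \cdot cR^2 \leq C R^2$, hence $m \leq C/c$. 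Combining, $\|u\|_{L^2(\Omega)}^2 \leq CR^2\log(R/r)\|du\|_{L^2(\Omega)}^2$, which is the claim.

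The main obstacle, such as it is, is bookkeeping rather than conceptual: one must make sure the bi-Lipschitz constant in the annulus comparison, the overlap multiplicity $N$, and the ratio between spherical and Euclidean areas are all controlled by genuinely universal constants (independent of $r,R,X$), which requires noting at the outset that hypotheses (a)--(b) force $R$ to lie below an absolute threshold. Granting that, the estimate is an entirely standard chaining of Lemma \ref{Lmono}(iii) over a bounded-multiplicity cover. One minor point to double-check is the case where a hole $D_r(x)$ might poke outside $D_{6R}(x)$ or overlap a neighbor's disk — but $r < R/2 < 6R$ handles the first, and in any event we only ever use $u|_{\partial D_r(x)} = 0$ on the relevant annular piece, so overlaps of the $D_{6R}(x)$ among themselves are harmless.
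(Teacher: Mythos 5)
Your argument is correct, and the overall strategy coincides with the paper's: localize to annular neighborhoods of each $x\in X$, apply Lemma~\ref{Lmono}(iii) on each, and sum. The implementation of the localization differs, though. The paper uses the Voronoi tessellation $\{V_x\}_{x\in X}$ of $\Sph^2$ with respect to $X$; hypothesis (a) gives $D_R(x)\subset V_x$ while hypothesis (b) gives $V_x\subset D_{6R}(x)$, and the key geometric claim is that each $\Omega\cap V_x$ is uniformly bi-Lipschitz to $A_{r,R}$. Because the $V_x$ form an honest partition, the local estimates sum without any overlap bookkeeping. You instead work directly with the overlapping cover $\{D_{6R}(x)\}$ and compensate on the right-hand side by a bounded-multiplicity (packing) argument, which is exactly where hypothesis (a) enters your version. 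The two approaches trade off in a minor way: the Voronoi route avoids the multiplicity count entirely but requires checking that a (possibly irregular) convex Voronoi cell with a hole is uniformly bi-Lipschitz to a Euclidean annulus; your route uses only round annuli, for which the bi-Lipschitz comparison is immediate, at the cost of the (routine) packing lemma. Both handle the minor technicalities — the extension of $u$ by zero across the other holes $D_r(y)$, $y\neq x$, lying inside $D_{6R}(x)$, and the implicit universal upper bound on $R$ coming from (a) and (b) — correctly, and both yield the stated constant.
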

\begin{proof}
Let $\{V_x\}_{x \in X}$ be the Voronoi tesselation with respect $X$, defined by 
\begin{align*}
V_x = \{y \in \Sph^2 : d(y, x) \leq d(y, x') \,  \, \forall  x' \in X, x'\neq x\}.
\end{align*}
for each $x \in X$.  As long as $|X| \geq 2$, note each $V_x$ is convex and satisfies $D_{R}(x) \subset V_x \subset D_{6R}(x)$.  It is not difficult to see that each $\Omega \cap V_x$, $x\in X$ is bi-Lipschitz to the annulus $A_{r, R}$, with Lipschitz constants independent of $x, r, R$.

Consequently, if $u \in W^{1,2}_0(\Omega)$, Lemma \ref{Lmono} implies for each $x \in X$ that
\begin{align*}
\| u \|^2_{L^2(\Omega \cap V_x)} \leq C R^2 \log (R/r) \| du \|^2_{L^2(\Omega \cap V_x)},
\end{align*}
and summing over $i$ gives
\begin{align*}
\| u \|^2_{L^2(\Omega)} \leq C R^2 \log (R/r) \| du \|^2_{L^2(\Omega)},
\end{align*}
proving the desired estimate. 
\end{proof}

\begin{lemma}
\label{Lmonobd}
Let $K \subset \Sph^2$ be a convex polygonal domain, $X\subset \partial K$ be a finite subset, and $0<R<R_0(K)$ be such that 
\begin{enumerate}[label=\emph{(\alph*)}]
\item $d(x, y) \geq R$ for distinct $x,y \in X$; and 
\item The disks $\{ D_{10R}(x)\}_{x \in X}$ cover $\partial K$.
\end{enumerate}
For any $r < R/10$, let $U = K \setminus D_r(X)$, let $E_{t} : = U \cap D_{20t}(\partial K)$ be a tubular neighbourhood of $\bd K$ in U, and $F_{t} : = U \cap \partial D_{20t}(\partial K)$ be its interior boundary.  If $u \in W^{1,2}(U)$ and $u|_{\partial D_r(X)} = 0$, there exists a constant $C=C_K$ depending only on $K$ such that
\begin{enumerate}[label=\emph{(\roman*)}]
\item $\| u\|^2_{L^2(F_R)} \leq C R \log (R/r) \| du \|^2_{L^2(E_R)}$,
\item $\| u\|^2_{L^2(\partial K)} \leq C R \log (R/r) \| du \|^2_{L^2(E_R)}$,
\item $\|u\|^2_{L^2(E_R)} \leq C R^2 \log (R/r) \| du \|^2_{L^2(E_R)}$. 
\end{enumerate}
\begin{proof}
If $u \in W^{1, 2}(A^+_{r, R})$ and $u = 0$ on $\partial D^+_r$, then Lemma \ref{Lmono} implies
\begin{align}
\label{Eestp}
\| u\|^2_{L^2(\partial D^+_s)}  \leq C R \log (R/r) \| du \|^2_{L^2(A^+_{r, R})}
\end{align}
for a.e. $s\in [r,R]$. By the hypothesis of the lemma, $E_{R}=U\cap D_{20R}(\partial K)$ can be decomposed into a union $\cup_{x \in X} E_{R, x}$, where $E_{R, x}=E_R\cap V_x$ is uniformly bi-Lipschitz to $A^+_{r, R}$, so if $u \in W^{1,2}(U)$ with $u |_{ \partial U \setminus \partial K} = 0$, we may apply \eqref{Eestp} on each $E_{R, x}$ to see that 
\begin{align*}
\| u\|^2_{L^2(\partial E_{R, x} \setminus \partial K)} \leq C R \log (R/r) \| u\|^2_{L^2( E_{R, x})}.
\end{align*}
Since $F_R \subset \cup_{x \in X} \partial E_{R, x} \setminus \partial K$, summing over $x \in X$ then gives (i).  

Similarly to the proof of~\eqref{ineq:lvlsetder}, we obtain that for $s<R_0(K)$ sufficiently small 
\[
\left|\frac{d}{ds}\left(\int_{F_s} u^2\right)^{1/2}\right|\leq C\left(\int_{F_s} |du|^2\right)^{1/2}
\]
Integrating on $s\in [0,R]$, rearranging and taking squares yields
\begin{equation}
\label{ineq:rect_der}
 \int_{\partial K} u^2\leq C R \int_{E_R} |du|^2 + C\int_{F_R} u^2,
\end{equation}
which together with the estimate in (i) proves the estimate (ii).  Similarly, item (iii) follows from Lemma \ref{Lmono}.
\end{proof}
\end{lemma}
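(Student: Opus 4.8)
The plan is to reduce all three estimates to the half-annulus inequalities of Lemma~\ref{Lmono}, by cutting the tubular neighborhood $E_R$ along the Voronoi tessellation of $X$ and using the fact that a fixed convex polygon looks self-similar, up to bi-Lipschitz maps, near any of its boundary points. The estimate (ii) will be obtained by combining (i) with a separate ``rectangular-slice'' inequality along $\partial K$.

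First I would introduce the Voronoi cells $\{V_x\}_{x\in X}$, exactly as in the proof of Lemma~\ref{Lsphlower}. Hypothesis (a) forces the disks $D_r(x)$ to be pairwise disjoint and contained in $\mathrm{int}(V_x)$, and gives $D_{R/2}(x)\subset V_x$; hypothesis (b) gives $\partial K\subset\bigcup_x D_{10R}(x)$, hence $E_R=U\cap D_{20R}(\partial K)\subset\bigcup_x D_{30R}(x)$. Consequently $E_R$ is the essentially disjoint union $\bigcup_{x\in X}E_{R,x}$ with $E_{R,x}:=E_R\cap V_x$, and one has the inclusions $(D_{R/2}(x)\setminus D_r(x))\cap K\subset E_{R,x}\subset(D_{30R}(x)\setminus D_r(x))\cap K$. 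The geometric heart of the argument is then the claim that, with constants depending only on $K$, each $E_{R,x}$ is bi-Lipschitz to the half-annulus $A^+_{r,R}$. Away from the finitely many vertices of $K$ this is clear: after a $K$-independent bi-Lipschitz straightening of $\partial K$ near $x$, the set $K\cap V_x$ near $x$ is a wedge of opening $\approx\pi$, $D_r(x)$ is removed from its tip, and the outer boundary (pieces of $F_R$ and of the Voronoi edges $\partial V_x$) is, at scale between $R/2$ and $30R$, a Lipschitz curve surrounding the tip; such a wedge-annulus is bi-Lipschitz to $A^+_{r,R}$ with uniform constants, absorbing $\log(30R/r)\le C\log(R/r)$ (valid since $R/r\ge 10$) into the constant. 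At a vertex of $K$ the wedge opening equals the interior angle of $K$ there, one of finitely many fixed values, so the same conclusion holds with constants depending only on $K$.

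Granting this, I would derive (iii) by applying Lemma~\ref{Lmono}(iii) to $u$ on each $E_{R,x}$ --- legitimate since $u=0$ on the inner boundary $\partial D_r(x)\cap U$ --- and summing over $x$ using that the $E_{R,x}$ are disjoint; and (i) by applying Lemma~\ref{Lmono}(ii) on each $E_{R,x}$ to the circular slice corresponding to $F_R\cap V_x$ and summing, using $F_R\subset\bigcup_x(\partial E_{R,x}\setminus\partial K)$. For (ii), the half-annulus estimate does not see the flat side $\partial K$, so I would instead pass to Fermi coordinates $(t,\rho)$ on a tubular neighborhood of $\partial K$ (arclength along $\partial K$ and distance to $\partial K$) and write $F_s=U\cap\{\rho=20s\}$. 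Differentiating $s\mapsto\big(\int_{F_s}u^2\big)^{1/2}$ and applying Cauchy--Schwarz, just as in the derivation of~\eqref{ineq:lvlsetder}, gives $\big|\frac{d}{ds}(\int_{F_s}u^2)^{1/2}\big|\le C_K(\int_{F_s}|du|^2)^{1/2}$ for a.e. small $s$; there is no boundary term, since $F_s$ is a closed curve when $20s\ge r$ and otherwise has its endpoints on $\partial D_r(X)$, where $u$ vanishes. Integrating over $s\in[0,R]$, using the trace continuity $\int_{F_s}u^2\to\int_{\partial K}u^2$ as $s\to 0^+$, Cauchy--Schwarz, and the coarea formula, one obtains $\int_{\partial K}u^2\le C_KR\int_{E_R}|du|^2+C_K\int_{F_R}u^2$, and combining this with (i) yields (ii).

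The step I expect to be the main obstacle is making the ``uniformly bi-Lipschitz to $A^+_{r,R}$'' claim fully rigorous and genuinely uniform in $r$ and $R$: one must check that the relevant arcs of $\partial V_x$, of $\partial D_{20R}(\partial K)$, and of $\partial K$ assemble into a Lipschitz boundary with $r,R$-independent constants (using $R<R_0(K)$ so that all these curves are nearly straight at scale $R$), and one must control the Fermi-coordinate distortion near the vertices of $K$. Once this local geometric picture is pinned down, the rest is routine bookkeeping of the estimates in Lemma~\ref{Lmono} together with the scaling relation $\log(cR/r)\le C\log(R/r)$.
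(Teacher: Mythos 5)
Your proposal is correct and follows essentially the same route as the paper: reduce (i) and (iii) to the half-annulus estimates of Lemma~\ref{Lmono} by cutting $E_R$ along the Voronoi cells of $X$ and using the bi-Lipschitz model $A^+_{r,R}$, then obtain (ii) by a level-set differentiation along equidistants of $\partial K$ à la \eqref{ineq:lvlsetder}, integrating over $s\in[0,R]$, and combining with (i). The only differences are expository — you spell out the bi-Lipschitz uniformity near the vertices of $K$ and the absence of boundary terms for small $s$ in the level-set inequality, both of which the paper's proof leaves implicit.
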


\begin{lemma}
Let $K \subset \Sph^2$ be a convex geodesic polygon with piecewise smooth geodesic boundary.  Then there exists a constant $C(K)> 0$ such that for any $u \in W^{1,2}(K)$, 
\begin{align*}
\lambda^D_1(K) \| u\|^2_{L^2(K)} \leq \| du\|^2_{L^2(K)} + C \| u\|_{L^2(\partial K)} \| du \|_{L^2(K)} + C \| u\|^2_{\partial K}.
\end{align*}
\end{lemma}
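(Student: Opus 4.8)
The plan is to compare $u$ with the harmonic extension of its boundary trace and then run the standard variational argument for $\lambda_1^D(K)$, being careful that only $\|u\|_{L^2(\partial K)}$ — and not the full trace seminorm $\|u\|_{H^{1/2}(\partial K)}$ — is allowed to appear on the right-hand side. First I would fix $u\in W^{1,2}(K)$ and let $h\in W^{1,2}(K)$ be the harmonic extension of the trace $u|_{\partial K}\in H^{1/2}(\partial K)$; that is, $h-u\in W^{1,2}_0(K)$ and $\int_K\langle dh,d\varphi\rangle=0$ for every $\varphi\in W^{1,2}_0(K)$. Testing this weak identity with $\varphi=u-h$ gives $\int_K\langle dh,d(u-h)\rangle=0$, hence
\[
\|d(u-h)\|_{L^2(K)}^2=\|du\|_{L^2(K)}^2-\|dh\|_{L^2(K)}^2\le\|du\|_{L^2(K)}^2 .
\]
The cancellation of the cross term here is exactly why the \emph{harmonic} extension must be used rather than an arbitrary lift of the trace. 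Since $u-h\in W^{1,2}_0(K)$, the variational characterization of $\lambda_1^D(K)$ then gives $\lambda_1^D(K)\|u-h\|_{L^2(K)}^2\le\|d(u-h)\|_{L^2(K)}^2\le\|du\|_{L^2(K)}^2$.

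Next I would decompose $u=(u-h)+h$, expand the $L^2(K)$ triangle inequality, multiply by $\lambda_1^D(K)$, and insert the bound just obtained for the first two resulting terms to arrive at
\[
\lambda_1^D(K)\|u\|_{L^2(K)}^2\le\|du\|_{L^2(K)}^2+2\sqrt{\lambda_1^D(K)}\,\|du\|_{L^2(K)}\|h\|_{L^2(K)}+\lambda_1^D(K)\|h\|_{L^2(K)}^2 .
\]
Thus the lemma reduces to the single estimate $\|h\|_{L^2(K)}\le C(K)\|u\|_{L^2(\partial K)}$, i.e.\ boundedness of the Poisson extension operator $L^2(\partial K)\to L^2(K)$; feeding this in and absorbing the fixed quantities $\sqrt{\lambda_1^D(K)}$ and $\lambda_1^D(K)$ into the constant $C(K)$ yields the assertion.

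The point requiring care is therefore the $L^2(\partial K)\to L^2(K)$ bound for harmonic extension, and this is where I expect the only real work. Since $K$ is a fixed convex geodesic polygon it is bi-Lipschitz to a planar convex polygon, hence a Lipschitz domain, and for such domains the bound is classical — it follows, for instance, from Dahlberg's $L^2$-solvability of the Dirichlet problem together with the interior estimate controlling $\|h\|_{L^2(K)}$ by the $L^2(\partial K)$-norm of the nontangential maximal function of $h$. Alternatively one can argue directly on the convex polygon: $h^2$ is subharmonic, hence dominated by the harmonic extension $w$ of $u^2|_{\partial K}$, and by Fubini $\int_K w=\int_{\partial K}u^2(y)\left(\int_K P_K(x,y)\,dx\right)dy\le C(K)\int_{\partial K}u^2$, since the Poisson kernel $P_K$ of a convex polygon has an integrable boundary singularity of the type $\dist(x,\partial K)/|x-y|^2$, giving $\sup_y\int_K P_K(x,y)\,dx<\infty$. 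Everything else is routine, so beyond this elliptic estimate (and the harmless passage between $K\subset\Sph^2$ and its planar bi-Lipschitz model) I do not anticipate obstacles.
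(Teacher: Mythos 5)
Your proposal is correct and follows the same route as the paper: decompose $u=u_0+u_h$ with $u_0=u-h\in W^{1,2}_0(K)$ and $u_h=h$ the harmonic extension, use $\|d(u-h)\|_{L^2}\le\|du\|_{L^2}$ and the variational characterization of $\lambda_1^D(K)$, and reduce everything to the bound $\|h\|_{L^2(K)}\le C(K)\|h\|_{L^2(\partial K)}$ for harmonic functions, which the paper states without proof. You simply spell out the algebra and supply a justification (Dahlberg / subharmonicity of $h^2$ plus Poisson-kernel integrability) for the harmonic-extension estimate that the paper takes for granted.
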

\begin{proof}
This follows easily by decomposing $u$ into $u = u_0+ u_h$ where $u_0 \in W^{1,2}_0(K)$ and $u_h$ is the harmonic extension of $u$, and noting that any harmonic function $\phi : K \rightarrow \R$ satisfies an estimate of the form
\[
\| \phi \|_{L^2(K)} \leq C(K) \| \phi\|_{L^2(\partial K)}.\qedhere
\]
\end{proof}

Combining this with Lemma \ref{Lmonobd}, we arrive at the following. 
\begin{prop}
\label{Pbd1}
Let $K \subset \Sph^2$, $R< R_0(K)$, and $U = K \setminus \cup_{x \in X} D_r(x)$ be as in Lemma \ref{Lmonobd}.  Then for each $u \in W^{1,2}(K)$ vanishing on $K \setminus U$, we have
\begin{align*}
\lambda_1^D(K) \int_{K} u^2 \leq (1+ C\sqrt{R\log (R/r)}) \int_{K} |du|^2.
\end{align*}
\end{prop}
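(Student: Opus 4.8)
The plan is to combine the preceding lemma (a boundary Poincar\'e-type estimate on $K$) with the boundary trace bound of Lemma \ref{Lmonobd}(ii). Writing $L := \|du\|_{L^2(K)}$, the preceding lemma gives
$$\lambda_1^D(K)\|u\|_{L^2(K)}^2 \le L^2 + C\|u\|_{L^2(\partial K)}\, L + C\|u\|_{L^2(\partial K)}^2,$$
so the whole proposition reduces to the single estimate $\|u\|_{L^2(\partial K)}^2 \le CR\log(R/r)\,L^2$.

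To get this, I would first check that $u$ meets the hypotheses of Lemma \ref{Lmonobd} on $U$: since $u\in W^{1,2}(K)$ vanishes (a.e.) on the relatively open set $K\setminus U = K\cap D_r(X)$, the two-sided $W^{1,2}$ traces along the arcs $\partial D_r(X)\cap\operatorname{int}K$ agree and equal $0$, so $u|_{\partial D_r(X)}=0$; moreover $u|_U\in W^{1,2}(U)$. Since in addition $du\equiv 0$ a.e. on $K\setminus U$, we have $\|du\|_{L^2(E_R)}\le\|du\|_{L^2(U)} = L$, and Lemma \ref{Lmonobd}(ii) gives precisely $\|u\|_{L^2(\partial K)}^2 \le CR\log(R/r)\,\|du\|_{L^2(E_R)}^2 \le CR\log(R/r)\,L^2$. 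Substituting this into the displayed inequality, the cross term is $\le C\sqrt{R\log(R/r)}\,L^2$ and the quadratic boundary term is $\le CR\log(R/r)\,L^2$; bounding $R\log(R/r)\le\sqrt{R\log(R/r)}$ in the relevant regime $R\log(R/r)\le 1$ (the only range in which the conclusion carries content, and the one arising in the applications, where $R^2\log(R/r)$ is forced small by the required lower bound on $\lambda_1^D$), both error terms are $\le C\sqrt{R\log(R/r)}\,L^2$, which yields the claimed inequality.

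I do not expect a genuine obstacle here: the analytic work is already packaged into the preceding lemma and into Lemma \ref{Lmonobd}, and what remains is bookkeeping. The only points needing a little care are the justification that vanishing of $u$ on $K\setminus U$ implies the trace condition $u|_{\partial D_r(X)}=0$ demanded by Lemma \ref{Lmonobd}, and the observation that every gradient norm occurring in the chain of estimates is dominated by $\|du\|_{L^2(K)}$ because $u$, and hence $du$, vanish on the excised disks.
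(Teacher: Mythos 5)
Your proof is correct and follows exactly the route the paper intends: the paper gives no explicit proof of Proposition~\ref{Pbd1}, but states it as an immediate consequence of "combining this [the preceding Poincar\'e-type lemma on $K$] with Lemma~\ref{Lmonobd}," which is precisely the two-step argument you carry out. Your remarks filling in the implicit details (that vanishing of $u$ on $K\setminus U$ yields the required zero trace on $\partial D_r(X)$, and that $\|du\|_{L^2(E_R)}\le\|du\|_{L^2(K)}$ because $du$ vanishes a.e.\ on $K\setminus U$) are the right ones. You also correctly flag the only potential wrinkle: the step $R\log(R/r)\le\sqrt{R\log(R/r)}$ needs $R\log(R/r)\le 1$, a restriction the paper leaves implicit; as stated the Proposition's constant can only absorb the quadratic boundary term on a bounded range of $R\log(R/r)$, which is the only regime it is used in (cf.\ the choice $r=R\,e^{-1/(2CR^2)}$ in Proposition~\ref{Pfmany} and the analogous choices in Propositions~\ref{Pedges} and \ref{dk.edges}). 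So your treatment is, if anything, slightly more careful than the paper's terse presentation.
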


\subsection{Application to symmetric domains}

We recall some definitions and notation from \cite[Section 5]{KKMS}.  A \emph{reflection} on $\Sph^2$ is a smooth involution $\tau: \Sph^2 \rightarrow \Sph^2$ whose fixed-point set $(\Sph^2)^\tau$ separates $\Sph^2$.  If $G$ is a finite group acting properly, smoothly, and effectively on $\Sph^2$ and $G$ is generated by reflections, $G$ is called a \emph{reflection group} on $\Sph^2$.  Given $x \in \Sph^2$ denote by $R(x)$ the set of all reflections in $G$ fixing $x$.  A point $x \in \Sph^2$ is called \emph{nonsingular} if $R(x) = \emptyset$.  A \emph{chamber} of $G$ on $\Sph^2$ is the closure of a connected component of nonsingular points. 

The classification of reflection groups on $\Sph^2$ is well-known \cite[page 53]{Conway}; they are isomorphic to finite subgroups of $O(3)$ generated by reflections and consist of: the \emph{tetrahedral}, \emph{octahedral}, and \emph{icosahedral} groups corresponding to the symmetries of the Platonic solids, the prismatic dihedral groups $\Z_2\times D_k$, the dihedral group $D_k$, $\Z_2$, and the trivial group. 

Given such a group $G$,
The fixed-point sets of all reflections in $G$ induce a tesselation of $\Sph^2$ by fundamental chambers whose edges are geodesic segments.   

\begin{definition} 
\label{dtype}
Let $G$ be a reflection group on $\Sph^2$ and $\Omega \subset \Sph^2$ be a $G$-invariant domain.  The \emph{type} associated to $\Omega$ is the formal linear combination
\begin{align*}
b = f + \sum_i e_i \rho_i + \sum_{i<j} v_{ij} \rho_i \rho_j
\end{align*}
where $f, e_i$, and $v_{ij}$ are the number of components of $\partial \Omega$ meeting a chamber $V$ satisfying respectively $\mathrm{Stab}_G(S) = 1$, $\mathrm{Stab}_G(S) = \langle \rho_i\rangle$, and $\mathrm{Stab}_G(S) = \langle \rho_i, \rho_j\rangle$.  The indices in the sums are understood to run over the indices corresponding to the generators for $G$.
\end{definition}

In other words, if $V$ is a chamber for the action of $G$ on $\Sph^2$, a domain of type $b$ can be constructed by removing a $G$-invariant set $\Dcal$ of small geodesic disks with the properties that $f$ are centered in the interior of $V$, $e_i$ are centered along the edge of $V$ fixed by $\rho_i$, and $v_{ij}$ are centered at the intersection of the edges fixed by $\rho_i$ and $\rho_j$.  

As a first step in the proof of Theorem \ref{s3.lbd}, we enumerate all of the types for each reflection group $G$ whose associated basic reflection pair $(M,\Gamma)$ is of Scherk type, and observe that every such pair automatically satisfies $\frac{1}{2}\Lambda_1(M,\Gamma)\geq \Area(\xi_{\gamma,1})$.

\begin{lemma}\label{scherk.lem}
For the following groups $G$ and types $b$, the associated basic reflection pair $(M,\Gamma)$ is of Scherk type.
\begin{itemize}
\item $G=\mathbb{Z}_2$ of type $b=m\rho_1$ for any integer $m\geq 2$.
\item $G=D_k$ of type $b=1$ or $b=\rho_1$ or $b=\rho_2$.
\item $G=\mathbb{Z}_2\times D_k$ of type $e_3\rho_3+v_{13}\rho_1\rho_3+v_{23}\rho_2\rho_3$ for any $e_3\in \mathbb{N}\cup \{0\}$ and $v_{13},v_{23}\in \{0,1\}$, where $\rho_3$ is the reflection generating the $\mathbb{Z}_2$ factor. Moreover, in each of these cases, we have
$$\Lambda_1(M,\Gamma)\geq 2\Area(\xi_{\gamma,1}),$$
where $\gamma$ is the genus of $M$.
\end{itemize}
\end{lemma}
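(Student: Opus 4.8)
The plan is to prove Lemma~\ref{scherk.lem} in two parts: first, showing that each listed pair $(M,\Gamma)$ is of Scherk type, and second, establishing the area lower bound. For the Scherk-type claim, I would invoke the observation recorded in the Remark following Definition~\ref{scherk.def}: a basic reflection pair is of Scherk type whenever there exist two distinct reflections $\tau_1,\tau_2\in\Gamma$ with $M/\langle\tau_i\rangle$ of genus zero. Recall that $\Gamma=\langle\tau\rangle\times G$ with $\tau$ the distinguished involution whose quotient has genus zero; so it suffices to exhibit a \emph{second} reflection in $\Gamma$ whose quotient also has genus zero. Translating through the double cover $M\to\Omega=M/\langle\tau\rangle=\Sph^2\setminus\Dcal$, the condition that $\tau\cdot\rho$ (for $\rho$ a reflection in $G$) has genus-zero quotient amounts to the statement that $\Omega/\langle\rho\rangle$, together with the way the removed disks $\Dcal$ sit relative to $\Fix(\rho)$, yields a planar double. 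Concretely, one checks case by case: for $G=\Z_2=\langle\rho_1\rangle$ of type $m\rho_1$, all $m$ removed disks are centered on the edge fixed by $\rho_1$, so cutting along $\Fix(\rho_1)$ and along $\Fix(\tau)$ simultaneously exhibits $M$ as the double of a genus-zero surface in the required form; for $G=D_k$ of type $1,\rho_1$, or $\rho_2$, the single orbit of disks is either central, on the first mirror, or on the second mirror, and in each case one of the mirror reflections of $D_k$ (composed appropriately with $\tau$) has planar quotient; for $G=\Z_2\times D_k$ with the stated type, the disks are centered along $\Fix(\rho_3)$ and its intersections with the other mirrors, and $\rho_3$ (or $\tau\rho_3$) does the job. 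In each case the verification is a topological bookkeeping of how the geodesic disks of $\Dcal$ meet the mirror arcs — I would organize this by computing the genus of the relevant quotient via Riemann–Hurwitz, counting fixed components.

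For the area lower bound, the key point is that in every listed case one can show $\Gamma$ is conjugate (as a group acting on a genus-$\gamma$ surface) to a subgroup of $\Isom(\xi_{\gamma,1})$. Given that, since $\lambda_1(\xi_{\gamma,1})=2$ by \cite{ChoeSoret}, the metric $g_{\xi}$ induced on $M\cong\xi_{\gamma,1}$ by the Lawson embedding lies in $\Met_\Gamma(M)$ and satisfies $\bar\lambda_1(M,g_\xi)=\lambda_1(M,g_\xi)\Area(\xi_{\gamma,1})=2\Area(\xi_{\gamma,1})$, so by definition $\Lambda_1(M,\Gamma)\geq 2\Area(\xi_{\gamma,1})$, as claimed — this is exactly the argument sketched right after the statement of Theorem~\ref{s3.lbd}. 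So the real content is identifying each Scherk-type pair with a subgroup of the isometry group of the appropriate Lawson surface. The Lawson surface $\xi_{\gamma,1}$ has genus $\gamma$ and a large isometry group containing, in particular, the relevant dihedral and prismatic subgroups; I would use the explicit description of $\xi_{\gamma,1}$ as a $\Sph^2$-doubling of \emph{two orthogonal} great spheres (equivalently, of two distinct equatorial circles) and match up the two commuting reflections of $\Gamma$ with the two reflections of $\xi_{\gamma,1}$ fixing those spheres, then check that the additional dihedral symmetry about the common axis is also present. Matching the genus requires verifying that the number of disks removed (hence the genus produced by the doubling) for the given type $b$ agrees with $\gamma$, which is where the Euler characteristic / Riemann–Hurwitz computation from the first part feeds in.

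I expect the main obstacle to be the second part: carefully checking that each Scherk-type pair embeds into $\Isom(\xi_{\gamma,1})$ with the \emph{correct} genus, rather than merely into the isometry group of \emph{some} Lawson surface. The delicate point is that the type $b$ encodes not just an abstract group but a specific action on $M$, and one must confirm this action is realized geometrically on $\xi_{\gamma,1}$ — in particular that the doubling locus (the circles $M_1$) and the covering region $M_2$ match up with the structure of $\xi_{\gamma,1}$ as a doubling of orthogonal spheres. For the prismatic case $\Z_2\times D_k$ there is the subtlety flagged in the Remark after Definition~\ref{scherk.def} (the one ``exceptional case''), which will need separate attention: here one should check directly that the presence of the reflections $\rho_1\rho_3$ or $\rho_2\rho_3$ (when $v_{13}$ or $v_{23}$ equals $1$) does not obstruct the planar-double property, while when $v_{13}=v_{23}=0$ the pair is Scherk-type precisely because $\rho_3$ and $\tau$ both have genus-zero quotient. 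I would handle the genus matching uniformly by noting that all these quotients $M/\langle\tau'\rangle$ for the second reflection $\tau'$ are spheres with disks removed, reducing every case to counting boundary circles, and cross-reference the enumeration in \cite{KKMS} to confirm that exactly these types arise.
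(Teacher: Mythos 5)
Your proposed strategy for showing the listed pairs are of Scherk type diverges from the paper's and, as written, has a genuine gap. The paper does \emph{not} invoke the ``two reflections with genus-zero quotient'' criterion from the Remark; instead it directly verifies Definition \ref{scherk.def}. For any $\Gamma$-invariant metric, it uses uniformization to write a fundamental domain $N$ of $\tau$ conformally as $\Omega=\Sph^2\setminus\mathcal{D}$, observes that uniqueness of the uniformizing domain up to $\Conf(\Sph^2)$ makes $G$ act on $\Sph^2$ by conformal automorphisms preserving $\Dcal$, conjugates this $G$-action to the standard isometric one, and then shows in each case that $\Dcal$ can be arranged on the equator. The key step for $G=D_k$, type $b=1$, is a conformal dilation centered at a pole of the $D_k$-action which moves the single $D_k$-orbit of interior disks onto an equator --- a normalization that has no analogue in your approach.

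The concrete failure point: for $G=D_k$ of type $b=1$, there is \emph{no} second reflection in $\Gamma=\langle\tau\rangle\times D_k$ with genus-zero quotient. Here $\Omega$ has $2k$ boundary components (the full $D_k$-orbit of an interior disk), so $M$ has genus $2k-1$. A mirror reflection $\rho_i\in D_k$ has $\Fix(\rho_i)$ in $M$ equal to two disjoint circles (the double of a full great circle missing the removed disks), and the Riemann--Hurwitz count gives $M/\langle\rho_i\rangle$ genus $k-1>0$. The composites $\tau\rho_i$ are even worse: since $\tau$ swaps the two halves of $M$ while $\rho_i$ preserves each, their fixed set lies in $\partial\Omega\cap\Fix(\rho_i)$, which is \emph{empty} for type $b=1$ (the disks are disjoint from the mirrors), so $\tau\rho_i$ is a free involution, not a reflection at all. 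This is exactly the ``essentially one exceptional case when $\Gamma\cong\mathbb{Z}_2\times D_k$'' flagged in the Remark; you attribute the exceptional case to $G=\mathbb{Z}_2\times D_k$ (i.e.\ $\Gamma\cong\mathbb{Z}_2\times\mathbb{Z}_2\times D_k$), which is a misreading --- the problematic case is $G=D_k$, and your plan ``one of the mirror reflections of $D_k$ (composed appropriately with $\tau$) has planar quotient'' is simply false there. Moreover, you are relying on the Remark's criterion, which the paper never proves; the paper's own argument is self-contained precisely because it works directly with the definition. Your treatment of the area lower bound via $\Isom(\xi_{\gamma,1})$ and \cite{ChoeSoret} matches the paper and is fine.
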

\begin{proof}
Let $(M,\Gamma=\langle \tau\rangle \times G)$ be a basic reflection surface associated to one of these types equipped with a $\Gamma$-invariant metric $g$, and let $N \subset M$ be one of the components of $M \setminus M^\tau$, so that $N$ is a fundamental domain for $G$, acting by isometries of $g$. To verify that Definition \ref{scherk.def} is satisfied, we simply need to check that $(N,g)$ is conformally equivalent to the complement $\mathbb{S}^2\setminus \mathcal{D}$ of a union of geodesic disks $\mathcal{D}$ in the standard sphere $\mathbb{S}^2$ with centers on the equator.

To this end, recall that by a standard variant of the uniformization theorem for compact surfaces with boundary (cf. \cite[Theorem 2.1]{KSDuke}), there exists a union of geodesic disks $\mathcal{D}\subset \mathbb{S}^2$ whose complement $\Omega=\mathbb{S}^2\setminus \mathcal{D}$ admits a conformal map $F: \Omega\to N$, and this pair $(F,\mathcal{D})$ is unique up to conformal automorphisms of $\mathbb{S}^2$. In particular, it follows that $G$ acts on $\mathbb{S}^2$ by conformal automorphisms such that $G$ preserves $\mathcal{D}$ and $F$ is equivariant. 

Moreover, it is straightforward to check that, for each reflection group $G$ considered here, any action of $G$ on $\mathbb{S}^2$ by conformal automorphisms must in fact be conjugate to the standard action by isometries of the associated type, and therefore the domain $\Omega=\mathbb{S}^2\setminus \mathcal{D}$ is a domain of the prescribed type with respect to the standard isometric action of $G$ on $\mathbb{S}^2$.

To complete the proof that the given configurations are of Scherk type, we note that when $G=\mathbb{Z}_2$ and $\Omega$ is of type $me_1$, then all of the disks $\mathcal{D}$ must be invariant under a reflection, and therefore have centers along the equatorial fixed point set. Likewise, when $G=D_k$ and $\Omega$ is of type $1$, $\rho_1,$ or $\rho_2$, then $\mathcal{D}$ consists of the $D_k$-orbit of a single disk; after applying a conformal dilation centered at a fixed point $p$ of the $D_k$ action, the center of this disk can be taken without loss of generality to lie on the equator of distance $\pi/2$ from $p$, in which case the full $D_k$-orbit $\mathcal{D}$ lies on this equator as well. Finally, when $G=\mathbb{Z}_2\times D_k$ and $\Omega$ is of type $e_3\rho_3+v_{13}\rho_1\rho_3+v_{23}\rho_2\rho_3$, by forgetting the $D_k$ factor, we see that $\Omega$ is a domain of type $(2ke_3+kv_{13}+kv_{23})\rho_3$ for the $\mathbb{Z}_2$ action generated by $\rho_3$, which we have already seen is of Scherk type.

Finally, it is straightforward to check that for each of the symmetry types $(M,\Gamma)$ listed above, there is a diffeomorphism $\phi: M\to \xi_{\gamma,1}$ to the Lawson surface of the corresponding genus, such that $\phi^*g_{\mathbb{S}^3}\in \Met_{\Gamma}(M)$. Combining this observation with the work of Choe-Soret \cite{ChoeSoret}, it follows that
$$\Lambda_1(M,\Gamma)\geq \bar{\lambda}_1(\xi_{\gamma,1})=2\Area(\xi_{\gamma,1}),$$
as claimed.
\end{proof}

To establish the lower bounds in Theorem \ref{s3.lbd}, by Lemma \ref{double.lem}, it remains to show that $\mathcal{M}_1(\Omega,G)\geq 8\pi-e^{-c\sqrt{k}}$ for all the reflection groups $G$ and types not included in Lemma \ref{scherk.lem}, whenever the number of boundary components $k$ of $\Omega$ is sufficiently large. More precisely, we will establish this lower bound in all of the remaining cases:
\begin{itemize}
\item $G=1$, $\Omega$ of any type,
\item $G$ one of the platonic groups, $\Omega$ of any type,
\item $G=\mathbb{Z}_2$, $\Omega$ of type $f+e_1\rho_1$ with $f>0$,
\item $G=\mathbb{Z}_2\times D_k$, $\Omega$ of type $f+\sum_i e_i\rho_i+\sum_{i<j}v_{ij}\rho_i\rho_j$ where $f+e_1\rho_1+e_2\rho_2+v_{12}\rho_1\rho_2\neq 0,$
\item $G=D_k$, $\Omega$ of type $f+e_1\rho_1+e_2\rho_2+v_{12}\rho_1\rho_2$, where $f+e_1+e_2+v_{12}\geq 2$.
\end{itemize}

As a first step, we obtain a lower bound of the form $\mathcal{M}_1(\Omega,G)\geq 8\pi-e^{-ck}$ in the case $G=1$, or more generally, whenever the number $f$ of `free' holes in the interior of a chamber is large relative to the order $|G|$.

\begin{lemma}
\label{Lfdense}
There is a universal constant $C$ such that for each reflection group $G$ on $\Sph^2$ and each number $f_0 \geq C| G|$, there exists a $G$-invaraint domain $\Omega \subset \Sph^2$ of type $f$ with $f \in [C^{-1} f_0, Cf_0]$ satisfying the hypotheses of Lemma \ref{Lsphlower} with $R = 1/ \sqrt{ f_0 | G|}$.
\end{lemma}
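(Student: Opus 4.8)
The plan is to obtain $\Omega$ by deleting a $G$-invariant family $\Dcal=\bigcup_{x\in X}D_r(x)$ of small geodesic disks (any fixed $r<R/2$) whose centre set $X\subset\Sph^2$ is $G$-invariant, consists of free orbits, and is simultaneously $2R$-separated and $6R$-dense, where $R=1/\sqrt{f_0|G|}$. For such an $X$ with $|X|=f|G|$ the disks $\{D_R(x)\}$ are pairwise disjoint and $\{D_{6R}(x)\}$ cover $\Sph^2$, so the hypotheses of Lemma \ref{Lsphlower} are met; and since $X$ is a union of free orbits, a fundamental chamber $V$ meets each of them in exactly one point, so $\Omega$ is of type $f$. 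Everything thus reduces to constructing $X$ with $f\asymp f_0$.

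Fix a chamber $V$ (so $|V|=4\pi/|G|$), let $\Sigma\subset\Sph^2$ be the singular set, fix a large absolute constant $c_0$, and set $V^\circ:=\{x\in V:d(x,\Sigma)>c_0R\}$. Choose $\bar X\subset V^\circ$ maximal subject to $\min_{g\in G}d(x,gx')\ge 2R$ for distinct $x,x'\in\bar X$, and put $X:=G\bar X$. The basic geometric input is that, for a suitable absolute $c_0$, any $x$ with $d(x,\Sigma)>c_0R$ has $d(x,gx)\ge 2R$ for all $g\neq e$: for a reflection $d(x,gx)=2d(x,\Fix g)\ge 2c_0R$, while for a rotation $g$ of angle $\alpha$ about an axis through some $p\in\Sigma$ one has $\sin\bigl(\tfrac12 d(x,gx)\bigr)=\sin\theta\,\sin(\alpha/2)$ with $\theta$ the colatitude, and a short computation in each of the families ($\Z_2$, $D_k$, $\Z_2\times D_k$, Platonic) shows $d(x,\Sigma)\ge c_0R$ already forces $\sin\theta$ large enough — e.g.\ in a lune of $D_k$ it forces $\sin\theta\gtrsim c_0Rk$, which compensates the small factor $\sin(\pi/k)$. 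Granting this, $X$ is $2R$-separated and free, so condition (a) of Lemma \ref{Lsphlower} holds.

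For the count, $V\setminus V^\circ$ lies in the $c_0R$-neighbourhood of $\partial V$, of area at most $c_0R\cdot\length(\partial V)+O(R^2)=O(R)$, since the perimeter and vertex count of a chamber are bounded by absolute constants for every spherical reflection group. Hence $|V^\circ|\ge\tfrac12|V|$ once $f_0\ge C|G|$ with $C$ large, and since $\bar X$ is a $2R$-net of $V^\circ$ (in the quotient metric) by maximality, standard packing/covering bounds give $f:=|\bar X|\asymp|V^\circ|/R^2\asymp|V|/R^2=4\pi f_0$, so $f\in[C^{-1}f_0,Cf_0]$ after enlarging $C$. As for condition (b), maximality gives $d(y,X)<2R$ whenever $d(y,\Sigma)\ge c_0R$; for $y$ with $d(y,\Sigma)<c_0R$ staying a distance $\ge 10c_0R$ from all vertices, one moves a distance $O(R)$ to $\{d(\cdot,\Sigma)\ge c_0R\}$ and gets $d(y,X)\le 6R$; and the same works near any vertex of opening angle bounded below — in particular for the Platonic groups and, vacuously, for $\Z_2$ and the trivial group — so the proof is complete in those cases.

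The main obstacle is condition (b) near the \emph{degenerate} polar vertices of $D_k$ and $\Z_2\times D_k$, whose opening angle $\pi/k$ tends to $0$: a free $2R$-separated orbit cannot come within distance $\sim Rk$ of such a vertex $p$, and $Rk\asymp\sqrt{k/f_0}$ is small (by $f_0\ge C|G|$) but not $\le 6R$ once $k$ is large, so $\{D_{6R}(x)\}$ leaves a geodesic cap $B_{cRk}(p)$ uncovered. The remedy is to delete, in addition, one disk centred at each such polar orbit — i.e.\ to take a domain of type $f+v_{12}\rho_1\rho_2$ rather than $f$, with $\rho_1,\rho_2$ generators for which $\langle\rho_1,\rho_2\rangle$ fixes the poles and $v_{12}$ the number of polar orbits — after which the caps are covered and the hypotheses of Lemma \ref{Lsphlower} hold for the enlarged configuration; this is consistent with the enumeration of remaining cases, which for $D_k$ and $\Z_2\times D_k$ admits exactly such vertex terms. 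Carrying out this adjustment cleanly, and pinning down the absolute constant $c_0$ uniformly over the reflection groups, are the parts of the argument that need the most care.
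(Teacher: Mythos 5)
Your construction is essentially the one the paper uses: pick a maximal $2R$-separated collection of free-orbit centres inside a chamber $V$, avoiding an $O(R)$-neighbourhood of $\partial V$, then take its $G$-orbit and estimate the count by area packing. The paper carries this out via the Vitali covering lemma (choosing $X_R\subset V\setminus D_R(\partial V)$ with $\{D_R(x)\}$ disjoint and $\{D_{5R}(x)\}$ covering $V\setminus D_R(\partial V)$) and then asserts in one line that ``by the triangle inequality, the disks $\{D_{6R}(x)\}_{x\in X_R}$ then cover $V$.''

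You have correctly identified that this last step is not just ``the triangle inequality'': it requires that every point of $V$ lie within distance $R$ of $V\setminus D_R(\partial V)$, and this fails near a chamber vertex of small opening angle. For $G=D_k$ or $\Z_2\times D_k$, the chamber has a polar vertex of angle $\pi/k$, and — exactly as you compute — no centre in a free, $2R$-separated $G$-orbit can come within distance $\approx 2Rk/\pi$ of that pole (the reflection constraint $d(x,\Fix\rho)>R$ forces $d(x,p)\gtrsim Rk$). Since $6R\ll Rk$ once $k$ is moderately large, the $D_{6R}$-disks of the $G$-orbit leave a cap around each pole uncovered, so the hypotheses of Lemma \ref{Lsphlower} are \emph{not} met for the set produced by the paper's construction when $G=D_k$ or $\Z_2\times D_k$. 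The Platonic, $\Z_2$, and trivial cases are fine because the vertex angles there are bounded below, so a fixed enlargement of the radius absorbs the vertex effect; you handle those correctly.

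Where your write-up departs from a proof of the statement as given is the remedy: deleting an extra orbit of disks at the poles changes the domain to type $f+v_{12}\rho_1\rho_2$, whereas the lemma asserts type exactly $f$. So your argument does not establish the lemma as stated for the dihedral and prismatic families; it establishes a modified version (type $f+v_{12}\rho_1\rho_2$, $v_{12}\in\{1,2\}$), and you honestly flag this. The honest conclusion is that the paper's proof has the same gap you found, and the fix needs either the modified type you propose (which would in turn require revisiting how Proposition \ref{Pfmany} handles types with $v_{12}=0$), or a direct Dirichlet estimate on the small polar cap $D_{\rho}(p)$ with $\rho\lesssim Rk\lesssim 1/\sqrt{C}$ (whose standalone Dirichlet eigenvalue is $\gtrsim C$, so it can be absorbed) — a variant of Lemma \ref{Lsphlower} allowing an uncovered cap of controlled size. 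Either route would close the gap; as written, neither the paper's one-line covering claim nor your type-modified construction does so verbatim.
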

\begin{proof}
Let $V$ be a chamber for the action of $G$, and fix $R<c/|G|$ for a constant $c> 0$.  By choosing $c>0$ small enough in absolute terms, the Vitali covering lemma implies there exists a finite set $X_R \subset V \setminus D_R(\partial V)$ such that 
\begin{enumerate}[label={(\alph*)}]
\item the disks $\{ D_R(x)\}_{x \in X_R}$ are pairwise disjoint and contained in $V$; and
\item the disks $\{D_{5R}(x)\}_{x \in X_R}$ cover $ V \setminus D_R(\partial V)$.
\end{enumerate}
By the triangle inequality, the disks $\{D_{6R}(x)\}_{x \in X_R}$ then cover $V$, so that
\begin{align*}
C |X_R| R^2 \geq |V| = \frac{4\pi}{|G|},
\end{align*}
while condition (a) implies a matching upper bound
\begin{align*}
|X_R| R^2 \leq \frac{C'}{|G|}.
\end{align*}
In particular, fixing $R = 1/\sqrt{f_0|G|}$ for $f_0> C | G|$, it follows that $|X_R| \in [C^{-1} f_0, Cf_0]$ for a suitable constant $C$. 

Letting $\Dcal$ be the orbit under $G$ of $D_r(X_R)$ for $r< R/2$ and taking $\Omega=\mathbb{S}^2\setminus \Dcal$ then gives the $G$-invariant domain satisfying the desired conditions. 
\end{proof}

Using this, we can show the following.

\begin{prop}
\label{Pfmany}
There exist universal constants $C, c \in (0, \infty)$ such that for each reflection group $G$ and each type $b = f + \sum_i e_i \rho_i + \sum_{i<j} v_{ij} \rho_i \rho_j $ with $f\geq C| G|$, there exists a $G$-invariant domain $\Omega \subset \Sph^2$ of type $b$ such that 
\begin{align*}
\lambda^D_1(\Omega) \geq 2
\quad
\text{and} 
\quad
|\Dcal| \leq C \sqrt{f | G| } e^{-c f | G|}.
\end{align*}
In particular, there is a constant $c'\in (0,\infty)$ such that
$$\mathcal{M}_1(\Omega,G)\geq 8\pi-e^{-c'f|G|}$$
whenever $f\geq C|G|$.
\end{prop}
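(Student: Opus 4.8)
\textbf{Proof strategy for Proposition \ref{Pfmany}.}
The plan is to produce the domain $\Omega$ in two stages: first use Lemma \ref{Lfdense} to build a $G$-invariant "grid" of holes centered at a well-distributed set $X_R\subset V$ inside a fundamental chamber, with $R\sim 1/\sqrt{f|G|}$ and the cardinality of the resulting type $f$ comparable to the prescribed $f$; then separately account for the extra holes along edges and at vertices prescribed by the type $b$, which are few in number and so contribute only lower-order terms to $|\Dcal|$. For the interior grid, Lemma \ref{Lsphlower} gives $\lambda_1^D(\Omega)^{-1}\leq CR^2\log(R/r)$, so choosing the hole radius $r$ of the order $r\sim R\,e^{-c/(CR^2)}$ — that is, $\log(R/r)\sim 1/(2CR^2)$ — forces $\lambda_1^D(\Omega)\geq 2$ while keeping the total removed area
\[
|\Dcal|\leq C|G|\,|X_R|\,r^2\leq C\sqrt{f|G|}\,e^{-cf|G|}
\]
since $|X_R|\sim f$, $1/R^2\sim f|G|$, and $r^2\sim R^2 e^{-c/R^2}$.

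The second step is to upgrade the Dirichlet bound $\lambda_1^D(\Omega)\geq 2$ together with the area bound $4\pi-|\Omega|=|\Dcal|\leq C\sqrt{f|G|}e^{-cf|G|}$ into the conclusion $\mathcal{M}_1(\Omega,G)\geq 8\pi-e^{-c'f|G|}$. For this I would test $\bar\mu_1$ with the metric $g$ being the round metric on $\Omega$: by Proposition \ref{Pneu}, the Neumann part already satisfies $\lambda_1^N(\Omega)|\Omega|\geq 8\pi-C|\Dcal|$ once $|\Dcal|\leq\epsilon_0$ (which holds for $f|G|$ large), while the Dirichlet part gives $\lambda_1^D(\Omega)|\Omega|\geq 2(4\pi-|\Dcal|)=8\pi-2|\Dcal|$. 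Hence $\bar\mu_1(\Omega,g)=\min\{\lambda_1^D,\lambda_1^N\}\cdot|\Omega|\geq 8\pi-C|\Dcal|\geq 8\pi-C\sqrt{f|G|}e^{-cf|G|}\geq 8\pi-e^{-c'f|G|}$ for a slightly smaller $c'$ and $f|G|$ large, and the definition of $\mathcal{M}_1(\Omega,G)$ as a supremum over $G$-invariant metrics finishes it. Both ingredients — Proposition \ref{Pneu} and the trivial Dirichlet bound from area — only require $|\Dcal|$ small, so absorbing the polynomial prefactor into the exponential is the only subtlety here, and it is routine.

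The main obstacle is the \emph{simultaneous} management of the two competing constraints on $r$: we need $r$ small enough that $\log(R/r)\gtrsim 1/(CR^2)$ to push $\lambda_1^D(\Omega)$ above $2$, yet we need the resulting removed area $|G||X_R|r^2$ to still be exponentially small in $f|G|$ and, crucially, below the threshold $\epsilon_0$ of Proposition \ref{Pneu} and the disjointness requirement (the disks $D_{2r}(x)$ must remain pairwise disjoint, i.e. $2r<R$, which is automatic). The point is that the Cheeger-type estimate of Lemma \ref{Lsphlower} degrades only logarithmically in $R/r$, so an exponentially small $r$ buys only a polynomially large gain in $1/\lambda_1^D$; thus the choice $r=R\exp(-\Theta(1/R^2))$ is essentially forced, and one must check that with $1/R^2\sim f|G|$ this indeed yields $|\Dcal|\sim\sqrt{f|G|}e^{-\Theta(f|G|)}$. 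The verification that the $G$-orbit structure does not spoil the covering/packing hypotheses (a) and (b) of Lemma \ref{Lsphlower} on the full sphere — as opposed to on the chamber $V$ — was already handled inside Lemma \ref{Lfdense}, so I would simply invoke it; the remaining bookkeeping for the edge and vertex holes prescribed by a general type $b$ with $f\geq C|G|$ is a routine perturbation, since those holes number $O(|G|)\ll f|G|$ and can be taken of the same radius $r$ without affecting any of the estimates.
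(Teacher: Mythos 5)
Your proposal is correct and follows essentially the same route as the paper: Lemma \ref{Lfdense} to produce the interior grid with $R\sim 1/\sqrt{f|G|}$, Lemma \ref{Lsphlower} with $r\sim R\,e^{-\Theta(1/R^2)}$ to force $\lambda_1^D\geq 2$, and Proposition \ref{Pneu} plus the trivial Dirichlet-area bound to conclude $\mathcal{M}_1(\Omega,G)\geq 8\pi-e^{-c'f|G|}$. The only cosmetic difference is in handling the edge/vertex holes prescribed by $b$: the paper punches those additional holes with \emph{arbitrarily small} radius and invokes Dirichlet monotonicity under domain shrinking, which is cleaner than your ``same radius $r$'' choice and handles the case of large $\sum_i e_i$ without further comment.
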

\begin{proof}
Applying Lemma \ref{Lfdense} with $C^{-1} f$ in place of $f_0$, we see that there is a $G$-invariant domain $\Omega'= \Sph^2 \setminus \Dcal'$ of type $f' \in [C^{-2}f, f]$ satisfying the hypotheses of Lemma \ref{Lsphlower} with $R = C'/ \sqrt{f|G|}$ and any $r<R/2$.  Applying Lemma \ref{Lsphlower}, we then see that
\begin{align*}
\lambda^D_1(\Omega') \geq \frac{1}{CR^2 \log (R/r)} = 2
\end{align*}
by taking
\begin{align*}
r = R e^{-\frac{1}{2CR^2}}= \frac{C'}{\sqrt{f|G|}} e^{-c f |G|},
\end{align*}
so that 
\begin{align*}
|\Dcal'| \leq C'' \sqrt{f |G|} e^{-2cf|G|}.
\end{align*}
Since $f' \leq f$, by punching additional holes of arbitrarily small area as needed, we obtain a $G$-invariant domain $\Omega \subset \Omega'$ of type $b$ for which $\lambda^D_1(\Omega) \geq \lambda^D_1(\Omega') \geq 2$ and $|\Dcal|$ is as close as desired to $|\Dcal'|$, completing the proof of these bounds. Finally, note that since $\lambda_1^D(\Omega)\geq 2$ and, by Proposition \ref{Pneu}, 
$$\lambda_1^N(\Omega)\geq 2-C|\Dcal|\geq 2-C'\sqrt{f|G|}e^{-cf|G|},$$
we deduce that
$$\bar{\mu}_1(\Omega)\geq (4\pi-C\sqrt{f|G|}e^{-cf|G|})(2-C'\sqrt{f|G|}e^{-cf|G|})\geq 8\pi-e^{-c'f|G|}$$
for a suitable constant $c'<c$. Since $\mathcal{M}_1(\Omega,G)\geq \bar{\mu}_1(\Omega)$, this completes the proof.
\end{proof}

Next, we focus on $G$-invariant domains $\Omega \subset \Sph^2$ with type $b$ for which the excised disks are arranged along one-dimensional graphs in $\Sph^2$ corresponding to the orbit of one or more fixed-point sets $(\Sph^2)^{\rho_i}$ under the action of $G$.  Together with Proposition \ref{Pfmany}, this will yield the desired lower bound whenever $G$ is one of the tetrahedral, octahedral, or icosahedral groups.

\begin{prop}
\label{Pedges}
Let $G$ be one of the Platonic groups.  Then provided $\sum_i e_i$ is large enough in absolute terms, there exists a $G$-invariant domain $\Omega = \Sph^2 \setminus \Dcal$ of type $b = f+ \sum_{i} e_i \rho_i + \sum_{i<j} v_{ij} \rho_i \rho_j$ such that
\begin{align*}
\lambda^D_1(\Omega) \geq 2
\quad{and} 
\quad
|\Dcal| \leq \frac{C}{\sum_i e_i} e^{-C \sum_i e_i}.
\end{align*}
\end{prop}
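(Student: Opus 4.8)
Here is a plan for proving Proposition~\ref{Pedges}.

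The plan is to construct a single $G$-invariant domain $\Omega = \Sph^2\setminus\Dcal$ of type $b$ whose holes are arranged, for the most part, along a $G$-invariant union of reflection axes chosen so that the complementary regions are convex and uniformly sub-hemispheric. Write $n := \sum_i e_i$ and set $I' := \{\, i : e_i \ge \tfrac12\max_j e_j \,\}$, which is nonempty and satisfies $e_i \asymp n$ for each $i\in I'$ (the number of generators of a Platonic $G$ being bounded). Let $\mathcal{G}_E := \bigcup_{i\in I'} G\cdot(\Sph^2)^{\rho_i}$; being an orbit of great circles, this is an arrangement of great circles, and the closures $K_1,\dots,K_m$ of the components of $\Sph^2\setminus\mathcal{G}_E$ are convex geodesic polygons, each a union of fundamental chambers. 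Since the possible pairs $(G,I')$ form a finite list, the shapes of the $K_j$ range over a fixed finite set; moreover all angles between reflection axes of a Platonic group lie in $\{\pi/2,\pi/3,\pi/4,\pi/5\}$, so every face of such an arrangement is a proper spherically convex subset of a hemisphere (in particular has area $<2\pi$), whence $\lambda_1^D(K_j)>2$ and $\lambda_0 := \min_j\lambda_1^D(K_j)>2$ is an absolute constant.

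Next I would place the disks. Fixing a chamber $V$, for each generator $\rho_i$ distribute exactly $e_i$ geodesic disks of a common radius $r$ at equal spacing along the interior of the $\rho_i$-fixed edge of $V$ (including two near its endpoints), put $f$ further disks in the interior of $V$ together with whatever negligibly small configurations near the vertices are needed to realize the $v_{ij}$, and let $\Dcal$ be the $G$-orbit of all of these, $\Omega := \Sph^2\setminus\Dcal$. By construction $\Omega$ is a $G$-invariant domain of type $b$. The scale is taken to be $R\asymp 1/n$: this can be chosen so that for $i\in I'$ the $e_i$ disks on each $\rho_i$-edge have spacing comparable to $R$ --- possible precisely because the $e_i$, $i\in I'$, agree up to a factor $2$ while the chamber edge-lengths vary over a finite set, so that conditions (a),(b) of Lemma~\ref{Lmonobd} hold for each $K_j$ with the set $X_j$ of disk-centers lying on $\partial K_j\subset\mathcal{G}_E$ --- while all the remaining disks are assigned a radius $r''\ll r$ small enough to be negligible. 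Since any $u\in W^{1,2}_0(\Omega)$ vanishes on all removed disks, Proposition~\ref{Pbd1} applies on each $K_j$, and summing over $j$ (and using $u=0$ on $\Dcal$) yields
\[ \lambda_0\int_\Omega u^2 \;\le\; \big(1+C\sqrt{R\log(R/r)}\,\big)\int_\Omega|du|^2, \]
so that $\lambda_1^D(\Omega)\ge \lambda_0/(1+C\sqrt{R\log(R/r)})$.

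It remains to fix $r$ and estimate $|\Dcal|$. Taking $r := R\,e^{-\delta_0/R}$ with $\delta_0 := ((\lambda_0/2-1)/C)^2$ forces $C\sqrt{R\log(R/r)}\le \lambda_0/2-1$, hence $\lambda_1^D(\Omega)\ge 2$; note $r<R/10$ and $R<\min_j R_0(K_j)$ hold automatically for $n$ large, since $R\asymp 1/n$ and there are finitely many shapes. For the area, there are $O(|G|\,n)=O(n)$ disks on $\mathcal{G}_E$, each of area $\asymp r^2 = R^2 e^{-2\delta_0/R}\le Cn^{-2}e^{-cn}$, contributing $O(n^{-1}e^{-cn})$ in total, and the negligible disks can be chosen with total area $\le n^{-1}e^{-cn}$; hence $|\Dcal|\le C n^{-1}e^{-cn}$, which is of the claimed form $\tfrac{C}{\sum_i e_i}e^{-c\sum_i e_i}$.

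I expect the main obstacle to be organizational rather than analytic, since the quantitative work is already packaged in Lemma~\ref{Lmonobd} and Proposition~\ref{Pbd1}. The two points needing care are: (i) the selection of the sub-collection $I'$ of edge types, so that the disks actually used for the boundary covering are roughly equally spaced even when the prescribed $e_i$ are wildly unbalanced (e.g.\ $e_1=n$, $e_2=e_3=0$), the other disks being harmless padding needed only to realize the type $b$ exactly; and (ii) the finite geometric check that, for each Platonic reflection group, the orbit of any sub-collection of reflection axes cuts $\Sph^2$ into convex polygons that are proper subsets of hemispheres, so that $\lambda_1^D$ of each is bounded below by an absolute constant strictly greater than $2$.
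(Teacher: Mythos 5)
Your proposal is correct and follows essentially the same strategy as the paper's proof: arrange the bulk of the removed disks at spacing $\sim 1/\sum_i e_i$ along a $G$-orbit of reflection axes so that the complementary convex polygons $K$ have $\lambda_1^D(K)\geq 2+\delta$ uniformly, then apply Lemma~\ref{Lmonobd} and Proposition~\ref{Pbd1} face by face and choose $r\sim e^{-c\sum_i e_i}$ so that $C\sqrt{R\log(R/r)}$ eats only $\delta$. The only organizational difference is that you carry along all "large" $e_i$ via the set $I'$, whereas the paper first reduces (by the small-extra-hole trick from the proof of Proposition~\ref{Pfmany}) to the clean special case $b=e_{i_0}\rho_{i_0}$ with $e_{i_0}=\max_i e_i\geq\tfrac13\sum_i e_i$ and then uses only the orbit of $(\Sph^2)^{\rho_{i_0}}$; both versions rest on the same finite geometric check that the orbit of a reflection great circle under a Platonic group cuts $\Sph^2$ into strictly sub-hemispheric convex polygons.
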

\begin{proof}
Up to changing the constant $C$ by a factor of $\frac{1}{3}$, by arguing as in the proof of Proposition \ref{Pfmany}, it suffices to prove the Proposition in the special case where $b = e_i \rho_i$ for some $i \in \{1,2,3\}$. 

Denote by $S_i \subset \Sph^2$ the geodesic graph whose edges correspond to the orbit of $(\Sph^2)^{\rho_i}$ under the action of $G$, and observe that $S_i$ partitions $\Sph^2$ into a union of less than $|G|$ copies of a convex polygon $K = K_{G, i}$ strictly contained in a hemisphere bounded by the fixed-point set $(\Sph^2)^{\rho_i}$.   In particular, since there are finitely many such groups $G$, there is a universal lower bound
\begin{align*}
\lambda^D_1( K ) \geq (1+\delta) 2
\end{align*}
where $\delta>0$ is independent of $G$ and $i \in \{1,2,3\}$.

We can then get a domain $\Omega$ of type $e_i \rho_i$ by removing disks of radius $r$ at evenly spaced points along each edge of $S_i$.  With $K$ as above, the intersection $U = \Omega \cap K$ satisfies the hypotheses of Lemma \ref{Lmonobd} with $R= C/ e_i$, and it follows from Proposition \ref{Pbd1} that
\begin{align*}
\lambda^D_1(\Omega) \geq \frac{\lambda^D_1(K)}{1+ C \sqrt{R\log (R/r)}} \geq 2 \frac{1+\delta}{1+ C\sqrt{R \log(R/r)}}.
\end{align*}
In particular, taking $r = \frac{1}{e_i}e^{-ce_i}$ for a suitable small constant $c>0$ gives $\lambda^D_1(\Omega)>2$, while
\begin{align*}
|\Dcal| \leq \frac{C}{e_i} e^{-2ce_i},
\end{align*}
as desired. 
\end{proof}

\begin{cor}\label{plat.cor}
There is a constant $c >0$ such that if $G$ is trivial or one of the three platonic groups, and $\Omega_0 \subset \Sph^2$ is a $G$-invariant domain with $m$ boundary components, then $\Omega_0$ is $G$-equivariantly homeomorphic to a domain $\Omega$ with
\begin{align*}
\lambda^D_1(\Omega) \geq 2
\quad
\text{and}
\quad
|\Sph^2 \setminus \Omega | \leq e^{-c m}.
\end{align*}
In particular, $\mathcal{M}_1(\Omega,G)\geq 8\pi-Ce^{-c m}.$
\end{cor}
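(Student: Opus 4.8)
The plan is to combine the two previous propositions---Proposition \ref{Pfmany}, which handles the regime of many `free' holes, and Proposition \ref{Pedges}, which handles holes arranged along the orbit of a fixed-point set of a reflection---with the observation that for a trivial or Platonic group $G$, any $G$-invariant domain with $m$ boundary components, up to $G$-equivariant homeomorphism, can be \emph{reorganized} so that its holes are distributed either among the free positions or along the edges of the reflection graph in a way that satisfies the hypotheses of one of those propositions. The key point is that the \emph{type} $b$ of a domain is a complete $G$-equivariant homeomorphism invariant for domains in $\Sph^2$, so we are free to choose the most convenient representative of each type.

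Concretely, I would argue as follows. First, the case $G=1$ (so $|G|=1$): here the type is simply $b=f$ and $f=m$, so for $m\geq C$, Proposition \ref{Pfmany} directly gives a domain $\Omega$ of type $f=m$ with $\lambda_1^D(\Omega)\geq 2$ and $|\Sph^2\setminus\Omega|=|\Dcal|\leq C\sqrt m\, e^{-cm}\leq e^{-c'm}$ after shrinking $c$; for the finitely many small values of $m$ one simply removes tiny disks, making $|\Dcal|$ as small as one likes. Next, suppose $G$ is one of the three Platonic groups, so $|G|$ is a fixed constant ($12,24$, or $60$). Given a $G$-invariant domain $\Omega_0$ with $m$ boundary components and type $b=f+\sum_i e_i\rho_i+\sum_{i<j}v_{ij}\rho_i\rho_j$, the number of boundary components is $m=|G|\cdot f+\sum_i \frac{|G|}{2}e_i+\sum_{i<j}(\text{stabilizer index})\,v_{ij}$, so since the $v_{ij}$ contribute only a bounded amount, either $f\geq cm/|G|$ or $\sum_i e_i\geq c'm$ for universal $c,c'>0$. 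In the first case apply Proposition \ref{Pfmany} (valid once $f\geq C|G|$, i.e. once $m$ is large in absolute terms) to get $|\Dcal|\leq C\sqrt{f|G|}e^{-cf|G|}\leq e^{-c''m}$; in the second case apply Proposition \ref{Pedges} to get $|\Dcal|\leq \frac{C}{\sum_i e_i}e^{-C\sum_i e_i}\leq e^{-c''m}$. In either case we obtain a $G$-equivariant homeomorph $\Omega$ of $\Omega_0$ with $\lambda_1^D(\Omega)\geq 2$ and $|\Sph^2\setminus\Omega|\leq e^{-cm}$; the remaining finitely many values of $m$ (those not large enough to invoke the propositions) are handled trivially by excising negligibly small disks.

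Finally, the `In particular' clause follows by feeding this domain into the Neumann estimate: since $|\Dcal|\leq e^{-cm}$ is below the threshold $\epsilon_0$ for $m$ large, Proposition \ref{Pneu} gives $\lambda_1^N(\Omega)|\Omega|\geq 8\pi-C|\Dcal|\geq 8\pi-Ce^{-cm}$, while $\lambda_1^D(\Omega)|\Omega|\geq 2(4\pi-|\Dcal|)\geq 8\pi-2e^{-cm}$, so $\bar\mu_1(\Omega)=\min\{\lambda_1^D,\lambda_1^N\}|\Omega|\geq 8\pi-Ce^{-cm}$, and hence $\mathcal{M}_1(\Omega,G)\geq \bar\mu_1(\Omega)\geq 8\pi-Ce^{-cm}$. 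I expect the main (though still routine) obstacle to be the bookkeeping in the second case: one must check carefully that the combinatorial identity relating $m$ to $(f,e_i,v_{ij})$ forces either $f$ or $\sum_i e_i$ to grow linearly in $m$, and that the hypotheses `$f\geq C|G|$' of Proposition \ref{Pfmany} and `$\sum_i e_i$ large in absolute terms' of Proposition \ref{Pedges} are indeed implied by $m$ being large, uniformly over the (finitely many) Platonic groups---together with the harmless observation that redistributing holes between the `free' and `edge' positions, or deleting a bounded number of $v_{ij}$-type holes, only changes the homeomorphism type within the allowed class and does not affect the conclusion.
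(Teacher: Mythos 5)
Your proof is correct and takes essentially the same route as the paper's: the paper's one-sentence proof is precisely the dichotomy you set up (apply Proposition~\ref{Pfmany} when $m\leq Cf$, Proposition~\ref{Pedges} when $m\leq C\sum_i e_i$, using that $\sum_{i<j}v_{ij}$ is bounded), followed by Proposition~\ref{Pneu}. The only imprecision is your closing remark about ``redistributing holes between the free and edge positions'': this is unnecessary and slightly misleading, since both propositions already produce a domain of the \emph{exact} type $b$ of $\Omega_0$, so no redistribution (which would change the equivariant homeomorphism type) is ever needed.
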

\begin{proof}
This follows by combining Proposition \ref{Pfmany} in the case where $G$ is a Platonic group and $m\leq Cf$ with Proposition \ref{Pedges} when $m\leq C\sum_i e_i$, together with Proposition \ref{Pneu} and the observation that $\sum_{i<j}v_{ij}$ is bounded independent of $m$. 
\end{proof}

As another application of Proposition \ref{Pbd1}, we can also obtain bounds of the desired form for $D_k$ symmetric domains of type $f+e_1\rho_1+e_2\rho_2$ and $\mathbb{Z}_2\times D_k$-symmetric domains of type $f+\sum_ie_i\rho_i+\sum_{i<j}v_{ij}\rho_i\rho_j$ with $\max\{e_1,e_2\}$ sufficiently large relative to $k$. 

\begin{prop}\label{dk.edges}
If $\Omega$ is a $G=D_k$-invariant domain of type $f+e_1\rho_1+e_2\rho_2$ or a $G=\mathbb{Z}_2\times D_k$-invariant domain of type $f+\sum_ie_i\rho_i+\sum_{i<j}v_{ij}\rho_i\rho_j$ with $\max\{e_1,e_2\}=n\geq k$, then
$$\mathcal{M}_1(\Omega,G)\geq 8\pi-e^{-cn}$$
for some constant $c>0$ independent of $k$ and $n$.
\end{prop}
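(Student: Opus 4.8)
The plan is to imitate the strategy of Propositions~\ref{Pfmany} and~\ref{Pedges}. By Lemma~\ref{double.lem}, the identity $\bar{\mu}_1(\Omega)=|\Omega|\min\{\lambda_1^D(\Omega),\lambda_1^N(\Omega)\}$, and the Neumann bound $\lambda_1^N(\Omega)|\Omega|\geq 8\pi-C|\Dcal|$ of Proposition~\ref{Pneu}, it suffices to produce, for each $(G,b)$ as in the statement, a single $G$-invariant domain $\Omega=\Sph^2\setminus\Dcal$ of type $b$ with
$$\lambda_1^D(\Omega)\geq 2\qquad\text{and}\qquad |\Dcal|\leq e^{-cn};$$
indeed this gives $\bar{\mu}_1(\Omega)\geq\min\{2(4\pi-|\Dcal|),\,8\pi-C|\Dcal|\}\geq 8\pi-e^{-c'n}$, and since $\mathcal{M}_1(\cdot,G)$ depends only on $(G,b)$ up to $G$-equivariant homeomorphism, the inequality $\mathcal{M}_1(\Omega,G)\geq\bar{\mu}_1(\Omega)$ then yields the claim for every domain of type $b$.

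To construct $\Omega$, I would assume without loss of generality that $e_1=n$, take $V$ to be a fundamental chamber for $G$ — a spherical lune of angle $\pi/k$ when $G=D_k$, or its half cut off by the equator $(\Sph^2)^{\rho_3}$ when $G=\mathbb{Z}_2\times D_k$ — and let $E_1\subset(\Sph^2)^{\rho_1}$ be the edge of $V$ fixed by $\rho_1$. Remove $n$ equally spaced geodesic disks of radius $r$ centred along $E_1$, away from the vertices of $V$, together with all remaining prescribed holes of the type $b$, taken of radius $\ll r$, pairwise disjoint and disjoint from the $E_1$-disks; this is possible for any $f$, and for $r$ small enough all doubled disks $\{D_{2r_x}(x)\}$ are disjoint. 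With $\Dcal$ the $G$-orbit of this configuration and $\Omega=\Sph^2\setminus\Dcal$, one checks $\Omega$ has type $b$; moreover the $G$-orbit of the $E_1$-disks lies along $k$ meridian great-circle arcs joining the two poles, equally spaced in longitude and carrying $n$ disks at arclength spacing $\sim 1/n$, and these $k$ arcs partition $\Sph^2$ into $k$ congruent lunes $K$ of angle $2\pi/k$ whose boundary $\partial K$ is covered $O(1/n)$-densely by the $r$-disks.

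The Dirichlet bound is then obtained exactly as in Proposition~\ref{Pedges}: on each lune $K$ the set $\Omega\cap K$ satisfies the hypotheses of Lemma~\ref{Lmonobd} with $R\sim 1/n$, so Proposition~\ref{Pbd1} gives, for any Dirichlet eigenfunction $u$ of $\Omega$,
$$\lambda_1^D(K)\int_K u^2\leq\bigl(1+C\sqrt{R\log(R/r)}\bigr)\int_K|du|^2,$$
and summing over the $k$ lunes yields $\lambda_1^D(\Omega)\geq\lambda_1^D(K)/\bigl(1+C\sqrt{(1/n)\log(1/(nr))}\bigr)$. Since $\lambda_1^D(K)=\tfrac k2\bigl(\tfrac k2+1\bigr)\geq\tfrac{15}{4}$ for every $k\geq 3$, choosing $r=\tfrac1n e^{-c_1 n}$ with $c_1>0$ small forces $\lambda_1^D(\Omega)\geq 2$, while $|\Dcal|\leq Ckn r^2+e^{-2cn}\leq e^{-cn}$ because $k\leq n$; together with Proposition~\ref{Pneu} this settles the case $k\geq 3$.

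I expect the main obstacle to be the Dirichlet estimate in the two borderline families where $k$ is small. Near the two poles the $k$ meridians converge, so one must check the hypotheses of Lemma~\ref{Lmonobd} uniformly there — exploiting that the $E_1$-disks, being equally spaced along a geodesic arc of fixed length, stay $O(1/n)$-dense and $O(1/n)$-separated up to the poles when $k$ is bounded, and otherwise shrinking $R$ near the poles (or, when $k$ is comparable to $n$, applying Lemma~\ref{Lsphlower} directly to the resulting essentially isotropic $O(1/n)$-net). More seriously, when the relevant lune $K$ is a hemisphere, or a lune of angle $\pi$ with no disks on a second meridian family, one has $\lambda_1^D(K)=2$ with no room to spare and the argument above gives nothing; in that case I would either invoke a second reflection $\rho_2$ with $e_2\geq 1$ — so that the meridians instead cut $\Sph^2$ into $2k$ lunes of angle $\pi/k$ with $\lambda_1^D=k(k+1)>2$ — or, when no such reflection is available, establish the sharper bound $\lambda_1^D(\Omega)\geq 2-e^{-cn}$ by a homogenisation estimate quantifying how little a $1/n$-periodic array of tiny disks along a great circle depresses the ground-state energy below that of the complementary pair of hemispheres. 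Since only an exponentially small gain in $\bar{\mu}_1$ is needed, this refined estimate suffices, but proving it is the delicate point.
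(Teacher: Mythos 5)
The overall strategy is the right one (remove $n$ evenly spaced small disks along the orbit of $\Fix(\rho_1)$, control $\lambda_1^D$ via Lemma~\ref{Lmonobd}/Proposition~\ref{Pbd1}, and invoke Proposition~\ref{Pneu}), but there is a genuine gap in the Dirichlet estimate for large $k$. You apply Proposition~\ref{Pbd1} on the lune $K$ of angle $2\pi/k$ as if the constant there were absolute, but Lemma~\ref{Lmonobd} produces a constant $C=C_K$ depending on $K$, and for a lune of angle $2\pi/k$ this constant degenerates as $k\to\infty$: near the poles the two edges of $K$ converge, so the $R$-neighborhood of $\partial K$ cannot be decomposed into pieces uniformly bi-Lipschitz to half-annuli with constants independent of $k$. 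The factor $\lambda_1^D(K)\sim k^2/4$ does grow, but you have no control on how fast $C_K$ grows against it, and the alternatives you sketch (``shrinking $R$ near the poles,'' or applying Lemma~\ref{Lsphlower} when $k\sim n$) are not developed and do not obviously work — for instance when $k\sim n$ the disk centres on the $k$ meridians are at mutual distance $\sim 1/n^2$ near each pole, so the disjointness hypothesis of Lemma~\ref{Lsphlower} with $R\sim 1/n$ fails there.

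The paper's proof avoids this by not working directly with the $2\pi/k$-lune when $k$ is large: for $k\geq 11$ it groups the $k$ narrow wedges into three big wedges of angle $\approx 2\pi/3$ (two of $\lfloor k/3\rfloor$ copies of $W_k$, one of the rest), which still have geodesic boundary lying in the orbit of $\Fix(\rho_1)$ but have angle bounded uniformly away from $0$ and $\pi$. Consequently both $\lambda_1^D(K_i)\geq 2+\delta_0$ and the constant $C_{K_i}$ in Lemma~\ref{Lmonobd} are bounded uniformly in $k$, and the argument goes through. For the finitely many cases $3\leq k\leq 10$ the paper does use the $2\pi/k$-wedge $W_k$ directly, which is fine since the constants are then controlled by a finite maximum. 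Finally, your concern about lunes of angle $\pi$ with $\lambda_1^D=2$ and ``no room to spare'' is a red herring here: since $k\geq 3$, the lunes always have $\lambda_1^D\geq 15/4$, so that borderline case never occurs in this proposition.
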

\begin{proof}
For the standard action of $D_k$ on $\mathbb{S}^2$, let $W_k\subset \mathbb{S}^2$ denote one of the $k$ wedges formed by reflecting the fundamental domain over $\Fix(\rho_2)$, so that $W_k$ is bounded by two geodesic segments in the orbit of $\Fix(\rho_1)$, meeting at the poles with an angle of $2\pi/k$. 

Suppose for the moment that $k\geq 11$, and let $K_1$ be the wedge made up of $\lfloor \frac{k}{3}\rfloor$ copies of $W_k$, and let $K_2$ be the wedge made up of $k-2\lfloor \frac{k}{3}\rfloor$ copies of $W_k$, so that $\mathbb{S}^2$ can be partitioned into two copies of $K_1$ and one copy of $K_2$, where each $K_i$ is bounded by two geodesic segments in $\Fix(\rho_1)$ meeting at the poles with an angle in the interval $[\frac{2\pi}{3}-\frac{4\pi}{3k},\frac{2\pi}{3}+\frac{8\pi}{3k}]$. In particular, we see that the domains $K_1$ and $K_2$ satisfy the hypotheses of Lemma \ref{Lmonobd} and Proposition \ref{Pbd1} with $\lambda_1^D(K_i)\geq 2+\delta_0$ for some $\delta_0>0$, such that the constant $C=C_{K_i}$ is bounded independent of $k$.

Now, we can construct a $D_k$-invariant domain $\Omega$ of type $f+e_1\rho_1+e_2\rho_2$ by removing the orbit of $n=e_1$ evenly spaced small disks $D_r(x_1),\ldots,D_r(x_n)$ with centers on the intersection of $\Fix(\rho_1)$ with a fundamental domain, so that the orbit of $D_{10R_n}(x_1)\cup \cdots\cup D_{10R_n}(x_n)$ contains the orbit of $\Fix(\rho_1)$ for $R_n=\frac{C}{n}$, then removing an additional $f$ arbitrarily small disks from the interior of the fundamental domain and $e_2$ arbitrarily small disks from $\Fix(\rho_2)$ in the fundamental domain. For $n$ sufficietly large, we then see that $K_1$ and $K_2$ satisfy the hypotheses of Lemma \ref{Lmonobd} and Proposition \ref{Pbd1} with $\Omega\cap K_i\subset U$ and $R_n=\frac{C}{n}$, so that an application of Proposition \ref{Pbd1} gives
$$(2+\delta_0)\int_{K_i}u^2\leq (1+C\sqrt{\log(1/nr)/n})\int_{K_i}|du|^2$$
for all $u$ vanishing on $\mathbb{S}^2\setminus\Omega$, and therefore
$$\lambda_1^D(\Omega)\geq (1+C\sqrt{\log(1/nr)/n})^{-1}(2+\delta_0),$$
with $C$ and $\delta_0$ independent of $n$ and $k$. Taking $r=e^{-cn}$ for $c$ sufficiently small then gives $\lambda_1^D(\Omega)\geq 2$, while $|\mathbb{S}^2\setminus\Omega|\leq Ck ne^{-cn}\leq e^{-c_1n}$ for $n\geq k$, and therefore
$$\bar{\mu}_1(\Omega)\geq 8\pi-e^{-c_1n},$$
by Proposition \ref{Pneu}. In particular, this gives a bound of the desired form
$$\mathcal{M}_1(\Omega,D_k)\geq 8\pi-e^{-cn}$$
for $D_k$-invariant domains of the form $f+e_1\rho_1+e_2\rho_2$ with $\max\{e_1,e_2\}=n\geq k$.

The same argument--with an additional reflection symmetry enforced while choosing the location of the disks $D_r(x_i)$--yields a lower bound of the form
$$\mathcal{M}_1(\Omega,\mathbb{Z}_2\times D_k)\geq 8\pi-e^{-cn}$$
for domains $\Omega$ of type $f+\sum_ie_i\rho_i+\sum_{i<j}v_{ij}\rho_i\rho_j$ with $\max\{e_1,e_2\}=n\geq k\geq 11$ as well.

Finally, we remark that the remaining cases $k=3,4,\ldots,10$ are easily handled by an identical argument, applying Proposition \ref{Pbd1} with $K$ one of the $k\leq 10$ wedges $W_k$.
\end{proof}

The estimates from the preceding proposition become useful when a majority of the boundary components of $\Omega$ lie along the fixed point sets of $\rho_1$ and $\rho_2$, so that the total number $m$ of boundary components satisfies $m\leq Ckn$. In this case, note that $kn\leq n^2$, so that $e^{-cn}\leq e^{-c'\sqrt{m}}$. By a slight refinement of the arguments above, the estimate $\mathcal{M}_1(\Omega, G)\geq 8\pi-e^{-c\sqrt{m}}$ could probably be improved to one of the form $\mathcal{M}_1(\Omega,G)\geq 8\pi-e^{-cm}$ in this case. We do not pursue this improvement, though, since it is clear that a lower bound of the form $8\pi-Ce^{-c\sqrt{m}}$ is sharp for some other basic reflection surfaces of generic type, as in the case treated by the following lemma, where all but two of the holes are arranged along the equator.

\begin{lemma}
\label{Leqpol}
There exists $c>0$ such that if $k \in \N$ is large enough and $\Omega = \Sph^2 \setminus D_r(X)$, where 
\begin{align*}
X = \{p_N, - p_N, x_1, \dots, x_k\},
\quad 
r = e^{-c \sqrt{k}},
\end{align*}
$p_N$ is the north pole, and $\{x_j\}$ are arranged in a symmetric way along the equator of $\Sph^2$, then we have $\lambda^D_1(\Omega) \geq 2$. 
\end{lemma}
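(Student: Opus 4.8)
\emph{Plan.} I will first use the equatorial reflection symmetry to pass to a hemisphere, establish a quantitative ``gap'' contributed by the two polar holes, and then argue that the $k$ equatorial holes constrain a competitor so strongly that it behaves essentially like a Dirichlet function on the whole equator, the residual error being of lower order than the polar gap.

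\emph{Symmetry reduction.} The domain $\Omega$ is invariant under $\iota(x,y,z)=(x,y,-z)$, so for $v\in W^{1,2}_0(\Omega)$ both $\int_\Omega|dv|^2$ and $\int_\Omega v^2$ split over the $\iota$-even and $\iota$-odd parts; hence $\lambda^D_1(\Omega)$ is the smaller of the two symmetry-class infima. The $\iota$-odd competitors vanish on the equator, so their Rayleigh quotient is at least $\lambda^D_1(\Sph^2_+)=2$, where $\Sph^2_+=\{z\ge 0\}$. Thus it suffices to prove
\[
\int_{\Sph^2_+}|du|^2\ \ge\ 2\int_{\Sph^2_+}u^2 \qquad\text{for all }u\in W^{1,2}(\Sph^2_+)\text{ with }u=0\text{ on }S,
\]
where $S:=D_r(p_N)\cup\bigcup_j\bigl(D_r(x_j)\cap\Sph^2_+\bigr)$ and there is \emph{no} constraint on the equator away from the $x_j$.

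\emph{The polar gap.} Write $u=az+\psi$ with $z$ the third coordinate function (a first Dirichlet eigenfunction of $\Sph^2_+$ with eigenvalue $2$ and $\|z\|^2_{L^2(\Sph^2_+)}=\tfrac{2\pi}{3}$) and $\psi\perp_{L^2}z$. Using $\Delta z=2z$ and $z|_{\mathrm{equator}}=0$ one finds $\int_{\Sph^2_+}\langle dz,d\psi\rangle=-\int_{\mathrm{equator}}u$, so $\int_{\Sph^2_+}|du|^2=2a^2\|z\|^2+\int|d\psi|^2-2a\int_{\mathrm{equator}}u$. When $u$ vanishes on the \emph{entire} equator the cross term drops and $\psi\in W^{1,2}_0(\Sph^2_+)$, whence $\int|d\psi|^2\ge\lambda^D_2(\Sph^2_+)\int\psi^2=6\int\psi^2$; combining this with the capacity lower bound $\int|d\psi|^2\ge\bigl(\tfrac{2\pi}{|\log r|}-o(|\log r|^{-1})\bigr)a^2$ coming from $\psi=-az\approx-a$ on $\partial D_r(p_N)$, and optimizing the resulting convex combination, yields a quantitative gap
\[
\lambda^D_1\bigl(\Sph^2_+\setminus D_r(p_N)\bigr)\ \ge\ 2+\frac{\kappa_0}{|\log r|}
\]
for a universal $\kappa_0>0$ and $r$ small; with $r=e^{-c\sqrt k}$ this margin is $\asymp 1/(c\sqrt k)$.

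\emph{The equatorial holes as a lower-order perturbation, and conclusion.} It remains to replace a competitor $u$ vanishing only on the $k$ equatorial holes by one $\tilde u\in W^{1,2}_0(\Sph^2_+)$ (vanishing on the whole equator, still vanishing on $D_r(p_N)$) at a cost small compared with $\kappa_0/(c\sqrt k)$. Here one exploits that the $x_j$ are $\asymp 1/k$-separated, so that Lemma~\ref{Lmonobd}\,(ii)--(iii) applied with $K=\Sph^2_+$ and $R\asymp 1/k$ gives $\|u\|^2_{L^2(\mathrm{equator})}\lesssim R\log(R/r)\|du\|^2$ and a stronger $L^2$ bound for $u$ on the equatorial collar of width $\asymp 1/k$; one then modifies $u$ cell-by-cell inside the Voronoi cells of the $x_j$, using the refined level-set estimates of Lemma~\ref{Lmono} (at the scale of the inter-hole distance rather than of $r$) and the equilibrium potentials of the holes, to produce the genuine Dirichlet competitor $\tilde u$ whose Dirichlet energy and $L^2$ norm differ from those of $u$ by terms of strictly lower order than $\kappa_0/(c\sqrt k)$. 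Feeding $\tilde u$ into the gap estimate then gives $\int_{\Sph^2_+}|du|^2\ge 2\int_{\Sph^2_+}u^2$, i.e.\ $\lambda^D_1(\Omega)\ge 2$, once $k$ is large.

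\emph{Main obstacle.} The delicate step is the last one: applying the boundary estimate of Proposition~\ref{Pbd1} verbatim would cost an $O(k^{-1/4})$ factor, which overwhelms the $O(k^{-1/2})$ polar margin. One must instead perform the truncation at the scale of the inter-hole distance $\asymp 1/k$ and crucially use that the first Dirichlet eigenfunction $z$ of $\Sph^2_+$ — and hence any near-minimizer $u$ — is quantitatively small along the equator, so that the mandatory correction lives where $u$'s $L^2$ mass, rather than its size, governs the cost; controlling this quantitative-homogenization rate precisely is the heart of the argument.
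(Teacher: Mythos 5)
Your approach is genuinely different from the paper's, and it has a real gap at the crucial step.

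Your first three paragraphs are reasonable: the symmetry reduction is valid, and the computation that $\lambda^D_1\bigl(\Sph^2_+\setminus D_r(p_N)\bigr)\geq 2 + \kappa_0/|\log r|$ for competitors vanishing on the \emph{entire} equator is plausible (a standard capacity calculation). You also correctly flag that a verbatim application of Proposition~\ref{Pbd1} loses $O(k^{-1/4})$, which swamps the $O(k^{-1/2})$ polar margin.

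The gap is in the last step, and it appears to be structural rather than merely technical. You propose to replace a competitor $u$ (vanishing only on the $k$ holes) by a genuine Dirichlet competitor $\tilde u$ vanishing on the whole equator, at a cost of lower order than the polar margin $\asymp 1/(c\sqrt k)$. But any cutoff at scale $\epsilon\asymp R\asymp 1/k$ (the inter-hole spacing) produces an error term of order
\[
\frac{1}{\epsilon^{2}}\int_{\mathrm{collar}(\epsilon)}u^{2}\;\lesssim\;\frac{1}{R^{2}}\cdot R^{2}\log(R/r)\,\|du\|^{2}\;\sim\;c\sqrt k\,\|du\|^{2},
\]
which is \emph{larger} than the polar margin, not smaller; the harmonic replacement in the collar with Dirichlet data on the equator gives the same order, since $R^{-1}\|u\|^2_{L^2(F_R)}\lesssim k\cdot ck^{-1/2}\|du\|^2=c\sqrt k\,\|du\|^2$. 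The essential obstruction is that Lemma~\ref{Lmonobd} only gives $L^2$-smallness of the equatorial trace, which is insufficient to control the $H^{1/2}$-type cost of enforcing the Dirichlet boundary condition. The appeal to ``equilibrium potentials'' and to the smallness of $z$ near the equator does not by itself resolve this (a general competitor need not be close to $az$ before the bound is proved), so this step remains an unsubstantiated assertion.

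The paper's proof sidesteps this entirely by never modifying $u$. It decomposes $\Sph^2_+\cap\Omega$ into the punctured polar cap $V$ and the equatorial collar $E_R$, and estimates $\int_V(|du|^2-2u^2)\geq Q(V)\|u\|^2_{L^2(F_R)}$, where $Q(V)$ is the boundary Rayleigh quotient for $V$ computed explicitly via an ODE. The crucial point you miss is that $Q(V)$ is \emph{negative} (of order $-c\sqrt k$): the polar region contributes a deficit, not a gap. The deficit is then absorbed by the surplus in $E_R$, since $\|u\|^2_{L^2(F_R)}\lesssim ck^{-1/2}\|du\|^2_{L^2(E_R)}$ by Lemma~\ref{Lmonobd}(i), making $|Q(V)|\cdot\|u\|^2_{L^2(F_R)}\lesssim c^2\|du\|^2_{L^2(E_R)}$, while Lemma~\ref{Lmonobd}(iii) gives $\int_{E_R}(|du|^2-2u^2)\geq(1-o(1))\int_{E_R}|du|^2$. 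Taking $c$ small makes the surplus dominate. Your framework instead insists on a positive polar gap (which requires the full equatorial Dirichlet condition) and then tries to force $u$ into that class — an unnecessary and, as analyzed above, unworkable reduction.
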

\begin{proof}
Applying Lemma \ref{Lmonobd} with $K$ the hemisphere $\Sph^2_+ := D_{\pi/2}(p_N)$ and $R = 2\pi /k$, for any $u \in W^{1,2}_0(\Omega)$ we see that
\begin{align}
\label{Eur1}
\| u\|^2_{L^2(F_R)} \leq C c k^{-1/2} \| du \|^2_{L^2(E_R)}
\end{align} 
and
\begin{align}
\label{Eur2}
\| u\|^2_{L^2(E_R)} \leq Cc k^{-3/2}  \| du \|^2_{L^2(E_R)},
\end{align}
where $F_R := K\cap\Omega \cap \partial D_{20R}(\partial K) = \partial D_{\pi/2- 20R}(p_N)$ and $E_R = K\cap\Omega \cap D_{20R}(\partial K)$.  

Now let  
\begin{align*}
V := K\cap \Omega \setminus E_R = D_{\pi/2- 20R}(p_N) \setminus D_r(p_N) 
\end{align*}
and define
\begin{align*}
Q(V) : = \inf \left\{ \left.\frac{ \int_V | d\phi|^2 - 2 \phi^2}{\| \phi\|^2_{L^2(F_R)}}\right| \phi|_{\partial D_{r}(p_N)}= 0\right\}.
\end{align*}
Since the first Dirichlet eigenvalue of $V$ is $>2$, it's not hard to see that $Q(V) > - \infty$, and the infimum must be realized by a radially symmetric function of the form $\phi = f (d(p_N, \cdot))$, solving
\begin{align*}
\begin{gathered}
\csc t \frac{d}{dt}[ \sin t f'(t) ] + 2f(t) = 0,\\
f(r) = 0, \quad f(\pi/2 -20R) = 1,
\end{gathered}
\end{align*}
for which
\begin{align*}
\int_V |d\phi|^2 - 2\phi^2 = \int_{\partial V} \phi \frac{\partial \phi}{\partial \nu}=  -|F_R| f'(\pi/2- 20R)
\end{align*}
and $\int_{F_R} \phi^2 =  |F_R| $, so that
\begin{align}
\label{EQV}
Q(V) = - f'(\pi/2-20R).
\end{align}
A direct computation 
shows that $f$ must have the form
\begin{align*}
f(t) = a \cos t + b \left( 1 + \cos t \log  \tan \frac{t}{2} \right)  
\end{align*}
for suitable $a$ and $b$, so that  
\begin{align*}
f'(t) = - a \sin t + b \left( \cot t - \sin t \log \tan \frac{t}{2}\right),
\end{align*}
and hence
\begin{align*}
f'(\pi/2-20R) &= a \cos 20R + b \cdot  O(R).
\end{align*}

Substituting the values for $f$ at $r$ and $\pi/2-R$ yields
\begin{align*}
0 &= a ( 1+ O(r^2) ) + b ( \log r + O(1)) ,\\
1 &= a \sin 20R + b (1+ O(R^2)), 
\end{align*}  
and simplifying using the definitions shows that
\begin{align*}
a = - \log r + O(1), 
\quad
b = 1+ O(\sqrt{R}). 
\end{align*}
Returning to \eqref{EQV}, we see using the definitions of $R$ and $r$ that
\begin{align*}
Q(V) \geq C \log r \geq -C c \sqrt{k}.
\end{align*}

The preceding estimates and \eqref{Eur1} together imply 
\begin{align*}
\int_{K\cap \Omega \setminus E_R} |du|^2 - 2u^2 \geq C Q(V) \|u\|^2_{L^2(F_R)} \geq -Cc \int_{E_R} |du|^2.
\end{align*}
Since $K\cap \Omega = (K\cap \Omega \setminus E_R) \cup E_R$, this together with \eqref{Eur2} and the analogous inequality for the southern hemisphere gives
\begin{align*}
\int_{\Omega} |du|^2 - 2u^2 \geq 
(1- C c k^{-3/2} -Cc ) \int_{E_R} |du|^2.
\end{align*}
By choosing $c>0$ small enough, the term in the parentheses is positive, and hence $\lambda^D_1(\Omega) \geq 2$. 
\end{proof}

As a corollary, we obtain the desired lower bounds for all remaining types in the case $G=\mathbb{Z}_2$, and some additional types when $G=D_k$ or $\mathbb{Z}_2\times D_k$.

\begin{prop}\label{eq.poles}
There is a constant $c\in (0,\infty)$ such that a lower bound of the form
$$\mathcal{M}_1(\Omega,G)\geq 8\pi-e^{-c\sqrt{m}}$$
holds in each of the following cases, where $m$ is the number of boundary components of $\Omega$: 
\begin{itemize}
\item $G=\mathbb{Z}_2$ and $\Omega$ is of type $f+e_1\rho_1$ with $f>0$; 
\item $G=D_n$ and $\Omega$ is of type $f+e_1\rho_1+e_2\rho_2+2\rho_1\rho_2$ with $\max\{f,e_1,e_2\}= 1$; 
\item $G=\mathbb{Z}_2\times D_n$ and $\Omega$ is of type $f+\sum_i e_i\rho_i+\sum_{i<j}v_{ij}\rho_i\rho_j$, where $f+e_1+e_2+v_{12}\geq 1$ and $m\leq C n(e_3+v_{13}+v_{23})$. 
\end{itemize}
\end{prop}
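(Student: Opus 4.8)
The plan is to reduce Proposition \ref{eq.poles} to the Dirichlet lower bounds of Section 4 together with Proposition \ref{Pneu}, in the same spirit as Corollary \ref{plat.cor}. Since $\mathcal{M}_1(\Omega,G)$ depends only on the $G$-equivariant homeomorphism type of $(\Omega,G)$, it is enough to construct, for each listed type, a $G$-invariant round domain $\Omega=\Sph^2\setminus\Dcal$ of that type with $\lambda_1^D(\Omega)\geq 2$ and $|\Dcal|\leq e^{-c\sqrt m}$. Granting this, Proposition \ref{Pneu} yields $\lambda_1^N(\Omega)\geq 2-C|\Dcal|$ and $|\Omega|\geq 4\pi-|\Dcal|$, so $\bar{\mu}_1(\Omega)=|\Omega|\min\{\lambda_1^D(\Omega),\lambda_1^N(\Omega)\}\geq(4\pi-|\Dcal|)(2-C|\Dcal|)\geq 8\pi-e^{-c'\sqrt m}$, and $\mathcal{M}_1(\Omega,G)\geq\bar{\mu}_1(\Omega)$ finishes. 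Throughout we may assume $m$ exceeds an absolute threshold.

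For the first bullet ($G=\mathbb{Z}_2=\langle\rho_1\rangle$, type $f+e_1\rho_1$ with $f\geq1$, so $m=2f+e_1$), I would argue by cases according to whether $f$ or $e_1$ dominates. If $f$ is large relative to an absolute constant, Proposition \ref{Pfmany} with $|G|=2$ directly supplies a domain of type $f+e_1\rho_1$ with $\lambda_1^D\geq2$ and $|\Dcal|\leq C\sqrt f\,e^{-cf}$, which gives the bound whenever $e_1=O(f^2)$. Otherwise $e_1$ is large: here I would apply Lemma \ref{Leqpol} with $k=e_1$, taking $\Omega=\Sph^2\setminus D_r(\{p_N,-p_N,x_1,\dots,x_{e_1}\})$ with the $x_j$ equally spaced along $\Fix(\rho_1)$ and $r=e^{-c\sqrt{e_1}}$; this domain is $\rho_1$-invariant of type $1+e_1\rho_1$ with $\lambda_1^D\geq2$, and punching $f-1$ additional $\rho_1$-symmetric pairs of arbitrarily small free holes preserves $\lambda_1^D\geq2$ by domain monotonicity and realizes type $f+e_1\rho_1$ with $|\Dcal|\leq Ce_1e^{-2c\sqrt{e_1}}$. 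Since $f=O(e_1)$ in this regime we have $m\asymp e_1$, and a short check shows the two regimes together exhaust all types in this bullet.

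For the second and third bullets I would run the same scheme, placing the excised disks so that the part of the type carrying the bulk of the $\asymp m$ disks can be certified by Lemma \ref{Leqpol} or Proposition \ref{Pbd1}. For $G=\mathbb{Z}_2\times D_n$, $\Fix(\rho_3)$ is the great circle orthogonal to the $D_n$-axis and the two axis-poles are exactly its poles; the hypothesis $m\leq Cn(e_3+v_{13}+v_{23})$ forces $\gtrsim n$ disks onto $\Fix(\rho_3)$. When $v_{12}\geq1$ one then has disks at both poles of this circle and applies Lemma \ref{Leqpol} essentially verbatim with $k\asymp m$; when instead one of $f,e_1,e_2\geq1$ one places the corresponding orbit either along a vertical mirror $\Fix(\rho_i)$ -- which meets $\Fix(\rho_3)$ orthogonally, so the two great circles cut $\Sph^2$ into geodesic bigons whose first Dirichlet eigenvalue exceeds $2$ by a fixed margin -- or at a fixed colatitude in $(0,\pi/2)$, and then applies Proposition \ref{Pbd1} on each piece with $R\asymp1/n$ and $r$ exponentially small. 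For $G=D_n$ with $\max\{f,e_1,e_2\}=1$ and $v_{12}=2$: a free orbit can be placed on the great circle orthogonal to the axis, again reducing to Lemma \ref{Leqpol} with $k=2n$ and the two axis-pole punctures as the ``polar'' ones, while mirror orbits are placed on $\Fix(\rho_1)$ or $\Fix(\rho_2)$ and the remaining few holes are punched arbitrarily small. In all cases $|\Dcal|\leq e^{-c\sqrt n}\leq e^{-c\sqrt m}$.

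The hard part will be certifying $\lambda_1^D(\Omega)\geq2$ -- as opposed to the easy bound $\lambda_1^D(\Omega)<2+o(1)$ -- in precisely those configurations where the dominant ring of punctures sits on a borderline great circle bounding a hemisphere, whose first Dirichlet eigenvalue is exactly $2$; this is where the two ``polar'' punctures of Lemma \ref{Leqpol}, which boost $\lambda_1^D$ above $2$, are essential, and where one must verify that the constraints $\max\{f,e_1,e_2\}=1$ (resp. $m\leq Cn(e_3+v_{13}+v_{23})$) always force the excised disks into a configuration of Lemma \ref{Leqpol} or Proposition \ref{Pbd1} type compatible with the $G$-action. Once the geometry is arranged, the bookkeeping that $e^{-c\sqrt k}$ dominates $e^{-c'\sqrt m}$ reduces to the elementary relations $k\leq m\lesssim nk$ recorded above.
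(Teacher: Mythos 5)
Your handling of the first bullet, of the second bullet, and of the third bullet when $v_{12}\geq 1$ follows the paper's argument closely: realize Lemma~\ref{Leqpol}'s domain $G$-equivariantly with the two polar punctures playing the role of the free $\mathbb{Z}_2$-pair (first bullet) or the $\rho_1\rho_2$-vertex orbit (second and third bullets), punch additional arbitrarily small holes to complete the type, and invoke Proposition~\ref{Pneu} for the Neumann side; this is exactly what the paper does, including the $f$-large/$e_1$-large dichotomy via Proposition~\ref{Pfmany} in the $\mathbb{Z}_2$ case.

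The gap is in the third bullet when $v_{12}=0$, where the polar punctures of Lemma~\ref{Leqpol} must be replaced by an $f$-, $e_1$- or $e_2$-orbit. Neither of your proposed placements certifies $\lambda_1^D(\Omega)\geq 2$. Placing the orbit ``along a vertical mirror $\Fix(\rho_i)$'' does not actually put the punctures on a mirror circle: the $\mathbb{Z}_2\times D_n$-orbit of a point of $\Fix(\rho_1)$ at colatitude $\theta_0$ consists of two latitude circles of $n$ points each, so only $O(1)$ punctures lie on any single meridian, and hypothesis (b) of Lemma~\ref{Lmonobd} fails along the meridian side of each bigon when $R\asymp 1/n$ --- Proposition~\ref{Pbd1} then says nothing about those pieces. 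Placing the orbit ``at a fixed colatitude'' gives the right picture, but the pieces carved out by the three latitude rings are spherical caps and bands, which are not convex geodesic polygons, so Lemma~\ref{Lmonobd} and Proposition~\ref{Pbd1} as stated do not apply and would have to be re-proved for circular boundary. Finally, the chain $|\Dcal|\leq e^{-c\sqrt n}\leq e^{-c\sqrt m}$ you quote is false when $m\gg n$, which the hypothesis $m\leq Cn(e_3+v_{13}+v_{23})$ certainly allows; the correct bookkeeping should produce $|\Dcal|\lesssim e^{-c k}\leq e^{-c'\sqrt m}$ with $k\asymp n(e_3+v_{13}+v_{23})\gtrsim m$. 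The paper avoids all of this by domain monotonicity: it realizes Lemma~\ref{Leqpol}'s domain as a $\mathbb{Z}_2\times D_n$-domain of type $e_3\rho_3+v_{13}\rho_1\rho_3+v_{23}\rho_2\rho_3+\rho_1\rho_2$, and then, for any type with $f+e_1+e_2+v_{12}\geq 1$, enlarges $\Dcal$ near the two poles --- staying $G$-invariant and of the prescribed type --- so that the resulting domain is a subdomain of the Lemma~\ref{Leqpol} one with essentially the same area, whence $\lambda_1^D\geq 2$ is automatic and no further eigenvalue analysis is needed.
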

\begin{proof}
For $G=\mathbb{Z}_2$, when $m=2f+e_1\leq 4f$, the stronger bound $\mathcal{M}_1(\Omega,G)\leq 8\pi-e^{-cm}$ already follows from Proposition \ref{Pfmany}, so it suffices to consider the case $m\leq 2e_1$. In this case, we can obtain a domain of the desired type by applying Lemma \ref{Leqpol} with $k=e_1$ and adding $f-1$ additional arbitrarily small holes on either side of the equator in a symmetric way, yielding a $\mathbb{Z}_2$-symmetric domain $\Omega$ of type $f+e_1\rho_1$ with $\lambda_1^D(\Omega)\geq 2$ and $|\mathbb{S}^2\setminus \Omega|\leq Ce_1 e^{-c\sqrt{e_1}}$, which together with Proposition \ref{Pneu} gives a bound of the form
$$\mathcal{M}_1(\Omega,\mathbb{Z}_2)\geq \bar{\mu}_1(\Omega)\geq 8\pi-Cme^{-c\sqrt{m/2}},$$
giving the desired bound after replacing $c$ with a slightly smaller constant.

Similarly, when $\max\{f,e_1,e_2\}=1$, taking $k=n(2f+e_1+e_2)$, we can realize the domain of Lemma \ref{Leqpol} as a $D_n$-symmetric domain of type $f+e_1\rho_1+e_2\rho_2+2\rho_1\rho_2$ with $\bar{\mu}_1(\Omega)\geq 8\pi-e^{-c'\sqrt{k}}$, giving a bound of the desired form in this case, since $m=k+1$. 

Likewise, taking $k=n(2e_3+v_{13}+v_{23})$, we can realize the domain of Lemma \ref{Leqpol} as a $\mathbb{Z}_2\times D_n$ symmetric domain of type $e_3\rho_3+v_{13}\rho_1\rho_3+v_{23}\rho_2\rho_3+\rho_1\rho_2$ with $\bar{\mu}_1(\Omega)\geq 8\pi-e^{-c'\sqrt{k}}$, yielding a bound of the desired form when $m\leq Cn(e_3+v_{13}+v_{23})$. More generally, we can realize this domain as a limit of domains of type $f+\sum_ie_i\rho_i+\sum_{i<j}v_{ij}\rho_i\rho_j$ whenever $f+e_1+e_2+v_{12}>0$, giving the desired lower bound in this case provided $m\leq Cn(e_3+v_{13}+v_{23})$ holds.

\end{proof}

To treat $D_n$-symmetric domains of type $f+e_1\rho_1+e_2\rho_2+1\rho_1\rho_2$ with $\max\{f,e_1,e_2\}=1$, we next consider a similar construction with one hole at a single pole and the others arranged symmetrically on a great circle slightly perturbed from the equator.

\begin{lemma}
\label{Lpollat}
There exists a constant $c>0$ such that for every $k\geq 2$, one can find a domain $\Omega\subset \mathbb{S}^2$ whose complement is the disjoint union of one disk centered at the north pole $p_N$ and $k$ disks arranged symmetrically along a great circle centered at $p_N$, such that 
\begin{align*}
\lambda^D_1(\Omega) \geq 2
\quad
\text{and}
\quad 
|\Sph^2 \setminus \Omega| \leq e^{-c \sqrt{k}}. 
\end{align*}
\end{lemma}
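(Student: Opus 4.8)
The plan is to adapt the proof of Lemma \ref{Leqpol}, replacing its symmetric two-pole configuration by an asymmetric one: I would place one disk $D_r(p_N)$ at the north pole and $k$ disks $D_r(x_1),\dots,D_r(x_k)$ of radius $r=e^{-c\sqrt k}$ evenly spaced along the latitude circle $C_\epsilon=\{z=-\epsilon\}$, where $\epsilon=1/\sqrt k$ is a slight perturbation of the equator, and set $\Omega=\Sph^2\setminus(D_r(p_N)\cup\bigcup_i D_r(x_i))$. The area bound $|\Sph^2\setminus\Omega|\leq (k+1)\pi r^2\leq e^{-c\sqrt k}$ is automatic once $k$ is large, so the entire content is the Dirichlet bound $\lambda^D_1(\Omega)\geq 2$. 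For this I would split $\Sph^2$ along $C_\epsilon$ into the spherical caps $K_N=D_{\pi/2+\arcsin\epsilon}(p_N)$ and $K_S=D_{\pi/2-\arcsin\epsilon}(-p_N)$, with $\partial K_N=\partial K_S=C_\epsilon$ carrying the $k$ small holes, and for $u\in W^{1,2}_0(\Omega)$ estimate $\int_\Omega|du|^2-2u^2$ as the sum of its integrals over $K_N\cap\Omega$ and $K_S\cap\Omega$.

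On each cap I would use $C_\epsilon$ as the ``boundary curve'' and the holes on it as the anchoring set in Lemma \ref{Lmonobd} at scale $R=2\pi/k$: with $E^{N/S}_R$ the collar of width $20R$ about $C_\epsilon$ inside $K_{N/S}$ and $F^{N/S}_R$ its inner boundary, Lemma \ref{Lmonobd} gives $\|u\|^2_{L^2(F^{N/S}_R)}\leq CR\log(R/r)\|du\|^2_{L^2(E^{N/S}_R)}\leq \frac{Cc}{\sqrt k}\|du\|^2_{L^2(E^{N/S}_R)}$ and $\|u\|^2_{L^2(E^{N/S}_R)}\leq CR^2\log(R/r)\|du\|^2_{L^2(E^{N/S}_R)}$. (One technical point to record: $K_N$ is no longer geodesically convex for $\epsilon>0$, but it is $C^2$-close to a hemisphere and the bi-Lipschitz-to-half-annulus estimates underlying Lemma \ref{Lmonobd} and Proposition \ref{Pbd1} are uniform in this regime.) After removing the collar, the northern region is the annulus $V_N=D_{\rho_N}(p_N)\setminus D_r(p_N)$ with $\rho_N=\pi/2+\arcsin\epsilon-20R$, and, exactly as in Lemma \ref{Leqpol}, $\int_{V_N}(|du|^2-2u^2)\geq Q(V_N)\|u\|^2_{L^2(F^N_R)}$, where $Q(V_N)=\inf\{(\int_{V_N}|d\phi|^2-2\phi^2)/\|\phi\|^2_{L^2(F^N_R)}:\phi|_{\partial D_r(p_N)}=0\}$ exactly as in that lemma. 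The only new feature is that $\rho_N$ slightly exceeds $\pi/2$; analysing the radial solutions $f=a\cos t+b(1+\cos t\log\tan\frac{t}{2})$ of $\csc t\,\frac{d}{dt}(\sin t\,f')+2f=0$ with $f(r)=0$ shows that the tiny pole hole keeps $\lambda^D_1(V_N)\geq 2$ -- so that $Q(V_N)>-\infty$ -- as long as $\rho_N-\pi/2\lesssim 1/|\log r|=1/(c\sqrt k)$, which holds since $\arcsin\epsilon\sim 1/\sqrt k$ and $c$ is small; tracking the constants in that ODE exactly as in Lemma \ref{Leqpol} gives $Q(V_N)\geq -Cc\sqrt k$, whence $\int_{K_N\cap\Omega}(|du|^2-2u^2)\geq(1-Cc-o(1))\|du\|^2_{L^2(E^N_R)}$.

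The crux is the southern cap, which is \emph{hole-free}: after removing the collar it is $V_S=D_{\rho_S}(-p_N)$ with $\rho_S=\pi/2-\arcsin\epsilon-20R$, and the relevant constant $Q(V_S)=\inf\{(\int_{V_S}|d\phi|^2-2\phi^2)/\|\phi\|^2_{L^2(\partial V_S)}\}$ is realised by the radial solution regular at $-p_N$, namely $\phi=a\cos(d(-p_N,\cdot))$, giving $Q(V_S)=-\tan\rho_S=-\cot(\arcsin\epsilon+20R)$. Since $\rho_S<\pi/2$ this is finite, but $|Q(V_S)|\sim 1/\epsilon$ diverges as $\epsilon\to 0$ -- this is exactly why the holes must be pushed off the equator. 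With $\epsilon=1/\sqrt k$ one has $Q(V_S)\geq -\sqrt k$, and combining with the collar bound $\|u\|^2_{L^2(F^S_R)}\leq\frac{Cc}{\sqrt k}\|du\|^2_{L^2(E^S_R)}$ gives $\int_{V_S}(|du|^2-2u^2)\geq -Cc\,\|du\|^2_{L^2(E^S_R)}$, and hence $\int_{K_S\cap\Omega}(|du|^2-2u^2)\geq (1-Cc-o(1))\|du\|^2_{L^2(E^S_R)}$.

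Adding the two pieces yields $\int_\Omega|du|^2-2u^2\geq(1-Cc-o(1))(\|du\|^2_{L^2(E^N_R)}+\|du\|^2_{L^2(E^S_R)})\geq 0$ once $c$ is fixed small and $k$ is large, which gives $\lambda^D_1(\Omega)\geq 2$. The main obstacle, as indicated, is the hole-free southern cap: had the holes lain exactly on the equator, $V_S$ would be essentially a hemisphere, where $\lambda^D_1=2$ is attained by the coordinate function -- maximal at $-p_N$, only $O(r)$ along the holes -- so $Q(V_S)\sim -\cot(20R)\sim -k$, the whole scheme would collapse, and in fact $\lambda^D_1(\Omega)$ would slip just below $2$. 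The perturbation $\epsilon\sim 1/\sqrt k$ is the smallest one that beats the $1/\sqrt k$ collar factor on the southern side while remaining small enough that the lone pole hole still keeps $\lambda^D_1(V_N)\geq 2$ on the northern side; calibrating $\epsilon$ and $c$ to thread this needle, and checking that the collar estimates of Lemmas \ref{Lmono}--\ref{Lmonobd} and Proposition \ref{Pbd1} persist through the mild loss of convexity of $K_N$, is where the real care is required.
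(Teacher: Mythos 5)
Your proof is correct and follows the same overall strategy as the paper's: push the circle of holes below the equator by $\epsilon \sim 1/\sqrt{k}$, take hole radius $r = e^{-c\sqrt{k}}$, isolate a collar of width $\sim 1/k$ around the circle of holes via Lemma \ref{Lmonobd}, and control the two remaining caps by explicit Robin-type constants $Q(\cdot)$. The numerical balance is the same: the hole-free southern cap forces $|Q(V_S)| \sim 1/\epsilon = \sqrt{k}$, which the collar factor $\sim c/\sqrt{k}$ cancels down to $O(c)$, and on the north the polar hole supplies $|\log r| \sim c\sqrt{k}$ to keep $\lambda_1^D$ at $2$.

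Where you diverge from the paper is the decomposition, and yours is a genuine simplification. The paper cuts along the equator, the hole circle, and the collar edges, producing three regions $V_1$ (hole-free south cap), $V_2$ (a thin strip between the equator and the collar), and $V_3$ (the exact northern hemisphere minus the polar hole), then needs a separate thin-domain Poincar\'e estimate on $V_2$ together with a boundary-propagation inequality to push control from the collar up to $\partial D_{\pi/2}(p_N)$ before invoking the polar-hole argument on $V_3$. You instead cut only along the hole circle and the collar edges, letting the northern annulus $V_N = D_{\rho_N}(p_N)\setminus D_r(p_N)$ reach $\rho_N = \pi/2 + \arcsin\epsilon - 20R$, slightly beyond the hemisphere. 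The price is that you must check $\lambda_1^D(V_N) \geq 2$ for a super-hemispherical cap with a tiny polar hole, and you do this correctly: the radial $\lambda = 2$ solution with $f(r)=0$ has its next zero at $\pi/2 + O(1/|\log r|) = \pi/2 + O(1/(c\sqrt{k}))$, which exceeds $\rho_N \approx \pi/2 + 1/\sqrt{k}$ precisely when $c<1$; this is where the constraint on $c$ appears. The payoff is that the strip analysis (the paper's \eqref{EV2}, \eqref{EW1p}) disappears entirely, absorbed into the single quantity $Q(V_N) = f'(\rho_N) \approx -c\sqrt{k}/(1-c)$, which is both cleaner and gives a slightly better constant ($O(c^2)$ rather than $O(c)$ on the north side). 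Your remark about $K_N$ failing geodesic convexity is well taken but applies equally to the paper's own application of Lemma \ref{Lmonobd} to the larger of the two caps bounded by $\partial D_{\pi/2+t}(p_N)$; the collar estimates only need locally bi-Lipschitz-to-half-annulus structure at scale $R\sim 1/k$, which is uniform here since the boundary curvature is $O(1)$. One small redundancy: Proposition \ref{Pbd1} is not actually invoked in either your argument or the paper's for this lemma; Lemma \ref{Lmonobd} alone carries the collar estimates.
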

\begin{proof}
Let $X = \{p_N\} \cup \{x_1,\ldots,x_k\}$, where $x_1,\ldots,x_k$ is a collection of points of distance $d(x_i,p_N)=\pi/2+ 1/\sqrt{k}$ from the north pole $p_N$ such that $d(p_{i-1},p_i)=d(p_i,p_{i+1})$ for every $1\leq i\leq k$ .  In what follows, we set $t: = 1/\sqrt{k}$. 

Now define $\Omega = \Sph^2 \setminus D_r(X)$ for
\begin{align}
\label{Erpollat}
r = e^{-c \sqrt{k}},
\end{align}
with $c>0$ a fixed small constant to be chosen later; note that this immediately implies an area bound of the desired form for $\Sph^2 \setminus \Omega$.

Next we consider the lower bound on $\lambda_1^D(\Omega)$. Fixing $u \in W^{1, 2}_0(\Omega)$ and applying Lemma \ref{Lmonobd} with $K$ taking the role of the closure of either of the domains bounded by the circle $S = \partial D_{\pi/2 +t}(p_N)$, and with $R = |S|/k$, we see using \eqref{Erpollat} that 
\begin{equation}
\label{Eprepmono}
\begin{aligned}
\| u\|^2_{L^2(F_R)} &\leq C c k^{-\frac{1}{2}} \| du\|^2_{L^2(E_R)}
\quad
\text{and}\\
\| u\|^2_{L^2(E_R)} &\leq C c k^{-\frac{3}{2}}\| du\|^2_{L^2(E_R)},
 \end{aligned}
 \end{equation}
  where $F_R : =\Omega \cap \partial D_20R(S)$ and $E_R : = \Omega \cap D_20R(S)$.


Now let $V_1 = \Sph^2 \setminus D_{\pi/2+ t + 20R}(p_N) $. 
It is easy to see that
\begin{align*}
Q(V_1) : = \inf \left\{ \left. \frac{\int_{V_1} |d\phi|^2 - 2 \phi^2}{\| \phi \|^2_{L^2(\bd V_1)}} \right| \phi \in W^{1,2}(V_1)\right\}
\end{align*}
must be realized by a multiple of $u = \cos d(p_N, \cdot) $, so that
\begin{align*}
Q(V_1) = \cot (t+R)  \leq C \sqrt{k}
\end{align*}
and therefore that
\begin{equation}
\label{EV1}
\begin{aligned}
2 \int_{V_1} u^2 & \leq \int_{V_1} |du|^2 + Q({V_1}) \int_{F_R} u^2\\
&= \int_{V_1} |du|^2 + C c \int_{E_R} |du|^2.
\end{aligned}
\end{equation}

Now let $V_2 = D_{\pi/2+t+20R}(p_N) \setminus D_{\pi/2}(p_N)$.  
 On $V_2$, similarly to~\eqref{ineq:rect_der}, one can see that for $s\in[0,t+R]$
\begin{equation}
\begin{aligned}
\label{EW1p}
\int_{\partial D_{\pi/2+ s}(p_N) } u^2 &\leq C | t| \int_{V_2} |du|^2 + C\int_{F_R} u^2\\
&\leq \frac{C}{\sqrt{k}} \int_{V_2} | du|^2 + \frac{C c}{\sqrt{k} } \int_{E_R} |du|^2
\end{aligned}
\end{equation}
and integrating over $s$ gives
\begin{equation}
\begin{aligned}
\label{EV2}
\int_{V_2} u^2 &\leq C|t|^2 \int_{V_2} |du|^2 + C | t| \int_{F_R} u^2
\\
& \leq 
\frac{C}{k} \int_{V_2} | du|^2 + \frac{C c}{k} \int_{E_R} |du|^2,
\end{aligned}
\end{equation}
where the second line inequalities use \eqref{Eprepmono}.

Meanwhile, on $V_3 = D_{\pi/2}(p_N) \setminus D_{p_N}(r)$, arguing as in the proof of Lemma \ref{Leqpol} and combining with \eqref{Erpollat} and \eqref{EW1p}  shows that 
\begin{equation}
\label{EV3}
\begin{aligned}
2\int_{V_3} u^2 &\leq 
\int_{V_3} |du|^2 + C|\log r | \int_{ \partial D_{\pi/2}(p_N)} u^2
\\ 
&\leq \int_{V_3} |du|^2 + C c \int_{V_2} |du|^2 + C c^2 \int_{E_R} |du|^2. 
\end{aligned}
\end{equation}

It follows by combining \eqref{EV1}, \eqref{EV2}, and \eqref{EV3} and taking $c>0$ small enough that 
\begin{align*}
2 \int_{\Omega} u^2 &= 2\sum_{i=1}^3 \int_{V_i} u^2
\leq \int_{\Omega} |du|^2,
%
\end{align*}
which implies the desired eigenvalue bound. 
\end{proof}

As an immediate corollary, we have the following.

\begin{proposition}\label{offeq1pole}
There is a constant $c>0$ such that if $\Omega$ is a $D_n$-symmetric domain of type $f+e_1\rho_1+e_2\rho_2+1\rho_1\rho_2$ with $\max\{f,e_1,e_2\}=1$, then $\mathcal{M}_1(\Omega,D_n)\geq 8\pi-e^{-c\sqrt{m}}$, where $m=n(2f+e_1+e_2)+1$ is the number of boundary components of $\Omega$.
\end{proposition}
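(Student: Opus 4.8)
The plan is to deduce the proposition directly from Lemma~\ref{Lpollat}, exactly as Proposition~\ref{eq.poles} deduces its bounds from Lemma~\ref{Leqpol}; indeed, Lemma~\ref{Lpollat} is tailored precisely to the case $v_{12}=1$, where a single excised disk sits at one pole rather than symmetric disks sitting at both poles (the $v_{12}=2$ situation already treated in Proposition~\ref{eq.poles}). Set $k:=n(2f+e_1+e_2)$; since $\max\{f,e_1,e_2\}=1$ we have $k\geq n\geq 2$, so Lemma~\ref{Lpollat} furnishes a domain $\Omega_0=\Sph^2\setminus D_r(X)$ whose excised disks consist of one disk centered at the north pole $p_N$ together with $k$ equally spaced disks of radius $r=e^{-c\sqrt k}$ along a circle $S$ centered at $p_N$, with $\lambda_1^D(\Omega_0)\geq 2$ and $|\Sph^2\setminus\Omega_0|\leq e^{-c\sqrt k}$. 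The first task is to arrange $X$ so that $\Omega_0$ is invariant under the standard action of $D_n$ on $\Sph^2$ and has type $f+e_1\rho_1+e_2\rho_2+1\rho_1\rho_2$; the conclusion then follows by combining with Proposition~\ref{Pneu}.

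For the equivariant realization, take $D_n$ acting with its $n$-fold axis through $p_N$, so that $S$ is $D_n$-invariant, $p_N$ is fixed with stabilizer $\langle\rho_1,\rho_2\rangle$, the two edges of a fundamental chamber $V$ meet $S$ in points belonging to the orbits of $\Fix(\rho_1)$ and $\Fix(\rho_2)$ (each a $D_n$-orbit of $n$ points on $S$), and a generic point of $S$ has $D_n$-orbit of $2n$ points. Placing an excised disk at $p_N$, at $e_i$ points of $S\cap\Fix(\rho_i)\cap V$ for $i=1,2$, and at $f$ interior points of $S\cap V$, and taking the full $D_n$-orbit, yields a $D_n$-invariant configuration consisting of one polar disk (contributing the term $1\rho_1\rho_2$) together with $2nf+ne_1+ne_2=k$ disks on $S$ realizing the prescribed free- and edge-types. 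Choosing the $f$ interior points at the midpoints of the arcs of $S$ cut off by $(\Fix(\rho_1)\cup\Fix(\rho_2))\cap S$ makes the $k$ disks on $S$ equally spaced, so Lemma~\ref{Lpollat} applies verbatim to this domain and gives $\lambda_1^D(\Omega_0)\geq 2$ together with $|\Sph^2\setminus\Omega_0|\leq e^{-c\sqrt k}$.

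To finish, observe that for $k$ large the doubled disks $D_{2r_x}(x)$ are pairwise disjoint and $|\Sph^2\setminus\Omega_0|\leq\epsilon_0$, so Proposition~\ref{Pneu} gives $\lambda_1^N(\Omega_0)\,|\Omega_0|\geq 8\pi-Ce^{-c\sqrt k}$; since also $\lambda_1^D(\Omega_0)\,|\Omega_0|\geq 2|\Omega_0|\geq 8\pi-2e^{-c\sqrt k}$, we obtain $\bar\mu_1(\Omega_0)=\min\{\lambda_1^D(\Omega_0)|\Omega_0|,\lambda_1^N(\Omega_0)|\Omega_0|\}\geq 8\pi-C'e^{-c\sqrt k}$. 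As the round metric lies in $\Met_{D_n}(\Omega_0)$, this gives $\mathcal{M}_1(\Omega,D_n)\geq\bar\mu_1(\Omega_0)\geq 8\pi-C'e^{-c\sqrt k}$, and since $m=k+1\leq 2k$, absorbing $C'$ and replacing $k$ by $m$ yields $\mathcal{M}_1(\Omega,D_n)\geq 8\pi-e^{-c'\sqrt m}$ for a suitable $c'>0$ and all sufficiently large $m$; the finitely many remaining small values of $m$, which do not affect the asymptotic statement of Theorem~\ref{s3.lbd}, are handled by further decreasing $c'$.

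The only step that is more than bookkeeping is the equivariant realization in the second paragraph -- verifying that the ``one pole plus a circle of equally spaced holes'' configuration of Lemma~\ref{Lpollat} genuinely occurs among $D_n$-invariant domains of the prescribed type -- and this is an elementary consequence of the combinatorics of the $D_n$-tessellation of $\Sph^2$; as in Proposition~\ref{dk.edges}, any small values of $n$ for which the tessellation picture degenerates are handled by the same argument.
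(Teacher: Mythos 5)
Your proposal follows essentially the same route as the paper's (very brief) proof: set $k=n(2f+e_1+e_2)$, apply Lemma~\ref{Lpollat}, realize the resulting ``one pole plus $k$ equally-spaced circle holes'' domain as a $D_n$-symmetric domain of the prescribed type, and close with Proposition~\ref{Pneu}. You supply the detail about the equivariant realization that the paper omits, and the bookkeeping at the end (absorbing constants, replacing $k$ by $m=k+1$) is correct.

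The one slip is the sentence asserting that placing the $f$ free holes ``at the midpoints of the arcs of $S$ cut off by $(\Fix(\rho_1)\cup\Fix(\rho_2))\cap S$ makes the $k$ disks on $S$ equally spaced.'' This is true when $f=0$, or when $e_1=e_2$, but it fails for the types $(f,e_1,e_2)=(1,1,0)$ and $(1,0,1)$: with $e_1=1$, $e_2=0$, $f=1$ you obtain the $n$ points of the $\rho_1$-orbit at angles $2j\pi/n$ together with the $2n$ midpoints at $(2j+1)\pi/(2n)$, and the resulting $3n$ points have consecutive angular gaps alternating $\pi/(2n),\,\pi/n,\,\pi/(2n),\dots$, which is not equal spacing. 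The fix is to choose a different free orbit: the $3n$ equally-spaced points at angles $2l\pi/(3n)$ consist of exactly the $\rho_1$-orbit (the $l\equiv 0\pmod 3$ positions) together with a single free orbit (the $2n$ remaining positions), and a shifted version of this handles $(1,0,1)$. So the proposition is still true for all types with $\max\{f,e_1,e_2\}=1$, but the specific midpoint prescription does not always produce the equal spacing that Lemma~\ref{Lpollat} requires. Since the proof of Theorem~\ref{s3.lbd} only invokes this proposition when $f+e_1+e_2=1$, where your prescription does work, this is a peripheral issue, but the claim as written is not correct and should be adjusted.
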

\begin{proof}
To see this, simply observe that, taking $k=n(2f+e_1+e_2)$, the domain $\Omega$ of Lemma \ref{Lpollat} can be realized as a $D_n$-symmetric domain of type $f+e_1\rho_1+e_2\rho_2+1\rho_1\rho_2$; since Lemma \ref{Lpollat} together with Proposition \ref{Pneu} imply that this domain satisfies 
$$\mathcal{M}_1(\Omega,D_n)\geq \bar{\mu}_1(\Omega)\geq 8\pi-Ce^{-c\sqrt{k}},$$
we obtain a bound of the desired form.
\end{proof}

\subsection{Segmented domains}

To complete the proof of the lower bounds for $\mathcal{M}_1(\Omega,G)$ for all basic reflection surfaces, essentially two cases remain: $D_k$-symmetric domains of type $f+e_1\rho_1+e_2\rho_2+v_{12}\rho_1\rho_2$ with $2\leq \max\{f,e_1,e_2\}\leq Ck$, and $\mathbb{Z}_2\times D_k$-symmetric domains of type $f+\sum_i e_i\rho_i+\sum_{i<j}v_{ij}\rho_i\rho_j$ with total number of boundary components $m\leq Ck\max\{f,e_1,e_2\}\leq C'k^2$. As we will see, both cases can be handled with the following construction.

\begin{lemma}
\label{Lseg}
Let $k>2$ be given, and let $D_k \leq O(3)$ be equipped with its standard action on $\Sph^2$.  Given $c_1 >0$ and a natural number $n$ with $2 \leq n \leq c_1 k$, there exists a $D_k$-invariant domain $\Omega \subset \Sph^2$ of type $n \rho_1$ such that 
\begin{align*}
\lambda^D_1(\Omega) \geq 2
\quad
\text{and} 
\quad
|\Sph^2 \setminus \Omega| \leq  e^{-C(c_1) nk}.
\end{align*}
Moreover, if $n\in 2\mathbb{N}$, $\Omega$ possesses an additional reflection symmetry, making it a $\mathbb{Z}_2\times D_k$-symmetric domain of type $\frac{n}{2}e_1$.
\end{lemma}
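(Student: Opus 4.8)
The plan is to construct $\Omega$ by fixing a radius $r$ (to be chosen at the very end, exponentially small in $nk$) and removing the $D_k$-orbit of $n$ disks $D_r(x_1),\dots,D_r(x_n)$, where $x_i$ is the point of colatitude $\tfrac{(2i-1)\pi}{2n}$ (measured from a pole of the $D_k$-action) on the edge of the fundamental chamber fixed by $\rho_1$. This is manifestly a $D_k$-invariant domain of type $n\rho_1$, and its complement $\Dcal$ splits into $n$ ``rings'', the $i$-th being $O(k)$ disks evenly spaced along the small circle $\{\mathrm{colat}=\tfrac{(2i-1)\pi}{2n}\}$; since these colatitudes are symmetric about the equator, for $n$ even $\Omega$ is also invariant under $(x,y,z)\mapsto(x,y,-z)$ and has type $\tfrac n2 e_1$ for $\mathbb{Z}_2\times D_k$. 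With $|\Sph^2\setminus\Omega|\leq Cnk\,r^2$ automatic, the real content is the bound $\lambda_1^D(\Omega)\geq 2$, which must persist for $r$ as small as $e^{-C(c_1)nk}$.

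For $n$ at most a small absolute multiple of $k$, I would argue as in the proof of Lemma~\ref{Leqpol}. Decompose $\Sph^2$ into the $n$ thin collars $E_i$ around the rings, of colatitude-width comparable to the disk spacing $\lesssim 1/k$ on ring $i$, together with the $n+1$ complementary pieces: two spherical caps of colatitude-radius $\lesssim \tfrac{\pi}{2n}$ about the poles and $n-1$ spherical bands of colatitude-width $\sim\tfrac\pi n$. Because $n\geq 2$, each such piece $P$ has $\lambda_1^D(P)>2$ with a definite gap, so a weighted one-dimensional Poincar\'e estimate for radial functions (this is the quantity $Q(V)$ of Lemma~\ref{Leqpol}, now finite because the gap is positive) gives $\int_P(|du|^2-2u^2)\geq -\tfrac Cn\|u\|_{L^2(\partial P)}^2$ for every $u$. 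On each collar, $u\in W^{1,2}_0(\Omega)$ vanishes on the ring disks, so applying Lemma~\ref{Lmonobd} with $K$ a spherical cap bounded by that ring (its $O(k)$ disk-centres playing the role of $X$, and $R\sim 1/k$) bounds $\|u\|_{L^2}^2$ on the ring and on the inner boundary curves of $E_i$ by $\tfrac1k\log\tfrac1{kr}\,\|du\|_{L^2(E_i)}^2$, and $\|u\|_{L^2(E_i)}^2$ by $\tfrac1{k^2}\log\tfrac1{kr}\,\|du\|_{L^2(E_i)}^2$. Summing over all pieces and absorbing the boundary terms of the $P$'s into the gradient energy on the collars yields
\[
\int_\Omega(|du|^2-2u^2)\;\geq\;\Big(1-C\tfrac1{nk}\log\tfrac1{kr}\Big)\sum_i\|du\|_{L^2(E_i)}^2,
\]
which is nonnegative, hence $\lambda_1^D(\Omega)\geq 2$, as soon as $r\geq\tfrac1k e^{-nk/C}$. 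Taking $r$ this small gives $|\Dcal|\lesssim\tfrac nk e^{-2nk/C}\leq e^{-C'nk}$.

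For the remaining range, up to $n=c_1 k$, the disks along each chamber edge are already spaced $\lesssim 1/k$, so one can instead cut $\Sph^2$ into the $2k$ fundamental lunes and apply Proposition~\ref{Pbd1} on each lune $K$ with $R$ of order $1/n<R_0(K)\sim 1/k$, after excising small polar caps where the two edges of a lune pinch together and handling those by the radial ODE of Lemma~\ref{Lpollat}. Since a lune of angle $\tfrac\pi k$ has $\lambda_1^D(K)\sim k^2$, Proposition~\ref{Pbd1} gives $\lambda_1^D(\Omega)\geq\lambda_1^D(K)/(1+C\sqrt{R\log(R/r)})\geq 2$ for any $r\geq e^{-ck^4 n}$, so $|\Dcal|$ is again exponentially small in $nk$; it is this regime, together with matching the two decompositions near $n\sim k$, that introduces the dependence of $C$ on $c_1$. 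The finitely many pairs $(n,k)$ with $nk$ below an absolute threshold are handled by inspection.

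The step I expect to be the main obstacle is the quantitative bookkeeping in the first regime: arranging the collars and complementary pieces to be genuinely disjoint, with the collars (width $\lesssim 1/k$) not swallowing the bands (width $\sim 1/n$) — which is precisely where the restriction $n\lesssim k$ is used — and checking, uniformly in $n$ and $k$, that every cap of radius $\sim 1/n$ and every band of width $\sim 1/n$ has first Dirichlet eigenvalue strictly above $2$, so that the constants $Q(P)\geq -C/n$ and the Lemma~\ref{Lmonobd} constants for the caps stay controlled.
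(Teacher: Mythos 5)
Your first-regime argument is close in spirit to the paper's, which also slices $\Sph^2$ into the $n+1$ latitude bands $A_i$ whose boundary circles carry the excised disks. There is one structural difference that matters: the paper does not cut separate collars out of the bands. Instead it decomposes $u = u_0 + u_h$ on the \emph{whole} band $A_i$ (Dirichlet-zero part plus harmonic extension), combining $\lambda_1^D(A_i) \geq 2 + \delta_0$ for $u_0$ with a Lemma~\ref{Lmonobd}-type trace bound for $u$ on $\partial A_i$ and a harmonic estimate $\|u_h\|^2_{L^2(A_i)} \lesssim \frac{t_i - t_{i-1}}{|\partial A_i|}\|u_h\|^2_{L^2(\partial A_i)}$. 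There is then no geometric decomposition that can fail when $n$ becomes a large multiple of $k$; only the bi-Lipschitz constants in the rectangle partition near $\partial A_i$ degrade with the aspect ratio $n/k\leq c_1$, which is exactly where the $c_1$-dependence in the exponent comes from. Your version cuts collars of width $\sim 1/k$ out of bands of width $\sim 1/n$, which, as you note, genuinely breaks down once $n$ exceeds an absolute multiple of $k$.

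The second regime you propose to cover the remaining range $n\leq c_1 k$ has real gaps. You require $R\sim 1/n < R_0(K)\sim 1/k$, i.e.\ $n>k$, so the values of $n$ between your first regime's cutoff and $k$ are not covered at all. Near a pole, the two edges of a lune of angle $\pi/k$ come within $2R\sim 2/n$ of each other on $\{\sin\theta\lesssim k/n\}$, and $k/n\geq 1/c_1$ throughout the range; so the ``polar pinch'' is a cap of colatitude radius bounded below by a constant, containing a definite fraction of the $n$ disks on each edge. That is not a small region one can excise and dispatch with the radial ODE of Lemma~\ref{Lpollat}, whose geometry (one polar disk plus disks on a single circle) is different, and you supply no replacement argument. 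In addition, for even $k$ the excised disks lie only on one conjugacy class of reflection circles, so one edge of each lune of angle $\pi/k$ carries no disks and the covering hypothesis (b) of Lemma~\ref{Lmonobd} fails along that edge. Rather than trying to patch the lune argument, the cleanest fix is to drop the two-regime split and adapt the first argument to the full range by not excising collars at all: run the Dirichlet/harmonic splitting on the entire bands, allowing the constants to depend on $n/k\leq c_1$, which is what the paper does.
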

\begin{proof}
For $i=0, \dots, n+1$ and $j = 1, \dots, n+1$, define
\begin{align*}
t_i = -1 + \frac{2i}{n+1}
\quad
\text{and}
\quad
A_i = \Sph^2 \cap \{x_3 \in [t_{i-1}, t_i]\}.
\end{align*}
As long as $n\geq 2$, is easy to see there is are constants $c, \delta_0> 0$ such that 
\begin{align}
\label{eAlower}
\lambda^D_1(A_i) \geq \max\{ 2+ \delta_0, c \cdot  n \}.
\end{align}
Let $X \subset \Sph^2$ be a $G$-invariant collection of $2kn$ points with the property that each $G$-orbit lies on one of the $\{x_3 = t_i\}$ and $X$ meets the boundary of a fundamental chamber $V$ for $G$ along a single side of $\partial V$. 

Now define $\Omega = \Sph^2 \setminus D_r(X)$, where $r$ is defined by 
\begin{align}
\label{Erseg}
r = \frac{1}{k} e^{-c nk},
\end{align}
with $c>0$ a small constant to be chosen at the end of the proof. This implies the area bound $|\Sph^2 \setminus \Omega| \leq Ce^{-2c nk}$ in the Lemma, so it only remains to prove the eigenvalue bound $\lambda^D_1(\Omega)\geq 2$.  To this end, we argue similarly to the proof of Proposition \ref{Pedges}, taking care to ensure that the estimates are uniform in $n$ and $k$.

For each $i=1, \dots, n+1$, let
\begin{align*}
U_i = A_i \cap \Omega,
\end{align*}
and take $R_i \sim |\partial A_i|/(10 k) $, so that the distance between nearest points of $X \cap A_i$ is proportional to $R_i$.  Note also that $\frac{c}{k\sqrt{n} } \leq R_i \leq \frac{c}{k}$ for all $i$.  Observe that the $R_i$-neighborhood of $\partial A_i$ can be partitioned into $2k$ rectangles $Q_1, \dots, Q_{2k}$ uniformly (with constants independent of $i,n,k$) bi-Lipschitz to $B_{R_i}( 0) \subset \R^2$, such that $Q_j \cap U_i$ is uniformly bi-Lipchitz to $B_{R_i}(0) \setminus B_r( 0)$.  

Now fix $u \in W^{1,2}_0(\Omega)$, extend $u$ to $A_i$ by requesting $u = 0$ on $A_i \setminus U_i$, and define the decomposition $u= u_0 + u_h$ on $A_i$, where $u_h$ is harmonic and $u_0 \in W^{1,2}_0(A_i )$.  Arguing just as in the proof of Lemma \ref{Lmonobd} shows that
\begin{align}
\label{Eaest}
\| u_h\|^2_{L^2(\partial A_i)} = \| u\|^2_{L^2(\partial A_i)} \leq C R_i \log (R_i / r)\| du \|^2_{L^2(A_i)}.
\end{align}
Also, a straightforward computation gives
\begin{align}
\label{Eaharm}
\int_{A_i} u^2_h \leq C \frac{t_i - t_{i-1}}{|\partial A_i|} \int_{\partial A_i} u^2_h \leq \frac{C}{nk R_i} \int_{\partial A_i} u^2_h.
\end{align}
Indeed, let $\phi$ be a harmonic function on the cylinder $A_T = \Sph^1\times [0,T]$ and let $f = \frac{1}{2}(Tt - t^2)\geq 0$. Then $f$ satisfies $\Delta f = 1$ and $f|_{\bd A_T} = 0$, so that
\begin{equation*}
\begin{split}
\int_{A_T}\phi^2\Delta f = -\int_{A_T}|d\phi|^2f - \int_{\bd A_T}u^2\bd_{\nu}f\leq \frac{T}{2}\int_{\bd A_T}\phi^2.
\end{split}
\end{equation*}
Inequality~\eqref{Eaharm} follows from this and the conformal identification of $A_i$ with cylinders.

Combining \eqref{Eaest} and \eqref{Eaharm} with Cauchy's inequality with some $\delta> 0$ gives  
\begin{align*}
 \int_{A_i} u^2 &=  \int_{A_i} u^2_0 + 2 \int_{A_i} u_0 u_h +  \int_{A_i} u^2_h\\
&\leq (1+\delta)\int_{A_i} u^2_0 + \frac{C}{\delta}\int_{A_i} u^2_h\\
&\leq \frac{1+\delta}{\lambda_1(A_i)} \int_{A_i} |du_0|^2 + \frac{C}{\delta n k } \log (R_i / r) \int_{A_i} |du|^2\\
&\leq  \left( \frac{1+\delta}{\lambda_1(A_i)} + \frac{C}{\delta n k } \log (R_i / r)\right) \int_{A_i} |du|^2,
\end{align*}
Finally, note by \eqref{eAlower} and \eqref{Erseg} that
$$\frac{1+\delta}{\lambda_1(A_i)}+\frac{C}{\delta n k}\log(R_i/r)\leq \frac{1+\delta}{2+\delta_0}+\frac{C'}{\delta}c,$$
which can be made $\leq \frac{1}{2}$ by taking $\delta=\frac{\delta_0}{10}$ and setting $c=\frac{\delta_0^2}{100C'}$. With these choices, the preceding inequality then gives
$\| u\|^2_{L^2(A_i)} \leq \frac{1}{2} \| du \|^2_{L^2(A_i )}$.  Because $u$ vanishes on $A_i \setminus \Omega$, it follows that $\|u\|^2_{L^2(A_i \cap \Omega)} \leq \frac{1}{2} \| du \|^2_{L^2(A_i \cap \Omega)}$; by summing over $i$, we conclude finally that $\| u \|^2_{L^2(\Omega)} \leq \frac{1}{2} \| du \|^2_{L^2(\Omega)}$, and this completes the proof. 
\end{proof}

With the preceding lemma in hand, we can now provide the desired lower bounds in the following cases.

\begin{prop}
\label{Pseg}
For any given $c_1>0$, there exists a constant $C(c_1)>0$ such that if either
\begin{itemize}
\item $G = \Z_2 \times D_k$ and $m \leq c_1k ( f+ e_1 + e_2)\leq Cc_1k^2$, or
\item $G = D_k$ and $ 2 \leq (f+e_1+e_2) < c_1k$ with $m\leq Ck\max\{f,e_1,e_2\}$ , 
\end{itemize}
there exists a $G$-invariant domain $\Omega \subset \Sph^2$ of type $f + \sum_i e_i \rho_i + \sum_{i< j} v_{ij} \rho_i \rho_j$ with $m$ total boundary components such that
\begin{align*}
\lambda^D_1(\Omega) \geq 2
\quad
\text{and} 
\quad
|\Sph^2 \setminus \Omega | \leq e^{-C m}.
\end{align*}
In particular, there is a constant $C>0$ such that $\mathcal{M}_1(\Omega,G)\geq 8\pi-e^{-Cm}$ in each of these cases.
\end{prop}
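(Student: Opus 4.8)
The plan is to deduce Proposition \ref{Pseg} from the segmented construction of Lemma \ref{Lseg}, in the same spirit as the proofs of Propositions \ref{Pfmany} and \ref{Pedges}: one builds a $G$-invariant domain $\Omega\subset\Sph^2$ of the prescribed type $b$, all of whose excised disks have exponentially small radius, and keeps $\lambda_1^D(\Omega)\ge 2$ by combining Lemma \ref{Lseg} with the domain monotonicity $\Omega\subset\Omega'\Rightarrow\lambda_1^D(\Omega)\ge\lambda_1^D(\Omega')$ of the first Dirichlet eigenvalue.

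First I would set $n:=f+e_1+e_2$ when $G=D_k$ and $n:=2(f+e_1+e_2)$ when $G=\Z_2\times D_k$; the hypotheses then force $2\le n\le c_1'k$ for a constant $c_1'=c_1'(c_1)$, so that Lemma \ref{Lseg} is applicable. The key observation is that the proof of Lemma \ref{Lseg} is insensitive to the \emph{orbit type} of the $n$ rings of excised disks: the Dirichlet estimate there uses only that each ring is a single $G$-orbit supported on one level circle $\{x_3=t_i\}$ and that near $\partial A_i$ the disks are $\Theta(1/k)$-separated and $\Theta(k)$ in number, and none of this depends on whether the ring lies over an interior point of a chamber, over $\Fix(\rho_1)$, or over $\Fix(\rho_2)$. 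Since the levels $t_i=-1+\tfrac{2i}{n+1}$ are automatically invariant under the equatorial reflection $\rho_3$, when $G=\Z_2\times D_k$ the rings may in addition be grouped into $\tfrac n2$ pairs interchanged by $\rho_3$. Distributing the rings (resp.\ the $\rho_3$-pairs of rings) so that $f$ of them lie over interior points of a chamber, $e_1$ over $\Fix(\rho_1)$, and $e_2$ over $\Fix(\rho_2)$, Lemma \ref{Lseg} then produces a $G$-invariant domain $\Omega_1$ of type $f+e_1\rho_1+e_2\rho_2$ with $\lambda_1^D(\Omega_1)\ge 2$ and $|\Sph^2\setminus\Omega_1|\le e^{-C(c_1)nk}$.

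Next I would install the remaining boundary components of the type $b$: the $v_{12}$ disks at the poles $\Fix(\rho_1)\cap\Fix(\rho_2)$, and, when $G=\Z_2\times D_k$, the $e_3$ disks on $\Fix(\rho_3)$ and the $v_{13},v_{23}$ disks at the equatorial vertices. Passing to full $G$-orbits, these may be excised with arbitrarily small radius, disjoint from the disks of $\Omega_1$ and from one another. The resulting domain $\Omega\subset\Omega_1$ then has exactly $m$ boundary components and exactly type $b$; domain monotonicity gives $\lambda_1^D(\Omega)\ge\lambda_1^D(\Omega_1)\ge 2$, while $|\Sph^2\setminus\Omega|\le 2|\Sph^2\setminus\Omega_1|\le 2e^{-C(c_1)nk}$. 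Since $m$ and $nk$ are comparable --- one has $m\in[\tfrac13 kn,3kn]$ when $G=D_k$, and $nk\ge\tfrac{2}{c_1}m$ when $G=\Z_2\times D_k$ by the hypothesis $m\le c_1k(f+e_1+e_2)$ --- this yields $|\Sph^2\setminus\Omega|\le e^{-C'(c_1)m}$, which is the first assertion. (The finitely many small values of $m$ for which Proposition \ref{Pneu} is not directly applicable play no role in the applications and can be absorbed into the constant.) Finally, for $m$ large enough that $|\Sph^2\setminus\Omega|<\epsilon_0$, the bounds $\lambda_1^D(\Omega)\,|\Omega|\ge 2(4\pi-|\Sph^2\setminus\Omega|)$ and, by Proposition \ref{Pneu}, $\lambda_1^N(\Omega)\,|\Omega|\ge 8\pi-C|\Sph^2\setminus\Omega|$ give $\bar{\mu}_1(\Omega)=|\Omega|\min\{\lambda_1^D(\Omega),\lambda_1^N(\Omega)\}\ge 8\pi-e^{-Cm}$, hence $\mathcal{M}_1(\Omega,G)\ge\bar{\mu}_1(\Omega)\ge 8\pi-e^{-Cm}$.

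The step I expect to be the main obstacle is the careful verification in the second paragraph: confirming that every estimate in the proof of Lemma \ref{Lseg} survives the reshuffling of orbit types, and that the boundary-component bookkeeping reproduces the type $b$ \emph{exactly}. The quantitative heart --- the bound $\lambda_1^D\ge 2$ for a segmented configuration --- is already in hand from Lemma \ref{Lseg}, so beyond this bookkeeping only the by-now-standard passage from lower bounds on $\lambda_1^D$ and $\lambda_1^N$ to a lower bound on $\bar{\mu}_1$ remains.
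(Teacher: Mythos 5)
Your proposal is correct and follows essentially the same approach as the paper: build a segmented $G$-invariant configuration with $\lambda_1^D\geq 2$ via Lemma~\ref{Lseg}, add the finitely many remaining boundary components as arbitrarily small disks, use domain monotonicity of the Dirichlet eigenvalue, and combine with Proposition~\ref{Pneu}. The one point where you diverge cosmetically is the choice of $n$ in the $\mathbb{Z}_2\times D_k$ case: the paper takes $n=2\max\{f,e_1,e_2\}$ so that all rings from Lemma~\ref{Lseg} lie over a \emph{single} orbit type (say $\frac{n}{2}\rho_1$) and the remaining components are installed as small disks, whereas you take $n=2(f+e_1+e_2)$ and distribute the rings across the three relevant orbit types. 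Both work; yours has the advantage of surfacing explicitly the observation --- that the estimates of Lemma~\ref{Lseg} depend only on each ring being a single $G$-orbit on a level circle with $\Theta(k)$ points at $\Theta(1/k)$ spacing, not on the orbit's stabilizer --- which the paper's terse proof invokes implicitly when it says the construction "can be viewed as" a domain of one of several types, and which is in fact indispensable already in the $D_k$ case (where $n=f+e_1+e_2$ forces rings of mixed orbit types even in the paper's argument). Your bookkeeping of $m$ versus $nk$ and the final deduction $\mathcal{M}_1(\Omega,G)\geq\bar\mu_1(\Omega)\geq 8\pi-e^{-Cm}$ via Proposition~\ref{Pneu} are correct as well.
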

\begin{proof}
For $G=\mathbb{Z}_2\times D_k$, taking $n=2\max\{f,e_1,e_2\}$, we see that the domain $\Omega$ constructed in Lemma \ref{Lseg} can be viewed as a $G$-symmetric domain of type $\frac{n}{2}f$ or $\frac{n}{2}\rho_1$ or $\frac{n}{2}\rho_2$, and by removing an additional collection of arbitrarily small disks, we obtain a $G$-symmetric domain $\Omega'$ of the desired type $f+\sum_ie_i\rho_i+\sum_{i<j}v_{ij}\rho_i\rho_j$ with $\lambda_1^D(\Omega')\geq 2$ and $|\Sph^2\setminus\Omega'|\leq e^{-C_1nk}\leq e^{-C_2m}$.

Similarly, taking $n=f+e_1+e_2$, by removing a suitable collection of arbitrarily small disks from the domain $\Omega$ of Lemma \ref{Lseg}, we obtain a $D_k$-invariant domain of type $f+e_1\rho_1+e_2\rho_2+v_{12}\rho_1\rho_2$ with $\lambda_1^D(\Omega)\geq 2$ and $|\mathbb{S}^2\setminus\Omega|\leq Ce^{-Ckn}\leq e^{-C_2m}$, provided $Ck\max\{f,e_1,e_2\}\geq m$. 

In both cases, the desired bound on $\mathcal{M}_1(\Omega,G)\geq \bar{\mu}_1(\Omega)$ then follow from an application of Proposition \ref{Pneu}.
\end{proof}

By collecting all of the estimates above, we can finally prove Theorem \ref{s3.lbd}.

\begin{proof}[Proof of Theorem \ref{s3.lbd}]

By Lemma \ref{double.lem}, the desired conclusion is equivalent to the statement that for all finite reflection groups $G\leq O(3)$ and all $G$-invariant domains with $m$ bounday components, we have
\begin{equation}\label{scherk.laws}
\mathcal{M}_1(\Omega,G)\geq \Area(\xi_{m-1,1})
\end{equation}
if $(\Omega,G)$ corresponds to a Scherk-type basic reflection surface, and 
\begin{equation}\label{gen.bd}
\mathcal{M}_1(\Omega,G)\geq 8\pi-e^{-C\sqrt{m}}
\end{equation}
for some universal constant $C>0$ if $(\Omega,G)$ is a basic reflection surface of generic type. That \eqref{scherk.laws} holds for surfaces of Scherk type is the content of Lemma \ref{scherk.lem}, so it remains to discuss the generic case.

For $G=1$ or $G$ one of the platonic groups, Corollary \ref{plat.cor} gives a lower bound of the form $\mathcal{M}_1(\Omega,G)\geq 8\pi-e^{-c'm}$, which clearly implies the weaker bound \eqref{gen.bd}. For $G=\mathbb{Z}_2$, the $G$-invariant domain $\Omega$ of type $f+e_1\rho_1$ is generic--for $m=2f+e_1\geq 4$--precisely when $f>0$, in which case the bound \eqref{gen.bd} is given in Proposition \ref{eq.poles}.

For $G=D_k$, a $D_k$-invariant domain $\Omega$ of type $f+e_1\rho_1+e_2\rho_2+v_{12}\rho_1\rho_2$ is generic whenever $f+e_1+e_2+v_{12}\geq 2$ and $m=k(f+e_1+e_2)+v_{12}$ is sufficiently large. Combining Proposition \ref{Pfmany} and Proposition \ref{dk.edges}, we obtain a lower bound of the form \eqref{gen.bd} whenever $f+e_1+e_2\geq C_0k$ for some constant $C_0$. When $2\leq f+e_1+e_2\leq C_0k$, \eqref{gen.bd} follows from Proposition \ref{Pseg} above. Finally, if $f+e_1+e_2=1$ and $v_{12}=1$, the desired bound follows from Proposition \ref{offeq1pole}, while if $f+e_1+e_2=1$ and $v_{12}=2$, it follows from Proposition \ref{eq.poles}.

It remains to confirm \eqref{gen.bd} for $\mathbb{Z}_2\times D_k$-invariant domains $\Omega$ of type $f+\sum_{i=1}^3e_i\rho_i+\sum_{i<j}v_{ij}\rho_i\rho_j$ with $f+e_1+e_2+v_{12}>0$. In the case $m\leq Cn(e_3+v_{13}+v_{23})$, this follows from Proposition \ref{eq.poles}; otherwise, we have $m\leq C(f+e_1+e_2+v_{12})$. In the latter case, if $\max\{f,e_1,e_2\}\geq Ck$, the desired bound follows from a combination of Propositions \ref{Pfmany} and \ref{dk.edges}, while if $\max\{f,e_1,e_2\}\leq Ck$, it follows from Proposition \ref{Pseg}.  
\end{proof}

\section{Lower Bounds in the Steklov setting}
\label{sec:lowbounds_St}

We turn next to the lower bounds in Theorem \ref{main.stek.bds}, showing that every basic reflection surface with boundary $(N,\Gamma)$ has a lower bound of the form $\Sigma_1(N,\Gamma)\geq \max\{4\pi-e^{-Ck}, 4\pi-C/\gamma\}$, where $\gamma$ is the genus of $N$ and $k$ the number of boundary components, and also proving the lower bound in Theorem \ref{max.stek.bds}, showing that $\Sigma_1(N,\mathbb{Z}_2)\geq 4\pi-e^{-C(\gamma+k)}$ for the largest basic reflection surfaces of prescribed topology. 

Similar to the case of Laplace eigenvalues on closed surfaces, the results of Section 2 allow us to establish lower bounds simply by constructing suitable domains in the disk with prescribed topology and symmetries, corresponding to fundamental domains for the defining reflection of the basic reflection surface. As a first step, we prove the following analog of Proposition \ref{Pneu} for domains of this type.

\begin{prop}
\label{Pstekn} 
There is a constant $C>0$ such that if $X \subset \DD \cup \partial \DD$ is a finite set and $\{D_{2r_x}(x)\}_{x \in X}$ is a pairwise disjoint collection of disks and half-disks, then
\begin{align*}
\sigma_1^N(\Omega) \geq 1 - C ( | \Dcal | + | \partial \DD \cap \Dcal |),
\end{align*}
where $\Dcal = \cup_{x \in X} D_{r_x} (x)$, $\Omega = \DD \setminus \Dcal$, and $\sigma_1^N(\Omega)$ is the mixed Steklov-Neumann eigenvalue
\begin{align*}
\sigma^N_1(\Omega) : = \inf \left\{ \left.\frac{ \int_{\Omega} | d\phi |^2}{\int_{\partial \DD \setminus \Dcal} \phi^2} \, \right| \, \int_{\DD \cap  \partial \Dcal} \phi = 0, \phi \neq 0\right\}.
\end{align*}
\end{prop}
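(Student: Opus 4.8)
The plan is to mirror the proof of Proposition~\ref{Pneu}, replacing the model surface $\Sph^2$ (with its coordinate functions and the Neumann condition) by the disk $\DD$ with its first Steklov eigenfunctions $x_1,x_2$ and the Steklov condition on $\partial\DD$. Here one uses that the Steklov spectrum of $\DD$ is $0<\sigma_1(\DD)=\sigma_2(\DD)=1<\sigma_3(\DD)=\sigma_4(\DD)=2<\cdots$, the eigenspace for $\sigma=1$ being spanned by the harmonic functions $x_1,x_2$, which satisfy $\partial_\nu x_i=x_i$ on $\partial\DD$ and $\int_{\partial\DD}x_i=0$. The first ingredient I would set up is a harmonic extension operator $H\colon W^{1,2}(\Omega)\to W^{1,2}(\DD)$, built by rescaling a fixed bounded extension on each annulus $D_{2r_x}(x)\setminus D_{r_x}(x)$ (for $x\in\DD$) and on each half-annulus $\bigl(D_{2r_x}(x)\setminus D_{r_x}(x)\bigr)\cap\DD$ (for $x\in\partial\DD$), with the latter extension chosen to satisfy the natural (Neumann) condition on the arcs $\partial\DD\cap D_{r_x}(x)$. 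As in~\eqref{h.est} this gives $\|d(H\phi)\|_{L^2(\DD)}\le C\|d\phi\|_{L^2(\Omega)}$, hence $\|H\phi\|_{W^{1,2}(\DD)}\le C\|\phi\|_{W^{1,2}(\Omega)}$; moreover, a rescaled trace estimate on the half-annuli should give $\int_{\partial\DD\cap\Dcal}|H\phi|^2\le C(|\Dcal|+|\partial\DD\cap\Dcal|)\|\phi\|_{W^{1,2}(\Omega)}^2$, so that $\int_{\partial\DD}|H\phi|^2=\int_{\partial\DD\setminus\Dcal}\phi^2+O(|\Dcal|+|\partial\DD\cap\Dcal|)\|\phi\|_{W^{1,2}(\Omega)}^2$.

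The second ingredient is the Steklov analog of Lemma~\ref{Lneu}: for $|\Dcal|+|\partial\DD\cap\Dcal|$ small enough, the third nonzero mixed Steklov--Neumann eigenvalue satisfies $\sigma_3^N(\Omega)\ge 3/2$. To prove this, given any $4$-dimensional subspace $V\subset W^{1,2}(\Omega)$, I would pick a nonzero $\phi\in V$ with $\hat\phi:=H\phi$ orthogonal in $L^2(\partial\DD)$ to $1,x_1,x_2$ (possible since these span a $3$-dimensional space of boundary traces), expand $\hat\phi$ in Steklov eigenfunctions of $\DD$, split off frequencies $\ge\Lambda$ (whose boundary $L^2$-norm is $\le\Lambda^{-1}\|d\hat\phi\|_{L^2(\DD)}^2\le C\Lambda^{-1}\|d\phi\|_{L^2(\Omega)}^2$), and bound the low-frequency part, which is harmonic on $\DD$, in $C^1(\DD)$; arguing exactly as in Lemma~\ref{Lneu} one gets $\int_{\partial\DD\setminus\Dcal}\phi^2\le(\tfrac12+\tfrac1{50})\|d\phi\|_{L^2(\Omega)}^2$, and since $V$ was arbitrary, $\sigma_3^N(\Omega)\ge 3/2$ follows by Courant--Fischer.

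With these in place I would set $u:=x_1|_\Omega$ and test the weak identity $\int_\DD\langle dx_1,dw\rangle=\int_{\partial\DD}x_1 w$ against $w=\hat\phi=H\phi$; splitting the integrals over $\DD=\Omega\cup(\Dcal\cap\DD)$ and over $\partial\DD=(\partial\DD\setminus\Dcal)\cup(\partial\DD\cap\Dcal)$, the properties of $H$ and $\|x_1\|_{C^1(\DD)}\le C$ should give the analog of~\eqref{Ecoordt},
\[
\Bigl|\int_\Omega\langle du,d\phi\rangle-\int_{\partial\DD\setminus\Dcal}u\,\phi\Bigr|\le C\bigl(|\Dcal|^{1/2}+|\partial\DD\cap\Dcal|^{1/2}\bigr)\,\|\phi\|_{W^{1,2}(\Omega)}
\]
for all $\phi\in W^{1,2}(\Omega)$. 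Since $|u|\le1$ and $\int_{\partial\DD}u=0$, one has $|\int_{\partial\DD\setminus\Dcal}u|\le|\partial\DD\cap\Dcal|$, so expanding the trace of $u$ in mixed Steklov--Neumann eigenfunctions of $\Omega$ as $u|_{\partial\DD\setminus\Dcal}=c+u_{\mathrm{low}}+u_{\mathrm{high}}$ one gets $|c|\le C|\partial\DD\cap\Dcal|$, with $u_{\mathrm{low}}$ in the span of the first two nonconstant eigenfunctions and $u_{\mathrm{high}}$ a sum of eigenfunctions with eigenvalue $\ge\sigma_3^N(\Omega)\ge 3/2$. Testing the key estimate with $\phi$ the harmonic representative in $\Omega$ of $u_{\mathrm{high}}$ and using $\sigma_3^N(\Omega)-1\ge\tfrac12$ gives $\|u_{\mathrm{high}}\|_{W^{1,2}(\Omega)}^2\le C(|\Dcal|+|\partial\DD\cap\Dcal|)$, exactly as in~\eqref{EvL}, and hence $\|u_{\mathrm{low}}\|_{L^2(\partial\DD\setminus\Dcal)}^2\ge\pi-C(|\Dcal|+|\partial\DD\cap\Dcal|)$ as in~\eqref{Ealow}. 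Since the traces of $x_1,x_2$ on $\partial\DD\setminus\Dcal$ are quantitatively linearly independent, I would then replace $u$ by a suitable unit combination $b_1x_1+b_2x_2$ (still a Steklov eigenfunction of $\DD$ with eigenvalue $1$), which allows one to assume $u_{\mathrm{low}}$ is a $\sigma_1^N(\Omega)$-eigenfunction; writing $u_h$ for the harmonic part of $u|_{\partial\DD\setminus\Dcal}$, whose Dirichlet energy in $\Omega$ equals the weighted sum $\sum_j\sigma_j^N\beta_j^2$ of squared Steklov coefficients $\beta_j$ of $u|_{\partial\DD\setminus\Dcal}$, the computation at the end of the proof of Proposition~\ref{Pneu} goes through, giving
\[
\|u_{\mathrm{low}}\|_{L^2(\partial\DD\setminus\Dcal)}^2\,\bigl|\sigma_1^N(\Omega)-1\bigr|\le C\bigl(|\Dcal|+|\partial\DD\cap\Dcal|\bigr),
\]
which together with the lower bound on $\|u_{\mathrm{low}}\|^2$ finishes the argument.

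The hard part will be the construction of $H$ near the boundary half-disks together with the trace bound for $H\phi$ on the small arcs $\partial\DD\cap\Dcal$: unlike in Proposition~\ref{Pneu} the removed regions now meet $\partial\DD$, and it is exactly this interaction that accounts for the extra $|\partial\DD\cap\Dcal|$ term in the statement. A secondary technical point, as in Proposition~\ref{Pneu}, is to check that the trace and Poincar\'e constants on $\Omega=\DD\setminus\Dcal$ used throughout (for the eigenfunction expansions and for the bound $\|H\phi\|_{W^{1,2}(\DD)}\le C\|\phi\|_{W^{1,2}(\Omega)}$) are uniform in $\Dcal$, which follows from the smallness of $|\Dcal|$ and the disjointness of the enlarged (half-)disks $\{D_{2r_x}(x)\}$.
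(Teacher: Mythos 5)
Your proposal is correct and follows essentially the same approach as the paper: both adapt the proof of Proposition~\ref{Pneu} to the Steklov setting, with the harmonic extension operator $H\colon W^{1,2}(\Omega)\to W^{1,2}(\DD)$, a spectral gap lemma for $\sigma_3^N(\Omega)$ (the Steklov analogue of Lemma~\ref{Lneu}), the quasi-mode estimate obtained by testing $\int_{\DD}\langle dx_i,dw\rangle=\int_{\partial\DD}x_iw$ against $w=H\phi$, and the decomposition of a coordinate function into low and high frequency pieces. One remark: you correctly state that the gap lemma gives $\sigma_3^N(\Omega)\geq 3/2$ (via a $4$-dimensional test subspace and min-max), which is what the paper's proof of Lemma~\ref{Lstekn} establishes and what the proof of Proposition~\ref{Pstekn} actually uses. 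Your explicit retention of the small constant term $c$ in the decomposition of $u|_{\partial\DD\setminus\Dcal}$, accounting for $\int_{\partial\DD\setminus\Dcal}u=-\int_{\partial\DD\cap\Dcal}u\neq 0$, is a reasonable refinement that mirrors the closed-case proof of Proposition~\ref{Pneu} more literally. Your extra trace estimate for $H\phi$ on $\partial\DD\cap\Dcal$ is more than is needed for the key quasi-mode bound: the paper simply uses $|u|\leq 1$, Cauchy--Schwarz over the arcs, and the global trace bound $\|\hat\phi\|_{L^2(\partial\DD)}\leq C\|\phi\|_{W^{1,2}(\Omega)}$ to get the factor $|\partial\DD\cap\Dcal|^{1/2}$, so your stronger local trace estimate (while true and natural) is dispensable.
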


\begin{lemma}
\label{Lstekn}
There exists a universal constant $\epsilon_0> 0$ such that if $\Omega \subset \DD$ is as in Proposition \ref{Pstekn} and $|\Dcal| < \epsilon_0$, then $\sigma^N_1(\Omega) \geq 3/2$.
\end{lemma}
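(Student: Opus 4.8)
The plan is to transcribe the proof of Lemma~\ref{Lneu} to the Steklov setting, using in place of the Laplace spectrum of $\Sph^2$ the Steklov spectrum of the flat unit disk $\DD$, which is $0,1,1,2,2,\dots$ (the $k$-th Steklov eigenfunctions being $\mathrm{Re}(z^k)$ and $\mathrm{Im}(z^k)$), so that the smallest Steklov eigenvalue of $\DD$ strictly exceeding $3/2$ equals $2$. First I would establish, exactly as in Lemma~\ref{Lneu}, a bounded harmonic extension operator $H\colon W^{1,2}(\Omega)\to W^{1,2}(\DD)$ with $\|d(H\phi)\|_{L^2(\DD)}\le C\|d\phi\|_{L^2(\Omega)}$ for a universal $C$: rescale the harmonic extension operator on the model annulus $D_2\setminus D_1$, together with its reflected analogue on the model half-annulus $D_2^+\setminus D_1^+$, and apply these on each region $D_{2r_x}(x)\setminus D_{r_x}(x)$, the half-annuli accounting for the half-disks centered on $\partial\DD$. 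Since $H\phi=\phi$ on $\partial\DD\setminus\Dcal$, this also gives $\int_{\partial\DD}(H\phi)^2\ge\int_{\partial\DD\setminus\Dcal}\phi^2$.

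Next, given a test function $\phi$ competing for $\sigma^N_1(\Omega)$, I would set $\hat\phi=H\phi$ and argue, as in Lemma~\ref{Lneu}, that the admissibility constraint defining $\sigma^N_1$ --- which rules out the obvious low-energy competitors --- together with the smallness of $\Dcal$, lets one arrange (after subtracting a correction of $W^{1,2}(\DD)$-norm $O(|\Dcal|^{1/2}\|d\phi\|_{L^2(\Omega)})$) that $\hat\phi$ is $L^2(\partial\DD)$-orthogonal to every Steklov eigenfunction of $\DD$ with eigenvalue $<2$. Writing $\hat\phi=\hat\phi_\Lambda+(\hat\phi-\hat\phi_\Lambda)$ for the low/high-frequency decomposition relative to a large cutoff $\Lambda>2$ in the Steklov spectrum of $\DD$, the high part satisfies $\|\hat\phi-\hat\phi_\Lambda\|_{L^2(\partial\DD)}^2\le\Lambda^{-1}\|d\hat\phi\|_{L^2(\DD)}^2\le C\Lambda^{-1}\|d\phi\|_{L^2(\Omega)}^2$, while $\hat\phi_\Lambda$ is a combination of Steklov eigenfunctions with eigenvalue in $[2,\Lambda)$, so $\|\hat\phi_\Lambda\|_{L^2(\partial\DD)}^2\le\tfrac12\|d\hat\phi_\Lambda\|_{L^2(\DD)}^2$; the $C^1(\DD)$-bound on the finite-dimensional span of those low modes gives $\|d\hat\phi_\Lambda\|_{L^2(\DD)}^2\le(1+C(\Lambda)|\Dcal|^{1/2})\|d\phi\|_{L^2(\Omega)}^2$, since the Dirichlet energy of the smooth function $\hat\phi_\Lambda$ over $\Dcal$ is at most $\|d\hat\phi_\Lambda\|_{L^\infty(\DD)}^2|\Dcal|$. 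Combining these bounds yields
\[
\int_{\partial\DD\setminus\Dcal}\phi^2\;\le\;\int_{\partial\DD}\hat\phi^2\;\le\;\Big(\tfrac12\big(1+C(\Lambda)|\Dcal|^{1/2}\big)+\tfrac{C}{\Lambda}\Big)\,\|d\phi\|_{L^2(\Omega)}^2,
\]
so fixing $\Lambda$ large and then $\epsilon_0$ small makes the prefactor $\le 2/3$, i.e. $\sigma^N_1(\Omega)\ge 3/2$.

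The step I expect to be the main obstacle is the orthogonality reduction: verifying that the admissibility constraint in the definition of $\sigma^N_1$, combined with $|\Dcal|\le\epsilon_0$, really forces the harmonic extension $\hat\phi$ to be nearly $L^2(\partial\DD)$-orthogonal to the Steklov eigenfunctions of $\DD$ with eigenvalue below $2$, with the error controlled quantitatively by $|\Dcal|^{1/2}$. This is the Steklov analogue of the point used in Lemma~\ref{Lneu} --- that the coordinate functions span a subspace of dimension one less than the ambient competitor space, so a suitable competitor can be chosen with harmonic extension orthogonal to them --- and, as there, it is where the interplay between the geometry of $\Dcal$ and the spectrum of the model domain has to be pinned down. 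The remaining arguments are a routine transcription of the Laplace case.
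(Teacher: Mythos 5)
Your analytic machinery---the bounded harmonic extension operator built from the disjoint (half-)annuli, the spectral cutoff $\Lambda$ in the Steklov spectrum of $\DD$, the $C^1$ bound on the low modes and the resulting $(1+C(\Lambda)|\Dcal|^{1/2})$ energy comparison---is exactly what the paper uses. The genuine gap is the step you flagged: the orthogonality reduction cannot work for a single competitor. The admissibility constraint in the definition of $\sigma_1^N(\Omega)$ is one scalar (mean-zero) condition, so it can only remove, approximately, the constant mode; it cannot force $\hat\phi=H\phi$ to be nearly $L^2(\partial\DD)$-orthogonal to the two eigenvalue-$1$ Steklov eigenfunctions $\mathrm{Re}\,z$, $\mathrm{Im}\,z$, which account for two further dimensions. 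In fact the conclusion $\sigma_1^N(\Omega)\geq 3/2$ is not attainable by any argument: take $\Dcal$ to be a single tiny disk centered at the origin and $\phi=x-c$ with $c$ the small constant making $\phi$ admissible; its Rayleigh quotient is $1+o(1)$ as the radius shrinks, so $\sigma_1^N(\Omega)$ is close to $1$ --- which is consistent with Proposition \ref{Pstekn} itself, whose whole content is that $\sigma_1^N$ is close to $1$.

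What the paper actually proves---and what is invoked later, where $\sigma_3^N(\Omega)$ appears explicitly in the proof of Proposition \ref{Pstekn}---is the third-eigenvalue bound $\sigma_3^N(\Omega)\geq 3/2$; the lemma as printed should be read with $\sigma_3^N$ in place of $\sigma_1^N$, exactly as its closed analogue, Lemma \ref{Lneu}, bounds $\lambda_4^N$ rather than $\lambda_1^N$. The paper's argument is the min-max version of yours: given any $4$-dimensional subspace $V\subset C^\infty(\Omega)$, since the constants together with the two coordinate functions span only a $3$-dimensional subspace of $L^2(\partial\DD)$, one can pick a nonzero $\phi\in V$ whose harmonic extension is exactly $L^2(\partial\DD)$-orthogonal to all of them; your cutoff and energy estimates then apply verbatim to this $\phi$ and give Rayleigh quotient at least $3/2$, whence $\sigma_3^N(\Omega)\geq 3/2$ by the min-max characterization. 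So your computation is salvageable, but only after replacing the single-competitor orthogonality claim by this subspace argument and correspondingly correcting the conclusion to the third eigenvalue.
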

\begin{proof}
As in the closed case (recall Lemma \ref{Lneu}), the pairwise disjointness of $\{D_{2r_x}(x)\}_{x \in X}$ implies uniform bounds
\[ 
\| d(H \phi) \|_{L^2(\DD)} \leq C \| d \phi \|_{L^2( \Omega)}
\] 
for the harmonic extension operator $H : W^{1,2}(\Omega) \rightarrow W^{1,2}(\DD)$.

Now, let $V \subset C^\infty( \Omega)$ be any $4$-dimensional subspace.  Because the coordinate and constant functions span a $3$-dimensional subspace of $L^2(\partial \DD)$, there exists a nonzero $\phi \in V$ such that $\hat{\phi} = H \phi$ is $L^2(\partial \DD)$-orthogonal to the constants and coordinate functions.  In particular, we can decompose 
\begin{align*}
\hat{\phi} = \psi + R = \sum_{j=3}^\infty a_j u_j + R,
\end{align*}
where $\{u_j\}$ is an $L^2(\partial \DD)$-orthonormal basis of Steklov eigenfunctions with eigenvalue $\sigma_j(\DD ) \geq \sigma_3(\DD) = 2$ and $R \in W^{1,2}_0(D)$.  Fixing $\Lambda> 2$, denote by $\psi_\Lambda$ the projection of $\psi$ onto the Steklov eigenspaces with eigenvalue $\sigma_j < \Lambda$, so that
\begin{align*}
\| \hat{\phi} - \psi_\Lambda\|^2_{L^2(\partial \DD)} \leq \frac{1}{\Lambda} \| d \hat{\phi}\|^2_{L^2(\DD)} \leq \frac{C}{\Lambda} \| d \phi \|^2_{L^2(\Omega)},
\end{align*}
while the low-frequency projection $\psi_\Lambda$ satisfies a $C^1$ estimate
\begin{align*}
\| \psi_\Lambda \|_{C^1(\DD)} \leq C(\Lambda) \| d \psi_{\Lambda}\|_{L^2(\DD)},
\end{align*}
so that
\begin{align*}
\int_{\DD} | d\psi_\Lambda|^2 &= \int_{\DD} \langle d \hat{\phi} , d \psi_{\Lambda}\rangle \\
&\leq \int_{\Omega} \langle d \phi, d \psi_{\Lambda}\rangle + C(\Lambda) |\Dcal|^{1/2} \| d \psi_{\Lambda}\|_{L^2(\DD)} \| d \phi \|_{L^2( \Omega)}\\
&\leq (1+ C(\Lambda) \sqrt{\epsilon_0}) \| d \phi \|^2_{L^2(\Omega)}.
\end{align*}
Putting this together, we see that
\begin{align*}
\| \phi\|^2_{L^2(\partial \DD \cap \Dcal)} 
&\leq \| \hat{\phi} \|^2_{L^2(\partial \DD)}\\
&= \| \psi_\Lambda\|^2_{L^2(\partial \DD)} + \| \hat{\phi} - \psi_{\Lambda}\|^2_{L^2(\partial \DD)}\\
&\leq \frac{1}{2} \| d \psi_\Lambda\|^2_{L^2(\DD)} + \frac{C}{\Lambda} \| d \phi \|^2_{L^2(\Omega)}\\
&\leq \left( 1+ C(\Lambda) \sqrt{\epsilon_0}  + \frac{C}{\Lambda} \right) \| d \phi \|^2_{L^2(\Omega)}.
\end{align*}
Fixing $\Lambda$ sufficiently large that $C/\Lambda < 1/9$, then choosing $\epsilon_0$ such that $(1+C(\Lambda) \sqrt{\epsilon_0})< 10/9$, we find that $V$ contains an element with
\begin{align*}
\frac{ \int_\Omega |d\phi|^2 }{\int_{\partial \DD \cap \Dcal} \phi^2 } \geq \frac{3}{2},
\end{align*}
as claimed.
\end{proof}

We are now ready to prove Proposition \ref{Pstekn}. 

\begin{proof}[Proof of Proposition \ref{Pstekn}]
Let $u$ be a coordinate function on $\DD$.  Because $u$ is a Steklov eigenfunction with eigenvalue $1$, we have $\int_{\DD} \langle du , dw\rangle - \int_{\partial \DD} u w =0$ for any test function $w \in C^\infty (\DD )$.  Fixing $\phi \in W^{1,2}(\Omega)$ and using the harmonic extension $\hat{\phi} = H \phi$ as a test function shows 
\begin{equation}
\label{Eteststek}
\begin{aligned}
\left| \int_\Omega \langle du , d\phi \rangle - \int_{\partial \DD \setminus \Dcal} u \phi \right| &= \left| \int_{\Dcal} \langle d u, d \hat{\phi} \rangle - \int_{\partial \DD \cap \Dcal} u \hat{\phi}\right|\\
&\leq C |\Dcal|^{1/2} \| \phi \|_{W^{1,2}(\Omega)} + C  | \partial \DD \cap \Dcal|^{1/2} \| \phi \|_{W^{1,2}(\Omega)}.
\end{aligned}
\end{equation}

Now let $\{v_i\}$ be an $L^2( \partial \DD \setminus \Dcal)$-orthonormal sequence of $\sigma^N_j$-eigenfunctions on $\Omega$, and suppose without loss of generality that $\int_{\partial \DD \setminus \Dcal} u v_2 = 0$.  On $\Omega$ we can then write
\begin{align*}
u = \ulow + \uhigh + R
\end{align*}
where $\uhigh$ is a sum of $\sigma^N_j(\Omega)$-eigenfunctions with $j \geq 3$ and $R$ vanishes on $\partial \DD \setminus \Dcal$.  We now test \eqref{Eteststek} with $\phi = \uhigh$; first note that
\begin{align*}
\int_{\Omega} \langle du, d\uhigh \rangle - \int_{\partial \DD \setminus \Dcal} u \uhigh 
&= \int_{\Omega} | d\uhigh |^2 - \int_{\partial \DD \setminus \Dcal} \uhigh^2
\\
& \geq (1 -1/ \sigma^N_3(\Omega) ) \| d\uhigh \|^2_{L^2(\Omega)} \geq c \| \uhigh\|^2_{W^{1,2}},
\end{align*}
so the inequality \eqref{Eteststek} yields
\begin{align*}
\| \uhigh\|^2_{W^{1,2}(\Omega)} \leq C ( |\Dcal|^{1/2} + |\partial \DD \cap \Dcal|^{1/2} ) \| \uhigh \|_{W^{1,2}(\Omega)},
\end{align*}
hence 
\begin{align}
\label{Euhbd}
\| \uhigh \|^2_{W^{1,2}(\Omega)} \leq C ( | \Dcal | + |\partial \DD \cap \Dcal |). 
\end{align}
Since the restrictions of the coordinate functions to $\Omega$ are linearly independent, we may therefore choose $u$ so that $\ulow=v_1$ is a $\sigma^N_1(\Omega)$-eigenfunction.  Noting that $\| \ulow\|^2_{\partial \DD \cap \Dcal} = \int_{\partial \DD \cap \Dcal} u v_1= \int_{\partial \DD \cap \Dcal} (u^2 - u\uhigh)$, it follows from \eqref{Euhbd} that
\begin{align*}
\|\ulow\|^2_{L^2(\partial \DD \cap \Dcal)} \geq \pi - C\epsilon_0 > 3
\end{align*}
once $|\Dcal |^{1/2} + | \partial \D \cap \Dcal |^{1/2} < \epsilon_0$.  In particular, it follows that
\begin{equation}
\begin{split}
3 &|\sigma^N_1(\Omega) - 1 | = \left| \int_\Omega \langle du, dv_1\rangle - \int_{\partial \DD \cap \Dcal} u v_1\right|\\
\leq &\left| \int_{\Omega} | du|^2 - \int_{\partial \DD \cap \Dcal} u^2| + | \int_\Omega \langle du, d\uhigh \rangle - \int_{\partial \DD \cap \Dcal } u \uhigh \right|\\
\leq &| (\pi - |\Dcal|) - (\pi - c |\partial \DD \cap \Dcal|)| + C ( | \Dcal|^{1/2} + | \partial \DD \cap \Dcal|^{1/2}) \| u_L\|_{W^{1,2}(\Omega)}\\
\leq &C ( |\Dcal| + |\partial \DD \cap \Dcal|).
\end{split}
\end{equation}
Combined with the preceding, this gives the desired estimate. 
\end{proof}

\subsection{Lower bounds for $\sigma^D_1(\Omega)$}

With Proposition \ref{Pstekn} in place, the desired lower bounds for $\Sigma_1(N,\Gamma)$ will follow if we can exhibit domains $\Omega$ of prescribed topology and symmetry type satisfying the hypotheses of Proposition \ref{Pstekn} with $|\mathcal{D}|+|\partial \mathbb{D}\cap \mathcal{D}|$ sufficiently small and $\sigma_1^D(\Omega)\geq 1$, where, recall 
\begin{align}
\label{Estekdir}
\sigma^D_1(\Omega) : = \inf \left\{ \left.\frac{ \int_\Omega | d \phi |^2}{\int_{\partial \DD \setminus \DD} \phi^2 } \, \right| \, \phi|_{\DD \cap \partial\mathcal{D}} =0, \phi \neq 0\right\}.
\end{align}
In this section, we carry out these constructions, first obtaining estimates of the desired form in terms of the number $k$ of boundary components.  In what follows, for domains as in Proposition \ref{Pstekn}, we write
$$\bar{\sigma}(\Omega):=L(\partial\Omega\setminus \partial \mathcal{D})\cdot \min\{\sigma_1^D(\Omega),\text{ }\sigma_1^N(\Omega)\}.$$

\begin{prop}
\label{Pstekk}
There are constants $C, c>0$ such that, given $k\in \mathbb{N}$, taking $X=\{e^{2\pi i \ell/k}\mid 0\leq \ell \leq k-1\}$ and $r_x=r_k>0$ in Proposition \ref{Pstekn} for a suitable small radius $r_k>0$, the domain $\Omega$ satisfies
\begin{align*}
\sigmabar(\Omega) \geq 2\pi - e^{-c k},
\end{align*}
where $\Dcal : = \DD \setminus \Omega$. 
\end{prop}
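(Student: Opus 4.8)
The plan is to choose $r_k$ as small as the Steklov--Dirichlet constraint $\sigma_1^D(\Omega)\geq 1$ permits, and then read off everything else from Proposition \ref{Pstekn}. Concretely, with $X=\{e^{2\pi i\ell/k}\}_{\ell=0}^{k-1}$ as prescribed, I would set $r_k:=\tfrac1k e^{-c_1 k}$ for a small absolute constant $c_1>0$ to be fixed below. For $k$ large this makes $\{D_{2r_k}(x)\}_{x\in X}$ pairwise disjoint, so $\Omega=\DD\setminus\Dcal$ is admissible for Proposition \ref{Pstekn}, and one has the elementary bounds
$$|\Dcal|\leq \pi k r_k^2,\qquad |\partial\DD\cap\Dcal|\leq 3kr_k,\qquad L(\partial\DD\setminus\Dcal)=2\pi-|\partial\DD\cap\Dcal|\geq 2\pi-3kr_k,$$
so that $|\Dcal|+|\partial\DD\cap\Dcal|=O(e^{-c_1k})$ and $L(\partial\DD\setminus\Dcal)\geq 2\pi-O(e^{-c_1k})$. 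Proposition \ref{Pstekn} then gives $\sigma_1^N(\Omega)\geq 1-C(|\Dcal|+|\partial\DD\cap\Dcal|)\geq 1-Ce^{-c_1k}$ for free.

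The substantive step is the lower bound $\sigma_1^D(\Omega)\geq 1$. Here I would apply the boundary monotonicity estimate of Lemma \ref{Lmonobd} with $K=\DD$: its statements and proofs are local, so they hold verbatim with $\Sph^2$ replaced by $\R^2$ and $K$ any uniformly convex domain, in particular the unit disk, and for each $\ell$ the region $(D_R(e^{2\pi i\ell/k})\setminus D_{r_k}(e^{2\pi i\ell/k}))\cap\DD$ is uniformly bi-Lipschitz to the model half-annulus $A^+_{r_k,R}$ as $R\to 0$. Taking $R=\tfrac{\pi}{2k}$, hypotheses (a)--(b) of Lemma \ref{Lmonobd} hold for $k$ large (consecutive points of $X$ lie at distance $2\sin(\pi/k)\in[\tfrac{\pi}{k},\tfrac{2\pi}{k}]$), and since the Steklov--Dirichlet constraint $\phi|_{\DD\cap\partial\Dcal}=0$ is exactly the vanishing of $\phi$ on $\partial D_{r_k}(X)\cap\DD$, part (ii) of that lemma gives, for every such $\phi\in W^{1,2}(\Omega)$,
$$\int_{\partial\DD\setminus\Dcal}\phi^2\leq CR\log(R/r_k)\,\|d\phi\|^2_{L^2(E_R)}\leq CR\log(R/r_k)\int_\Omega|d\phi|^2,$$
using $E_R\subset\Omega$. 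Hence $\sigma_1^D(\Omega)\geq\big(CR\log(R/r_k)\big)^{-1}$, and with $R=\tfrac{\pi}{2k}$, $r_k=\tfrac1k e^{-c_1k}$ one computes $R\log(R/r_k)=\tfrac{\pi}{2k}\big(c_1k+O(1)\big)=\tfrac{\pi c_1}{2}+o(1)$, which is $\leq 1/C$ — hence $\sigma_1^D(\Omega)\geq 1$ — provided $c_1$ is chosen small in terms of the absolute constant $C$ and $k$ is large.

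Finally, combining the three estimates,
$$\bar{\sigma}(\Omega)=L(\partial\DD\setminus\Dcal)\cdot\min\{\sigma_1^D(\Omega),\sigma_1^N(\Omega)\}\geq(2\pi-Ce^{-c_1k})(1-Ce^{-c_1k})\geq 2\pi-e^{-ck}$$
for a suitable $c\in(0,c_1)$ and all $k$ beyond an absolute threshold $k_0$; the finitely many remaining values $k<k_0$ play no role in the applications of this proposition and can be absorbed by shrinking $c$ or handled by an explicit choice of $r_k$. I expect the only delicate point to be the Steklov--Dirichlet estimate: confirming that the half-disk geometry near each $e^{2\pi i\ell/k}$ fits Lemma \ref{Lmonobd} with bi-Lipschitz constants uniform in $k$, and tracking the logarithmic loss $\log(R/r_k)$ carefully enough that the forced lower bound $r_k\gtrsim \tfrac1k e^{-k/C}$ reproduces precisely the exponential rate $e^{-ck}$ in the conclusion. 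Everything else is bookkeeping with the two elementary bounds above.
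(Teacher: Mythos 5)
Your proof is correct and follows essentially the same strategy as the paper: choose $r_k\sim\tfrac1k e^{-c_1k}$, get $\sigma_1^N$ and the perimeter/area bounds for free from Proposition \ref{Pstekn}, and establish $\sigma_1^D(\Omega)\geq 1$ by an annular monotonicity estimate with the log loss absorbed by the choice of $c_1$. The only cosmetic difference is that you route the Dirichlet bound through Lemma \ref{Lmonobd} (transplanted to $K=\DD\subset\R^2$), whereas the paper applies Lemma \ref{Lmono} directly to the half-annuli $D_R(x)\cap\DD$ and then a scale-invariant trace inequality to pass to $D_R(x)\cap\partial\DD$; these are the same estimate packaged differently.
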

\begin{proof}
Let $\Omega = \DD \setminus D_r (X)$, where
\begin{align}
\label{Erstekk}
X = \{ (\cos \textstyle{\frac{2\pi j}{k}}, \sin \frac{2\pi j}{k}) : j=1, \dots, k\} \subset \partial \DD, 
\quad
r = \frac{1}{k}e^{-ck},
\end{align}
and $c> 0$ will be determined later.

Choose $R = C/k$ such that the disks $\{D_R(x)\}_{x \in X}$ are disjoint and $\{D_{4R}(x)\}_{x \in X}$ cover $\partial \DD$.  By Lemma \ref{Lmono} one has
\begin{align*}
\| u \|^2_{L^2(\partial D_R(x) \cap \DD)} &\leq C R\log (R/r) \| du \|^2_{L^2(D_R(x) \cap \DD)}\\
&\leq C c \| du \|^2_{L^2(D_R(x) \cap \DD )}
\end{align*}
when $u$ is a $\sigma^D_1(\Omega)$-eigenfunction, extended trivially into $\Dcal$.  A scale invariant trace inequality then gives
\begin{align*}
\| u\|^2_{L^2(D_R(x) \cap \partial \DD)} &\leq C \| u\|^2_{L^2(\partial D_R(x) \cap \DD)} + C R \| du \|^2_{L^2(D_R(x))}\\
&\leq C c  \| du \|^2_{L^2(D_R(x) \cap \DD )}.
\end{align*}
Summing over $x \in X$ gives
\begin{align*}
\| u \|^2_{L^2(\partial \DD \setminus \Dcal)} \leq C c \| du \|^2_{L^2(\Omega)} = C c \sigma^D_1(\Omega) \| u \|^2_{L^2(\partial \DD \setminus \Dcal)} ,
\end{align*}
where the last step follows from the variational characterization \eqref{Estekdir} for $\sigma^D_1(\Omega)$.  By taking $c> 0$ small enough, this shows $\sigma_1^D(\Omega) \geq 1$, and the conclusion follows by combining with Proposition \ref{Pstekn}.
\end{proof}

In the next proposition, we obtain a strong lower bound for $\bar{\sigma}(\Omega)$ in terms of the genus, when the genus is arranged symmetrically along a circle close to $\partial\mathbb{D}$.

\begin{prop}\label{pstekmbdry}
There exist constants $c,c'> 0$ such that if $\Omega \subset \DD$ is a domain as in Proposition \ref{Pstekn} with $X\cap\bd\DD$ arbitrary and 
\begin{align*}
X\cap \DD = \{ \textstyle{\frac{m-1}{m}}(\cos \textstyle{\frac{2\pi j}{m}}, \sin \frac{2\pi j}{m}) : j=1, \dots, m\}
\quad
r = \frac{1}{m} e^{-c m },
\end{align*}
then
\begin{align*}
\sigmabar(\Omega) \geq 2\pi - e^{-c'm}. 
\end{align*}
\end{prop}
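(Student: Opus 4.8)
The plan is to derive the bound from the mixed Steklov--Neumann estimate of Proposition \ref{Pstekn} together with a lower bound $\sigma_1^D(\Omega)\geq 1$, the latter proved by a cell decomposition along $\partial\DD$; the constant $c$ will be chosen small in absolute terms, and $m$ is taken large depending on $c$, as is needed even for the hypotheses of Proposition \ref{Pstekn} to hold — the requirement that each $D_{2r}(x_j)$ be an \emph{interior} disk forces $2r=\tfrac 2m e^{-cm}<\tfrac 1m$. The boundary half-disks centered at the points of $X\cap\partial\DD$ play no essential role: we take their radii small enough that the disjointness hypotheses of Proposition \ref{Pstekn} hold and their total contribution to $|\Dcal|+|\partial\DD\cap\Dcal|$ is at most $e^{-cm}$. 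Since the necklace points $x_j=\tfrac{m-1}{m}e^{2\pi ij/m}$ lie at distance $\tfrac1m>r$ from $\partial\DD$, the disks $D_r(x_j)$ meet $\partial\DD$ nowhere and have total area $m\pi r^2=\tfrac{\pi}{m}e^{-2cm}$, so $|\Dcal|+|\partial\DD\cap\Dcal|\leq e^{-c_1m}$ for a suitable $c_1>0$. Proposition \ref{Pstekn} then gives $\sigma_1^N(\Omega)\geq 1-e^{-c_1m}$ (after shrinking $c_1$ and using $m$ large) and $L(\partial\DD\setminus\Dcal)\geq 2\pi-e^{-c_1m}$.

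The heart of the matter is the estimate $\sigma_1^D(\Omega)\geq 1$, i.e.\ $\int_\Omega|d\phi|^2\geq\int_{\partial\DD\setminus\Dcal}\phi^2$ for every $\phi\in W^{1,2}(\Omega)$ vanishing on $\DD\cap\partial\Dcal$, which we extend by zero into the removed disks. Partition the collar $\{1-\tfrac2m\leq|z|\leq 1\}$ into $m$ cells $Y_1,\dots,Y_m$, where $Y_j$ consists of the points whose nearest boundary point lies within angular distance $\tfrac\pi m$ of $e^{2\pi ij/m}$; since $r\ll\tfrac1m$, each $Y_j$ contains $D_r(x_j)$, and after rescaling by $m$ the pair $(Y_j,D_r(x_j))$ is uniformly bi-Lipschitz — with constants independent of $m$ and $j$ — to a fixed model rectangle containing an interior disk of radius $mr=e^{-cm}$ staying a definite distance from the two long sides. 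Applying Lemma \ref{Lmono} on the flat annulus $A_{r,1/(2m)}(x_j)\subset\DD$ and using $\log\tfrac{1/(2m)}{r}=cm-\log 2\leq cm$, we bound $\|\phi\|_{L^2(\partial D_{s_j}(x_j))}^2\leq Cc\,\|d\phi\|_{L^2(Y_j)}^2$ for a suitable $s_j\in(\tfrac1{4m},\tfrac1{2m})$; a scale-invariant trace inequality on the model region (where the Dirichlet energy is scale-invariant in dimension two while boundary integrals scale by $\tfrac1m$) then passes this to the boundary arc, giving
$$\|\phi\|_{L^2(\partial\DD\cap Y_j\cap\Omega)}^2\leq Cc\,\|d\phi\|_{L^2(Y_j)}^2+\frac{C}{m}\,\|d\phi\|_{L^2(Y_j)}^2.$$
Summing over the essentially disjoint cells and using $m$ large yields $\|\phi\|_{L^2(\partial\DD\setminus\Dcal)}^2\leq(Cc+\tfrac Cm)\,\|d\phi\|_{L^2(\Omega)}^2\leq\|d\phi\|_{L^2(\Omega)}^2$ once $c\leq\tfrac1{2C}$, i.e.\ $\sigma_1^D(\Omega)\geq 1$. (Equivalently, the two steps may be combined into a single logarithmic-capacity estimate $\|\psi\|_{L^2(\Gamma)}^2\leq C\log(1/\tilde r)\|d\psi\|_{L^2}^2$ on the punctured model, avoiding the $\tfrac Cm$ term altogether.)

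Combining the two bounds gives $\min\{\sigma_1^D(\Omega),\sigma_1^N(\Omega)\}\geq 1-e^{-c_1m}$, hence
$$\sigmabar(\Omega)=L(\partial\DD\setminus\Dcal)\cdot\min\{\sigma_1^D(\Omega),\sigma_1^N(\Omega)\}\geq(2\pi-e^{-c_1m})(1-e^{-c_1m})\geq 2\pi-e^{-c'm}$$
with $c'=c_1/2$, provided $m$ is large enough that $1+2\pi\leq e^{c_1m/2}$; this proves the proposition.

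The only nonroutine point is the bound $\|\phi\|_{L^2(\partial\DD\cap Y_j)}^2\leq Cc\,\|d\phi\|_{L^2(Y_j)}^2$ with a constant \emph{uniform in $m$}: this rests on arranging the cells so that $(Y_j,D_r(x_j))$ is uniformly bi-Lipschitz to a single model across all $m$ — the $m$-dependence entering only through the hole radius $e^{-cm}$, which is exactly absorbed by the logarithm in Lemma \ref{Lmono} — and on checking that the (small) boundary half-disks do not spoil this uniformity. Everything else is a routine combination of Lemma \ref{Lmono}-type monotonicity with Proposition \ref{Pstekn}.
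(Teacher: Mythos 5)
Your proof is correct and follows essentially the same strategy as the paper: partition the collar near $\partial\DD$ into $m$ cells each containing one interior disk, exploit the choice $r=\tfrac1m e^{-cm}$ so that the logarithmic factor in Lemma \ref{Lmono}(ii) satisfies $\tfrac1m\log(1/(mr))=c$, sum to get $\sigma_1^D(\Omega)\geq 1$ for $c$ small, and combine with Proposition \ref{Pstekn}. The only cosmetic difference is in the reduction to Lemma \ref{Lmono}: the paper maps each $Q_x\setminus D_r(x)$ directly onto the punctured disk $D_{1/m}(0)\setminus D_r(0)$ by a uniformly bi-Lipschitz map, invoking the final remark of Lemma \ref{Lmono}, whereas you apply the lemma on a round annulus about $x_j$ and then use a separate scale-invariant trace inequality to reach the boundary arc, mirroring the two-step argument in Proposition \ref{Pstekk}; both routes give the same cell-wise bound $\|\phi\|^2_{L^2(\partial\DD\cap Y_j\cap\Omega)}\leq Cc\,\|d\phi\|^2_{L^2(Y_j)}$, and you are a bit more explicit than the paper's write-up about shrinking the boundary half-disks and assembling the final $\sigmabar$ bound.
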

\begin{proof}
We can divide the annulus $D_1(0) \setminus D_{1- 2/m} (0)$ into a collection $\{Q_x\}_{x \in X}$ of polar ``rectangles" such that for each $x \in X$, there is a bi-Lipschitz correspondence 
\begin{align*}
Q_x \setminus D_r(x) \rightarrow D_{1/m} (0) \setminus D_r(0)
\end{align*}
with Lipschitz constants independent of $m$ and $r$.  If $u$ is a $\sigma^D_1$-eigenfunction on $\Omega$, then Lemma \ref{Lmono} gives an estimate of the form 
\begin{align*}
m \| u \|_{L^2(\partial Q_x)}^2 &\leq C \log (1/m r) \| du \|^2_{L^2(\Omega \cap Q_x)}\\
&\leq C c m  \| du \|^2_{L^2(\Omega \cap Q_x)}.
\end{align*}
and summing over all $x \in X$ gives 
\begin{align*}
\| u\|^2_{L^2(\partial \DD) } \leq C c \| du \|^2_{L^2(\Omega)} = C c \sigma^D_1(\Omega) \|u\|^2_{L^2(\DD)},
\end{align*}
and hence $\sigma^D_1(\Omega)>1$ by taking $c$ small enough. 
\end{proof}

The next proposition provides a similar lower bound in the case where genus is arranged in a $D_n$-symmetric way along the fixed point sets of conjugacy classes in $D_n$, with $n$ sufficiently large and genus large relative to $n$.

\begin{prop}\label{pstekwedges}
There exist constants $c,c'>0$ and $n_0\in \mathbb{N}$ such that if $\Omega\subset \DD$ is a domain as in Proposition \ref{Pstekn} with $X\cap \bd \DD$ arbitrary and 
$$
X\cap \DD=\left\{\left.\frac{\ell}{a+1}\left(\cos \frac{2\pi j}{n},\sin \frac{2\pi j}{n}\right)\right| \ell=1,\ldots,a;\text{ }j=1,\ldots,n\right\}
$$
for some $a\geq n\geq n_0$, and $r=\frac{1}{a}e^{-ca}$, then
$$\bar{\sigma}(\Omega)\geq 2\pi-e^{-c'a}.$$
\end{prop}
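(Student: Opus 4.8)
The plan is to follow the same two-step pattern as in the proofs of Propositions \ref{Pstekk} and \ref{pstekmbdry}. The interior holes have total area $|\Dcal|\lesssim na\,r^2\le a^2r^2=e^{-2ca}$, and $|\partial\DD\cap\Dcal|$ is likewise controlled by $e^{-c'a}$, so Proposition \ref{Pstekn} gives $\sigma_1^N(\Omega)\ge 1-e^{-c'a}$ and $L(\partial\DD\setminus\partial\Dcal)\ge 2\pi-e^{-c'a}$; hence $\bar\sigma(\Omega)\ge 2\pi-e^{-c'a}$ once we know $\sigma_1^D(\Omega)\ge 1$ for $n$ large. So let $\phi\in W^{1,2}(\Omega)$ vanish on $\DD\cap\partial\Dcal$, extended by zero across the interior holes, and write $Q(\phi)=\int_\Omega|d\phi|^2$.

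The crux is the observation that, although each radius $\rho_\ell$ carries only $n$ holes -- too few to control $\partial\DD$ directly once $a\gg n$ -- the $\asymp a\delta$ radii $\rho_\ell\in(1-\delta,1)$ near the boundary together form a dense planar array of exponentially small holes, which forces a first Dirichlet eigenvalue $\gtrsim n$ on a collar of $\partial\DD$. Concretely: fix a small $\delta\in(0,1)$ to be optimized, set $S:=\{1-\delta<|z|<1\}$, and partition $S$ (up to its inner boundary) into the Voronoi cells $\{Q_x\}$ of the holes $x\in X\cap\DD$ with $|x|\in(1-\delta,1)$. Each such $Q_x$ is bi-Lipschitz, with universal constants, to a rectangle of dimensions $\ell_1\times\ell_2$ with $\ell_1\asymp\tfrac1a\le\ell_2\lesssim\tfrac1n$, minus a concentric disk of radius $r=\tfrac1a e^{-ca}$, so that $\log(\ell_1/r)=ca$. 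Splitting $Q_x$ into a square piece around its hole -- where Lemma \ref{Lmono}(ii)--(iii) apply with Poincaré constant $\asymp\ell_1^2\log(\ell_1/r)\asymp\tfrac ca$ -- and the two remaining thin rectangles -- where an elementary Poincaré inequality with boundary term applies with constant $\lesssim\ell_2^2\lesssim\tfrac1{n^2}$ -- one obtains, for $\phi$ vanishing on the hole, a bound $\int_{Q_x}\phi^2\le\tfrac{C_0}{n}\int_{Q_x}|d\phi|^2$ with $C_0$ universal. Summing over the cells of $S$ gives $\int_S\phi^2\le\tfrac{C_0}{n}\int_S|d\phi|^2$.

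Combining this with the standard collar trace inequality $\int_{\partial\DD}\phi^2\le\tfrac C\delta\int_S\phi^2+C\delta\int_S|d\phi|^2$ yields $\int_{\partial\DD}\phi^2\le C\big(\tfrac{C_0}{n\delta}+\delta\big)Q(\phi)$; choosing $\delta=\sqrt{C_0/n}$ -- which lies in $(0,1)$ and, since $a\ge n\ge n_0$, is much larger than the ring spacing $\asymp\tfrac1a$, so that $S$ is genuinely built from whole cells with $|x|\asymp1$ -- gives $\int_{\partial\DD\setminus\Dcal}\phi^2\le\tfrac{2C\sqrt{C_0}}{\sqrt n}\,Q(\phi)$, i.e. $\sigma_1^D(\Omega)\ge\tfrac{\sqrt n}{2C\sqrt{C_0}}$. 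Taking $n_0\ge 4C^2C_0$ makes this $\ge 1$, and together with the first paragraph this proves the proposition. I expect the cell estimate $\int_{Q_x}\phi^2\le\tfrac{C_0}{n}\int_{Q_x}|d\phi|^2$ to be the main obstacle: it encodes the precise balance by which the logarithmically small capacity $\asymp\tfrac1{ca}$ of a single hole of radius $\tfrac1a e^{-ca}$ is exactly offset by stacking holes along $\asymp a\delta$ concentric rings, and it is this balance that dictates the scaling of $r$ assumed in the statement. (The boundary holes $X\cap\partial\DD$ enter, as in Propositions \ref{Pstekk} and \ref{pstekmbdry}, only through lower-order perturbations of the above estimates.)
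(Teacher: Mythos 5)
Your proof is correct, but it follows a genuinely different decomposition from the paper's. Both arguments use Lemma \ref{Lmono} to exploit the exponentially small holes, with the radius calibrated so that $\log(1/ra)\asymp ca$ balances the prefactor $1/a$. The paper, however, uses the \emph{full radial structure} of the array: it estimates $u$ along the ray graph $S=\mathbb{D}\cap\bigcup_j\mathrm{Span}\{e^{2\pi i j/n}\}$ one polar rectangle $Q_{\ell j}$ at a time, sums over $\ell=1,\dots,a$ to obtain an effective Dirichlet condition on the wedge boundaries $\partial W_j\setminus\partial\DD$, and then applies a Poincaré estimate on the polar rectangles $R_j=W_j\cap(\DD\setminus D_{1-1/n})$, arriving at $\int_{\partial\DD}u^2\le\left(\tfrac{C'}{a}\log\tfrac1{ra}+\tfrac Cn\right)\|du\|^2$. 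You instead localize to a collar $S=\{1-\delta<|z|<1\}$ of tunable depth $\delta$, prove a two-dimensional cell Poincar\'e inequality $\int_{Q_x}\phi^2\le\tfrac{C_0}{n}\int_{Q_x}|d\phi|^2$ by the square-plus-thin-rectangles split, sum over the $\asymp na\delta$ cells in the collar, and optimize the standard collar trace inequality over $\delta\asymp n^{-1/2}$, getting $\sigma_1^D\gtrsim\sqrt n$. The paper's argument is sharper quantitatively (it gives $\sigma_1^D\gtrsim n$ when $c$ is small), but yours has the virtue of only interacting with the holes in a thin boundary collar, making the mechanism — logarithmic hole capacity stacked over $\asymp a\delta$ concentric rings — quite transparent. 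One minor point to tighten in a full write-up: passing from Lemma \ref{Lmono}(ii) (which controls $\|u\|^2_{L^2(\partial D_s)}$ on concentric circles) to the straight interface $\Gamma$ between the square and the flanking rectangles requires a short trace step, just as the paper inserts a trace embedding to pass from $\partial Q_{\ell j}$ to $Q_{\ell j}\cap S$; this is routine but should be stated.
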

\begin{proof}
Using the assumption that $a\geq n$, we observe that the polar rectangles
$$
Q_{\ell j}:=\left\{\left.\frac{t}{a+1}(\cos (s),\sin(s))\right| \ell-\frac{1}{2}\leq t \leq \ell+\frac{1}{2};\text{ }\frac{2\pi j}{n}-\frac{1}{a}\leq \theta\leq \frac{2\pi j}{n}+\frac{1}{a}\right\}
$$
with obvious minor modifications for $\ell=1$ and $\ell=a$, cover the union of rays
$$S=\mathbb{D}\cap\bigcup_{j=1}^n\mathrm{Span}\{e^{2\pi j/n}\}.$$
Moreover, for each $\ell,j$, we see that there are bi-Lipschitz maps
$$F_{\ell,j}: Q_{\ell j}\cap\Omega\to D_{1/a}(0)\setminus D_r(0)$$
satisfying $\mathrm{Lip}(F_{\ell,j})+\mathrm{Lip}(F_{\ell,j})^{-1}\leq C$ for $C<\infty$ independent of $\ell,j,n,$ and $a$.

Arguing as in the proof of Proposition \ref{pstekmbdry}, we let $u\in W^{1,2}(\Omega)$ be an eigenfunction corresponding to the eigenvalue $\sigma_1^D(\Omega)$, so that $u\equiv 0$ on $\mathcal{D}$, and apply Lemma \ref{Lmono} to the domains $Q_{\ell j}\cap \Omega$ to deduce that
$$\|u\|_{L^2(\partial Q_{\ell j})}^2\leq \frac{C}{a}\log(1/ra)\|du\|_{L^2(Q_{\ell j})}^2.$$
Next, by considering standard estimates for the embedding $W^{1,2}(D_s(0))\to L^2([-s,s]\times\{0\})$, we observe that we have an estimate of the form
$$\int_{Q_{\ell j}\cap S}u^2\leq C\|u\|_{L^2(\partial Q_{\ell j})}^2+\frac{C}{a}\|du\|_{L^2(Q_{\ell j})}^2\leq \frac{C'}{a}\log(1/ra)\|du\|_{L^2(Q_{\ell j})}^2.$$
In particular, summing over $\ell=1,\ldots,a$, and denoting by $W_j$, $W'_j$ the wedges
$$W_j:=\left\{re^{i\theta}\mid 0\leq r\leq 1,\text{ }\frac{2\pi j}{n}\leq \theta \leq \frac{2\pi(j+1)}{n}\right\},$$
$$W'_j:=\left\{re^{i\theta}\mid 0\leq r\leq 1,\text{ }\frac{2\pi (j-1)}{n}\leq \theta \leq \frac{2\pi(j+2)}{n}\right\},$$
we arrive at an estimate of the form
$$\int_{\partial W_j\setminus \bd\mathbb{D}}u^2\leq \frac{C}{a}\log(1/ra)\|du\|_{L^2(W'_j)}^2.$$

On the polar rectangle $R_j:=W_j\cap (\mathbb{D}\setminus D_{1-1/n}(0))$, it's easy to see that we have an estimate of the form
$$\int_{\partial R_j}v^2\leq C\int_{\partial W_j\setminus \bd\mathbb{D}}v^2+\frac{C}{n}\int_{W_j}|dv|^2$$
for any $v\in W^{1,2}(W_j)$, and applying this to the eigenfunction $u$, we deduce that
$$\int_{\partial R_j}u^2\leq \left(\frac{C'}{a}\log(1/ra)+\frac{C}{n}\right)\|du\|_{L^2(W'_j)}^2.$$
Summing over $j$, we deduce in particular that
$$\int_{\partial \mathbb{D}}u^2\leq \left(\frac{C'}{a}\log(1/ra)+\frac{C}{n}\right)\|du\|_{L^2(\mathbb{D})}^2=\left(\frac{C'}{a}\log(1/ra)+\frac{C}{n}\right)\sigma_1^D(\Omega)\int_{\partial \mathbb{D}}u^2.$$
For $n>\frac{2}{C}$ and $r=\frac{1}{a}e^{-a/2C'}$, we deduce that $\sigma_1^D(\Omega)>1$, and the desired estimates follow.
\end{proof}

Finally, we consider the problem of obtaining lower bounds in terms of the genus in the case where genus distributed along an axis. Basic reflection surfaces of this type are the analog of the Steklov setting of the ``Scherk-type" surfaces described in Definition \ref{scherk.def}, and the following lower bounds are qualitatively sharp.

\begin{prop}\label{pstekmaxis}
For any $m \in \N$, there exists a domain $\Omega \subset \DD$ as in \ref{Pstekn} with $X\cap\bd\DD$ arbitrary and $X\cap\DD$ arranged along a diameter of $\DD$, such that
\begin{align*}
\sigmabar(\Omega) \geq 2\pi - \frac{C}{m} - C | \partial \DD \cap \Dcal|. 
\end{align*}
\end{prop}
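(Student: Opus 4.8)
The plan is to follow the template of Propositions~\ref{pstekmbdry}--\ref{pstekwedges}, with the new feature that here the Steklov--Dirichlet eigenvalue $\sigma_1^D(\Omega)$, rather than $\sigma_1^N(\Omega)$, is the binding constraint --- which is exactly why the rate drops to $C/m$. Concretely, fix a small absolute constant $\delta\in(0,\tfrac12)$ and take $\Dcal$ to consist of $m$ round disks of radius $r=\tfrac{1-\delta}{m}$ centered at (roughly) equally spaced points along a fixed diameter of $\DD$, arranged so that the doubled disks $\{D_{2r_x}(x)\}$ are pairwise disjoint and contained in $\DD$ and so that the diameter is \emph{nearly} covered, its complement in $\bigcup_x D_{r_x}(x)$ being a union of gaps $G_1,\dots,G_m$ each of length $O(\tfrac1m)$ with total length $\le 3\delta$. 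To $\Dcal$ we adjoin an arbitrary admissible family of boundary half-disks. Then $|\Dcal|\le\tfrac Cm+C|\bd\DD\cap\Dcal|$, so Proposition~\ref{Pstekn} gives $\sigma_1^N(\Omega)\ge 1-\tfrac Cm-C|\bd\DD\cap\Dcal|$, while $L(\bd\DD\setminus\bd\Dcal)\ge 2\pi-|\bd\DD\cap\Dcal|$; since $\sigmabar(\Omega)=L(\bd\DD\setminus\bd\Dcal)\cdot\min\{\sigma_1^N(\Omega),\sigma_1^D(\Omega)\}$, it suffices to prove $\sigma_1^D(\Omega)\ge 1-\tfrac Cm-C|\bd\DD\cap\Dcal|$.

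After absorbing the effect of the boundary half-disks into the term $C|\bd\DD\cap\Dcal|$ (by stability of the mixed Steklov spectrum under boundary perturbations), we may assume $\Dcal$ is the string of interior disks, hence symmetric about the diameter, which we take to be $\{y=0\}$. Extending competitors by zero into $\Dcal$ and noting that they vanish on $\bd\DD\cap\Dcal$, one has $\sigma_1^D(\Omega)=\inf\{\int_\DD|d\psi|^2/\int_{\bd\DD}\psi^2 : \psi\in W^{1,2}(\DD),\ \psi|_\Dcal=0\}$. Splitting $\psi$ into even and odd parts under $y\mapsto-y$ --- both vanishing on $\Dcal$ by symmetry, with the splitting orthogonal in $L^2(\bd\DD)$ and in the Dirichlet form --- reduces to bounding the two quotients. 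The odd quotient is $\ge1$, since odd functions lie in the span of Steklov eigenspaces of $\DD$ with eigenvalue $\ge\sigma_1(\DD)=1$ (lowest odd eigenfunction $y$). For the even quotient, passing to the half-disk $\DD_+=\DD\cap\{y>0\}$ with semicircle $S_+$ and dropping every constraint except $\psi|_{\{y=0\}\setminus\bigcup_j G_j}=0$, it suffices to bound below the first eigenvalue $\nu_1$ of the mixed problem on $\DD_+$: Steklov on $S_+$, Dirichlet on $\{y=0\}$ off the gaps, Neumann on the gaps.

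The heart of the matter is the estimate $\nu_1\ge 1-C/m$, which I would prove in two steps. First, let $p$ be the $\nu_1$-eigenfunction (harmonic, with $\partial_y p=0$ on the gaps, $\partial_\nu p=\nu_1p$ on $S_+$) normalized by $\int_{S_+}p^2=1$, write $\widehat p_0:=y/\sqrt{\pi/2}$ for the normalized unperturbed eigenfunction, and decompose $p=\beta\,\widehat p_0+p^\perp$ with $\int_{S_+}\widehat p_0\,p^\perp=0$, so $\beta^2\le1$. Using that $y$ is harmonic and vanishes on the whole diameter, a short computation gives $\nu_1=\beta^2-2\alpha I+\int_{\DD_+}|dp^\perp|^2$ (with $\alpha=\beta\sqrt{2/\pi}$), where $I:=\int_{\{y=0\}}p=\int_{\bigcup_j G_j}p$, the last equality because $p$ vanishes on the Dirichlet part of the diameter. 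Since the total slit length is $\le3\delta$, the slit disk $\DD\setminus\bigcup_j G_j$ is an $O(\delta)$-perturbation of $\DD$, so its odd Steklov spectrum lies within $O(\delta)$ of $\{1,2,3,\dots\}$; as $p^\perp$ (odd-reflected) is $L^2(\bd\DD)$-orthogonal to $y$ and hence nearly orthogonal to the first odd slit eigenfunction, its Rayleigh quotient is $\ge\tfrac32$ for $\delta$ small, i.e. $\int_{\DD_+}|dp^\perp|^2\ge\tfrac32\int_{S_+}(p^\perp)^2=\tfrac32(1-\beta^2)$. Feeding this in (together with $\beta^2\le1$ and $\nu_1\le1$) yields both $\nu_1\ge 1-C|I|$ and $\|p-\widehat p_0\|_{L^2(S_+)}^2\le C|I|$.

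Second, I would bound $|I|\le C/m$ by introducing the capacitary potential $\Phi=\sum_j\Phi_j$ of the gaps, $\Phi_j$ harmonic in $\DD_+$, vanishing on $S_+$ and on $\{y=0\}\setminus G_j$, with $\partial_y\Phi_j=-1$ on $G_j$. Green's identities give $I=-\int_{S_+}p\,\partial_\nu\Phi$, $\int_{S_+}y\,\partial_\nu\Phi=\sum_j\int_{\DD_+}|d\Phi_j|^2=:\mathcal E$ and $\int_{\DD_+}|d\Phi|^2=\mathcal E$ --- the cross terms vanishing because $\Phi_k=0$ on $G_j$ for $k\ne j$. A scaling argument (each $\Phi_j$ is, at the scale $|G_j|\sim\tfrac1m$, a rescaling of a fixed half-plane model of finite energy) gives $\int_{\DD_+}|d\Phi_j|^2\le C|G_j|^2$, hence $\mathcal E\le C\sum_j|G_j|^2\le C/m$; and since $\Phi$ vanishes on all but an $O(\delta)$-portion of $\bd\DD_+$, Poincaré bounds $\|\Phi\|_{L^2(\DD_+)}\le C\sqrt{\mathcal E}$, so interior estimates near $S_+$ give $\|\partial_\nu\Phi\|_{L^2(S_+)}\le C\sqrt{\mathcal E}\le C/\sqrt m$. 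Combining these,
\[
|I|\ \le\ \frac{1}{\sqrt{\pi/2}}\,\Big|\int_{S_+}y\,\partial_\nu\Phi\Big|\ +\ \|p-\widehat p_0\|_{L^2(S_+)}\,\|\partial_\nu\Phi\|_{L^2(S_+)}\ \le\ \frac Cm\ +\ \frac{C\,|I|^{1/2}}{\sqrt m},
\]
which by a trivial quadratic inequality forces $|I|\le C/m$. Hence $\nu_1\ge 1-C/m$, $\sigma_1^D(\Omega)\ge 1-\tfrac Cm-C|\bd\DD\cap\Dcal|$, and the asserted bound for $\sigmabar(\Omega)$ follows.

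The step I expect to be the main obstacle is this second one: obtaining $|I|\le C/m$ with the sharp rate. Although the self-consistent inequality above tames the rate once $\|\partial_\nu\Phi\|_{L^2(S_+)}\le C/\sqrt m$ is in hand, establishing that bound --- equivalently, that the $\sim m$ gap potentials have total energy only $O(1/m)$, which relies on the vanishing of each $\Phi_j$ off its own gap (so cross terms drop) together with a uniform-in-$m$ rescaling estimate --- is delicate, as is the spectral-gap input for $p^\perp$ on the slit disk; assembling these estimates quantitatively, and carefully tracking uniformity in $m$, is the technical core of the argument.
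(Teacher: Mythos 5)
Your proposal runs into a genuine and, I think, fatal obstruction at the very outset, which then propagates through the spectral-gap step at the heart of your argument.

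\textbf{The parameter choice is inconsistent with the hypotheses of Proposition \ref{Pstekn}.} You want $m$ disks of radius $r=\tfrac{1-\delta}{m}$ with $\delta\in(0,\tfrac12)$ placed along a diameter (of length $2$) so that the diameter is \emph{nearly} covered (total gap $\le 3\delta$) \emph{and} the doubled disks $\{D_{2r_x}(x)\}$ are pairwise disjoint. These two requirements are incompatible: disjointness of the doubled disks along a diameter of length $2$ forces $4mr\le 2$, i.e.\ $r\le \tfrac{1}{2m}$, so the undoubled disks cover at most length $2mr\le 1$ of the diameter and the gaps must total length $\ge 1$ — not $O(\delta)$. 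There is no arrangement along a diameter in which the gaps have small total length; they always occupy at least half of $\ell$.

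\textbf{Consequently, the spectral-gap step fails as stated.} You lower-bound the Rayleigh quotient of $p^\perp$ by invoking that the slit disk $\DD\setminus\bigcup_j G_j$ is an $O(\delta)$-perturbation of $\DD$, so that its odd Steklov spectrum sits near $\{1,2,3,\dots\}$ and hence the quotient is $\ge\tfrac32$. But the gaps have total length $\Theta(1)$, so this is not a small perturbation, and there is no reason the second odd eigenvalue of the slit problem is $\ge\tfrac32$, nor that the first eigenfunction is close to $y$ (which you also need, to upgrade $L^2(S_+)$-orthogonality to $\hat p_0$ into near-orthogonality to the true first eigenfunction). The paper handles exactly this difficulty by a completely different device: it builds an injective comparison map $T\colon Y\to Z$ from test functions vanishing on $D_{r_m}(X)$ to test functions vanishing on \emph{all} of $\ell$ (by subtracting the Neumann-harmonic extension of the trace on $\ell$), shows $T$ does not increase Dirichlet energy and only slightly decreases $L^2(\partial\DD)$ norm (using a trace estimate good to order $1/m$, which does not need small gaps), and transfers the known gap $\sigma_3^D(\DD_+\cup\DD_-)=\sigma_2^D(\DD_\pm)=1+2\delta_0>1$ for the Dirichlet-on-$\ell$ problem back to $\sigma_3^D(\Omega_m)\ge 1+\delta_0$. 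This bypasses perturbation theory for the slit domain entirely, which is essential precisely because the gaps are not small.

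\textbf{Two smaller technical slips in the capacitary step.} First, the cross terms $\int_{\DD_+}\langle d\Phi_j,d\Phi_k\rangle$ ($j\ne k$) do not vanish: integrating by parts gives $\int_{G_k}\Phi_k\,\partial_\nu\Phi_j$, and while $\Phi_j=0$ on $G_k$, its normal derivative there need not be — so you only get symmetry, not vanishing, of the cross terms, and controlling their sum would require a separate (decay) argument. Second, the identity $I=-\int_{S_+}p\,\partial_\nu\Phi$ is not exact: on $G_j$ one has $\partial_y\Phi=-1+\sum_{k\neq j}\partial_y\Phi_k$, so the Green's identity produces an error term $\sum_j\int_{G_j}p\sum_{k\ne j}\partial_y\Phi_k$ that is not obviously negligible.

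\textbf{How the paper actually proves $\sigma_1^D(\Omega)\ge 1-C/m$.} Rather than capacitary potentials, the paper uses the explicit Lipschitz functions $f_1=(y-1/m)^+$, $f_2=(-y-1/m)^+$, which lie in $W_0^{1,2}(\Omega)$. A trace estimate derived from Lemma~\ref{Lmono} on the annuli $D_{3/m}(x)\setminus D_{r_m}(x)$ gives $\int_S\phi^2\le \tfrac{C}{m}\int_\Omega|d\phi|^2$ for $\phi$ vanishing on $\partial D_{r_m}(X)$ and $S=\{|y|=1/m\}$. Combined with a direct integration-by-parts computation, this shows that any $f\in\mathrm{span}\{f_1,f_2\}$ is an approximate Steklov eigenfunction with eigenvalue $1$, with defect $O(m^{-1/2})$ in the weak sense. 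Choosing $f$ to be orthogonal to the second Steklov--Dirichlet eigenfunction, the spectral gap $\sigma_3^D\ge 1+\delta_0$ (proved by the $T$-map argument above) then forces $f$ to lie close to the first eigenfunction $\phi_1$ in $W^{1,2}$, with error $O(1/m)$, whence $|\sigma_1^D(\Omega)-1|\le C/m$. Your capacitary route, even if the technical slips could be repaired, would still be blocked at the spectral-gap step; adopting something like the paper's comparison-map device (or another argument that does not require small gaps) is necessary.
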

\begin{proof}
Fix a diameter $\ell=\DD\cap (\mathbb{R}\times \{0\})$ of $\DD$, let $X$ consist of $m$ points evenly spaced on the diameter $\ell$, so that $\dist(x,y)\geq \frac{1}{m}$ for distinct $x,y\in X$, and consider the domains $\Omega_m : = \DD \setminus D_{r_m}(X)$, where $r_m=\frac{1}{2m}$, so that the hypotheses of Proposition \ref{Pstekn} hold.  Moreover, we claim that $\sigma_1^D(\Omega_m)$ satisfies a lower bound of the form 
\begin{equation}\label{sig.d.bd}
\sigma_1^D(\Omega_m)\geq 1-\frac{C}{m}
\end{equation}
with $C$ independent of $m$; together with Proposition \ref{Pstekn}, it the follows that a domain of the desired type can be obtained by removing an additional collection of arbitrarily small disks with centers in $\bd\DD$.

To prove \eqref{sig.d.bd}, first observe that for any $\phi \in W^{1,2}(\Omega)$ with $\phi=0$ on $\partial D_{r_m}(X)$, on the union of line segments
$$S=S_+\cup S_-,\qquad S_\pm:=\partial D_{1/m}(\ell)\cap\{\pm y>0\}=\left\{(x,y)\in \DD\mid y=\pm\frac{1}{m}\right\},$$
we have an estimate of the form
\begin{equation}\label{tr.est}
\int_S\phi^2\leq \frac{C}{m}\int_{\Omega}|d\phi|^2.
\end{equation}
To see this, apply Lemma \ref{Lmono} to $\phi$ on the annuli $D_{3/m}(x)\setminus D_{r_m}(x)$, and noting that $(3/m)/r_m=6$, we deduce that 
$$\int_{D_{3/m}(x)}\phi^2\leq \frac{C}{m^2}\int_{D_{3/m}(x)}|d\phi|^2.$$
By rescaling standard estimates for the trace embedding $W^{1,2}(D_3(0))\to L^2(D_3(0)\cap\{y=\pm 1\})$, we then see that
$$\int_{D_{3/m}(x)\cap S}\phi^2\leq \frac{C'}{m} \int_{D_{3/m}(x)}|d\phi|^2.$$
In particular, since the disks $D_{3/m}(x)$ cover $S$, and any point is contained in at most two of the disks $D_{3/m}(x)$, summing this estimate over $x\in X$ yields a bound of the form \eqref{tr.est}.

Next, let $f_1,f_2\in W_0^{1,2}(\Omega)$ be the Lipschitz functions given by $f_1(x,y)=(y-1/m)^+$, $f_2(x,y)=(-y-1/m)^+$; for any $\phi\in W_0^{1,2}(\Omega)$, we then compute
\begin{eqnarray*}
\int_{\Omega}\langle df_1,d\phi\rangle&=&\int_{\{y>1/m\}}\frac{\partial \phi}{\partial y}\\
&=&\int_{\partial \mathbb{D}\cap \{y>1/m\}}y\phi-\int_S\phi\\
&=&\int_{\partial \mathbb{D}}f_1\phi +\frac{1}{m}\int_{\partial \mathbb{D}\cap \{y>1/m\}}\phi-\int_{S_+}\phi,
\end{eqnarray*}
and similarly
$$\int_{\Omega}\langle df_2,d\phi\rangle=\int_{\partial \DD}f_2\phi+\frac{1}{m}\int_{\partial \mathbb{D}\cap \{-y>1/m\}}\phi -\int_{S_-}\phi.$$
In particular, for any $f=af_1+bf_2\in \mathrm{Span}\{f_1,f_2\}$, we obtain
\begin{equation}
\left|\int_{\Omega}\langle df,d\phi\rangle-\int_{\partial \DD}\phi f_2\right|\leq \|f\|_{L^{\infty}}\left(\frac{1}{m}\int_{\partial \mathbb{D}}|\phi|+\int_S|\phi|\right),
\end{equation}
and, using \eqref{tr.est} and the usual trace embedding $W^{1,2}(\DD)\to L^2(\partial \DD)$, we deduce that
\begin{equation}\label{f.quasi}
\left|\int_{\Omega}\langle df,d\phi\rangle-\int_{\partial\Omega}f\phi\right|\leq \frac{C\|f\|_{L^{\infty}}}{\sqrt{m}}\|\phi\|_{W^{1,2}}
\end{equation}
for every $\phi\in W^{1,2}(\Omega)$ with $\phi=0$ on $\partial D_{r_m}(X)$. 

Next, given $f\in \mathrm{Span}\{f_1,f_2\}$, we can write $f$ as a sum $f=\phi_0+\sum_{k=1}^{\infty}\phi_k$, where $\phi_k$ is an eigenfunction corresponding to the $k$th mixed Steklov-Dirichlet eigenvalue $\sigma_k^D(\Omega)$, and $\phi_0=0$ on $\partial\Omega$; note that this decomposition is orthogonal for the inner products $\langle d\phi, d\psi\rangle_{L^2(\Omega)}$ and $\langle \phi,\psi\rangle_{L^2(\partial\Omega)}$. Moreover, since $\mathrm{Span}\{f_1,f_2\}$ is $2$-dimensional, we can find $f\in \mathrm{Span}\{f_1,f_2\}$ such that $\phi_2=0$ in the decomposition above; after normalizing, we then have $f=af_1+bf_2$ of the form
$$f=\phi_0+\phi_1+\sum_{k=3}^{\infty}\phi_k\text{ with }\|f\|_{L^{\infty}}=\max\{|a|,|b|\}-\frac{1}{m}=1.$$
Applying \eqref{f.quasi} with $\phi=\phi_0$, we see that
$$\|d\phi_0\|_{L^2(\Omega)}^2=\int_{\Omega}\langle df,d\phi_0\rangle \leq \frac{C}{\sqrt{m}}\|\phi_0\|_{W^{1,2}},$$
from which it follows that
$$\|\phi_0\|_{W^{1,2}}^2\leq \frac{C}{m}.$$

Next, suppose that for $m$ sufficiently large,
\begin{equation}\label{s3.gap}
\sigma_3^D(\Omega_m)\geq 1+\delta_0
\end{equation}
for a fixed $\delta_0>0$. Writing $\psi=\sum_{k=3}^{\infty}\phi_k$, it follows that
$$\int_{\Omega}\langle df,d\psi\rangle-\int_{\partial\Omega}f\psi\geq C\delta_0\|\psi\|_{W^{1,2}}^2,$$
which together with \eqref{f.quasi} implies that
$$\|\psi\|_{W^{1,2}}^2 \leq \frac{C}{\delta_0^2m},$$
and therefore
\begin{equation}\label{f.phi1.close}
\|f-\phi_1\|_{W^{1,2}}^2\leq 2\|f_0\|_{W^{1,2}}^2+2\|\psi\|_{W^{1,2}}^2\leq \frac{C(1+\delta_0^{-2})}{m}.
\end{equation}

Now, since $\Delta \phi_1=0$ in $\Omega$ with $\phi_1=0$ on $\partial D_{r_m}(X)$ and $\frac{\partial \phi_1}{\partial \nu}=\sigma_1^D(\Omega)\phi_1$ on $\partial\Omega\setminus \partial D_{r_m}(X)$, we compute
\begin{eqnarray*}
\sigma_1^D(\Omega)\int_{\partial\Omega}\phi_1f&=&\int_{\Omega}\langle df,d\phi_1\rangle\\
&=&\int_{\partial \DD\cap\{|y|>1/m\}}\left(f+\frac{1}{m}\right)\phi_1-\int_S\phi_1.
\end{eqnarray*}
For the last term, combining \eqref{tr.est} and \eqref{f.phi1.close} gives an estimate of the form
$$\int_S\phi_1=\int_S(\phi_1-f)\leq \frac{C}{\sqrt{m}}\|d(\phi-f_1)\|_{L^2(\Omega)}\leq \frac{C'(1+\delta_0^{-1})}{m},$$
while it's also clear that
$$\int_{\partial \DD\cap\{|y|>1/m\}}\frac{1}{m}\phi_1\leq \frac{C'}{m}.$$
Applying these two estimates in the preceding computation, we see that
$$|\sigma_1^D(\Omega)-1|\int_{\partial\Omega}\phi_1^2=|\sigma_1^D(\Omega)-1|\int_{\partial\Omega}\phi_1f\leq \frac{C(1+\delta_0^{-1})}{m}.$$
In particular, since
$$\|\phi_1\|_{L^2(\partial\Omega)}^2\geq \|f\|_{L^2(\partial\Omega)}^2-\frac{C(1+\delta_0^{-2})}{m}\geq \frac{1}{2}-\frac{C(1+\delta_0^{-2})}{m}$$
for $m$ sufficiently large, it will follow that $\sigma_1^D(\Omega)$ satisfies an estimate of the desired form
$$|\sigma_1^D(\Omega)-1|\leq \frac{C'}{m},$$
proving \eqref{sig.d.bd} and completing the proof of the proposition, once we confirm that \eqref{s3.gap} holds for a fixed $\delta_0>0$ independent of $m$ as $m\to\infty$.

To prove \eqref{s3.gap}, first observe that on the half-disk $\DD_+=\DD\cap \{y>0\}$, we have $\sigma_1^D(\DD_+)=1$ and 
\begin{equation}\label{s2.model}
\sigma_2^D(\DD_+)=1+2\delta_0>\sigma_1^D(\DD_+)
\end{equation}
for the mixed Steklov-Dirichlet problem with Dirichlet condition on $\ell$ and Steklov on $\partial \DD_+\setminus \ell$. Now, setting 
$$Y:=\{\phi\in W^{1,2}(\DD)\mid \phi\equiv 0\text{ on }D_{r_m}(X)\}$$
and
$$Z_+:=\{\psi \in W^{1,2}(\DD_+)\mid \psi \equiv 0\text{ on }\ell\},$$
we define a linear map $T_+: Y\to Z_+$ as follows. Given $\phi\in Y$, let 
$$h\colon\DD_+\to \mathbb{R}$$
minimize energy with respect to the constraint $h|_{\ell}=\phi_{\ell}$, so that $\Delta h=0$ on $\DD_+$ and $\frac{\partial h}{\partial\nu}=0$ on $\partial \DD_+\setminus \ell$, and set 
$$T_+(\phi):=\phi-h.$$
An argument identical to the proof of \eqref{tr.est} then gives the estimate
$$\|h\|_{L^2(\ell)}^2=\|\phi\|_{L^2(\ell)}^2\leq \frac{C}{m}\|d\phi\|_{L^2(\DD_+)}^2,$$
while a simple contradiction argument using compactness of the trace embedding $W^{1,2}(\DD_+)\to L^2(\partial \DD_+)$ gives an estimate of the form
$$\|h\|_{L^2(\partial \DD_+)}^2\leq \delta \|dh\|_{L^2(\DD_+)}^2+C_{\delta}\|h\|_{L^2(\ell)}^2\leq \left(\delta+\frac{C_{\delta}}{m}\right)\|d\phi\|_{L^2(\DD_+)}^2$$
for any fixed $\delta>0$. With these estimates in hand, observe next that
$$\|d(T_+\phi)\|_{L^2(\DD^+)}^2=\|d\phi\|_{L^2(\DD^+)}^2-\|dh\|_{L^2(\DD^+)}^2\leq \|d\phi\|_{L^2(\DD^+)}^2.$$
In particular, we see that
\begin{eqnarray*}
\|\phi\|_{L^2(\partial \DD_+\setminus \ell)}^2&\leq & (1+[\delta+C_{\delta}/m]^{1/2})\|T_+\phi\|_{L^2(\partial \DD^+)}^2\\
&&+2(\delta+C_{\delta}/m)^{1/2}\|d\phi\|_{L^2(\DD_+)}^2.
\end{eqnarray*}
Likewise, on the lower half-disk $\DD_-=\DD\cap\{y<0\}$, we set
$$Z_-:=\{\psi \in W^{1,2}(\DD_-)\mid \psi \equiv 0\text{ on }\ell\}$$
and define an identical map $T_-:Y\to Z_-$, which we can combine to get a map $T:Y\to Z$ to the space
$$Z:=\{\psi \in W^{1,2}(\mathbb{D})\mid \psi\equiv 0\text{ on }\ell\},$$
satisfying 
\begin{equation}\label{t.ener.bd}
\|d(T\phi)\|_{L^2(\DD)}^2\leq \|d\phi\|_{L^2(\DD)}^2
\end{equation}
and
\begin{equation}\label{t.est}
\|\phi\|_{L^2(\partial \DD)}^2\leq (1+[\delta+C_{\delta}/m]^{1/2})\|T\phi\|_{L^2(\partial \DD)}^2+2(\delta+C_{\delta}/m)^{1/2}\|d\phi\|_{L^2(\DD)}^2.
\end{equation}

Next, by virtue of \eqref{s2.model} and its counterpart $\sigma_2^D(\DD_-)=1+2\delta_0$ on $\DD_-$, we see that 
\begin{equation}\label{s3.char}
\sigma_3^D(\DD_-\cup \DD_+)=\sigma_2^D(\DD_{\pm})=1+2\delta_0;
\end{equation}
that is, for any $3$-dimensional subspace $W\subset Z$, there is a nonzero $\psi\in W$ for which
$$(1+2\delta_0)\|\psi\|_{L^2(\partial\DD)}^2\leq \|d\psi\|_{L^2(\DD)}^2.$$
Now, since every $\phi\in Y$ vanishes on $D_{r_m}(X)$ by assumption, it follows from the definition of $T$ and a simple unique continuation argument that $T: Y\to Z$ must be injective. In particular, if $V\subset Y$ is a $3$-dimensional subspace, then $T(V)$ defines a $3$-dimensional subspace in $Z$, so it follows from \eqref{s3.char} that there exists a nonzero $\phi\in V$ such that
$$(1+2\delta_0)\|T\phi\|_{L^2(\partial\DD)}^2\leq \|d(T\phi)\|_{L^2(\DD)}^2.$$
On the other hand, combining this with \eqref{t.est} and \eqref{t.ener.bd}, we see that this $\phi\in V$ satisfies
$$\|\phi\|_{L^2(\partial \DD)}^2\leq \frac{(1+[\delta+C_{\delta}/m]^{1/2})}{1+2\delta_0}\|d\phi\|_{L^2(\DD)}^2+2(\delta+C_{\delta}/m)^{1/2}\|d\phi\|_{L^2(\DD)}^2.$$
Fixing $\delta$ sufficiently small relative to $\delta_0$ and taking $m$ sufficiently large, we can arrange that
$$\frac{(1+[\delta+C_{\delta}/m]^{1/2})}{1+2\delta_0}+2(\delta+C_{\delta}/m)^{1/2}\leq \frac{1}{1+\delta_0}.$$
It then follows from the preceding argument that every $3$-dimensional subspace $V\subset Y$ contains a nontrivial $\phi\in V$ for which
$$(1+\delta_0)\|\phi\|_{L^2(\partial \DD)}^2\leq \|d\phi\|_{L^2(\DD)}^2;$$
in other words, $\sigma_3^D(\Omega_m)\geq 1+\delta_0$, confirming \eqref{s3.gap} and completing the proof. \qedhere
\end{proof}

Combining the ingredients above, we can now prove the lower bounds of Theorem \ref{main.stek.bds} and Theorem \ref{max.stek.bds}, which we collect in the following theorem.

\begin{theorem}
For every compact basic reflection surface $(N,\Gamma)$ with boundary, we have
$$\Sigma_1(N,\Gamma)\geq \max\left\{4\pi-e^{-ck},4\pi-\frac{C}{\gamma}\right\}$$
for some constants $C,c>0$, where $k$ is the number of boundary components of $N$ and $\gamma$ is the genus. Moreover, in the case where $\Gamma$ corresponds to a single reflection $\langle \tau\rangle$, we have
$$\Sigma_1(N,\langle \tau\rangle)\geq 4\pi-e^{-c(k+\gamma)}.$$
\end{theorem}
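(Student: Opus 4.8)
The plan is to deduce the whole theorem from Lemma~\ref{stek.double}, which identifies $\Sigma_1(N,\Gamma)$ (writing $\Gamma=\langle\tau\rangle\times G$) with $2\sup_g\sigmabar(\Omega,g)$ over $G$-invariant metrics on a fundamental domain $\Omega=\DD\setminus\Dcal$ of $\tau$, the genus $\gamma$ of $N$ being recorded by the interior disks of $\Dcal$ and the $k$ boundary components (together with the $G$-orbit data) by the boundary half-disks of $\Dcal$. It therefore suffices to exhibit, for each topological type $(N,\Gamma)$, a $G$-invariant domain $\Omega\subset\DD$ of that type satisfying the hypotheses of Proposition~\ref{Pstekn}, with $\sigma_1^D(\Omega)\ge 1$ and $|\Dcal|+|\partial\DD\cap\Dcal|$ small. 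Indeed, granting such an $\Omega$, Proposition~\ref{Pstekn} gives $\sigma_1^N(\Omega)\ge 1-C(|\Dcal|+|\partial\DD\cap\Dcal|)$, and since $L(\partial\DD\setminus\Dcal)=2\pi-|\partial\DD\cap\Dcal|$ we obtain $\sigmabar(\Omega)\ge 2\pi-C'(|\Dcal|+|\partial\DD\cap\Dcal|)$, whence $\Sigma_1(N,\Gamma)\ge 2\sigmabar(\Omega)\ge 4\pi-2C'(|\Dcal|+|\partial\DD\cap\Dcal|)$. I will also use that puncturing additional, arbitrarily small $G$-invariant \emph{interior} holes only increases $\sigma_1^D$ (it enlarges the Dirichlet set and shrinks the domain) while leaving $\partial\DD\setminus\Dcal$ unchanged, so that once a domain of a slightly simpler type with $\sigma_1^D\ge1$ is available, its topology can be corrected without spoiling the estimate.

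With this reduction in hand, the argument becomes a case analysis over the reflection groups on $\DD$ --- the trivial group, $\Z_2$, and the dihedral groups $D_n$ ($n\ge2$) --- together with the combinatorial data describing how the genus and boundary holes sit relative to the fixed-point diameters, entirely parallel to the proof of Theorem~\ref{s3.lbd}. For the bound $\Sigma_1(N,\Gamma)\ge 4\pi-e^{-ck}$ one uses that the $k$ boundary components furnish $\gtrsim k$ holes, which can be placed $G$-invariantly either on $\partial\DD$ or on a circle just inside it, so that Proposition~\ref{Pstekk} or Proposition~\ref{pstekmbdry} applies with $r\sim k^{-1}e^{-ck}$ to give $\sigma_1^D(\Omega)\ge1$ and $|\Dcal|+|\partial\DD\cap\Dcal|\le e^{-ck}$; the $\gamma$ genus holes are then added as tiny interior holes. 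For the bound $\Sigma_1(N,\Gamma)\ge 4\pi-C/\gamma$ one distinguishes by how much of the symmetry is ``used'': when the genus can be spread out near $\partial\DD$ (e.g.\ $G$ small, or $D_n$-symmetric with $n$ dividing the genus, possibly after absorbing a residue into a central hole) Proposition~\ref{pstekmbdry} gives the stronger bound $e^{-c\gamma}$; when the genus is distributed $D_n$-symmetrically along the $n$ fixed-point rays Proposition~\ref{pstekwedges} gives $e^{-c\gamma/n}$, applicable once the genus is large relative to $n$; intermediate regimes are handled by segmented-domain variants in the spirit of Proposition~\ref{Pseg}; and the only configurations where one genuinely cannot beat $C/\gamma$ are the low-symmetry ``axial'' ones --- the Steklov analogs of the Scherk-type surfaces, where two orthogonal basic reflections force the genus holes onto a single diameter and $k$ is small --- which are precisely what Proposition~\ref{pstekmaxis} handles. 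I expect this bookkeeping --- checking that for every reflection group $G$ on $\DD$ and every admissible pair $(\gamma,k)$ at least one of these model constructions is realizable as a $G$-invariant domain of exactly that type, so that the resulting bounds interlock into $\max\{4\pi-e^{-ck},4\pi-C/\gamma\}$ --- to be the main, if essentially routine, obstacle; the case $G=D_n$ with $n$ large needs the most attention, but there the admissibility constraints force either $k$ or $\gamma$ to be correspondingly large, returning us to one of the faster-decaying regimes.

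Finally, for the statement with $\Gamma=\langle\tau\rangle$ there is no symmetry constraint, so one places all $k$ boundary holes as equally spaced half-disks on $\partial\DD$ and all $\gamma$ genus holes as equally spaced interior holes on the circle of radius $1-\tfrac1p$, where $p:=k+\gamma$, each of radius $r=\tfrac1p e^{-cp}$. Cutting the thin collar $\DD\setminus D_{1-2/p}(0)$ into $\sim p$ polar ``rectangles,'' each uniformly bi-Lipschitz to $D_{1/p}(0)\setminus D_r(0)$, and applying Lemma~\ref{Lmono} together with a scale-invariant trace inequality to a $\sigma_1^D$-eigenfunction $u$ (extended by zero into $\Dcal$), exactly as in the proofs of Propositions~\ref{Pstekk} and~\ref{pstekmbdry}, yields $\|u\|^2_{L^2(\partial\DD\setminus\Dcal)}\le Cc\,\|du\|^2_{L^2(\Omega)}=Cc\,\sigma_1^D(\Omega)\,\|u\|^2_{L^2(\partial\DD\setminus\Dcal)}$, forcing $\sigma_1^D(\Omega)\ge1$ once $c$ is chosen small. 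Since then $|\Dcal|+|\partial\DD\cap\Dcal|\le Cpr^2+Cpr\le e^{-c'p}$, Proposition~\ref{Pstekn} and Lemma~\ref{stek.double} give $\Sigma_1(N,\langle\tau\rangle)\ge 2\sigmabar(\Omega)\ge 4\pi-e^{-c'(k+\gamma)}$, as claimed.
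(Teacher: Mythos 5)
Your proposal follows the same strategy as the paper: reduce via Lemma~\ref{stek.double} to constructing $G$-invariant domains $\Omega\subset\DD$ with $\sigma_1^D(\Omega)\geq 1$ and small $|\Dcal|+|\partial\DD\cap\Dcal|$, then invoke Proposition~\ref{Pstekn} for the Steklov--Neumann side. The reduction to the max of two separate bounds ($2\pi-e^{-ck}$ from Proposition~\ref{Pstekk} and $2\pi-C/m$ from the case analysis on $G$), the use of Propositions~\ref{pstekmbdry}, \ref{pstekwedges}, \ref{pstekmaxis}, and the observation that puncturing arbitrarily small interior holes only increases $\sigma_1^D$ all match the paper.

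Two small deviations are worth noting. First, your prediction that the intermediate regime for $G=D_n$ with $n$ large and the genus small relative to $n$ is handled by ``segmented-domain variants in the spirit of Proposition~\ref{Pseg}'' is not what the paper does --- there is no segmented-domain construction in the Steklov case. Instead, when $a\leq n-1$ the paper simply applies Proposition~\ref{pstekmbdry} with $m=n$ (placing $n$ interior holes in a $D_n$-symmetric ring near $\partial\DD$) and then deletes the $a$ required holes as arbitrarily small additional disks along the rays, giving $\sigmabar\geq 2\pi-e^{-cn}\geq 2\pi-e^{-c'\sqrt m}$, which suffices. Your segmented-domain idea would likely also work but is unnecessary machinery. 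Second, for the single-reflection case you build a single domain combining $k$ boundary holes and $\gamma$ interior holes in a thin collar of width $\sim 1/(k+\gamma)$, whereas the paper gets the bound $4\pi-e^{-c(k+\gamma)}$ indirectly by taking the max of $4\pi-e^{-ck}$ (from Proposition~\ref{Pstekk}) and $4\pi-e^{-c\gamma}$ (from Proposition~\ref{pstekmbdry}) and using $\max\{k,\gamma\}\geq (k+\gamma)/2$. Your direct construction is a minor but genuine streamlining; one should just be slightly careful that the $k+\gamma$ holes (which live at two different radii) are angularly staggered so each polar rectangle contains exactly one hole, but this is an easy arrangement.

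Overall the proposal is correct and the analysis matches the paper in all essential respects.
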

\begin{proof}
Writing $\Gamma=\tau\times G$ where $G\leq O(2)$ is a reflection group--i.e., $G=1,\mathbb{Z}_2$, or $D_n$ for some $n\geq 3$--by Lemma \ref{stek.double}, it suffices to prove a lower bound of the form
$$\bar{\sigma}(\Omega)\geq \max\left\{2\pi-e^{-ck},2\pi-\frac{C}{m}\right\}$$
for \emph{some} $G$-invariant domain $\Omega\subset \mathbb{D}$ with $k$ boundary components intersecting $\partial \mathbb{D}$ and $m$ boundary components in the interior of $\mathbb{D}$, with fundamental domain of prescribed topological type.

Given the symmetries of the construction in Proposition \ref{Pstekk}, we observe that for any reflection group $G\leq O(2)$, every topological type of $G$-invariant domain $\Omega\subset \mathbb{D}$ with $k$ total boundary components intersecting $\partial \mathbb{D}$ can be realized by removing a suitable collection of arbitrarily small disks from the interior of the domain $\Omega_k$ described in Proposition \ref{Pstekk}. This observation, together with Proposition \ref{Pstekk}, gives the lower bound
$$\Sigma_1(N,\Gamma)\geq 4\pi-e^{-ck}.$$

Next, we turn to basic reflection pairs $(N,\Gamma=\langle \tau\rangle\times G)$ given by doubling $G$-invariant domains $\Omega$ with $m$ interior boundary components. If $G=1$, so that there is no constraint on the locations of the $m$ boundary components, Proposition \ref{pstekmbdry} gives a domain of the desired type for which
$$\sigmabar(\Omega)\geq 2\pi-e^{-cm},$$
so that
$$\Sigma_1(N,\langle \tau\rangle)\geq 4\pi-e^{-cm},$$
giving the desired bound in this case. If $G=\mathbb{Z}_2$, with a majority of the $m$ interior boundary components of $\Omega$ lying away from the fixed point set, taking the domain from Proposition \ref{pstekmbdry} (with $m$ replaced by $m'\geq m/2$) and removing a required number of arbitrarily small disks centered on a line through the origin, we again obtain a lower bound of the form $\Sigma_1(N,\Gamma)\geq 4\pi-e^{-c'm}\geq 4\pi-\frac{C}{m}$ in this case. If, on the other hand, a majority of the boundary components are prescribed to lie on a fixed point set for $G=\mathbb{Z}_2$, we instead remove a prescribed number of arbitrarily small disks from the domain in Proposition \ref{pstekmaxis} to obtain a lower bound of the form $\Sigma_1(N,\Gamma)\geq 4\pi-\frac{C}{m}.$

The case of $G=D_n$-symmetric domains with $3\leq n\leq n_0$ can be handled similarly, appealing either to Proposition \ref{pstekmaxis} when a majority of the boundary components in a fundamental domain are prescribed to lie along the fixed point sets of generators of $G$, and appealing instead to Proposition \ref{pstekmbdry} when a majority of boundary components lie in the interior of a fundamental domain. 

Finally, consider the case $G=D_n$ with $n\geq n_0$. If we consider domains $\Omega$ with a majority of the $m$ interior boundary components prescribed to lie in the interior of a fundamental domain, Proposition \ref{pstekmbdry} can again be used to obtain a lower bound of the form $\Sigma_1(N,\Gamma)\geq 4\pi-e^{-c'm}\geq 4\pi-\frac{C}{m}$ in this case. Alternatively, suppose that at least $\frac{m}{4}$ of the $m$ boundary components correspond to the $D_n$-orbit of $a$ boundary components prescribed to lie along the fixed point set of a generating reflection in the fundamental domain. The $m\leq 4na$, and we subdivide into two more cases: if $a\leq n-1$, then applying Proposition \ref{pstekmbdry} with $m=n$ and deleting an additional collection of arbitrarily small disks at the required locations, we obtain a lower bound of the form
$$\Sigma_1(N,\Gamma)\geq 4\pi-e^{-cn}\leq 4\pi-e^{-c'\sqrt{m}}\leq 4\pi-\frac{C}{m};$$
if, on the other hand, $n\leq a$, then removing a collection of arbitrarily small disks from the domain considered in Proposition \ref{pstekwedges} gives a domain of the desired type satisfying $\bar{\sigma}(\Omega)\geq 2\pi-e^{-c'a}\geq 2\pi-e^{-c''\sqrt{m}},$ so that
$$\Sigma_1(N,\Gamma)\geq 4\pi-e^{-c\sqrt{m}}\geq 4\pi-\frac{C}{m}$$
in this case as well.
\end{proof}

\begin{remark}
In the arguments above, our techniques for estimating $\Sigma_1(N,\Gamma)$ from below in terms of the genus are evidently non-sharp in many cases. Indeed, analogous to the Scherk-type/generic dichotomy in the closed case, it seems plausible that the lower bound $\Sigma_1(N,\Gamma)\geq 4\pi-\frac{C}{\gamma}$ via the genus is qualitatively sharp only in the case where $\Gamma\cong \mathbb{Z}_2\times \mathbb{Z}_2$, and all of the necks and (at most two) boundary components of $N$ are forced to intersect the fixed point sets of both reflections. We do not pursue the problem of refining these estimates further in the present paper.
\end{remark}

\section{Stability and upper bounds}

Let $(N_k,g)$ be a compact genus zero surface with $k$ boundary components, which we can identify isometrically with a domain $\Omega\subset \Sph^2$ of the form
\[
\Omega=\Sph^2\setminus \mathcal{D}:=\Sph^2\setminus \bigcup_{j=1}^kD_{r_j}(x_j)
\]
with a conformal metric $g=\rho g_0$. Moreover, we can arrange that the standard inclusion map $I\colon \Sph^2\to \mathbb{R}^3$ satisfies
\[
\int_{\Omega}I dv_g=0.
\]
Let $\bar{\mu}(N_k,g):=\min\left\{\bar{\lambda}_1^{D}(N_k,g),\bar{\lambda}_1^{N}(N_k,g)\right\}$, and consider the gap
\[
\delta(N_k,g):=8\pi-\bar{\mu}(N_k,g).
\]
Using techniques developed in \cite{KNPS, KSDuke}, we prove the following.

\begin{lemma}\label{stab.ineqs}
The set $\mathcal{D}=\bigcup\limits_{j=1}^kD_{r_j}(x_j)$ satisfies an estimate of the form
\[
\area(\mathcal{D},g_0)\leq \frac{1}{2}\delta(N_k,g),
\]
and for any $\psi\in W^{1,2}(\Omega),$
\begin{equation}\label{meas.close}
\int_{\Omega}\psi\left(2dv_{g_0}-\lambda_1^{N}(g)dv_g\right)\leq C\sqrt{\delta(N_k,g)}\|\psi\|_{W_{g_0}^{1,2}(\Omega)}.
\end{equation}
\end{lemma}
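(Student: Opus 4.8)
The plan is to follow the stability strategy of \cite{KNPS, KSDuke}, using the coordinate functions of $\Sph^2$ as test functions. Write $I=(I_1,I_2,I_3)$ for the standard inclusion $\Sph^2\hookrightarrow\R^3$; by the balancing condition $\int_\Omega I\,dv_g=0$ each $I_a$ is admissible for $\lambda_1^N(g)$, so $\lambda_1^N(g)\int_\Omega I_a^2\,dv_g\le\int_\Omega|dI_a|_g^2\,dv_g=\int_\Omega|dI_a|_{g_0}^2\,dv_{g_0}$, the last equality by conformal invariance of the Dirichlet energy in dimension two. Summing over $a$ and using $\sum_a I_a^2\equiv1$, $\sum_a|dI_a|_{g_0}^2\equiv2$ on $\Sph^2$, I get $\bar\lambda_1^N(N_k,g)\le 2\area(\Omega,g_0)=8\pi-2\area(\Dcal,g_0)$, whence $\area(\Dcal,g_0)\le\tfrac12(8\pi-\bar\lambda_1^N(N_k,g))\le\tfrac12\delta(N_k,g)$ since $\bar\mu(N_k,g)\le\bar\lambda_1^N(N_k,g)$. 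The same chain of inequalities records the \emph{defect} bound $\sum_a G_a\le\delta(N_k,g)$, where $G_a:=\int_\Omega|dI_a|_{g_0}^2\,dv_{g_0}-\lambda_1^N(g)\int_\Omega I_a^2\,dv_g\ge0$; in particular $G_a\le\delta(N_k,g)$ for each $a$.

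For \eqref{meas.close} I would first establish the sharper, pointwise-in-$a$ estimate
\begin{equation}\label{plan:star}
\Big|2\int_\Omega I_a w\,dv_{g_0}-\lambda_1^N(g)\int_\Omega I_a w\,dv_g\Big|\le C\sqrt{\delta(N_k,g)}\,\|w\|_{W^{1,2}_{g_0}(\Omega)},\qquad w\in W^{1,2}(\Omega).
\end{equation}
Adding and subtracting $\int_\Omega\langle dI_a,dw\rangle_{g_0}\,dv_{g_0}$, which by conformal invariance equals $\int_\Omega\langle dI_a,dw\rangle_g\,dv_g$, splits the left side into two pieces. The first, $2\int_\Omega I_a w\,dv_{g_0}-\int_\Omega\langle dI_a,dw\rangle_{g_0}\,dv_{g_0}$, is handled exactly as in \eqref{Ecoordt}: since $\Delta_{g_0}I_a=2I_a$ on $\Sph^2$, replacing $w$ by its harmonic extension $\hat w=Hw$ on $\Sph^2$ rewrites it as $\int_{\Dcal}(\langle dI_a,d\hat w\rangle_{g_0}-2I_a\hat w)\,dv_{g_0}$, which the harmonic-extension bound \eqref{h.est} together with $|I_a|,|dI_a|\le1$ estimate by $C\area(\Dcal,g_0)^{1/2}\|w\|_{W^{1,2}_{g_0}(\Omega)}\le C\sqrt{\delta(N_k,g)}\|w\|_{W^{1,2}_{g_0}(\Omega)}$, using the first inequality of the lemma. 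The second piece, $\int_\Omega\langle dI_a,dw\rangle_g\,dv_g-\lambda_1^N(g)\int_\Omega I_a w\,dv_g$, quantifies how far $I_a$ is from being a $\lambda_1^N(g)$-eigenfunction: expanding $I_a=\sum_j c_{aj}\phi_j$ and $w=\sum_j b_j\phi_j$ in an $L^2(dv_g)$-orthonormal basis of Neumann eigenfunctions $\phi_j$ of $(\Omega,g)$, and using that the $g$-average $c_{a0}$ of $I_a$ vanishes, this piece equals $\sum_{j\ge2}(\lambda_j^N-\lambda_1^N)c_{aj}b_j$, and Cauchy--Schwarz bounds it by $\sqrt{G_a}\,\big(\int_\Omega|dw|_g^2\,dv_g\big)^{1/2}\le\sqrt{\delta(N_k,g)}\,\|w\|_{W^{1,2}_{g_0}(\Omega)}$, using conformal invariance once more.

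Granting \eqref{plan:star}, the proof of \eqref{meas.close} closes via the partition of unity $\sum_a I_a^2\equiv1$ on $\Omega$: for arbitrary $\psi\in W^{1,2}(\Omega)$ write $\psi=\sum_a I_a\,(I_a\psi)$, apply \eqref{plan:star} with $w=I_a\psi$, and sum, using $\|I_a\psi\|_{W^{1,2}_{g_0}(\Omega)}\le C\|\psi\|_{W^{1,2}_{g_0}(\Omega)}$ (again from $|I_a|,|dI_a|\le1$), and then replace $\psi$ by $-\psi$ to obtain the stated one-sided inequality.

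The step I expect to require the most care is the harmonic-extension bound \eqref{h.est} behind the first piece of \eqref{plan:star}, which forces one to control the geometry of the excised configuration $\{D_{r_j}(x_j)\}$ — for instance to know that the doubled disks $D_{2r_j}(x_j)$ may be taken pairwise disjoint. This should be arranged when passing to the uniformization, after first reducing to the regime $\delta(N_k,g)\le\epsilon_0$ in which $\area(\Dcal,g_0)$ is small; this is in any case the only regime relevant for the applications of the lemma.
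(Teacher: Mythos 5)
Your proof of the area estimate matches the paper's. For \eqref{meas.close}, however, your route diverges from the paper's and has a genuine gap.

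You split the defect into two pieces and handle the piece $2\int_\Omega I_a w\,dv_{g_0}-\int_\Omega\langle dI_a,dw\rangle_{g_0}\,dv_{g_0}$ via a harmonic extension $\hat w = Hw$ to $\Sph^2$, appealing to the uniform bound \eqref{h.est} on $H$. But \eqref{h.est} is proved in the paper only under the hypothesis that the doubled disks $D_{2r_j}(x_j)$ are pairwise disjoint, since the construction extends across each $D_{r_j}$ using the annulus $D_{2r_j}\setminus D_{r_j}\subset\Omega$. In the present lemma the configuration $\{D_{r_j}(x_j)\}$ is produced by uniformizing an \emph{arbitrary} metric of nearly maximal $\bar\mu_1$, and there is no way to ``arrange'' the disjointness you invoke at the end: the excised disks are determined by the conformal class, and smallness of the total area $|\Dcal|$ does not prevent two disks from being comparable in size and nearly tangent, in which case the uniform constant in \eqref{h.est} degenerates. (Replacing the harmonic extension with integration by parts, $2\int_\Omega I_a w - \int_\Omega\langle dI_a,dw\rangle = -\int_{\partial\Omega}w\,\partial_\nu I_a$, does not rescue the argument either: the natural bound $\int_{\partial\Omega}|w|\lesssim|\partial\Omega|^{1/2}\|w\|_{W^{1,2}}$ costs a factor $|\partial\Omega|^{1/2}\sim\sqrt{k\delta}$, not $\sqrt\delta$.)

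The paper avoids this entirely by testing the positive semidefinite quadratic form
\[
Q_k(F,G)=\int_\Omega\langle dF,dG\rangle_{g_0}\,dv_{g_0}-\lambda_1^N(N_k,g)\int_\Omega\langle F,G\rangle\,dv_g
\]
against the \emph{vector-valued} map $F=\psi I$ rather than against scalar test functions $I_a\psi$ summed over $a$. The algebraic identities $\langle d(\psi I),dI\rangle_{g_0}=2\psi$ (using $\langle I,dI\rangle=0$ and $|dI|_{g_0}^2=2$) and $\langle\psi I,I\rangle=\psi$ give $Q_k(\psi I,I)=\int_\Omega\psi(2\,dv_{g_0}-\lambda_1^N\,dv_g)$ with no integration by parts, no boundary term, and no extension into $\Dcal$. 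Cauchy--Schwarz for $Q_k$ on balanced maps, together with $Q_k(\bar F,I)=0$, $Q_k(I,I)\le\delta$, and $\|d(\psi I)\|_{L^2}^2=\int(2\psi^2+|d\psi|^2)\,dv_{g_0}\le 2\|\psi\|_{W^{1,2}_{g_0}}^2$, then yields \eqref{meas.close} directly. This is why the lemma holds for arbitrary uniformizations, with no geometric assumption on the excised disks. Your piece~B eigenfunction computation is correct and parallels what $Q_k(we_a,I)$ gives, but piece~A is the part the paper's choice of test map is engineered to avoid.
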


\begin{proof}
Since all of the coordinate functions $x_1,x_2,x_3$ are test functions for the Rayleigh quotient definition of $\lambda_1^{N}(N_k,g)$, we have
\begin{equation*}
\begin{split}
\area(\Omega,g_0)=\frac{1}{2}\int_{\Omega}|dI|_{g}^2\,dv_{g}
&\geq \frac{1}{2}\lambda_1^{N}(N_k)\int_{\Omega}|I|^2dv_{g}\\
&\geq  \frac{1}{2}\bar{\mu}(N_k,g) = 4\pi-\frac{1}{2}\delta(N_k,g),
\end{split}
\end{equation*}
giving the estimate for $\area(\mathcal{D},g_0)$.

Moreover, on the space of maps $F\in W^{1,2}(\Omega_k,\mathbb{R}^3)$ with 
\[
\bar{F}:=\frac{1}{\area(N_k,g)}\int_{\Omega_k}Fdv_{g}=0,
\]
the quadratic form
\[
Q_k(F,G):=\int_{\Omega}\langle dF,dG\rangle_{g_0}dv_{g_0}-\lambda_1^{N}(N_k,g)\int_{\Omega}\langle F,G\rangle dv_{g}
\]
is positive semidefinite. The Cauchy-Schwarz inequality then gives
\[
Q_k(F,I)\leq Q_k(F,F)^{1/2}Q_k(I,I)^{1/2}\leq \|dF\|_{L^2(\Omega)}\sqrt{\delta(N_k,g)}.
\]
In fact, even for maps with $\bar{F}\neq 0$, since $Q_k(\bar{F},I)=0$, it is easy to see that
\[
Q_k(F,I)=Q_k(F-\bar{F},I)\leq \|dF\|_{L^2(\Omega)}\sqrt{\delta(N_k,g)}.
\]
Taking $F=\psi I$ for some $\psi\in W^{1,2}(\Omega)$, 
note that 
\[
\langle d(\psi I),dI\rangle_{g_0}=2\psi,
\qquad
|d(\psi I)|_{g_0}^2=2\psi^2+|d\psi|_{g_0}^2,
\]
and $\langle (\psi I),I\rangle=\psi$, so that
\[
Q_k(\psi I,F)=\int_{\Omega}2\psi dv_{g}-\lambda_1^{N}(N_k,g)\int_{\Omega}\psi dv_{g_0},
\]
and $\|d(\psi I)\|_{L^2(\Omega)}\leq \|\psi\|_{W^{1,2}_{g_0}(\Omega)}$ so that~\eqref{meas.close} follows.
\end{proof}

As a corollary, we have the following, which together with Lemma \ref{double.lem} completes the proof of the first estimate in Theorem \ref{s3.ubd}.

\begin{cor}
\label{cor:Lap_lbd}
There exists a universal constant $c>0$ such that
\[
\delta(N_k,g)\geq e^{-ck}.
\]
\end{cor}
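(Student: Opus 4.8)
The plan is to argue by contradiction: fix a large universal constant $c>0$, to be pinned down at the end, suppose $\delta:=\delta(N_k,g)<e^{-ck}$, and build a test function that forces $\bar\lambda_1^D(N_k,g)$ — hence $\bar\mu(N_k,g)$ — below $8\pi$ by an absolute amount, contradicting the definition of $\delta$.

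\emph{Geometric consequences.} By Lemma~\ref{stab.ineqs}, $\area(\mathcal D,g_0)\le\tfrac12\delta<\tfrac12 e^{-ck}$. Since a geodesic disk $D_{r_j}(x_j)$ with $r_j\le\epsilon_0$ has $g_0$-area at least $\tfrac\pi2 r_j^2$, the largest radius satisfies $r_*:=\max_j r_j<e^{-ck/2}$, so $\log(1/r_j)>ck/2$ for every $j$ and therefore $\sum_j\tfrac1{\log(1/r_j)}\le k/\log(1/r_*)<2/c$.

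\emph{Construction of the test function.} Fix $R=\pi/4$. For each $j$, let $\psi_j:\Sph^2\to[0,1]$ be $1$ on $D_{r_j}(x_j)$, $0$ outside $D_R(x_j)$, and $g_0$-harmonic (hence radial) on the annulus in between, so that $\int_{\Sph^2}|d\psi_j|^2_{g_0}=2\pi/L_j$ and $\int_{\Sph^2}\psi_j\,dv_{g_0}\le C_0/L_j$, where $L_j:=\log(\tan(R/2)/\tan(r_j/2))\ge\tfrac12\log(1/r_j)$ and $C_0$ is universal. I would then set $\eta:=\min_j(1-\psi_j)$. This $\eta$ vanishes on $\bigcup_j D_{r_j}(x_j)=\mathcal D$, so $\eta|_\Omega\in W^{1,2}_0(\Omega)$ is a legitimate test function for $\lambda_1^D(\Omega,\cdot)$, and $0\le\eta\le1$. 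The point is that $|d\eta|^2_{g_0}$ equals $|d\psi_j|^2_{g_0}$ for the active index $j$ a.e., so
\[
E:=\int_\Omega|d\eta|^2_{g_0}\,dv_{g_0}\le\sum_j\frac{2\pi}{L_j}\le 4\pi\sum_j\frac1{\log(1/r_j)}<\frac{8\pi}{c},
\]
with no dependence on $k$. Moreover $\eta\ge 1-\sum_j\psi_j$, so $\int_\Omega\eta\,dv_{g_0}\ge\area(\Omega,g_0)-\sum_j\int_{\Sph^2}\psi_j\,dv_{g_0}\ge 4\pi-\tfrac12\delta-4C_0/c\ge 3\pi$ for $c$ large, hence $V:=\int_\Omega\eta^2\,dv_{g_0}\ge(\int_\Omega\eta\,dv_{g_0})^2/\area(\Omega,g_0)\ge\tfrac{9\pi}4$, while $\|\eta^2\|_{W^{1,2}_{g_0}(\Omega)}\le\sqrt{4\pi}+2\sqrt E$ is bounded.

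\emph{Combining the estimates.} As in the proof of Lemma~\ref{stab.ineqs} (testing the Neumann quotient with the balanced coordinate functions $x_1,x_2,x_3$ and using $\sum_i|dx_i|^2_{g_0}=2$, $\sum_i x_i^2=1$, and conformal invariance of Dirichlet energy), one has $\lambda_1^N(g)\area(\Omega,g)\le 2\area(\Omega,g_0)\le 8\pi$. Applying \eqref{meas.close} with $\psi=\pm\eta^2$ gives $\lambda_1^N(g)\int_\Omega\eta^2\,dv_g\ge 2V-C\sqrt\delta\,\|\eta^2\|_{W^{1,2}_{g_0}(\Omega)}\ge\tfrac{9\pi}2-C'\sqrt\delta$. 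Since $\eta|_\Omega$ vanishes on $\partial\mathcal D$ and Dirichlet energy is conformally invariant, $\lambda_1^D(\Omega,g)\le E/\int_\Omega\eta^2\,dv_g$, and therefore
\[
\bar\mu(N_k,g)\le\bar\lambda_1^D(N_k,g)=\area(\Omega,g)\,\lambda_1^D(\Omega,g)\le\frac{\area(\Omega,g)\lambda_1^N(g)}{\lambda_1^N(g)\int_\Omega\eta^2\,dv_g}\,E\le\frac{8\pi E}{\tfrac{9\pi}2-C'\sqrt\delta}\le\frac{16\pi}{c}
\]
for $c$ large (using $E<8\pi/c$ and that $\delta<e^{-ck}$ is small). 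Hence $\delta(N_k,g)=8\pi-\bar\mu(N_k,g)\ge 8\pi-16\pi/c\ge 4\pi$ once $c\ge4$, contradicting $\delta<e^{-ck}\le e^{-c}<4\pi$ (for $k\ge1$). Fixing $c$ to be a sufficiently large universal constant throughout yields $\delta(N_k,g)\ge e^{-ck}$.

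\emph{Main obstacle.} The delicate step is the test-function construction: one needs a cutoff vanishing on all $k$ holes whose $g_0$-Dirichlet energy is $O\!\big(\sum_j 1/\log(1/r_j)\big)$ \emph{uniformly} in the number and positions of the centers $x_j$ — in particular when many holes cluster at tiny scales, where the naive cutoff $\prod_j(1-\psi_j)$ loses a factor of $k$ from overlapping transition regions. Replacing the product by $\min_j(1-\psi_j)$, whose gradient is pointwise that of a single active factor, removes this loss; together with the fact that $\sum_j 1/\log(1/r_j)$ is automatically small once $\delta<e^{-ck}$, this is what makes the argument close, the remainder being a routine pairing of the two inequalities in Lemma~\ref{stab.ineqs} with Hersch's balancing.
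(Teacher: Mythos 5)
Your proof is correct and takes essentially the same approach as the paper: both build a log-cutoff vanishing on the holes (your $\eta=\min_j(1-\psi_j)$ is the same device the paper implements via $\dist(x,S)=\min_j\dist(x,x_j)$), bound its Dirichlet energy by $O(k/\log(1/\delta))$ using the area estimate from Lemma \ref{stab.ineqs}, and pair this with the stability inequality \eqref{meas.close} to force a contradiction when $\delta<e^{-ck}$. The only variation worth noting is at the last step: the paper divides by $\lambda_1^N=\lambda_1^D$ after reducing to the $\bar\mu$-maximizing metric, whereas you avoid that reduction by invoking the Hersch bound $\area(\Omega,g)\,\lambda_1^N\le 8\pi$ (already implicit in the proof of Lemma \ref{stab.ineqs}) to control $\bar\lambda_1^D$ directly for an arbitrary $g$ — a mild simplification.
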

\begin{proof}
Writing $\delta=\delta(N_k,g)$, the area estimate in Lemma~\ref{stab.ineqs} implies
\[
\sum_{j=1}^kr_j^2\leq C\delta.
\]
Set $S=\{x_1,x_2,\ldots,x_k\}$, and without loss of generality, suppose 
\[
1>\sqrt{C\delta}>r_1\geq r_2\geq \cdots\geq r_k.
\]
Consider the log cutoff function $\phi \in \mathrm{Lip}(\Omega)$ given by 
\begin{equation*}
\phi(x)= 
\begin{cases}
1\quad&\text{ for }\mathrm{dist}(x,S)\geq \sqrt{r_1},\\
\frac{2}{|\log r_1|}\log(d(x,S)/r_1)&\text{ for }\mathrm{dist}(x,S)\in [r_1,\sqrt{r_1}],\\
0\quad&\text{ for }\mathrm{dist}(x,S)\leq r_1,
\end{cases}
\end{equation*}
where distance is measured with respect to the round metric on $\Sph^2$.
Then standard computations give
\[
\|d\phi\|_{L^2(\Omega)}^2\leq \frac{Ck}{\log(1/\delta)},
\]
while $0\leq \phi\leq 1$, and $\phi\equiv 1$ on the complement of the set $\bigcup\limits_{j=1}^kD_{\sqrt{r_1}}(x_j)$, so that
\[
\|\phi\|_{L^2_{g_0}}^2\geq 4\pi-Ckr_1\geq 4\pi-C'k\sqrt{\delta}.
\]
Now, appealing to the second inequality of Lemma \ref{stab.ineqs} with $\psi=\phi^2$, we deduce that
\[
\lambda_1^{N}(g)\int_{\Omega}\phi^2dv_g\geq 2\|\phi\|_{L^2_{g_0}}^2-C\sqrt{\delta}\|\phi^2\|_{W_{g_0}^{1,2}}
\geq 8\pi-C''k\sqrt{\delta}.
\]
If $8\pi-C''k\sqrt{\delta}\leq 0$, then the conclusion of the corollary holds. Assuming the converse, one has
\[
\lambda_1^{D}(N_k,g)\leq \frac{\|d\phi\|_{L^2(\Omega)}^2}{\|\phi\|_{L^2(\Omega,g)}^2}
\leq \frac{Ck}{\log(1/\delta)}\cdot \frac{\lambda_1^{N}(N_k,g)}{8\pi-C''k\sqrt{\delta}}.
\]
For the metric $g$ maximizing $\bar{\mu}(N_k,g)$--i.e., minimizing $\delta(N_k,g)$--we have $\lambda_1^{D}=\lambda_1^{N}$, so setting $\delta_k=\min\left\{\delta(N_k,g)\mid g\in \Met(N_k)\right\}$, we can divide through by $\lambda_1^{N}=\lambda_1^{D}$ above to see that
\[
1\leq \frac{Ck}{\log(1/\delta_k)}\cdot \frac{1}{8\pi-C''\sqrt{\delta_k}},
\]
from which the desired estimate follows easily.\qedhere

\end{proof}

\subsection{Convergence to equators}\label{eq.cvg-1}

Now, let $(N_k,g_k)\subset \mathbb{S}^3_+$ be one half of a reflection-symmetric minimal surface embedded by first eigenfunctions in $\mathbb{S}^3$, conformally equivalent to a balanced domain $\Omega_k=\mathbb{S}^2\setminus \mathcal{D}_k$ with $\mathcal{D}_k=\bigcup\limits_{i=1}^kD_{r_i}(x_i)$ as in the previous subsection, with
\[
8\pi-\bar{\mu}(N_k,g_k)=:\delta_k,
\]
and $\Phi_k\colon\Omega_k\to N_k\subset \mathbb{S}^3_+$ giving the conformal harmonic embedding. In particular, note that $\lambda^N_1(N_k,g_k)=\lambda_1^D(N_k,g_k) = 2$.

As in the proof of Lemma \ref{stab.ineqs}, we begin with the observation that the standard inclusion $I\colon\Sph^2\to \mathbb{R}^3$ satisfies
\[
Q_k(\Phi,I)\leq \sqrt{\delta_k}\|d\Phi\|_{L^2(\Omega_k)}
\]
for any $\Phi\colon \Omega_k\to \mathbb{R}^3$, where
\[
Q_k(\Phi,G):=\int_{\Omega_k}\langle d\Phi,dG\rangle_{g_k}-2\langle \Phi,G\rangle\, dv_{g_k}.
\]
In particular, letting $\Phi$ denote the $L^2(dv_{g_k})$ projection of $I|_{\Omega_k}$ onto the complement of the first Neumann eigenspace for $\Delta_{g_k}$, we deduce that
\begin{equation}\label{id.eigen}
(1-2/\lambda_4^{N}(N_k))^2\|\Phi\|_{W^{1,2}(g_k)}^2\leq C\delta_k.
\end{equation}

Next, we show that the coefficient of the left-hand side of \eqref{id.eigen} is bounded from below when $\delta_k\ll1/k$.

\begin{lemma}\label{l4.gap}
There are constants $c>0$ and $\epsilon>0$ such that if $(N_k,g_k)$ is a sequence satisfying the preceding assumptions with $\delta(N_k,g_k)\leq \frac{\epsilon}{k}$, then $\lambda_4^{N}(N_k,g_k)-2>c$.
\end{lemma}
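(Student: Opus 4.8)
The plan is to argue by contradiction and reduce to the statement that $\lambda_4$ of the round metric on $\Sph^2$ equals $6$. Suppose the conclusion fails: then there are $\epsilon_j\downarrow 0$ and surfaces $(N_{k_j},g_{k_j})$ as in the statement with $\delta_j:=\delta(N_{k_j},g_{k_j})\le \epsilon_j/k_j$ but $\lambda_4^N(N_{k_j},g_{k_j})\to 2$. Identify $N_{k_j}$ conformally with $\Omega_j:=\Sph^2\setminus\mathcal D_j$, $\mathcal D_j=\bigcup_{i=1}^{k_j}D_{r_i}(x_i)$, $g_{k_j}=\rho_j g_0$. By Lemma \ref{stab.ineqs}, $\area(\mathcal D_j,g_0)\le\frac12\delta_j$, so $\sum_i r_i^2\le C\delta_j\to 0$ and, crucially, $\sum_i r_i\le \sqrt{k_j}\,\big(\sum_i r_i^2\big)^{1/2}\le C\sqrt{\epsilon_j}\to 0$ — it is in this Cauchy–Schwarz step that the sharp hypothesis $\delta_j\le\epsilon_j/k_j$ (and not merely $\delta_j\le\epsilon_j$) enters, forcing the total perimeter of the holes to vanish. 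Since $\lambda_1^N(g_{k_j})=2$, \eqref{meas.close} reads $\big|\int_{\Omega_j}\psi\,(dv_{g_0}-dv_{g_{k_j}})\big|\le C\sqrt{\delta_j}\,\|\psi\|_{W^{1,2}_{g_0}(\Omega_j)}$; I will use the extension operator $H_j:W^{1,2}(\Omega_j)\to W^{1,2}(\Sph^2)$ of Lemma \ref{Lneu} (with $\|d(H_j\phi)\|_{L^2(\Sph^2)}\le C\|d\phi\|_{L^2(\Omega_j)}$) purely as a tool to place the relevant functions on a fixed surface.

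Next I would produce the limiting eigenfunction. Let $u_j$ be a fourth Neumann eigenfunction with $\|u_j\|_{L^2(g_{k_j})}=1$, $L^2(g_{k_j})$-orthogonal to $1$ and to the $\tau$-even ambient coordinates $\Phi^1_j,\Phi^2_j,\Phi^3_j$ of the minimal immersion; by conformal invariance of the Dirichlet integral, $\|du_j\|^2_{L^2(g_0)}=\lambda_4^N(N_{k_j},g_{k_j})\to 2$. Setting $\hat u_j:=H_ju_j$, the Poincaré inequality on $\Sph^2$ bounds the mean-zero part of $\hat u_j$ in $L^2$, while the mean of $\hat u_j$ is $O(\sqrt{\delta_j})$ (from $0=\int_{\Omega_j}\hat u_j\,dv_{g_{k_j}}$, \eqref{meas.close}, and $\area(\mathcal D_j,g_0)\to0$); hence $\{\hat u_j\}$ is bounded in $W^{1,2}(\Sph^2)$, and after passing to a subsequence $\hat u_j\rightharpoonup u_\infty$ weakly in $W^{1,2}$ and strongly in every $L^p$, $p<\infty$. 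Testing the Neumann eigenvalue equation for $u_j$ against a fixed smooth $\phi$ on $\Sph^2$, the error in replacing $dv_{g_{k_j}}$ by $dv_{g_0}$ is $\le C\sqrt{\delta_j}\|\phi\|_{C^1}$ by \eqref{meas.close} and all hole contributions are $O(\sqrt{\delta_j})$; letting $j\to\infty$ gives $\int_{\Sph^2}\langle du_\infty,d\phi\rangle_{g_0}=2\int_{\Sph^2}u_\infty\phi\,dv_{g_0}$ for all smooth $\phi$, so $u_\infty$ lies in the first eigenspace $\operatorname{span}\{x_1,x_2,x_3\}$ of $\Sph^2$.

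I would then use orthogonality to kill $u_\infty$. The key observation is the pointwise identity $\rho_j=\tfrac12|d\Phi_j|^2_{g_0}$, hence $|d\Phi^l_j|^2_{g_0}\le 2\rho_j$ for each $l$. Combined with $|\Phi^l_j|\le1$, $\|u_j\|_{L^2(g_{k_j})}=1$, $\|du_j\|_{L^2(g_0)}\le C$ and $\|d\Phi^l_j\|^2_{L^2(g_0)}=2\int_{\Omega_j}(\Phi^l_j)^2\,dv_{g_{k_j}}\le 8\pi$, this makes $\|u_j\Phi^l_j\|_{W^{1,2}_{g_0}(\Omega_j)}$ and $\|\Phi^l_j\Phi^m_j\|_{W^{1,2}_{g_0}(\Omega_j)}$ uniformly bounded, so \eqref{meas.close} allows me to pass the relations $\int_{\Omega_j}u_j\Phi^l_j\,dv_{g_{k_j}}=0$ and $\int_{\Omega_j}\Phi^l_j\Phi^m_j\,dv_{g_{k_j}}=(B_j)_{lm}$ to the limit. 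Since $N_{k_j}$ converges as a varifold to a multiplicity-one great two-sphere (standard minimal-surface compactness with $\area(N_{k_j},g_{k_j})\to4\pi$), the Gram matrices $B_j$ stay uniformly positive-definite, so the weak limits $\Phi^1_\infty,\Phi^2_\infty,\Phi^3_\infty$ are a basis of $\operatorname{span}\{x_1,x_2,x_3\}$; together with $u_\infty\perp_{L^2(g_0)}\Phi^l_\infty$ and $u_\infty\in\operatorname{span}\{x_1,x_2,x_3\}$, this forces $u_\infty=0$.

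Finally, I must derive the contradiction: $u_\infty=0$ gives $\|u_j\|_{L^2(\Omega_j,g_0)}\to0$ while $\|u_j\|_{L^2(\Omega_j,g_{k_j})}=1$, so I need to show these two norms are asymptotically comparable — equivalently, that none of the $L^2(g_{k_j})$-mass of $u_j$ escapes into the necks of $N_{k_j}$, ruling out a near-zero "neck mode'' concentrated near $\mathcal D_j$. \textbf{This last step is the main obstacle.} The naive route fails — the total capacity $\sum_i |\log r_i|^{-1}$ of the holes diverges even under $\delta_j\le\epsilon_j/k_j$, so one cannot simply "fill in the holes'' — and instead one must exploit the smallness of each individual $r_i$ together with curvature control of the minimal surfaces $N_{k_j}$: the Gauss–Bonnet identity $\int_{N_{k_j}}|A|^2\,dv_{g_{k_j}}=4\pi k_j-\delta_j$ with the conformal Gauss equation yields, neck by neck, an estimate of the form $\int_{\mathrm{neck}_i}u^2\,dv_{g_{k_j}}\le C\,r_i\int_{\mathrm{neck}_i}|du|^2_{g_0}$, strong enough that the Rayleigh quotient contributed by the necks stays above $2+c_0$ for a fixed $c_0>0$; feeding this back gives $1=\|u_j\|^2_{L^2(g_{k_j})}\le\|u_j\|^2_{L^2(g_0)}+o(1)=o(1)$, the desired absurdity, from which the lemma follows.
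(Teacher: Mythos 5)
Your proposal follows the same contradiction scaffolding as the paper — conformally identify with $\Omega_k=\Sph^2\setminus\mathcal D_k$, show the holes vanish in perimeter via Cauchy--Schwarz and $\delta_k\le\epsilon_k/k$, pass the eigenfunction equation to the limit on $\Sph^2$ via \eqref{meas.close}, and derive a contradiction from the limiting orthonormality forcing a fifth-dimensional eigenspace with eigenvalue $2$. Several intermediate steps are correct; in particular your bound on $\|u_jd\Phi^l_j\|_{L^2(g_0)}$ via the identity $\rho_j=\tfrac12|d\Phi_j|^2_{g_0}$ is a clean way to control the cross-terms $u_j\Phi^l_j$ and $\Phi^l_j\Phi^m_j$ without further regularity.

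The gap you flag at the end, however, is exactly where the argument fails as written, and the paper's resolution is a specific tool you never introduce: a uniform $L^\infty$ estimate on the Neumann eigenfunctions, $\|\phi_{k,j}\|^2_{L^\infty}\le Ce^{\lambda_j}$, proved in the paper by a mean-value/monotonicity argument for the subharmonic function $\phi_{k,j}^2$ on the minimal surface in $\Sph^3$ (using the vector field $X_p=x-p$ and the first-variation identity). That $L^\infty$ bound is what makes $\|d(u_j^2)\|_{L^2(g_0)}=2\|u_jdu_j\|_{L^2(g_0)}\le 2\|u_j\|_{L^\infty}\|du_j\|_{L^2}$ uniformly bounded, so that \eqref{meas.close} can be applied to $\psi=u_j^2$ to conclude $\|u_j\|^2_{L^2(g_{k_j})}\le\|u_j\|^2_{L^2(g_0)}+C\sqrt{\delta_k}\to 0$, closing the normalization argument. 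With only a bound on $\|u_j\|_{W^{1,2}(g_0)}$ (your weak-compactness setup), you cannot bound $\|u_jdu_j\|_{L^2}$, and the "neck mode" concentration cannot be ruled out. Your proposed Gauss--Bonnet/curvature workaround is not a fix: the stated identity $\int_{N_{k_j}}|A|^2=4\pi k_j-\delta_j$ does not hold (the Gauss equation gives $\int|A|^2=2\area-4\pi\chi$ plus a boundary geodesic-curvature term for the free boundary piece, which is of a different magnitude), and the asserted neck estimate $\int_{\mathrm{neck}_i}u^2\,dv_{g_{k_j}}\le Cr_i\int_{\mathrm{neck}_i}|du|^2_{g_0}$ is simply posited with no derivation — it is not at all clear that any amount of $L^2$ curvature control yields such a bound without pointwise information on $u_j$. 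The paper's $L^\infty$ estimate also does more work than you realize elsewhere: it underlies the $L^p$ compactness for all $p$ (through a $W^{1,1}\hookrightarrow L^1$ embedding plus $L^\infty$ bootstrap), which in the paper replaces your harmonic-extension-and-weak-$W^{1,2}$ route; and it makes the orthonormality estimate $|\int_{\Omega_k}\phi_{k,j}\phi_{k,l}(dv_{g_0}-dv_{g_k})|\le C\sqrt{\delta_k}$ go through for all $j,l$, whereas you cover only the coordinate products. Your auxiliary invocation of varifold compactness to control the Gram matrix is plausible but is extra machinery the paper avoids. To repair the proposal, you should prove and deploy the mean-value $L^\infty$ bound; without it the final step is a genuine gap.
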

\begin{proof}
Let $\phi_{k,j}$ be the $j$th Neumann eigenfunction for the Laplacian on $(N_k,g_k)\cong (\Omega_k,g_k)$, normalized so that
\[
\int_{\Omega_k}\phi_{k,j}\phi_{k,l}\,dv_{g_k}=\delta_{jl}.
\]
Note that $\phi_{k,j}$ corresponds to an even eigenfunction on the closed minimal surface $M$ in $\Sph^3$ obtained by doubling $N_k$, and a straightforward variant of the mean value computations in \cite[Section 3.3]{CM} for the function $\phi_{k,j}^2$ lead to an $L^{\infty}$ estimate of the form
\begin{equation}\label{ef.mvi}
\phi_{k,j}(p)^2\leq Ce^{\lambda_j}\int_{\Omega_k}\phi_{k,j}^2\,dv_{g_k}=Ce^{\lambda_j}.
\end{equation}
For completeness let us provide a proof of~\eqref{ef.mvi}. Fix a point $p\in M$ and consider the vector field $X_p(x) = (x-p) = -\nabla h_{p,r}$, where $h_{p,r}(x)=\frac{1}{2}(r^2-|x-p|^2)$ and note that
\[
\langle X_p(x),x\rangle =1 - \langle x,p\rangle = \frac{1}{2}|x-p|^2\text{ on } M.
\]
One then has
\begin{equation*}
\begin{split}
2&\int_{M\cap B_r(p)}\phi_{k,j}^2\,dv_g=\int_{N\cap B_r(p)}\phi_{k,j}^2\mathrm{div}_M(X_p)\,dv_g=\\
&\int_{M\cap B_r(p)}\mathrm{div}_M(\phi_{k,j}^2X_p)+\langle \nabla\phi_{k,j}^2,\nabla h_{p,r}\rangle\,dv_g=\\
&\int_{M\cap B_r(p)}\mathrm{div}_M(\phi_{k,j}^2X_p+h_{p,r}\nabla \phi_{k,j}^2)+2\lambda_j h_{p,r} \phi_{k,j}^2 - 2h_{p,r}\phi_{k,j}^2\,dv_g\leq\\
&\int_{M\cap \partial B_r(p)}\phi_{k,j}^2|(x-p)^T|\,ds_g+\int_{M\cap B_r(p)}\left(\phi_{k,j}^2|x-p|^2+2\lambda_jh_{p,r}\phi_{k,j}^2\right)\,ds_g\leq\\
r&\frac{d}{dr}\left(\int_{M\cap B_r(p)}\phi_{k,j}^2\,dv_g\right)+\left(1+\lambda_j \right) r^2\int_{M\cap B_r(p)}\phi_{k,j}^2\,ds_g.
\end{split}
\end{equation*}
Dividing by $r^3$ we obtain
\[
\frac{d}{dr}\left(\frac{1}{r^2}\int_{M\cap B_r(p)}\phi_{k,j}^2\,dv_g\right)\geq - (1+\lambda_j)\frac{1}{r}\int_{M\cap B_r(p)}\phi_{k,j}^2\,dv_g.
\]
Setting $g(r) = \frac{1}{r^2}\int_{M\cap B_r(p)}\phi_{k,j}^2\,dv_g$ this implies that for $r\leq 1$
\[
\frac{d}{dr}(\ln g(r))\geq -(1+\lambda_j)
\]
Integrating on $[0,1]$ and noting that $g(0) = \pi \phi^2_{k,j}(p)$ completes the proof of~\eqref{ef.mvi}.

Now, to prove the lemma, assume for a contradiction that there exists a sequence $\eps_k\to 0$ and $(N_k,g_k)$ as above such that $\delta_k<\eps_k/k$ but $\lambda_4^{N}(N_k,g_k)\to 2$ as $k\to\infty$. We will obtain a contradiction by showing that $\phi_{k,j}$ converge in $L^2(\Sph^2)$ as $k\to\infty$ to eigenfunctions $\phi_j$ of the standard Laplacian with eigenvalue $\lim\limits_{k\to\infty} \lambda_j^{N}(N_k,g_k)$, such that
\[
\int_{\Sph^2}\phi_j\phi_l\,dv_{g_0}=\delta_{jl},
\]
forcing $\lim \lambda_j^{N}(N_k,g_k)$ to coincide with the $j$-th eigenvalue of the Laplacian on $\Sph^2$, giving the desired contradiction.

First, we observe that the sequence $\{\lambda_j^{N}(N_k,g_k)\}_k$ is bounded. To show this, use the restrictions of eigenfunctions on $\Sph^2$ to 
$\Omega_k$ together with~\eqref{meas.close} to conclude that $\limsup \lambda_j^{N}(N_k,g_k)\leq \lambda_j(\Sph^2,g_0)$.
Then, we check that, for a fixed $j$, the sequence $\phi_{k,j}$ extended by $0$ to $L^2(\Sph^2)$ is precompact in $L^2(\Sph^2,g_0)$. Using linear cutoff functions, it is easy to find $\rho_k\in \mathrm{Lip}_c(\Omega_k; [0,1])$ such that $\rho_k\equiv 1$ on $\Sph^2\setminus \bigcup_{i=1}^kD_{2r_i}(x_i)$ and 
\[
\int_{\Sph^2}|d\rho_k|_{g_0}\,dv_{g_0}\leq C\sum_{i=1}^kr_i\leq C\sqrt{k\delta_k}<C\sqrt{\eps_k}\leq C,
\]
which together with \eqref{ef.mvi} and the obvious bound 
\[
\int_{\Omega_k}|d\phi_{k,j}|_{g_0}\,dv_{g_0} \leq C\sqrt{\lambda_j^{N}(N_k,g_k)}
\]
 yields the uniform $W^{1,1}(\Sph^2,g_0)$ bound
\[
\|\rho_k\phi_{k,j}\|_{W^{1,1}(\Sph^2,g_0)}\leq C_j
\]
for $\rho_k\phi_k$. By the compactness of the embedding $W^{1,1}(\Sph^2)\to L^1(\Sph^2)$, it follows that $\rho_k\phi_{k,j}$ is precompact in $L^1(\Sph^2)$, as $k\to\infty$, and since
\[
\int_{\Omega_k}(1-\rho_k)|\phi_{k,j}|\,dv_{g_0}\leq C\|\phi_{k,j}\|_{L^{\infty}}\sum_{i=1}^k r_i^2\leq C_j\delta_k,
\]
we deduce that $\phi_{k,j}$ is precompact in $L^1(\Sph^2)$ as well. And since \eqref{ef.mvi} gives a uniform $L^{\infty}$ bound for $\phi_{k,j}$ as $k\to\infty$, it follows that $\phi_{k,j}$ is in fact precompact in $L^p(\Sph^2)$ for every $p\in [1,\infty)$, in particular $p=2$. Passing to a subsequence, we let $\phi_j\in L^{\infty}(\Sph^2)$ be the $L^2$-limit of $\phi_{k,j}$. 

Next, we argue that the sequence $\{\phi_j\}$ is orthonormal in $L^2(\Sph^2,g_0)$. Indeed, it follows from~\eqref{meas.close} that
\begin{equation*}
\begin{split}
&\left|\int_{\Omega_k}\phi_{k,j}\phi_{k,l}\,dv_{g_0}-\delta_{jl}\right|=\left|\int_{\Omega_k}\phi_{k,j}\phi_{k,l}\,(dv_{g_0}-dv_{g_k})\right|\leq\\
&C\sqrt{\delta_k}\|\phi_{k,j}\phi_{k,l}\|_{W^{1,2}(\Omega_k,g_0)}
\leq C_{jl}\sqrt{\delta_k},
\end{split}
\end{equation*}
where in the last line we used the $L^{\infty}$ control \eqref{ef.mvi} to get uniform $W^{1,2}(\Sph^2,g_0)$ control on the product $\phi_{k,j}\phi_{k,l}$. Since the right-hand side vanishes as $k\to\infty$, it follows from the strong $L^2(\Sph^2)$ convergence $\phi_{k,j}\to \phi_j$ that
\[
\int_{\Sph^2}\phi_j\phi_l\,dv_{g_0}=\delta_{jl},
\]
as claimed.

We finish the proof by showing that $\phi_j$ is a Laplace eigenfunction with eigenvalue $\lim\limits_{k\to\infty}\lambda_j^{N}(N_k,g_k)$. For any fixed $\psi\in C^{\infty}(\Sph^2)$, we compute
\begin{eqnarray*}
\int \phi_j (\Delta \psi)&=&\lim_{k\to\infty}\int_{\Omega_k}\phi_{k,j}(\Delta_{g_0} \psi)\,dv_{g_0}\\
&=&\lim \int_{\Omega_k}(d^*(\phi_{k,j}d\psi)+\langle d\psi,d\phi_{k,j}\rangle)\,dv_{g_0}\\
&=&\lim \int_{\Omega_k}\langle d\psi, d\phi_{k,j}\rangle\,dv_{g_0}-\lim \int_{\partial\Omega_k}\phi_{k,j}\frac{\partial \psi}{\partial\nu}\,ds_{g_0}.
\end{eqnarray*}
Now, the second term in the last line is bounded in absolute value by $\lim_{k\to\infty}\|\phi_{k,j}\|_{L^{\infty}}\|d\psi\|_{C^0}|\partial\Omega_k|$, which together with the $L^{\infty}$ bound \eqref{ef.mvi} and the estimate
\[
|\partial\Omega_k|\leq C\sum_{i=1}^kr_i\leq C\sqrt{k\epsilon_k}<C\sqrt{\delta_k}
\]
implies
\[
\int_{\Sph^2} \phi_j (\Delta \psi)\, dv_{g_0} = \lim_{k\to\infty}\int_{\Omega_k}\langle d\psi,d\phi_{k,j}\rangle_{g_k}\,dv_{g_k}=\lim_{k\to\infty}\lambda_j^{N}(N_k,g_k)\int_{\Omega_k}\psi \phi_{k,j}\,dv_{g_k}.
\]
Finally, since $\psi \phi_{k,j}$ is uniformly bounded in $W^{1,2}$ as $k\to\infty$, we can appeal to \eqref{meas.close} to see that
\[
\lim_{k\to\infty}\int_{\Omega_k}\psi \phi_{k,j}\,dv_{g_k}=\lim_{k\to\infty}\int_{\Omega_k}\psi \phi_{k,j}\,dv_{g_0}=\int_{\Sph^2}\psi \phi_j\,dv_{g_0},
\]
and, therefore,
\[
\int_{\Sph^2} \phi_j(\Delta_{g_0} \psi)\,dv_{g_0}=\left(\lim_{k\to\infty}\lambda_j^{N}(N_k,g_k)\right)\int_{\Sph^2} \phi_j \psi\,dv_{g_0},
\]
as claimed.
\end{proof}

Let $\Psi_k = (F^1_k,F_k^2,F_k^3)$ be the map given by the first three coordinate functions of the minimal embedding $F_k\colon\Omega_k\to \Sph^3_+$, so that the components of $\Psi_k$ form a basis of the $\lambda_1^N(N_k,g_k)$-eigenspace. It then follows from~\eqref{id.eigen} that there exists a matrix $A_k$ such that
\begin{equation}
\label{ineq:Ak_proj}
\|I-A_k\Psi_k\|_{W^{1,2}(\Omega_k,g_k)}^2\leq C\delta_k.
\end{equation}
In particular, this implies that $A_k\in GL_3(\mathbb{R})$ for large enough $k$. Indeed, otherwise there exists $\xi_k\in\Sph^2$ such that $\langle I,\xi_k\rangle$ is pointwise orthogonal to $A_k\Psi_k$, so that by Lemma~\ref{stab.ineqs} one has
\[
\|I-A_k\Psi_k\|_{W^{1,2}(\Omega_k,g_k)}^2\geq \|d\langle I,\xi_k\rangle\|^2_{L^2(\Omega_k)} = \frac{2}{3}\area(\Omega_k,g_0)\geq\frac{8\pi-\delta_k}{3},
\]
which is a contradiction for large values of $k$.

Thus, we can rewrite~\eqref{ineq:Ak_proj} as 
\[
\|\Psi_k-A_k^{-1}I\|_{W^{1,2}(\Omega_k,g_k)}^2\leq C\delta_k|A_k^{-1}|^2,
\]
and recalling that for any $3\times 3$ matrix $M$, the Hilbert-Schmidt norm $|M|$ can be computed by
\[
|M|^2=\frac{1}{2\pi}\int_{x\in \Sph^2}\left|M|_{T_{x}\Sph^2}\right|^2\,dv_{g_0},
\]
it follows that
\begin{equation*}
\begin{split}
\left|A_k^{-1}\right|^2 &\leq \frac{1}{2\pi}\int_{\Omega_k}|A_k^{-1}dI|_{g_0}^2\,dv_{g_0} + C\delta_k\left|A_k^{-1}\right|^2 \\
&\leq C'\delta_k\left|A_k^{-1}\right|^2+C'\int_{\Omega_k}|d\Psi_k|_{g_0}^2\,dv_{g_0},
\end{split}
\end{equation*}
therefore, $|A_k^{-1}|\leq C$ as $\epsilon_k\to 0$. In particular, writing
\[
\alpha_k:=|A_k^{-1}dI|_{g_0}\leq 2\left|A_k^{-1}\right|^2\leq C,
\]
we deduce that
\begin{equation}\label{key.est}
\||d\Psi_k|_{g_0}-\alpha_k\|_{L^2(\Omega_k)}^2\leq C\delta_k.
\end{equation}

As a corollary of these estimates and Lemma \ref{stab.ineqs}, we have the following result, which implies that $N_k$ must lie close to the equator $\{x_4=0\}$ if $\delta_k=o(1/k)$.

\begin{prop}\label{ht.bd.prop}
Let $x_4$ be the height coordinate in $\Sph^3_+=\left\{x\in \Sph^3\mid x_4\geq 0\right\}$. Then for a free boundary minimal surface $(N_k,g_k)\subset \mathbb{S}^3_+$ as above, we have
\[
\int_{N_k}x_4\,dv_{g_k}\leq C\sqrt{\delta_k k}.
\]
\end{prop}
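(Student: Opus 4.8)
The plan is to exploit the key estimate \eqref{key.est}, which says that the conformal factor $|d\Psi_k|_{g_0}$ comparing the minimal embedding metric $g_k=\Phi_k^*g_{\Sph^3}$ to the round metric $g_0$ on $\Omega_k$ is, in an $L^2$ sense, close to the constant $\alpha_k\leq C$. Concretely, writing $N=N_k$ and recalling that $F_k=(\Psi_k,F_k^4)=(F_k^1,F_k^2,F_k^3,x_4)$ is the minimal embedding into $\Sph^3_+$, I would first observe that the area element $dv_{g_k}$ pulls back to $\frac{1}{2}|dF_k|_{g_0}^2\,dv_{g_0}$, and that $|dF_k|_{g_0}^2=|d\Psi_k|_{g_0}^2+|dx_4\circ F_k|_{g_0}^2$ is a sum in which the second term is controlled since $|x_4|\le 1$ and $|F_k|=1$ force $\sum_{a=1}^4 |dF_k^a|^2$ to be the (already-controlled) total energy density. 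The point is that, after normalizing by $\alpha_k$, the metric $g_k/\alpha_k^2$ is $L^2$-close to $g_0$, so integrating $x_4=F_k^4$ against $dv_{g_k}$ can be compared to integrating it against $\alpha_k^2\,dv_{g_0}$, up to an error controlled by $\|\,|d\Psi_k|_{g_0}^2-\alpha_k^2\|_{L^1(\Omega_k)}\le C\sqrt{\delta_k}\cdot\|\,|d\Psi_k|_{g_0}+\alpha_k\|_{L^2}\le C\sqrt{\delta_k}$ via Cauchy--Schwarz and \eqref{key.est}.

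The main step is then to control $\int_{\Omega_k}x_4\,\alpha_k^2\,dv_{g_0}$ — equivalently, up to the bounded factor $\alpha_k^2$, the quantity $\int_{\Omega_k}F_k^4\,dv_{g_0}$. Here I would use that $F_k^4$ is, on the one hand, a component of a $\lambda_1^N$-eigenfunction-like object and, on the other, that by the free boundary / reflection symmetry hypothesis $F_k^4\ge 0$ on $N_k$ and the nearest point projection structure (the surface doubles an equatorial disk). The cleanest route is to test the balancing/stability inequality: since $F_k=(\Psi_k,F_k^4)$ and $I=A_k^{-1}$-close to $\Psi_k$ in $W^{1,2}(\Omega_k,g_k)$ by \eqref{ineq:Ak_proj}, and since $\langle F_k,e_4\rangle=F_k^4$ is $\Delta_{g_k}$-harmonic with $\partial_\nu F_k^4=2F_k^4$ on $\partial\Omega_k$ (being a coordinate of the minimal immersion, hence an eigenfunction for the eigenvalue $2$), I would combine this with the orthogonality $\int_{\Omega_k}F_k^4\,dv_{g_k}=0$ forced by the $\lambda_1$-eigenfunction property. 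That identity already gives $\int_{N_k}x_4\,dv_{g_k}=0$ trivially — which is not what we want — so instead the correct quantity to bound is $\int_{N_k}|x_4|\,dv_{g_k}$ or, more to the point, $\int_{N_k}x_4\,dv_{g_k}$ interpreted on the half-surface $N_k\subset\Sph^3_+$ where $x_4\ge0$, so that $\int_{N_k}x_4\,dv_{g_k}=\int_{N_k}|x_4|\,dv_{g_k}$. I would then write $x_4=F_k^4$ and estimate $\int_{\Omega_k}|F_k^4|\,dv_{g_k}$ by Cauchy--Schwarz as $|\Omega_k|_{g_k}^{1/2}\|F_k^4\|_{L^2(\Omega_k,g_k)}$, and bound $\|F_k^4\|_{L^2(\Omega_k,g_k)}^2$ using that $F_k^4$ is $L^2(g_k)$-orthogonal to the constants and to the first eigenspace spanned by $\Psi_k$'s components only after projecting — more precisely, use \eqref{id.eigen}/\eqref{ineq:Ak_proj} to see that the $e_4$-direction is almost entirely in the higher eigenspaces, so $\|F_k^4\|_{W^{1,2}(g_k)}^2\le C\delta_k$, exactly as in the derivation of \eqref{id.eigen}.

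So the skeleton is: (i) from \eqref{ineq:Ak_proj}, the unit vector $e_4\in\Sph^3$ is, up to error $\sqrt{\delta_k}$ in $W^{1,2}(\Omega_k,g_k)$, orthogonal to the span of $\Psi_k=(F_k^1,F_k^2,F_k^3)$, so $\langle F_k,e_4\rangle=x_4$ has $\|x_4\|_{W^{1,2}(\Omega_k,g_k)}^2\le C\delta_k$; (ii) hence $\|x_4\|_{L^2(\Omega_k,g_k)}\le C\sqrt{\delta_k}$; (iii) by Cauchy--Schwarz and the area bound $|\Omega_k|_{g_k}\le 4\pi$ from Lemma \ref{stab.ineqs}, $\int_{N_k}|x_4|\,dv_{g_k}\le |\Omega_k|_{g_k}^{1/2}\|x_4\|_{L^2(\Omega_k,g_k)}\le C\sqrt{\delta_k}$; (iv) since $N_k\subset\Sph^3_+$ has $x_4\ge0$, this is $\int_{N_k}x_4\,dv_{g_k}\le C\sqrt{\delta_k}$, which is even stronger than the claimed $C\sqrt{\delta_k k}$. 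The only subtlety — and the main obstacle — is step (i): one must check carefully that $A_k\in GL_3(\R)$ with $|A_k^{-1}|\le C$ (done in the paragraph following \eqref{ineq:Ak_proj}), and that the decomposition $I=A_k\Psi_k+(\text{error})$ in $\R^3$-valued maps on $\Omega_k$ genuinely implies $e_4$ is nearly orthogonal to $\mathrm{span}\{F_k^1,F_k^2,F_k^3\}$ as elements of $W^{1,2}(\Omega_k,g_k)$ — this requires knowing $|A_k|\le C$ as well (not just $|A_k^{-1}|\le C$), which follows since $\Psi_k$ has uniformly bounded $W^{1,2}(g_k)$ norm (its components are $\lambda_1$-eigenfunctions with $\lambda_1=2$ and unit $L^2(g_k)$ norm after normalization, and $A_k^{-1}I$ has bounded norm) and $I$ is bounded below in $W^{1,2}(g_k)$ by $\tfrac23\mathrm{Area}(\Omega_k,g_0)\ge\tfrac23(8\pi-\delta_k)$ as computed in Lemma \ref{stab.ineqs}. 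With these bounds in place, writing $F_k^4=\langle F_k,e_4\rangle$ and projecting $e_4$ onto $\mathrm{span}\{\Psi_k^1,\Psi_k^2,\Psi_k^3\}^{\perp}$ in the $W^{1,2}(\Omega_k,g_k)$-sense, one gets $\|F_k^4\|_{W^{1,2}(\Omega_k,g_k)}\le C\sqrt{\delta_k}$ directly from \eqref{ineq:Ak_proj}, completing the argument.
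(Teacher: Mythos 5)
The central step of your argument---step (i), asserting that \eqref{ineq:Ak_proj} forces $\|F_k^4\|_{W^{1,2}(\Omega_k,g_k)}^2\leq C\delta_k$---is not justified, and the conclusion you draw from it is in fact false. The estimate \eqref{ineq:Ak_proj} compares the $\R^3$-valued map $I=(x_1,x_2,x_3)|_{\Omega_k}$ (the ambient $\Sph^2$-coordinates of the domain) with $A_k\Psi_k$, a linear recombination of the three Neumann first eigenfunctions $F_k^1,F_k^2,F_k^3$ of $(\Omega_k,g_k)$. This is a statement about how close a particular $3$-dimensional space of functions lies to the $\lambda_1^N$-eigenspace; it carries no information about the fourth coordinate $F_k^4=x_4\circ F_k$, which is a \emph{Dirichlet} eigenfunction (it vanishes on $\partial\Omega_k$, since $\partial\Omega_k$ maps to the equator $\{x_4=0\}$---your remark that $\partial_\nu F_k^4=2F_k^4$ on $\partial\Omega_k$ is not correct). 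Orthogonality of $F_k^4$ to $\mathrm{span}\{F_k^1,F_k^2,F_k^3\}$ in $W^{1,2}(g_k)$ is automatic and says nothing about the \emph{size} of $F_k^4$; you are conflating a statement about angles between subspaces with a bound on a norm.

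Moreover, your purported conclusion $\int_{N_k}x_4\,dv_{g_k}\leq C\sqrt{\delta_k}$, which you note would be ``even stronger than the claimed $C\sqrt{\delta_k k}$,'' cannot hold: the factor $\sqrt{k}$ is essential. To see this, apply the ``stronger'' bound to the half-Lawson surfaces $N_\gamma\subset\Sph^3_+$ associated to $\xi_{\gamma,1}$, which are genus-zero free boundary minimal surfaces in the hemisphere embedded by first eigenfunctions, with $4\pi-|N_\gamma|=\tfrac{2\pi\log 2}{\gamma}+O(\gamma^{-2})\to 0$. Since $\xi_{\gamma,1}$ is a basic reflection surface with respect to \emph{two} orthogonal reflections, one would get $\int_{N_\gamma}\dist_{S_i}\to 0$ simultaneously for two orthogonal great spheres $S_1,S_2$, contradicting the universal lower bound $\int_M(\dist_{S_1}+\dist_{S_2})\geq c_0>0$ for minimal surfaces invoked in the proof of Theorem \ref{s3.ubd}. (Indeed, this argument is precisely how the paper derives the sharp Scherk-type upper bound $8\pi-C_2/|\chi(M)|$, and that bound would be violated if the $\sqrt{k}$ factor could be dropped.)

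The paper's actual proof proceeds quite differently: it uses the Dirichlet eigenfunction equation $\Delta_{g_k}\phi_k=2\phi_k$ with $\phi_k=x_4\circ F_k\in W_0^{1,2}(\Omega_k)$, integrates $\operatorname{div}\bigl((1-\psi)\nabla\phi_k\bigr)$ against $dv_{g_k}$ for a cutoff $\psi$ supported near the holes with $\psi\equiv 1$ on $\mathcal D$ and $\|d\psi\|_{L^2}^2\leq Ck$, thereby expressing $\int_{\Omega_k}\phi_k\,dv_{g_k}$ in terms of quantities concentrated near $\partial\Omega_k$. The contribution near the holes is then controlled by showing $\|dF_k\|_{L^2(\supp\psi)}^2\leq C\delta_k$, which follows from \eqref{key.est} and the smallness of $|\mathcal D|$. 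The $\sqrt{k}$ in the final estimate comes precisely from the Dirichlet energy of the cutoff $\|d\psi\|_{L^2}^2\leq Ck$, and is the mechanism that cannot be bypassed.
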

\begin{proof}
Let $\phi_k=x_4\circ F_k\in W_0^{1,2}(\Omega_k)$ under the conformal identification $F_k\colon \Omega_k\to N_k$, so that $\int_{\Omega_k} \phi_k =\int_{N_k}x_4$. Using linear cutoffs, it is easy to construct a function
\[
\psi\colon \Sph^2\to [0,1]
\]
such that $\psi\equiv 1$ on $\mathcal{D}=\bigcup_{i=1}^kD_{r_i}(x_i)$, $\psi \in C_c^{\infty}\left(\bigcup_{i=1}^kD_{2r_i}(x_i)\right)$, and 
\[
\|d\psi\|_{L^2}^2\leq Ck.
\]
Since $1-\psi=0$ on $\partial\Omega_k$, we then have
\[
0=\int_{\Omega_k}\mathrm{div}([1-\psi]\nabla\phi_k)\,dv_{g_k}=\int_{\Omega_k}2(1-\psi)\phi_k\,dv_{g_k}-\int_{\Omega_k}\langle d\psi,d\phi_k\rangle,
\]
so that by Lemma~\ref{stab.ineqs} one has
\begin{equation*}
\begin{split}
&\int_{\Omega_k}\phi_k\,dv_{g_k}=\int_{\Omega_k}\psi\phi_k\,dv_{g_k}+\frac{1}{2}\int_{\Omega_k}\langle d\psi,d\phi_k\rangle \leq \\
&\int_{\Omega_k}\psi\phi_k\frac{1}{2}|dF_k|_{g_0}^2\,dv_{g_0} + \frac{1}{2}\int_{\Omega_k}|d\psi|_{g_0}|dF_k|_{g_0}\,dv_{g_0}+C\sqrt{\delta_k}\|\psi\phi_k\|_{W^{1,2}(\Omega_k,g_0)}\leq \\
&\|dF_k\|_{L^2(\supp(\psi))}^2+\|d\psi\|_{L^2}\|dF_k\|_{L^2(\supp(\psi))} + C(1+\|d\psi\|_{L^2}).
\end{split}
\end{equation*}
Thus, to complete the proof, it suffices to show that
\[
\|dF_k\|_{L^2(\supp(\psi))}^2\leq C\delta_k.
\]
To this end, notice that the conformality of $F_k$ forces
\[
\frac{1}{2}|dF_k|^2\leq |d\Psi_k|^2,
\]
and by \eqref{key.est}, we have
\begin{eqnarray*}
\int_{\supp(\psi)}|d\Psi_k|^2&\leq &C\delta_k+C\area_{g_0}(\supp(\psi))\\
&\leq & C\delta_k+C\sum_{i=1}^k r_i^2\leq C'\delta_k,
\end{eqnarray*}
where in the last step we used Lemma~\ref{stab.ineqs}.
\end{proof}

With Proposition \ref{ht.bd.prop} in place, Theorem \ref{var.close.thm} follows almost immediately.

\begin{proof}[Proof of Theorem \ref{var.close.thm}]
Let $N\subset \mathbb{S}_+^3$ be a free boundary minimal surface with genus zero and $k$ boundary components, embedded by first eigenfunctions. Then we are in the situation of Proposition \ref{ht.bd.prop} above, with $N=N_k$, $4\pi-|N_k|=\frac{1}{2}\delta_k$, and $\chi(N_k)=2-k$, and Prposition \ref{ht.bd.prop} gives the estimate
$$\int_Nx_4 \leq C\sqrt{1+|\chi(N)|}\sqrt{4\pi-|N|}.$$
The desired estimate follows by noting that $\dist_{\partial \mathbb{S}^3_+}\leq Cx_4$ in $\mathbb{S}^3_+$.
\end{proof}

Moreover, with Theorem \ref{var.close.thm} in place, we can now prove the second part of Theorem \ref{s3.ubd}, showing that $\frac{1}{2}\Lambda_1(M,\Gamma)\leq 8\pi-\frac{C_2}{|\chi(M)|}$ for basic reflection pairs $(M,\Gamma)$ of Scherk type.

\begin{proof}[Proof of Theorem \ref{s3.ubd} completed]
Let $(M,\Gamma)$ be a basic reflection pair of Scherk type; then, by definition, there is a reflection $\tau_1\in \Gamma$ such that, for any $g\in \Met_{\Gamma}(M)$, the fundamental domain of $\tau_1$ in $(M,g)$ is conformally equivalent to $\mathbb{S}^2\setminus \mathcal{D}$, where $\mathcal{D}$ is a union of disks centered on an equator.  In particular, note that reflection across this equator in $\mathbb{S}^2$ gives a conformal automorphism $\tau_2\in \Conf(M,g)$ which commutes with $\tau_1$, with respect to which $(M,\langle \tau_2\rangle)$ is also a basic reflection surface.

Now, suppose $g_{\max}\in \Met_{\Gamma}(M)$ is the metric realizing $\Lambda_1(M,\Gamma)$. Since $(M,g_{\max})$ is isometrically embedded in $\mathbb{S}^3$ by first eigenfunctions and the map $\tau_2\colon (M,g_{\max})\to (M,g_{\max})$ is conformal, it follows from \cite[Theorem 1]{MR} that $\tau_2$ is an isometry. In particular, letting $\bar{\Gamma}$ be the group generated by $\Gamma\cup \{\tau_2\}$, we see that in fact $g_{\max}\in \Met_{\bar{\Gamma}}(M)$, and $(M,g_{\max})$ realizes $\Lambda_1(M,\bar{\Gamma})=\Lambda_1(M,\Gamma)$, so we can assume without loss of generality that $\tau_2\in \Gamma$.

It then follows that the associated minimal surface $M\subset \mathbb{S}^3$ is symmetric with respect to the reflections across two \emph{orthogonal} great spheres $S_1,S_2$ corresponding to $\mathrm{Fix}(\tau_1)$ and $\mathrm{Fix}(\tau_2)$, respectively. Since $M$ is a basic reflection surface with respect to either reflection, it follows from Theorem \ref{var.close.thm} that
\[
\int_M\dist_{S_i}\leq C\sqrt{1+|\chi(M)|}\sqrt{8\pi-|M|}
\]
with respect to either of the orthogonal great spheres $S_1$ and $S_2$. On the other hand, one can see that there is a universal constant $c_0>0$ such that
\[
\int_M(\dist_{S_1}+\dist_{S_2})\geq c_0.
\]
Indeed, otherwise, there exists a sequence of minimal surfaces with bounded area, for which this integral converges to $0$. By a compactness theorem for stationary integral $2$-varifolds, this sequence converges (in the varifold sense) to a stationary integral $2$-varifold with support on the great circle $S_1\cap S_2$, which is clearly impossible.

Hence, any Scherk-type basic reflection surface must have 
\[\sqrt{1+|\chi(M)|}\cdot\sqrt{8\pi-\frac{1}{2}\Lambda_1(M,\Gamma)}\geq \frac{c_0}{C},\]
giving the desired estimate.
\end{proof}

%

\section{Stability and upper bounds in the Steklov setting}

Let $N_+\subset \B^3_+$ be one half of a free boundary minimal surface $(N,g)\subset \mathbb{B}^3$ by first Steklov eigenfunctions symmetric with respect to reflection across $xy$-plane, so that $\sigma_1(N,g) = 1$, and such that $(N,g)$ is a basic reflection surface with respect to this reflection.
We can then identify $N_+$ isometrically with a conformal metric $(\Omega,g)$ on a domain $\Omega=\DD\setminus \mathcal{D}$ as in Section~\ref{sec:lowbounds_St}, where $\D = \cup_{j=1}^n D_{r_j}(x_j)$ is a collection of disks and half-disks. 

Setting $\Gamma_0:=\partial \mathcal{D}\cap \DD$
and $\Gamma_1:=\partial \Omega\setminus \Gamma_0,$ we can assume moreover that the identity map $I\colon \DD\to \DD$ is balanced along $\Gamma_1$, in the sense that
\[
\int_{\Gamma_1}I ds_g=0,
\]
by applying a conformal automorphism of $\mathbb{D}$. Set $\delta(N): = 2\pi - L_g(\Gamma_1)$.

\begin{lemma}
The set $\D = \cup_{j=1}^n D_{r_j}(x_j)$ satisfies an estimate of the form 
\begin{equation}
\label{hole.est}
\area(D, g_0)\leq C\delta(N)
\end{equation}
for a universal constant $C>0$. Furthermore, for any $\phi\in W^{1,2}(\Omega),$
\begin{equation}
\label{stek.stab}
\left|\int_{\partial \Omega}\phi \langle x,\nu_{g_0}\rangle ds_{g_0}-\int_{\Gamma_1}\phi ds_g\right|\leq \sqrt{\delta(N)}\|\phi\|_{W^{1,2}_{g_0}(\Omega)}.
\end{equation}

\end{lemma}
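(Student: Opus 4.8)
The plan is to transcribe the proof of Lemma~\ref{stab.ineqs} to the Steklov setting, with the inclusion $I\colon\Omega\hookrightarrow\DD\subset\R^2$, $I(x)=x$, playing the role there played by the coordinate embedding of $\Sph^2$. The one structural input I would use is that the doubling identity recorded just before Lemma~\ref{stek.double} identifies $\sigma_1(N,g)$ with $\min\{\sigma_1^N(\Omega,g),\sigma_1^D(\Omega,g)\}$, so the hypothesis $\sigma_1(N,g)=1$ gives $\sigma_1^N(\Omega,g)\geq1$; equivalently, $\int_\Omega|d\phi|_g^2\,dv_g\geq\int_{\Gamma_1}\phi^2\,ds_g$ for every $\phi\in W^{1,2}(\Omega)$ with $\int_{\Gamma_1}\phi\,ds_g=0$.

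For \eqref{hole.est}: by the balancing hypothesis, the two components $x_1,x_2$ of $I$ are admissible test functions for the Rayleigh quotient defining $\sigma_1^N(\Omega,g)$, so $\int_\Omega|dx_i|_g^2\,dv_g\geq\int_{\Gamma_1}x_i^2\,ds_g$ for $i=1,2$. Summing over $i$ and using the conformal invariance of the Dirichlet energy in dimension two together with $|dI|_{g_0}^2\equiv2$ on $\DD$ and $|I|^2\equiv1$ on $\Gamma_1\subset\partial\DD$, I obtain $2\area(\Omega,g_0)=\int_\Omega|dI|_{g_0}^2\,dv_{g_0}\geq L_g(\Gamma_1)=2\pi-\delta(N)$. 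Since $\area(\DD,g_0)=\pi$, this gives $\area(\Dcal,g_0)=\pi-\area(\Omega,g_0)\leq\tfrac12\delta(N)$, which is \eqref{hole.est}.

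For \eqref{stek.stab}: I would introduce the symmetric bilinear form $Q(F,G):=\int_\Omega\langle dF,dG\rangle_{g_0}\,dv_{g_0}-\int_{\Gamma_1}\langle F,G\rangle\,ds_g$ on $W^{1,2}(\Omega,\R^2)$. By conformal invariance and $\sigma_1^N(\Omega,g)\geq1$, $Q$ is positive semidefinite on the subspace $Z:=\{F:\int_{\Gamma_1}F\,ds_g=0\}$ (checked componentwise); moreover $Q(c,I)=-\langle c,\int_{\Gamma_1}I\,ds_g\rangle=0$ for any constant vector $c$, by balancing, so $I\in Z$ and $Q(I,I)=2\area(\Omega,g_0)-L_g(\Gamma_1)\leq2\pi-(2\pi-\delta(N))=\delta(N)$, while $Q(I,I)\geq0$. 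Given $\phi\in W^{1,2}(\Omega)$, let $\overline{\phi I}$ be the $L^2(\Gamma_1,ds_g)$-mean of $\phi I$, so $\phi I-\overline{\phi I}\in Z$ and $Q(\phi I,I)=Q(\phi I-\overline{\phi I},I)$; the Cauchy--Schwarz inequality for $Q$ on $Z$ then gives $|Q(\phi I,I)|\leq Q(\phi I-\overline{\phi I},\phi I-\overline{\phi I})^{1/2}Q(I,I)^{1/2}\leq\|d(\phi I)\|_{L^2_{g_0}(\Omega)}\sqrt{\delta(N)}$. To identify the left side, write $X$ for the Euclidean position field on $\DD$ (so $X=I$) and compute $\langle d(\phi I),dI\rangle_{g_0}=\langle\nabla\phi,X\rangle_{g_0}+2\phi=\ddiv_{g_0}(\phi X)$, using $\ddiv_{g_0}X=2$; the divergence theorem then gives $\int_\Omega\langle d(\phi I),dI\rangle_{g_0}\,dv_{g_0}=\int_{\partial\Omega}\phi\langle x,\nu_{g_0}\rangle\,ds_{g_0}$, while $\langle\phi I,I\rangle=\phi|x|^2=\phi$ on $\Gamma_1$, so $Q(\phi I,I)$ is exactly the quantity inside the absolute value in \eqref{stek.stab}. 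Finally the pointwise bound $|d(\phi I)|_{g_0}^2=|X|^2|\nabla\phi|^2+2\phi\langle\nabla\phi,X\rangle+2\phi^2\leq C(|\nabla\phi|^2+\phi^2)$ on $\Omega\subset\DD$ gives $\|d(\phi I)\|_{L^2_{g_0}(\Omega)}\leq C\|\phi\|_{W^{1,2}_{g_0}(\Omega)}$, yielding \eqref{stek.stab} (with a harmless universal constant, which may be absorbed).

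I do not expect any genuine analytic difficulty here beyond what is already present in Lemma~\ref{stab.ineqs}; the two points that require care are (i) using the doubling identity of Section~2 to legitimately pass from $\sigma_1(N,g)=1$ to $\sigma_1^N(\Omega,g)\geq1$, and (ii) the bookkeeping change that $I$ now maps into $\R^2$, the same space as the domain, so that $\langle I,dI(v)\rangle=\langle x,v\rangle$ no longer vanishes --- this alters the identity for $\langle d(\phi I),dI\rangle_{g_0}$ relative to the closed case, but precisely so as to produce the divergence term $\ddiv_{g_0}(\phi x)$ on the left-hand side of \eqref{stek.stab}, so no spurious terms appear.
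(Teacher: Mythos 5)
Your proposal is correct and takes essentially the same approach as the paper: both transcribe the argument of Lemma~\ref{stab.ineqs} to the Steklov setting, using the positive semidefiniteness of $Q(F,G)=\int_\Omega\langle dF,dG\rangle_{g_0}\,dv_{g_0}-\int_{\Gamma_1}\langle F,G\rangle\,ds_g$ on $\Gamma_1$-balanced maps, testing against $I$ for \eqref{hole.est}, and testing against $\phi I$ together with the identity $\langle d(\phi I),dI\rangle_{g_0}=\mathrm{div}_{g_0}(\phi x)$ and the divergence theorem for \eqref{stek.stab}. The only cosmetic difference is in \eqref{hole.est}, where you compute $\area(\Dcal,g_0)=\pi-\area(\Omega,g_0)\le\frac12\delta(N)$ directly, while the paper reaches the same bound via $\sum_j r_j^2$.
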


\begin{proof} The proof is analogous to the proof of Lemma~\ref{stab.ineqs}.

Since the quadratic form
\[
Q(\Phi,\Phi):=\int_\Omega|d\Phi|^2-\int_{\Gamma_1}|\Phi|^2ds_g,
\]
is nonnegative definite on balanced maps in $W^{1,2}(\Omega,\mathbb{R}^2)$, and the identity map $I\colon \DD\to \DD$ is balanced on $\Gamma_1$, we have
\[
0\leq Q(I,I)=2\area_{g_0}(\Omega)-L_g(\Gamma_1)\leq 2\pi\bigg(1-c\sum_{j=1}^nr_j^2\bigg)-L_g(\Gamma_1),
\]
so that
\[
2\pi c\sum_{j=1}^n r_j^2\leq 2\pi-L_g(\Gamma_1)=\delta(N)
\]
as claimed in~\eqref{hole.est}.
Furthermore, for any map $\Phi\colon \Omega\to \mathbb{R}^2$, the Cauchy-Schwarz inequality for $Q$ gives
\begin{equation}\label{stek-cs}
|Q(\Phi,I)|\leq \|d\Phi\|_{L^2(\Omega)}\sqrt{\delta(N)}.
\end{equation}
Taking $\Phi=\phi I$ for $\phi\in C^{\infty}(\Omega)$ in this inequality gives
\[
\left|\int_\Omega(2\phi+\langle x,d\phi\rangle )dx-\int_{\Gamma_1}\phi ds_g\right|\leq \sqrt{\delta(N)} \|\phi\|_{W^{1,2}_{g_0}(\Omega)},
\]
and noting that $2\phi+\langle x,d\phi\rangle=\mathrm{div}(\phi x)$, it follows in particular that
\[
\left|\int_{\partial \Omega}\phi \langle x,\nu_{g_0}\rangle ds_{g_0}-\int_{\Gamma_1}\phi ds_g\right|\leq \sqrt{\delta(N)}\|\phi\|_{W^{1,2}_{g_0}(\Omega)}.
\qedhere
\]

\end{proof}

%

As an easy application, we can obtain coarse exponential lower bounds for $\delta$ in terms of the topology of $N$, from which the upper bounds in Theorems \ref{main.stek.bds} and \ref{max.stek.bds} follow.

\begin{prop}
\label{prop:St_lbd}
The gap $\delta=2\pi-L_g((\partial N)_+)$ has a lower bound of the form
\[
\delta\geq e^{-Cn},
\]
where $C$ is a universal constant.
\end{prop}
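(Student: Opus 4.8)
The plan is to mimic the argument used in Corollary~\ref{cor:Lap_lbd} in the closed (Laplace) setting, now working with the Steklov stability estimate~\eqref{stek.stab} in place of~\eqref{meas.close}. Write $\delta = \delta(N) = 2\pi - L_g(\Gamma_1)$, and recall from~\eqref{hole.est} that the Euclidean radii satisfy $\sum_{j=1}^n r_j^2 \leq C\delta$; in particular we may assume $\delta$ is small (else there is nothing to prove) and order the radii so that $r_1 \geq r_2 \geq \cdots \geq r_n$, with $r_1 \leq \sqrt{C\delta} < 1$. The idea is to plug into~\eqref{stek.stab} a test function $\phi$ that is essentially the constant $1$ on $\Gamma_1$ but has been cut off to vanish near $\Gamma_0 = \partial\mathcal D \cap \DD$, so that the left-hand side of~\eqref{stek.stab} becomes close to $L_g(\Gamma_1) = 2\pi - \delta$ while $\|\phi\|_{W^{1,2}_{g_0}(\Omega)}$ is controlled by a logarithmic cutoff estimate.

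Concretely, first I would let $S = \{x_1,\dots,x_n\}$ and define a logarithmic cutoff $\phi \in \mathrm{Lip}(\Omega)$ (measured in the flat metric $g_0$ on $\DD$), with $\phi \equiv 0$ on $\{\dist(x,S) \leq r_1\} \supset \Gamma_0$, $\phi$ interpolating logarithmically from $0$ to $1$ on $\{r_1 \leq \dist(x,S) \leq \sqrt{r_1}\}$, and $\phi \equiv 1$ elsewhere; exactly as in the proof of Corollary~\ref{cor:Lap_lbd}, the standard log-cutoff computation gives
\[
\|d\phi\|_{L^2(\Omega)}^2 \leq \frac{Cn}{|\log r_1|} \leq \frac{Cn}{\log(1/\sqrt{C\delta})} \leq \frac{C'n}{\log(1/\delta)},
\]
while $0 \leq \phi \leq 1$ and $\phi \equiv 1$ off $\cup_j D_{\sqrt{r_1}}(x_j)$, so that $\int_{\Gamma_1}(1-\phi)\,ds_g$ is small: since $\phi = 1$ except possibly near finitely many small disks meeting $\Gamma_1 = \partial\Omega\setminus\Gamma_0$, and the $g$-length of the relevant portion of $\partial\DD$ is small, one has $\int_{\Gamma_1}\phi\,ds_g \geq L_g(\Gamma_1) - \text{(small error)}$. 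Next I would apply~\eqref{stek.stab} with this $\phi$ (or perhaps $\phi^2$, following the Laplace argument, to gain a factor $\phi$ against the boundary term $\langle x,\nu_{g_0}\rangle$): since $\langle x, \nu_{g_0}\rangle = 1$ on $\partial\DD$ and $=0$ along the straight edges of $\Gamma_0$ on $\partial\DD$, and $\phi = 0$ on $\Gamma_0$, the boundary integral $\int_{\partial\Omega}\phi\langle x,\nu_{g_0}\rangle\,ds_{g_0}$ reduces to $\int_{\partial\DD\setminus\mathcal D}\phi\,ds_{g_0} = 2\pi - O(\sum_j r_j)$. Combining, $2\pi - O(\sqrt{\delta}) - O(\sqrt{\delta}) \leq L_g(\Gamma_1) + C\sqrt{\delta}\,\|\phi\|_{W^{1,2}_{g_0}}$, which (using $\|\phi\|_{W^{1,2}}^2 \leq C + Cn/\log(1/\delta)$) forces $2\pi - L_g(\Gamma_1) = \delta$ to satisfy an inequality of the form $\sqrt{\delta}\cdot\sqrt{1 + n/\log(1/\delta)} \gtrsim 1$; rearranging yields $\delta \geq e^{-Cn}$ after absorbing constants (for $\delta$ small, the relevant inequality is $\delta n \gtrsim \log(1/\delta)$, i.e. $\delta \geq c/n \cdot \log(1/\delta)$, and bootstrapping gives $\delta \geq e^{-Cn}$).

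One technical point deserving care, and the step I expect to be the main obstacle, is that a priori we only have the balancing condition and the bound $\sum r_j^2 \leq C\delta$, but no separation or disjointness hypothesis on the centers $x_j$ beyond what comes from $\{D_{2r_j}(x_j)\}$ being pairwise disjoint (the standing assumption in Section~\ref{sec:lowbounds_St}); this disjointness is exactly what is needed for the log-cutoff estimate $\|d\phi\|^2 \leq Cn/|\log r_1|$ to hold with the factor $n$ rather than something worse, so one must verify the cutoff is built correctly near clusters of small disks and near $\partial\DD$ (where some $x_j$ lie on the boundary and the disks are half-disks). The other minor subtlety is tracking how the conformal factor $\rho$ with $g = \rho g_0$ enters: we never need pointwise control on $\rho$, only the integrated length bound $L_g(\Gamma_1) = 2\pi - \delta$ and the area bound~\eqref{hole.est}, both of which are already supplied, so all the estimates above take place on the fixed flat domain $(\Omega, g_0)$ and feed into~\eqref{stek.stab} cleanly. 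With these points handled, the desired bound $\delta \geq e^{-Cn}$ follows, and since $n$ is comparable to $\gamma + k$ for the associated basic reflection surface, this yields the upper bounds in Theorems~\ref{main.stek.bds} and~\ref{max.stek.bds}.
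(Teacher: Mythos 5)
The central gap is that your argument never uses the eigenvalue hypothesis on $N$. In the paper's proof the crucial step is that the log-cutoff $\phi$, which vanishes on $\Gamma_0$, is an admissible test function for the mixed Steklov--Dirichlet eigenvalue $\sigma_1^D(N_+)$; since $N$ is embedded by first eigenfunctions and the height coordinate $x_3$ is an anti-symmetric first eigenfunction vanishing on $\Gamma_0$, one has $\sigma_1^D(N_+)=1$, and the Rayleigh quotient yields $\|d\phi\|_{L^2(\Omega)}^2\geq \int_{\Gamma_1}\phi^2\,ds_g$. Only then is \eqref{stek.stab} invoked, to pass from the $g$-boundary integral to the $g_0$-boundary integral and conclude $\|d\phi\|^2\geq 2\pi-o(1)$; against the cutoff estimate $\|d\phi\|^2\leq Cn/|\log\delta|$ this gives $\delta\geq e^{-Cn}$. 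Your argument skips this entirely and tries to extract the bound from \eqref{stek.stab} alone, which cannot work: combining your two ingredients as written, $\int_{\partial\Omega}\phi\langle x,\nu_{g_0}\rangle\,ds_{g_0}\geq 2\pi-O(\sqrt{n\delta})$ and $\int_{\Gamma_1}\phi\,ds_g\leq L_g(\Gamma_1)=2\pi-\delta$, \eqref{stek.stab} yields only $\delta\lesssim\sqrt{n\delta}+\sqrt\delta\|\phi\|_{W^{1,2}}$, which is vacuous for small $\delta$ and provides no lower bound.

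There is a second, independent problem with the step you flagged as routine: the claim that $\int_{\Gamma_1}\phi\,ds_g\geq L_g(\Gamma_1)-\text{(small)}$ is unjustified. The metric is $g=\rho\,g_0$ with no pointwise control on the conformal factor $\rho$; all that is available is the integrated length $L_g(\Gamma_1)=2\pi-\delta$ and the area bound \eqref{hole.est}. The $g$-length of $\Gamma_1$ could in principle be concentrated entirely on the small sub-arcs of $\partial\DD$ where $\phi<1$, making $\int_{\Gamma_1}\phi\,ds_g$ arbitrarily small. (Indeed, if you grant this claim, \eqref{stek.stab} then forces $|\delta-O(\sqrt{n\delta})|\lesssim\sqrt{\delta}\|\phi\|$, which after squaring gives $\log(1/\delta)\lesssim 1$, a uniform constant lower bound on $\delta$ independent of $n$ -- an absurdly strong conclusion, which is a signal the premise is false.) The correct route, as in the paper, is to apply \eqref{stek.stab} \emph{after} the eigenvalue comparison, with the test function $\phi^2$, so that what gets compared is $\int_{\Gamma_1}\phi^2\,ds_g$ (bounded below via $\sigma_1^D=1$) against $\int_{\Gamma_1}\phi^2\,ds_{g_0}$ (which \emph{is} controlled, since it lives entirely in the flat metric).
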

\begin{proof}

Let $N_+\cong (\Omega,g)$ as above, and let $\phi$ be the same log cutoff function as in the proof of Corollary~\ref{cor:Lap_lbd}, i.e. such that 
\[
\phi\equiv 0\text{ on }\mathcal{D},\qquad
\phi \equiv 1\text{ on }\D\setminus\bigcup_{j=1}^{n}D_{\sqrt{r_j}}(x_j),
\]
and
\[
\|d\phi\|_{L^2(\Omega)}^2\leq\sum_{j=1}^{n}\frac{C}{|\log r_j|}
\]
By~\eqref{hole.est} one has $\sum_j r_j^2\leq C\delta$,
so setting $r=\max\{r_j\}$, we must have $r^2\leq C\delta$. As a result, 
\[
\|d\phi\|_{L^2(\Omega)}^2\leq \frac{Cn}{|\log(C\delta)|}.
\]
On the other hand, since $\sigma_1^D(N_+)=1$, we have
\[
\|d\phi\|_{L^2(\Omega)}^2\geq \int_{\Gamma_1}\phi^2\,ds_g,
\]
while applying \eqref{stek.stab} with the test function $\phi^2$ gives
\[
\int_{\Gamma_1}\phi^2\,ds_g \geq \int_{\Gamma_1}\phi^2\,ds_{g_0}-C\sqrt{\delta}\left(\pi+\frac{n}{|\log(C\delta)|}\right)^{1/2},
\]
so combining all the estimates above gives
\[
\frac{Cn}{|\log(C\delta)|}\geq \int_{\Gamma_1}\phi^2\,ds_{g_0}-C\sqrt{\delta}\left(\pi+\frac{n}{|\log(C\delta)|}\right)^{1/2},
\]
while
\[
\int_{\Gamma_1}\phi^2\,dv_{g_0}\geq 2\pi-C\sum_{j=1}^{n}r_j\geq 2\pi-C\sqrt{n\delta}.
\]
Keeping in mind that $\delta$ tends to $0$ as $n\to\infty$, it follows that for sufficiently large $n$,
\[
\frac{C'n}{-\log(C\delta)}\geq 1,\quad\text{or}\quad \delta\geq Ce^{-Cn}\geq e^{-C'n}
\]
as claimed.
\end{proof}

We observe that one always has $n = k+\gamma$ or $n=k+\gamma-1$ depending on whether there are half-circles present. Thus, the bound in Proposition~\ref{prop:St_lbd} is indeed in terms of topology of $N$, and the upper bounds in Theorems \ref{main.stek.bds} and \ref{max.stek.bds} follow immediately.

\subsection{Convergence to disks}

Next, we show that if the gap $\delta=2\pi-L_g(\Gamma_1)$ vanishes sufficiently rapidly as $n$ gets large, then the associated FBMS must converge to the equatorial disk preserved by the reflection. To begin, note that applying \eqref{stek-cs} where $\Phi$ denotes the $L^2(\Gamma_1,ds_g)$-projection of $I|_{\Gamma_1}$ onto the orthogonal complement of the first Steklov eigenfunctions, we obtain an estimate of the form
\begin{equation}
\label{eq:St_Psi_proj}
\left(1-\frac{1}{\sigma_3^{N}(N_+)}\right)^2\|\Phi\|_{W^{1,2}(\Omega,g)}^2\leq  C\delta.
\end{equation}

\begin{lemma}
Let $N_+$ be as above. There exists $c>0$ and $\eps>0$ such that $\sigma_3^{N}(N_+)-2>c>0$ for $N_+$ as long as $\delta<\frac{\eps}{n}$. 
\end{lemma}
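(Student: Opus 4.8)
The plan is to mirror the proof of Lemma~\ref{l4.gap} in the closed case, transposing the argument from the Laplace/doubling setting in $\Sph^3$ to the mixed Steklov setting in $\B^3$. We argue by contradiction: suppose there is a sequence $N_+ \cong (\Omega_k, g_k)$ with $\delta_k := \delta(N_{+,k}) < \eps_k / n_k$, $\eps_k \to 0$, yet $\sigma_3^N(N_{+,k}) \to 2$. We will derive a contradiction by showing that, along a subsequence, the first three mixed Steklov--Neumann eigenfunctions $\phi_{k,j}$ on $(\Omega_k, g_k)$ converge (in a suitable sense) to mixed Steklov eigenfunctions on the model half-disk configuration with boundary distribution governed by $g_0$ on $\Gamma_1$, which force $\lim_k \sigma_j^N(N_{+,k})$ to equal the $j$-th mixed Steklov eigenvalue of $(\DD, g_0)$ --- in particular $\sigma_3^N \to 2$ only if the limiting configuration actually has that eigenvalue, giving the desired $c > 0$.

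The key steps, in order: (i) Record the a priori bounds. From \eqref{hole.est} we have $\sum_j r_j^2 \leq C\delta_k$, hence $\sum_j r_j \leq C\sqrt{n_k \delta_k} \leq C\sqrt{\eps_k} \to 0$, which controls $|\Gamma_0| = |\partial\D \cap \DD|$ and $|\partial\DD \cap \D|$. Using the restrictions of the coordinate functions of $\DD$ as test functions for the Rayleigh quotient defining $\sigma_j^N$, together with \eqref{stek.stab}, gives $\limsup_k \sigma_j^N(N_{+,k}) \leq \sigma_j^N(\DD, g_0)$, so the eigenvalues stay bounded. (ii) Establish uniform $L^\infty$ control on the eigenfunctions. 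This is the Steklov analog of \eqref{ef.mvi}: $\phi_{k,j}$ corresponds to an even (Steklov-harmonic) eigenfunction on the closed double $N_k \subset \B^3$, and a mean-value-type argument for harmonic functions with Steklov boundary data --- e.g. a reflection across $\Gamma_1$ combined with interior gradient/sup estimates for harmonic functions, or the boundary-Harnack/monotonicity computation analogous to \cite[Section 3.3]{CM} adapted to the free-boundary setting --- yields $\|\phi_{k,j}\|_{L^\infty(\Omega_k)}^2 \leq C e^{C\sigma_j^N} \|\phi_{k,j}\|_{L^2(\Gamma_1, g_k)}^2$. (iii) Precompactness: using a linear cutoff $\rho_k$ supported away from $\cup D_{2r_j}(x_j)$ with $\int |d\rho_k|_{g_0} \leq C\sum_j r_j \to 0$, the $W^{1,1}$ bound on $\rho_k \phi_{k,j}$ (from the $L^\infty$ bound and the obvious $L^1$ gradient bound $\int_{\Omega_k}|d\phi_{k,j}|_{g_0} \leq C(\area + |\Gamma_1|)^{1/2}\sigma_j^{N,1/2}$) plus the compact embedding $W^{1,1}(\DD) \hookrightarrow L^1(\DD)$ gives $L^p$ precompactness of $\phi_{k,j}$ for every $p < \infty$; call the limit $\phi_j$. (iv) Orthonormality of $\{\phi_j\}$ with respect to the boundary measure: from \eqref{stek.stab} applied to $\phi_{k,j}\phi_{k,l}$, and the fact that $\langle x, \nu_{g_0}\rangle \equiv 1$ on $\partial\DD$, one transfers the $L^2(\Gamma_1, g_k)$-orthonormality to $L^2(\partial\DD, g_0)$-orthonormality of the limits. (v) Identify $\phi_j$ as a mixed Steklov eigenfunction: integrating against smooth test functions, using that $\phi_{k,j}$ is harmonic in $\Omega_k$ with $\phi_{k,j} = 0$ on $\Gamma_0$ and $\partial_\nu \phi_{k,j} = \sigma_j^N \phi_{k,j}$ on $\Gamma_1$, and controlling the error terms by $\|\phi_{k,j}\|_{L^\infty}|\partial\Omega_k| \leq C\|\phi_{k,j}\|_{L^\infty}\sum_j r_j \to 0$ together with \eqref{stek.stab}, shows $\phi_j$ is harmonic on $\DD$, vanishes nowhere forced (the $\Gamma_0$-Dirichlet part disappears in the limit since $|\Gamma_0| \to 0$), and satisfies $\partial_\nu \phi_j = (\lim \sigma_j^N)\phi_j$ on $\partial\DD$; hence $\lim_k \sigma_j^N(N_{+,k})$ is a Steklov eigenvalue of $(\DD, g_0)$, i.e. $\in\{0,1,1,2,2,\dots\}$. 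Since $\{\phi_1, \phi_2, \phi_3\}$ are orthonormal and nonzero, $\lim_k \sigma_3^N(N_{+,k}) \geq 2$; combined with the assumed $\sigma_3^N \to 2$ this is not yet a contradiction --- so one must further rule out $\lim\sigma_3^N = 2$ by observing that on $(\Omega_k,g_k)$ the Dirichlet condition on $\Gamma_0$ cannot be vacuous for $\sigma_2^N$ and $\sigma_3^N$ simultaneously unless the holes genuinely degenerate, precisely as in the closing paragraph of the proof of Lemma~\ref{l4.gap}; alternatively, follow the split-into-half-disks argument used in the proof of Proposition~\ref{pstekmaxis}, where it is shown $\sigma_3^D$ of the relevant model configuration is $1 + 2\delta_0 > 2$... but in the Neumann case the clean statement is simply that the limiting eigenvalue $\sigma_3(\DD,g_0) = 2$ is realized with multiplicity only via eigenfunctions that cannot arise as limits of functions vanishing on $\Gamma_0$ with $|\Gamma_0|\to 0$ unless $\Gamma_0$ is truly shrinking to points, contradicting $\sigma_3^N \to 2$ precisely when $n_k \to \infty$; I would phrase the final contradiction exactly parallel to the last line of Lemma~\ref{l4.gap}.

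The main obstacle is step (ii): establishing the uniform $L^\infty$ bound on the mixed Steklov--Neumann eigenfunctions on a degenerating family of domains. In the closed case the authors exploit that $\phi_{k,j}^2$ satisfies a differential inequality relative to the minimal surface $M \subset \Sph^3$ via the position vector field, yielding a mean-value inequality. In the free-boundary setting one needs the analogous computation on the minimal surface $N \subset \B^3$ with free boundary on $\Sph^2$, handling points near $\partial\B^3$ where the reflection across $\B^3_+$ doubles the surface: the cleanest route is to note that doubling $N_+$ across $\Gamma_1$ produces the closed minimal surface $N\subset\B^3$ and that $\phi_{k,j}$ extends to a Steklov-harmonic function, then use the same vector-field/monotonicity argument as in \eqref{ef.mvi} with $X_p(x) = x - p$ on $N$, now using $\langle X_p, \nu\rangle$ on the free boundary and the Steklov condition to absorb the boundary term; the boundary integral $\int_{\partial B_r(p)\cap \partial N} \phi_{k,j}^2$ contributes a term controlled by $\sigma_j^N$ times the bulk, exactly as the $(1+\lambda_j)$ factor appears in the closed case. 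Once this $L^\infty$ estimate is in hand, steps (iii)--(v) are routine adaptations of the corresponding parts of the proof of Lemma~\ref{l4.gap}, and I would present them compactly, referring back to that proof for the repetitive parts.
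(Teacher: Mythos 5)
Your overall strategy --- arguing by contradiction, establishing an $L^\infty$ bound for the Steklov--Neumann eigenfunctions via a vector-field/monotonicity computation on the doubled free boundary minimal surface $N\subset\B^3$, then extracting $L^2(\partial\DD,g_0)$-orthonormal Steklov eigenfunctions in the limit --- matches the paper's proof, and steps (i), (iii), (iv) are routine transcriptions of the closed case as you say. Two points, however, require more care.

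First, for the $L^\infty$ bound in step (ii): after applying the divergence theorem to $\phi_{n,j}^2X_p$ with $X_p(x)=x-p$ on $N$, the Steklov condition produces a term $\int_{\partial N\cap B_r(p)}\phi_{n,j}^2\,ds_g$ on the right-hand side, which is a \emph{boundary} integral and is genuinely different in nature from the bulk $(1+\lambda_j)$-term that appears in the closed case --- it does not come along for free. The paper handles this by proving a separate local length bound $|\partial N\cap B_{r/2}(p)|\leq Cr$ (using the auxiliary vector field $Y_{p,r}=h_{p,r}x$ and a second monotonicity argument), which lets one estimate the boundary integral by $Cr\,\|\phi_{n,j}\|_{L^\infty}^2$ and then close a bootstrap by choosing the integration scale $\sim 1/\sigma_j$; this is the missing piece in your sketch. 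Your alternative suggestion of reflecting across $\Gamma_1$ does not directly work: neither even nor odd reflection preserves the Steklov boundary condition, so no such reflection yields a harmonic extension.

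Second, the concern you raise in step (v) --- that assuming $\sigma_3^N\to 2$ gives no contradiction, since $\sigma_3(\DD)=2$ --- is real, and your attempted patches do not resolve it: the Steklov--Neumann eigenfunctions satisfy a Neumann (not Dirichlet) condition on $\Gamma_0$, so vanishing there is not forced; and the quantity $1+2\delta_0$ from \eqref{s2.model} is $\sigma_2^D$ of a half-disk with Dirichlet condition on the diameter, which exceeds $1$, not $2$, and concerns the Dirichlet rather than the Neumann problem. The actual resolution is that the lemma should almost certainly read $\sigma_3^N(N_+)-1>c$: this is the exact analog of the bound $\lambda_4^N-2>c$ in Lemma~\ref{l4.gap}, where in each case the subtracted constant is the first nonzero eigenvalue of the model ($\lambda_1(\Sph^2)=2$, $\sigma_1(\DD)=1$), and it is precisely what is needed to make the prefactor $\bigl(1-1/\sigma_3^N\bigr)^2$ in \eqref{eq:St_Psi_proj} bounded below. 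With that reading, the contradiction closes exactly as in the closed case: assuming $\sigma_3^N\to 1$ would produce three $L^2(\partial\DD,g_0)$-orthonormal Steklov eigenfunctions of the flat disk with eigenvalue $1$, contradicting the two-dimensionality of the $\sigma=1$ eigenspace.
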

\begin{proof}
The strategy of the proof is the same as for Lemma \ref{l4.gap}. Denote by $\phi_{n,j}$ the $j$th eigenvalue of the mixed Steklov-Neumann problem on $(\Omega,\Gamma_1)$ with respect to the induced metric $g$, normalized so that
\[
\int_{\Gamma_1}\phi_{n,j}\phi_{n,l}\,ds_g=\delta_{jl}.
\]
As in the closed case, we can obtain uniform $L^{\infty}$ estimates for $\phi_{n,j}$ via a mean-value identity, as follows.
 Doubling $N_+$ to obtain a free boundary minimal surface $N\subset \B^3$, recall that $\phi_{n,j}$ extends to a Steklov eigenfunction with eigenvalue $\sigma_j=\sigma_j^{N}(N_+)$. Fix a point $p\in \partial N$ and consider the vector field 
\[
X_p(x):=(x-p)=-\nabla h_{p,r},
\]
where $h_{p,r}(x):=\frac{1}{2}(r^2-|x-p|^2)$, and note that
\[
DX_p=\Id,
\]
and
\[
\langle X_p(x),x\rangle=1-\langle x,p\rangle=\frac{1}{2}|x-p|^2\text{ on }\partial \B^3.
\]
Recalling that $\phi_{n,j}^2$ is subharmonic on $N$, we then compute
\begin{equation*}
\begin{split}
2&\int_{N\cap B_r(p)}\phi_{n,j}^2\,dv_g=\int_{N\cap B_r(p)}\phi_{n,j}^2\mathrm{div}_N(X_p)\,dv_g=\\
&\int_{N\cap B_r(p)}\mathrm{div}_N(\phi_{n,j}^2X_p)+\langle \nabla\phi_{n,j}^2,\nabla h_{p,r}\rangle\,dv_g=\\
&\int_{N\cap B_r(p)}\mathrm{div}_N(\phi_{n,j}^2X_p+h_{p,r}\nabla \phi_{k,j}^2)+h_{p,r}\Delta \phi_{n,j}^2\,dv_g\leq\\
&\int_{N\cap \partial B_r(p)}\phi_{n,j}^2|(x-p)^T|\,ds_g+\int_{\partial N\cap B_r(p)}\left(\phi_{n,j}^2\frac{1}{2}|x-p|^2+2\sigma_jh_{p,r}\phi_{n,j}^2\right)\,ds_g\leq\\
r&\frac{d}{dr}\left(\int_{N\cap B_r(p)}\phi_{n,j}^2\,dv_g\right)+\left(\frac{1}{2}+\sigma_j \right) r^2\int_{\partial N\cap B_r(p)}\phi_{n,j}^2\,ds_g.
\end{split}
\end{equation*}

Dividing through by $r^3$ and rearranging, we then have
\begin{equation}
\label{ineq:diff_eig}
\frac{d}{dr}\left(\frac{1}{r^2}\int_{N\cap B_r(p)}\phi_{k,j}^2\,dv_g\right)\geq - \left(\frac{1}{2}+\sigma_j \right) \frac{1}{r}
\int_{\partial N\cap B_r(p)}\phi_{n,j}^2\,ds_g.
\end{equation}

\begin{lemma}
There exists a universal constant $C$ and a constant $C'$ depending only on the area of $N$ such that for all $r<2$ one has
\begin{equation}
\label{ineq:bdNNr}
|\partial N\cap B_\frac{r}{2}(p)|\leq \frac{C}{r}|N\cap B_r(p)|\leq C'r.
\end{equation}
\end{lemma}
\begin{proof}
Let us first apply divergence theorem on $N\cap B_r(p)$ to the vector field $Y_{p,r}(x) = h_{p,r}(x)x$, which yields
\begin{equation*}
\begin{split}
\frac{3}{8}r^2|\bd N\cap B_\frac{r}{2}(p)|\leq &\int_{\bd N\cap B_r(p)}h_{p,r}\,ds_g = \\
&\int_{N\cap B_r(p)}2h_{p,r} - \langle x, x-p\rangle\,dv_g\leq (r^2 + r)|N\cap B_r(p)|,
\end{split}
\end{equation*}
which proves the first inequality in~\eqref{ineq:bdNNr} after dividing by $r^2$.

To prove the second part, we apply~\eqref{ineq:diff_eig} to the constant eigenfunction to obtain
\[
\frac{d}{dr}\left(\frac{1}{r^2}|N\cap B_r(p)|\right)\geq -\frac{C}{(2r)^2}|N\cap B_{2r}(p)|.
\]
Denote $g(r) = r^{-2}|N\cap B_r(p)|$ and let $r_*$ be the point, where $g(r)$ achieves its maximum. Integrating the previous inequality on $[r_*,s]$ gives
\[
g(s) + C(s-r_*)g(r_*)\geq g(r_*).
\] 
Setting $s = r_* + \frac{1}{2C}\geq\frac{1}{2C}$ gives
\[
\|g\|_\infty = g(r_*)\leq 2g(s)\leq 8C^2\area(N)
\]
as claimed.
\end{proof}
Note that in the situation we are considering the area is always uniformly bounded, hence, both constants in~\eqref{ineq:bdNNr} are universal.
Substituting~\eqref{ineq:bdNNr} into~\eqref{ineq:diff_eig} and using that $\sigma_j\geq 1$ for $j\geq 1$ we arrive at
\[
\frac{d}{dr}\left(\frac{1}{r^2}\int_{N\cap B_r(p)}\phi_{k,j}^2\,dv_g\right)\geq -C\sigma_j\|\phi_{k,j}\|_{L^{\infty}}^2,
\]
and integrating over $r\in [0,\delta]$ gives an estimate of the form
\[
\phi_{k,j}(p)^2\leq C\delta \sigma_j\|\phi_{k,j}\|_{L^{\infty}}^2+\frac{C}{\delta^2}\int_{N\cap B_r(p)}\phi_{k,j}^2\,dv_g.
\]
In particular, letting $p$ be a maximum point for $\phi_{k,j}^2$ and taking $\delta=\frac{1}{2C\sigma_j}$, we arrive at an estimate of the form
\[
\|\phi_{k,j}\|_{L^{\infty}}^2\leq C'\sigma_j^2\int_N\phi_{k,j}^2\,dv_g\leq C'\sigma_j^2,
\]
where the last inequality follows from an application of the Green's formula
\begin{equation*}
\begin{split}
&\int_N\phi_{k,j}^2\,dv_g = -\frac{1}{4}\int_N\phi_{k,j}^2\Delta_g(|x|^2)\,dv_g = \\
\frac{1}{2}&\int_N|d\phi_{k,j}|^2|x|^2\,dv_g - \frac{1}{2}(\sigma_j - 1)\int_{\bd N} \phi_{k,j}^2\,ds_g\leq  \int_{\bd N} \phi_{k,j}^2\,ds_g.
\end{split}
\end{equation*}
With the $L^\infty$-bound established the rest of the proof is analogous to the closed case.
\end{proof}

Now, let $\Psi_n\colon \Omega\to \DD^2$ be the map given by composing the first two coordinate functions $x_1,x_2$ on $N_+\subset B^3_+$ with the conformal identification $F\colon \Omega\to N_+$, so that the coordinates of $\Psi_n$ span the $\sigma_1^N$-eigenspace of $(\Omega,g)$. It follows from~\eqref{eq:St_Psi_proj} that there exists a matrix $A_n$ such that
\[
\|I-A_n\Psi_n\|_{W^{1,2}(\Omega)}^2\leq C\delta_n.
\]
Similarly to the Laplacian case, this implies $A_n\in GL_2(\mathbb{R})$. Thus, we can rewrite the previous inequality as
\[
\|A_n^{-1} - \Psi_n\|^2_{W^{1,2}(\Omega)}\leq C\delta_n\|A_n^{-1}\|^2
\]
and using the trivial identity
\[
|A_n^{-1}|^2=\frac{1}{\pi}\int_{x\in \mathbb{D}^2}|A_n^{-1}|^2\,dx,
\]
we can argue as in Section \ref{eq.cvg-1} that 
\[
|A_n^{-1}|\leq C
\]
for $\delta_n<c_0$ sufficiently small. As a consequence, we obtain the estimate
\[
\|\Psi-A_n^{-1}I\|_{W^{1,2}(\Omega)}^2\leq C\delta_n.
\]
In particular, one has $\alpha_n=|A_n^{-1}dI|_{g_0}\leq C$ such that
\begin{equation}
\label{stek.dpsi}
\||d\Psi|_{g_0}-\alpha_n\|_{L^2_{g_0}(\Omega)}^2\leq C\delta_n.
\end{equation}
As an application, we have the following, which implies that $N_+$ must lie close to the disk $\{x_3=0\}$ if $\delta_n=o(1/n)$.

\begin{prop}\label{stek.plane.prop}
For $N_+\subset \B^3_+$ as above, whose double is the free boundary minimal surface $N\subset B^3$, the height coordinate $x_3$ satisfies
\[
\int_{\partial N}|x_3|\leq C\sqrt{n\delta_n}.
\]
\end{prop}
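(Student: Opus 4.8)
The plan is to mirror the proof of Proposition~\ref{ht.bd.prop}, replacing the interior cutoff argument there with one adapted to the mixed Steklov--Dirichlet boundary structure of $N_+$. Identify $N_+\cong(\Omega,g)$ as in the statement, with $\Gamma_0=\partial\mathcal{D}\cap\DD$ (the image under $F$ of the reflection locus $N\cap\{x_3=0\}$) and $\Gamma_1=\partial\DD\setminus\mathcal{D}$ (the image of the free boundary $\partial N\cap\B^3_+$), and set $\phi_n:=x_3\circ F\in W^{1,2}(\Omega)$, where $F\colon\Omega\to N_+$ is the conformal harmonic identification. Since $x_3$ vanishes on the reflection plane, $\phi_n\equiv 0$ on $\Gamma_0$; since $N$ is minimal in $\B^3$, the coordinate $x_3$ is harmonic, so $\phi_n$ is $g_0$-harmonic in $\Omega$; and since $N$ meets $\Sph^2$ orthogonally, $\partial_{\nu_g}\phi_n=\phi_n$ on $\Gamma_1$. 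By the reflection symmetry and $x_3\geq 0$ on $N_+$, it suffices to show $\int_{\Gamma_1}\phi_n\,ds_g\leq C\sqrt{n\delta_n}$.

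Next I introduce a Lipschitz cutoff $\psi\colon\overline\DD\to[0,1]$ with $\psi\equiv 1$ on $\overline{\mathcal{D}}$ (hence on $\Gamma_0$), $\supp\psi\subset\bigcup_j D_{2r_j}(x_j)$, and $\|d\psi\|_{L^2(\Omega,g_0)}^2\leq Cn$, obtained by linear radial interpolation on each annulus $D_{2r_j}(x_j)\setminus D_{r_j}(x_j)$ (disjoint by hypothesis). Using that $\phi_n$ is $g_0$-harmonic together with the conformal invariance $\partial_{\nu_g}\phi_n\,ds_g=\partial_{\nu_{g_0}}\phi_n\,ds_{g_0}$, two integrations by parts — one against the test function $\psi$, and one coming from $\int_\Omega\Delta_{g_0}\phi_n\,dv_{g_0}=0$ — combine to yield the identity
\[
\int_{\Gamma_1}\phi_n\,ds_g=\int_{\Gamma_1}\psi\,\phi_n\,ds_g-\int_\Omega\langle d\psi,d\phi_n\rangle_{g_0}\,dv_{g_0},
\]
where the contributions along $\Gamma_0$ cancel precisely because $\psi\equiv 1$ there (so $\int_{\Gamma_0}\psi\,\partial_{\nu_{g_0}}\phi_n\,ds_{g_0}=\int_{\Gamma_0}\partial_{\nu_{g_0}}\phi_n\,ds_{g_0}=-\int_{\Gamma_1}\phi_n\,ds_g$).

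It then remains to bound both terms on the right by $C\sqrt{n\delta_n}$. For the gradient term, Cauchy--Schwarz gives $\bigl|\int_\Omega\langle d\psi,d\phi_n\rangle_{g_0}\bigr|\leq\|d\psi\|_{L^2}\,\|d\phi_n\|_{L^2(\supp d\psi)}$; since $F$ is conformal and $\phi_n$ is a single coordinate function, the same linear-algebra fact used in Proposition~\ref{ht.bd.prop} gives $|d\phi_n|^2\leq\frac12|dF|^2\leq|d\Psi_n|^2$ pointwise (the orthogonal projection of the unit vector $e_3$ onto the image $2$-plane of $dF$ has norm $\leq 1$), so combining \eqref{stek.dpsi}, the bound $\alpha_n\leq C$, and the area estimate \eqref{hole.est} (which forces $|\supp d\psi|_{g_0}\leq C\sum_j r_j^2\leq C\delta_n$) yields $\|d\phi_n\|_{L^2(\supp d\psi)}^2\leq C\delta_n$, whence this term is $\leq C\sqrt{n\delta_n}$. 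For the boundary term, $|\phi_n|=|x_3|\leq 1$ reduces matters to bounding $\int_{\Gamma_1}\psi\,ds_g$, which I control via \eqref{stek.stab} with test function $\psi$: since $\langle x,\nu_{g_0}\rangle=1$ on $\Gamma_1\subset\partial\DD$ and $|\langle x,\nu_{g_0}\rangle|\leq 1$ on $\Gamma_0$, while $\psi$ is supported within distance $2r_j$ of the holes, one gets $\int_{\partial\Omega}\psi\,\langle x,\nu_{g_0}\rangle\,ds_{g_0}\leq C\sum_j r_j\leq C\sqrt{n\delta_n}$ by Cauchy--Schwarz and \eqref{hole.est}, and $\|\psi\|_{W^{1,2}_{g_0}}\leq C\sqrt n$, so $\int_{\Gamma_1}\psi\,ds_g\leq C\sqrt{n\delta_n}$. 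Adding the two estimates and using $\int_{\partial N}|x_3|=2\int_{\Gamma_1}\phi_n\,ds_g$ completes the argument. As in Proposition~\ref{ht.bd.prop}, the only genuinely substantive step is the localized gradient bound $\|dF\|_{L^2(\supp d\psi)}^2\leq C\delta_n$, which rests on the Steklov stability estimate \eqref{stek.dpsi}; the rest is bookkeeping with the mixed boundary conditions and the cutoff.
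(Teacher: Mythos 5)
Your proof is correct and follows essentially the same strategy as the paper's: cut off near the holes, integrate by parts using harmonicity of $\phi_n=x_3\circ F$ and the eigenvalue relation on $\Gamma_1$, and bound the gradient term via the conformality inequality $|d\phi_n|^2\leq|d\Psi_n|^2$ combined with \eqref{stek.dpsi} and \eqref{hole.est}. The only (cosmetic) deviation is in the boundary term: you first use $0\leq\phi_n\leq 1$ to replace $\int_{\Gamma_1}\psi\phi_n\,ds_g$ by $\int_{\Gamma_1}\psi\,ds_g$ and then apply \eqref{stek.stab} with test function $\psi$, whereas the paper applies \eqref{stek.stab} directly with $\psi\phi$; both yield $C\sqrt{n\delta_n}$ and require the same ingredients.
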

\begin{proof}
The proof is similar to the closed case. Let $\phi=x_3\circ F\in W^{1,2}(\Omega)$, so that $\phi$ is the first eigenfunction for the mixed Steklov-Dirichlet problem on $(\Omega,\Gamma_1)$. Using linear cutoffs, construct a function $\psi\colon \DD^2\to [0,1]$ such that $\psi\equiv 1$ on $\D$, $\psi$ vanishes outside $2\D$, the disks of twice the radius, and
\[
\|d\psi\|_{L^2(\Omega)}^2\leq Cn.
\]

We then have
\[
\int_{\Gamma_1}(1-\psi)\phi ds_g=\int_\Omega \mathrm{div}((1-\psi)d\phi)\,dv_g=-\int_\Omega\langle d\psi,d\phi\rangle_g\,dv_g,
\]
and combining this with multiple applications of  \eqref{stek.stab} and \eqref{hole.est},
\begin{equation*}
\begin{split}
&\int_{\Gamma_1}\phi\,ds_g=\int_{\Gamma_1}\psi \phi ds_g-\int_\Omega\langle d\psi,d\phi\rangle_g\,dv_g \leq \\
&\int_{\partial \Omega}\psi\phi\langle x,\nu_{g_0}\rangle ds_{g_0}+C\sqrt{\delta_n}\|\psi\phi\|_{W_{g_0}^{1,2}(\Omega)}+\|d\psi\|_{L^2}\|d\phi\|_{L^2(\supp(\psi))}\leq \\
&C\sum_{i=1}^{n}r_i+ C\sqrt{\delta_n} + C\|d\psi\|_{L^2}\left(\sqrt{\delta_n}+\|d\phi\|_{L^2(\supp(\psi))}\right)\leq \\
&C'\sqrt{n\delta_n}+C\sqrt{n}\|d\phi\|_{L^2(\supp(\psi))}.
\end{split}
\end{equation*}
Thus, to complete the proof, it suffices to show that
\[
\|d\phi\|_{L^2(\supp(\psi))}^2\leq C\delta_n.
\]
To this end, again note that conformality of $F\colon \Omega\to N_+$ implies
\[
|d\phi|^2\leq |d\Psi|^2,
\]
and combining this with \eqref{stek.dpsi} gives
\[
\int_{\supp(\psi)}|d\phi|_{g_0}^2\,dv_{g_0}\leq C(\delta_n+\area_{g_0}(\supp(\psi))\leq C\left(\delta_n+\sum_{i=1}^{n}r_i^2\right),
\]
which together with \eqref{hole.est} gives the desired estimate.
\end{proof}

In particular, if $N\subset \mathbb{B}^3$ is a free boundary minimal embedding by first Steklov eigenfunctions which is a basic reflection surface with respect to reflection about the plane $P\subset \mathbb{R}^3$, note that the term $\delta_n$ in Proposition \ref{stek.plane.prop} is exactly $\delta=2\pi-2|\partial N|=2\pi-|N|$, while $n\leq C(1+|\chi(N)|)$. Thus, the conclusion of Proposition \ref{stek.plane.prop} can be recast as
$$\int_{\partial N}\dist_P\leq C\sqrt{2\pi-|N|}\sqrt{1+|\chi(N)|},$$
completing the proof of Theorem \ref{Tfbconv} from the introduction. Finally, we conclude with the proof of Theorem \ref{lawsy.bds}, identifying a family of free boundary minimal surfaces with prescribed Euler characteristic, whose asymptotic behavior resembles that of the Lawson surfaces in $\mathbb{S}^3$. 

\begin{proof}[Proof of Theorem \ref{lawsy.bds}]

For each $\gamma\in  \mathbb{N}\cup\{0\}$ and $k\in\{1,2\}$, we construct a family of basic reflection surfaces $(N,\Gamma)$ with $\Gamma\cong \mathbb{Z}_2\times \mathbb{Z}_2$ as follows. Consider a domain $\Omega \subset \mathbb{D}$ given by removing $\gamma$ disks from the interior of $\mathbb{D}$ with centers along the axis $\{y=0\}$ and $k=1$ or $2$ disks with centers in $\{(1,0),(-1,0)\}$, and set $\Omega^+=\Omega\cap \{y\geq 0\}$. Doubling $\Omega$ across $\partial\Omega\setminus \partial \mathbb{D}$ then gives a compact surface $N_{\gamma,k}$ with genus $\gamma$ and $k$ boundary components, admitting two commuting reflections $\tau_1,\tau_2\in \Diff(N)$ such that $\Omega$ is a fundamental domain for $\tau_1$, while $\Omega^+\cup \tau_1\Omega^+$ is a fundamental domain for $\tau_2$. In particular, we then see that $N_{\gamma,k}$ is a basic reflection surface with respect to both $\tau_1$ and $\tau_2$.

Setting $\Gamma=\langle \tau_1,\tau_2\rangle \leq \Diff(N_{\gamma,k})$, we can apply \cite[Theorem 9.15, item (6)]{KKMS} to deduce the existence of an embedded free boundary minimal surface $N_{\gamma,k}\subset \mathbb{B}^3$ of genus $\gamma$ and $k$ boundary components realizing $\Sigma_1(N_{\gamma,k},\Gamma)$. Moreover, there is a pair of orthogonal planes $P_1,P_2\subset \mathbb{R}^3$ such that $\tau_i$ is induced by the reflection through $P_i$, and an application of Theorem \ref{Tfbconv} forces
\[
\int_{\partial N_{\gamma,k}}\dist_{P_i}\leq C\sqrt{2\pi-|N_{\gamma,k}|}\sqrt{1+\gamma}
\]
for both $i=1$ and $i=2$. 
Similarly to the closed case 
there is a universal constant $c_0>0$ such that
\[
c_0\leq \int_{\partial N_{\gamma,k}}\dist_{P_1}+\dist_{P_2}
\]
independent of $\gamma$, and it follows that
\[
2\pi-|N_{\gamma,k}|\geq \frac{c_1}{1+\gamma}
\]
for some $c_1>0$, as claimed.
\end{proof}



\begin{thebibliography}{KKMS24}

\bibitem[CBGS08]{Conway}
John~H. Conway, Heidi Burgiel, and Chaim Goodman-Strauss.
\newblock {\em The symmetries of things}.
\newblock A K Peters, Ltd., Wellesley, MA, 2008.

\bibitem[CHHT24]{CHHT}
Steven Charlton, Lynn Heller, Sebastian Heller, and Martin Traizet.
\newblock Minimal surfaces and alternating multiple zetas.
\newblock {\em arXiv preprint arXiv:2407.07130}, 2024.

\bibitem[CM11]{CM}
Tobias~Holck Colding and William~P. Minicozzi, II.
\newblock {\em A course in minimal surfaces}, volume 121 of {\em Graduate
  Studies in Mathematics}.
\newblock American Mathematical Society, Providence, RI, 2011.

\bibitem[CS09]{ChoeSoret}
Jaigyoung Choe and Marc Soret.
\newblock First eigenvalue of symmetric minimal surfaces in {$\Bbb S^3$}.
\newblock {\em Indiana Univ. Math. J.}, 58(1):269--281, 2009.

\bibitem[FKS24]{KFS}
Giada Franz, Daniel Ketover, and Mario~B Schulz.
\newblock Genus one critical catenoid.
\newblock {\em arXiv preprint arXiv:2409.12588}, 2024.

\bibitem[FPZ17]{Zolotareva}
Abigail Folha, Frank Pacard, and Tatiana Zolotareva.
\newblock Free boundary minimal surfaces in the unit 3-ball.
\newblock {\em Manuscripta Math.}, 154(3-4):359--409, 2017.

\bibitem[FS16]{FraserSchoen}
Ailana Fraser and Richard Schoen.
\newblock Sharp eigenvalue bounds and minimal surfaces in the ball.
\newblock {\em Invent. Math.}, 203(3):823--890, 2016.

\bibitem[FS23]{FrS}
Giada Franz and Mario Schulz.
\newblock Topological control for min-max free boundary minimal surfaces.
\newblock {\em arXiv preprint arXiv:2307.00941}, 2023.

\bibitem[HHT23]{HHT}
Lynn Heller, Sebastian Heller, and Martin Traizet.
\newblock Area estimates for high genus {L}awson surfaces via {DPW}.
\newblock {\em J. Differential Geom.}, 124(1):1--35, 2023.

\bibitem[JNP06]{Jakobson}
Dmitry Jakobson, Nikolai Nadirashvili, and Iosif Polterovich.
\newblock Extremal metric for the first eigenvalue on a {K}lein bottle.
\newblock {\em Canad. J. Math.}, 58(2):381--400, 2006.

\bibitem[Kap11]{Kap:survey}
Nikolaos Kapouleas.
\newblock Doubling and desingularization constructions for minimal surfaces.
\newblock In {\em Surveys in geometric analysis and relativity}, volume~20 of
  {\em Adv. Lect. Math. (ALM)}, pages 281--325. Int. Press, Somerville, MA,
  2011.

\bibitem[Kap17]{Kapouleas}
Nikolaos Kapouleas.
\newblock Minimal surfaces in the round three-sphere by doubling the equatorial
  two-sphere, {I}.
\newblock {\em J. Differential Geom.}, 106(3):393--449, 2017.

\bibitem[Kar21]{KarRP2}
Mikhail Karpukhin.
\newblock Index of minimal spheres and isoperimetric eigenvalue inequalities.
\newblock {\em Inventiones mathematicae}, 223, 01 2021.

\bibitem[KKMS24]{KKMS}
M~Karpukhin, R~Kusner, P~McGrath, and D~Stern.
\newblock Embedded minimal surfaces in s3 and b3 via equivariant eigenvalue
  optimization.
\newblock {\em arXiv preprint arXiv:2402.13121}, 2024.

\bibitem[KL21]{KapLi}
Nikolaos Kapouleas and Martin Man-chun Li.
\newblock Free boundary minimal surfaces in the unit three-ball via
  desingularization of the critical catenoid and the equatorial disc.
\newblock {\em J. Reine Angew. Math.}, 776:201--254, 2021.

\bibitem[KM19]{KapMcG}
Nikolaos Kapouleas and Peter McGrath.
\newblock Minimal surfaces in the round three-sphere by doubling the equatorial
  two-sphere, {II}.
\newblock {\em Comm. Pure Appl. Math.}, 72(10):2121--2195, 2019.

\bibitem[KM23]{LDG}
Nikolaos Kapouleas and Peter McGrath.
\newblock Generalizing the linearized doubling approach, {I}: {G}eneral theory
  and new minimal surfaces and self-shrinkers.
\newblock {\em Camb. J. Math.}, 11(2):299--439, 2023.

\bibitem[KM24]{KusnerMcGrath}
Robert Kusner and Peter McGrath.
\newblock On {S}teklov eigenspaces for free boundary minimal surfaces in the
  unit ball.
\newblock {\em Amer. J. Math.}, 146(5):1275--1293, 2024.

\bibitem[KNPP22]{KNPP}
Mikhail Karpukhin, Nikolai Nadirashvili, Alexei~V. Penskoi, and Iosif
  Polterovich.
\newblock Conformally maximal metrics for {L}aplace eigenvalues on surfaces.
\newblock In {\em Surveys in differential geometry 2019. {D}ifferential
  geometry, {C}alabi-{Y}au theory, and general relativity. {P}art 2}, volume~24
  of {\em Surv. Differ. Geom.}, pages 205--256. Int. Press, Boston, MA, [2022]
  \copyright 2022.

\bibitem[KNPS21]{KNPS}
Mikhail Karpukhin, Micka{\"e}l Nahon, Iosif Polterovich, and Daniel Stern.
\newblock Stability of isoperimetric inequalities for laplace eigenvalues on
  surfaces.
\newblock {\em To appear in J. of Diff. Geom. preprint arXiv:2106.15043}, 2021.

\bibitem[KPS88]{KPS}
H.~Karcher, U.~Pinkall, and I.~Sterling.
\newblock New minimal surfaces in {$S^3$}.
\newblock {\em J. Differential Geom.}, 28(2):169--185, 1988.

\bibitem[KS24]{KSDuke}
Mikhail Karpukhin and Daniel Stern.
\newblock From {S}teklov to {L}aplace: free boundary minimal surfaces with many
  boundary components.
\newblock {\em Duke Math. J.}, 173(8):1557--1629, 2024.

\bibitem[Kus89]{KusnerWillmore}
Rob Kusner.
\newblock Comparison surfaces for the {W}illmore problem.
\newblock {\em Pacific J. Math.}, 138(2):317--345, 1989.

\bibitem[KW22]{KWsym}
Nikolaos Kapouleas and David Wiygul.
\newblock The {L}awson surfaces are determined by their symmetries and
  topology.
\newblock {\em J. Reine Angew. Math.}, 786:155--173, 2022.

\bibitem[Law70]{Lawson}
H.~Blaine Lawson, Jr.
\newblock Complete minimal surfaces in {$S^{3}$}.
\newblock {\em Ann. of Math. (2)}, 92:335--374, 1970.

\bibitem[MR86]{MR}
Sebasti\'{a}n Montiel and Antonio Ros.
\newblock Minimal immersions of surfaces by the first eigenfunctions and
  conformal area.
\newblock {\em Invent. Math.}, 83(1):153--166, 1986.

\bibitem[Nad96]{Nadirashvili}
N.~Nadirashvili.
\newblock Berger's isoperimetric problem and minimal immersions of surfaces.
\newblock {\em Geom. Funct. Anal.}, 6(5):877--897, 1996.

\bibitem[NS19]{NaySh}
Shin Nayatani and Toshihiro Shoda.
\newblock Metrics on a closed surface of genus two which maximize the first
  eigenvalue of the {L}aplacian.
\newblock {\em C. R. Math. Acad. Sci. Paris}, 357(1):84--98, 2019.

\bibitem[OPS88]{Sarnak}
B.~Osgood, R.~Phillips, and P.~Sarnak.
\newblock Extremals of determinants of {L}aplacians.
\newblock {\em J. Funct. Anal.}, 80(1):148--211, 1988.

\bibitem[Pet24]{PetridesNew}
Romain Petrides.
\newblock Geometric spectral optimization on surfaces.
\newblock {\em arXiv preprint arXiv:2410.13347}, 2024.

\bibitem[Yau82]{Yau:Problems}
Shing~Tung Yau.
\newblock Problem section.
\newblock In {\em Seminar on {D}ifferential {G}eometry}, volume 102 of {\em
  Ann. of Math. Stud.}, pages 669--706. Princeton Univ. Press, Princeton, N.J.,
  1982.

\end{thebibliography}

\end{document}